\tikzset{>=stealth',
         dvertex/.style={circle,draw=white,inner sep=5pt,outer sep=7pt},
         vertex/.style={circle,fill=black,inner sep=1pt,outer sep=3pt},
         star/.style={circle,fill=yellow,inner sep=0.75pt,outer sep=0.75pt},
         tvertex/.style={inner sep=1pt,font=\scriptsize},
         gap/.style={inner sep=0.5pt,fill=white}}
\tikzstyle{mybox} = [draw=black, fill=blue!10, very thick,
\tikzstyle{boxtitle} =[fill=blue!50, text=white,rectangle,rounded corners]
\newtheorem{thm}{Theorem}[section]
\newtheorem{prop}[thm]{Proposition}
\newtheorem{lemma}[thm]{Lemma}
\newtheorem{defin}[thm]{Definition}
\newtheorem{cor}[thm]{Corollary}
\theoremstyle{definition} 
\newtheorem{example}[thm]{Example}
\newtheorem{remark}[thm]{Remark}
\numberwithin{equation}{section}
\def\thick{\mathop{\rm thick}\nolimits}
\newcommand{\e}{\varepsilon}
\renewcommand{\t}[1]{\textnormal{#1}}
\newcommand{\looptop}[2]{\xy \SelectTips{cm}{10}
\POS(0,0) \endxy}
\def\Sub{\mathop{\rm Sub}\nolimits}
\def\Aus{\mathop{\rm Aus}\nolimits}
\def\op{\mathop{\rm op}\nolimits}
\def\SL{\mathop{\rm SL}\nolimits}
\def\GL{\mathop{\rm GL}\nolimits}
\def\CM{\mathop{\rm CM}\nolimits}
\def\GP{\mathop{\rm GP}\nolimits}
\def\uCM{\mathop{\underline{\rm CM}}\nolimits}
\def\SCM{\mathop{\rm SCM}\nolimits}
\def\depth{\mathop{\rm depth}\nolimits}
\def\mod{\mathop{\rm mod}\nolimits}
\def\top{\mathop{\rm top}\nolimits}
\def\coh{\mathop{\rm coh}\nolimits}
\def\Qcoh{\mathop{\rm Qcoh}\nolimits}
\def\Mod{\mathop{\rm Mod}\nolimits}
\def\proj{\mathop{\rm proj}\nolimits}
\def\Inj{\mathop{\rm Inj}\nolimits}
\def\pd{\mathop{\rm proj.dim}\nolimits}
\def\id{\mathop{\rm inj.dim}\nolimits}
\def\Hom{\mathop{\rm Hom}\nolimits}
\def\RHom{\mathop{\rm {\bf R}Hom}\nolimits}
\def\End{\mathop{\rm End}\nolimits}
\def\Ext{\mathop{\rm Ext}\nolimits}
\def\add{\mathop{\rm add}\nolimits}
\def\ker{\mathop{\rm ker}\nolimits}
\def\Sing{\mathop{\rm Sing}\nolimits}
\def\Spec{\mathop{\rm Spec}\nolimits}
\def\Max{\mathop{\rm Max}\nolimits}
\def\gl{\mathop{\rm gl.dim}\nolimits}
\def\D{\mathop{\rm{D}^{}}\nolimits}
\def\Dsg{\mathop{\rm{D}_{\rm sg}}\nolimits}
\def\Db{\mathop{\rm{D}^b}\nolimits}
\def\Kb{\mathop{\rm{K}^b}\nolimits}
\def\K{\mathop{\rm{K}}\nolimits}
\def\m{\mathop{\mathfrak{m}}\nolimits}
\def\Rf{\mathop{\mathbf{R}f_*}\nolimits}
\def\Rfcs{\mathop{\mathbf{R}f_*^\cs}\nolimits}
\def\Lf{\mathop{\mathbf{L}f^*}\nolimits}
\def\fsh{\mathop{f^!}\nolimits}
\def\gsh{\mathop{g^!}\nolimits}
\def\Perf{\mathop{\rm{Perf}}\nolimits}
\def\RsHom{\mathop{{\bf R}\mathcal{H}om}\nolimits}
\newcommand{\ul}[1]{\underline{#1}}
\newcommand{\ra}{\to}
\newcommand{\G}{\mathbb{G}}
\newcommand{\I}{\mathbb{I}}
\renewcommand{\P}{\mathbb{P}}
\newcommand{\cc}{{\mathcal C}}
\newcommand{\cd}{{\mathcal D}}
\newcommand{\ce}{{\mathcal E}}
\newcommand{\cf}{{\mathcal F}}
\newcommand{\cg}{{\mathcal G}}
\newcommand{\ci}{{\mathcal I}}
\newcommand{\cl}{{\mathcal L}}
\newcommand{\cm}{{\mathcal M}}
\newcommand{\cn}{{\mathcal N}}
\newcommand{\co}{{\mathcal O}}
\newcommand{\cp}{{\mathcal P}}
\newcommand{\cq}{{\mathcal Q}}
\newcommand{\cs}{{\mathcal S}}
\newcommand{\ct}{{\mathcal T}}
\newcommand{\cu}{{\mathcal U}}
\newcommand{\cv}{{\mathcal V}}
\newcommand{\cz}{{\mathcal Z}}
\begin{document}
\title[Frobenius Categories via Gorenstein Algebras]{Frobenius categories, Gorenstein algebras and rational surface singularities}
\author{Osamu Iyama}
\address{Osamu Iyama\\ Graduate School of Mathematics\\ Nagoya University\\ Chikusa-ku, Nagoya, 464-8602, Japan}
\email{iyama@math.nagoya-u.ac.jp}
\author{Martin Kalck}
\address{Martin Kalck, The Maxwell Institute, School of Mathematics, James Clerk Maxwell Building, The King's Buildings, Mayfield Road, Edinburgh, EH9 3JZ, UK.}
\email{m.kalck@ed.ac.uk}
\author{Michael Wemyss}
\address{Michael Wemyss, The Maxwell Institute, School of Mathematics, James Clerk Maxwell Building, The King's Buildings, Mayfield Road, Edinburgh, EH9 3JZ, UK.}
\email{wemyss.m@googlemail.com}
\author{Dong Yang}
\address{Dong Yang, Department of Mathematics, Nanjing University, Nanjing 210093, P. R. China}
\email{dongyang2002@gmail.com}
\dedicatory{Dedicated to Ragnar-Olaf Buchweitz on the occasion of his 60th birthday.}
\thanks{\emph{2010 Mathematical Subject Classification:} 14J17, 13C14, 18E30, 16E65}
\thanks{\emph{Keywords:} Frobenius categories,
Iwanaga--Gorenstein algebras, Gorenstein-projective modules,
Cohen-Macaulay modules, 
noncommutative resolutions,
singularity categories,
rational surface singularities.}
\begin{abstract}
We give sufficient conditions for a Frobenius category to be equivalent to the category of Gorenstein projective modules over an Iwanaga--Gorenstein ring.\  We then apply this result to the Frobenius category of special Cohen--Macaulay modules over a rational surface singularity, where we show that the associated stable category is triangle equivalent to the singularity category of a certain discrepant partial resolution of the given rational singularity.
In particular, this produces uncountably many Iwanaga--Gorenstein rings of finite GP type. We also apply our method to representation theory, obtaining Auslander--Solberg and Kong type results.
\end{abstract}
\maketitle

\tableofcontents

\parindent 15pt

\section{Introduction} This paper is motivated by the study of certain triangulated categories associated to rational surface singularities, first constructed in \cite{IW2}.  The purpose is to develop both the algebraic and geometric techniques necessary to give precise information regarding these categories, and to put them into a more conceptual framework.  It is only by developing both sides of the picture that we are able to prove the results that we want.

We explain the algebraic side first.  Frobenius categories \cite{H88,K,Keller96} are now ubiquitous in algebra, since they give rise to many of the triangulated categories arising in algebraic and geometric contexts.  One of the points of this paper is that we should treat Frobenius categories which admit a `noncommutative resolution' as a special class of Frobenius categories. We show that such a Frobenius category is equivalent to the category ${\GP}(E)$ of Gorenstein projective modules over some Iwanaga--Gorenstein ring $E$ (for definitions see \S\ref{ss:alg-tria-cl-sing} and \S\ref{Frob section}).   The precise statement is as follows.  
For a Frobenius category $\ce$ we denote by $\proj\ce$ the full subcategory of $\ce$ consisting of projective objects and for an object $P$ of $\ce$ we denote by $\add P$ the full subcategory of $\ce$ consisting of direct summands of finite direct sums of copies of $P$. 

\begin{thm}(=\ref{t:main-thm})\label{main Frob intro}
Let $\ce$ be a Frobenius category with $\proj \ce= \add P$ for some $P \in \proj \ce$.  Assume that there exists $M\in\ce$ such that $A:=\End_{\ce}(P\oplus M)$ is a noetherian ring of global dimension $n$.  Then\\ 
\t{(1)} $E:=\End_\ce(P)$ is an Iwanaga--Gorenstein ring of dimension at most $n$, that is, a noetherian ring with  $\id_E E\leq n$ and $\id E_E\leq n$.\\
\t{(2)} We have an equivalence $\Hom_{\ce}(P,-)\colon\ce\to\GP(E)$ up to direct summands. It is an equivalence if $\ce$ is idempotent complete. This induces a triangle equivalence 
\[
\underline{\ce}\xrightarrow{\simeq}\underline{\GP}(E)\simeq \Dsg(E)
\]
up to direct summands. It is an equivalence if $\ce$ or $\underline{\ce}$ is idempotent complete.\\
\t{(3)} $\ul{\ce}=\thick_{\ul{\ce}}(M)$, i.e.\ the smallest full triangulated subcategory of $\ul{\ce}$ containing $M$ which is closed under direct summands is $\ul{\ce}$.
\end{thm}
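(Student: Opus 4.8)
The plan is to build the functor $F := \Hom_{\ce}(P, -)\colon \ce \to \Mod E$ and analyze it in three stages, corresponding to the three parts of the theorem. The key structural input is the ring $A = \End_\ce(P \oplus M)$ of global dimension $n$, together with the idempotent $e \in A$ corresponding to the summand $P$, so that $E = eAe$. Since $\proj\ce = \add P$, the functor $F$ restricts to an equivalence $\proj\ce \xrightarrow{\simeq} \proj E$, and because $\ce$ is Frobenius, for every $X \in \ce$ a projective presentation $P_1 \to P_0 \to X$ exists with $P_i \in \add P$; applying $F$ and using that $F$ is fully faithful on $\add P$, one sees $FX$ is a finitely presented $E$-module. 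I would first nail down the exactness properties of $F$: it sends conflations in $\ce$ to exact sequences of $E$-modules (since $P$ is projective, hence Ext-acyclic in the exact structure), and one checks $F$ is fully faithful on all of $\ce$ using the projective presentations and the Yoneda-type argument (any $E$-linear map $FX \to FY$ lifts to a map of presentations by copies of $P$, hence to a morphism $X \to Y$ by full faithfulness on $\add P$ and a diagram chase).

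For part (1), the strategy is to transport homological data from $A$ to $E = eAe$ via the recollement / idempotent-truncation formalism: the functor $-\otimes_E eA \colon \Mod E \to \Mod A$ and its adjoint $e(-)$ relate $E$ and $A$, and one uses that $\gl A = n < \infty$ to bound $\id_E E$. Concretely, I would take a minimal injective (or rather, a suitable) resolution and use the fact that for any $E$-module $Y$ of the form $eX$ with $X$ an $A$-module, $\Ext^i_E(-, E)$ can be computed via $A$; the finiteness of $\gl A$ forces $\Ext^{>n}_E(\Mod E, E) $ to vanish on the relevant class of modules, and a devissage over the (finitely many) simple modules — or directly over $E$ as a module over itself — gives $\id_E E \le n$. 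The symmetric argument on the opposite side gives $\id E_E \le n$. The subtlety here is making sure one really gets a bound on injective dimension of $E$ over itself, not merely some weaker finiteness; this is where one genuinely uses that $A$ has finite global dimension \emph{and} that $E$ is a corner ring $eAe$ with $A/AeA$ controlled (typically $A/AeA$ is finite length, or $eA$ is a tilting-like object), so that $\Db(E)$ embeds appropriately in $\Db(A)$.

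Once (1) is in hand, part (2) is essentially formal: $\GP(E)$ is by definition the category of modules $X$ with $X \cong \Hom_E(X^*, E)^*$-type duality conditions and $\Ext^{>0}_E(X, E) = 0$, and since $E$ is Iwanaga--Gorenstein of finite dimension, $\GP(E)$ coincides with the modules of $E$-dimension $\le n$ in the appropriate sense; equivalently $\GP(E) = \{X : \Ext^{i}_E(X,E)=0 \text{ for } i>0\}$. One shows $F$ lands in $\GP(E)$ by applying $F$ to a complete $\add P$-resolution of $X \in \ce$ (which exists because $\ce$ is Frobenius, with $P$ both projective and injective in the exact structure), obtaining a totally acyclic complex of projective $E$-modules. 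Full faithfulness was established above; for essential surjectivity up to summands one takes $Y \in \GP(E)$, lifts a projective presentation over $E$ to one over $\add P$ in $\ce$ via the equivalence $\proj\ce \simeq \proj E$, takes the cokernel $X \in \ce$, and checks $FX \cong Y$ using exactness of $F$ and the Gorenstein-projectivity of $Y$. Idempotent completeness removes the "up to summands" caveat in the standard way. The induced triangle equivalence $\ul\ce \xrightarrow{\simeq} \uGP(E) \simeq \Dsg(E)$ follows since $F$ is exact and sends projectives to projectives, hence descends to stable categories, and $\uGP(E) \simeq \Dsg(E)$ is Buchweitz's theorem given that $E$ is Iwanaga--Gorenstein.

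For part (3), the plan is: $\add(P \oplus M)$ generates $\ce$ under conflations and summands in a strong sense because $A = \End_\ce(P \oplus M)$ has finite global dimension $n$ — every object of $\ce$ admits a finite $\add(P\oplus M)$-resolution (this is the homological meaning of $\gl A = n$, transported back through the equivalence $\ce \simeq$ (finitely presented $A$-modules of a suitable form) given by $\Hom_\ce(P \oplus M, -)$). Passing to the stable category $\ul\ce$, the projective summand $P$ becomes zero, so these resolutions exhibit every object of $\ul\ce$ as built from finitely many shifts and cones of $M$; hence $\ul\ce = \thick_{\ul\ce}(M)$. The main obstacle throughout is part (1) — producing the sharp bound $\id_E E \le n$ from $\gl A = n$ for the corner ring, since corner rings of rings of finite global dimension need not themselves have finite global dimension, and one must use the full force of the Frobenius hypothesis (that $P$ is projective-injective, making $eA$ and $Ae$ behave like (co)tilting bimodules) to get injective dimension rather than merely finiteness of some Tor-dimension. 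I would organize that step around the exact sequence relating $\RHom_E(-, E)$ and $\RHom_A(-, A)$ under the functors $e(-)$ and $-\otimes^{\mathbf L}_E eA$, isolating the obstruction in the finitely generated $A/AeA$-module $A/AeA$ and controlling it via $\gl A$.
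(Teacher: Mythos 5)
Your overall strategy matches the paper's — introduce the idempotent $e\in A$ with $eAe=E$, exploit $\gl A=n$ via $e(-)$, and use that $P$ is projective--injective in $\ce$ — but two of the steps as you sketch them do not close, and the paper's actual argument is rather different in both places.

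For part (1), you correctly identify that the hard point is getting $\id_E E\le n$ for a corner ring, and you correctly suspect that the Frobenius hypothesis must enter. But your proposed route (recollement-style bookkeeping with $\RHom_E$ vs.\ $\RHom_A$ and ``isolating the obstruction in $A/AeA$'') is left quite open: $eAe$ of a ring of finite global dimension need not be Iwanaga--Gorenstein, and nothing you write pins down what extra structure overcomes this. The paper's argument is more elementary and has two precise steps you are circling around without stating: (i) for every $X\in\ce$ one has $\Ext^{i}_{E}(\Hom_{\ce}(P,X),E)=0$ for $i>0$ — this is exactly where injectivity of $P$ in $\ce$ is used, by comparing $\Hom_\ce(-,P)$ applied to a conflation $0\to Y\to P'\to X\to0$ with $\Hom_E(-,E)$ applied to its image; and (ii) every $X\in\mod E$ admits an exact sequence $0\to Q_n\to\cdots\to Q_0\to X\to0$ with $Q_j\in\add\Hom_\ce(P,P\oplus M)$, obtained by taking a projective $A$-resolution of $Ae\otimes_E X$ of length $\le n$ and applying $e(-)$. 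Putting (i) and (ii) together immediately gives $\Ext^{n+1}_E(-,E)=0$. You need both ingredients in this form, and (i) in particular is the load-bearing use of the Frobenius structure; your sketch never isolates it.

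For part (2), the essential-surjectivity argument has a genuine gap. You propose to take $Y\in\GP(E)$, lift a projective presentation $P_1\to P_0\to Y\to 0$ to $Q_1\to Q_0$ in $\add P\subseteq\ce$, and ``take the cokernel $X\in\ce$.'' In an exact category cokernels of arbitrary maps between projectives do not exist; one can only extract an admissible cokernel after replacing $Q_1\to Q_0$ by an inflation, which (by the technical lemma \ref{technical lemma}) requires $\Hom_\ce(Q_1\to Q_0,\,P)$ to be surjective — equivalently, $\Hom_E(P_1\to P_0,\,E)$ to be surjective. That is \emph{not} implied by $Y\in\GP(E)$: Gorenstein-projectivity gives surjectivity of $\Hom_E(P_0,E)\to\Hom_E(\Omega Y,E)$, not of $\Hom_E(P_0,E)\to\Hom_E(P_1,E)$. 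The paper avoids this by starting from the full length-$\le n$ resolution of $Y$ by $\add\Hom_\ce(P,P\oplus M)$ produced in step (ii), lifting it to a complex $M_n\to\cdots\to M_0$ in $\add(P\oplus M)$, verifying exactness of the $\Hom_\ce(-,P)$-dual complex by comparing with the $\Hom_E(-,E)$-dual (which is exact precisely because $Y$ is GP and each $\Hom_\ce(P,M_j)$ is Ext-orthogonal to $E$ by (i)), and then iterating the technical lemma from the inflation end to glue genuine exact sequences. You should rework part (2) along those lines; part (3) of your sketch is fine once (ii) is in place.
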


This abstract result has applications in, and is motivated by, problems in algebraic geometry. If $R$ is a Gorenstein singularity, then the category $\CM(R)$ of maximal Cohen--Macaulay modules over $R$ is a Frobenius category.  Moreover if $R$ is a simple surface singularity, then the classical algebraic McKay correspondence can be formulated in terms of the associated stable category $\uCM(R)$, see \cite{Auslander86}.

When $R$ is not Gorenstein, $\CM(R)$ is no longer Frobenius.  However, for a complete local rational surface singularity $R$ over an algebraically closed field of characteristic zero (for details, see \S\ref{SCMsection} or \S\ref{uncount section}), there is a subcategory $\SCM(R)\subseteq\CM(R)$ of \emph{special} CM modules (recalled in \S\ref{SCMsection}). By Wunram's GL(2) McKay correspondence \cite{Wun}, if we denote $Y\to\Spec R$ to be the minimal resolution, and let $\{E_i\}_{i\in I}$ denote the set of exceptional curves, then there is a natural bijection
\[
\left. \left\{ \begin{array}{c}\mbox{non-free indecomposable}\\ \mbox{special CM $R$-modules}\end{array}\right\}\middle/ \cong \right.\quad\longleftrightarrow\quad\left\{ E_i\mid i\in I\right\}.
\]
We let $M_i$ denote the indecomposable special CM $R$-module corresponding to the exceptional curve $E_i$.  We remark that the set of exceptional curves can be partitioned into two subsets, namely $I=\mathcal{C}\cup \mathcal{D}$ where $\mathcal{C}$ are all the (C)repant  curves (i.e.\ the ($-2$)-curves), and $\mathcal{D}$ are all the (D)iscrepant curves (i.e.\ the non-($-2$)-curves).  In this paper, the following module plays a central role.

\begin{defin}\label{module N}
We define the module $D\in\SCM(R)$ by $D:=R\oplus\left(\bigoplus_{d\in\mathcal{D}}M_d\right)$.
\end{defin}

It was shown in \cite{IW2} that the category $\SCM(R)$ has at least one natural Frobenius structure.  Our first result in this setting is that there are often many different Frobenius structures on $\SCM(R)$, and so the one found in \cite{IW2} is not unique.

\begin{prop}(=\ref{new Frobenius})\label{new Frobenius intro}
Let R be a complete local rational surface singularity over an algebraically closed field of characteristic zero and let $D\in\SCM(R)$ be defined as above.  Choose $N\in\SCM(R)$ such that $\add D\subseteq\add N$. Then $\SCM(R)$ has the structure of a Frobenius category whose projective objects are exactly $\add N$. We denote the category $\SCM(R)$, equipped with this Frobenius structure, by $\SCM_N(R)$.
\end{prop}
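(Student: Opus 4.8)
The plan is to produce the Frobenius structure as a \emph{relative} exact structure on $\SCM(R)$, in the spirit of Auslander--Solberg's relative homological algebra, exploiting that $\add N$ contains $\add D$ and hence both $R$ and $\omega_R$. First I would record that $\SCM(R)$ is an extension-closed subcategory of $\mod R$: an extension of two special Cohen--Macaulay modules is maximal Cohen--Macaulay by the depth lemma, and it is again special because the defining $\Ext$-vanishing condition (for rational surface singularities, vanishing of $\Ext^1_R(-,R)$) is inherited along short exact sequences. Thus $\SCM(R)$ is an exact category whose conflations are the short exact sequences of $R$-modules with all terms in $\SCM(R)$, and I would take $\SCM_N(R)$ to be $\SCM(R)$ together with the smaller class of conflations consisting of those short exact sequences that remain exact after applying both $\Hom_R(N,-)$ and $\Hom_R(-,N)$. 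Since $\add N$ is functorially finite in $\SCM(R)$ (it has finitely many indecomposable objects), this is again an exact structure, and by construction every object of $\add N$ is both projective and injective in it.

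The substance is then to check that $\SCM_N(R)$ has enough projectives and enough injectives; granting this, its projective and injective objects must both coincide with $\add N$, since $\SCM(R)$ is Krull--Schmidt ($R$ being complete local). For enough projectives I would use $R\in\add D\subseteq\add N$: for any $X\in\SCM(R)$, a free cover shows that a right $\add N$-approximation $N'\to X$ is surjective, its kernel $Y$ is maximal Cohen--Macaulay by the depth lemma, and --- the essential input --- is again special, because over a rational surface singularity $\SCM(R)$ is closed under syzygies (equivalently $\Ext^{>0}_R(\SCM(R),R)=0$, which one reads off from the minimal resolution $Y\to\Spec R$). Dually, the reason for forcing all of $D$ into $\add N$ is that $\omega_R\in\add D$ --- it is, up to summands, one of the discrepant modules $M_d$, or $R$ itself when $R$ is Gorenstein --- so the canonical duality $(-)^\vee=\Hom_R(-,\omega_R)$, which preserves $\SCM(R)$ and carries syzygies to cosyzygies, converts the construction just given into enough embeddings $0\to X\to N''\to Z\to 0$ with $N''\in\add N$ and $Z\in\SCM(R)$, producing enough injectives.

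The main obstacle, and the step that really uses the geometry, is that a right $\add N$-approximation is automatically $\Hom_R(N,-)$-exact but need not be $\Hom_R(-,N)$-exact, so a priori the conflations built above are only one-sided, and one must show they can be chosen two-sided --- equivalently, that $N$ is a genuinely injective, and not merely a projective, object of $\SCM_N(R)$. I would attack this by playing the two one-sided relative structures against each other via $(-)^\vee$, which interchanges $\Hom_R(N,-)$-exactness of a conflation with $\Hom_R(-,N^\vee)$-exactness of its dual; combined with the Auslander--Reiten duality available because $R$ is an isolated singularity, and with the closure of $\SCM(R)$ under syzygies and cosyzygies, this should force the projective and injective objects of the relative structure to agree, and to equal $\add N$. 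This is the same circle of ideas behind the Auslander--Solberg and Kong type statements mentioned in the introduction, so in practice I would first isolate a general statement giving a Frobenius exact structure whose projective-injective objects form a prescribed functorially finite generator--cogenerator, and then verify its hypotheses for $\SCM(R)$ and $\add N$; that verification --- in particular the two-sidedness of the relative (co)resolutions --- is where I expect most of the work to lie.
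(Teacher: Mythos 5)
Your overall strategy --- build a relative exact structure on $\SCM(R)$ in the style of Auslander--Solberg, then verify it is Frobenius with $\add N$ as the projective--injective objects --- is indeed what the paper does: it applies the abstract statement (Proposition~\ref{new Frobenius structure}) to $(\ce,\cm,\tau)=(\SCM(R),\add N,\mathbb{S}[-1])$. You also correctly identify the crux: given enough relative projectives (via free covers and closure of $\SCM(R)$ under syzygies), the real issue is forcing the ``wrong-sided'' exactness, i.e.\ producing enough relative injectives.

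However, the specific mechanism you propose to settle that point does not work. You assert that $\omega_R\in\add D$ and hence that the duality $(-)^\vee=\Hom_R(-,\omega_R)$ preserves $\SCM(R)$. This is false in general: already for $R=\mathbb{C}[[x,y]]^{\frac{1}{n}(1,1)}$ with $n\ge 4$, the minimal resolution has a single $(-n)$-curve, $\add D=\add(R\oplus M_1)$ with $M_1$ the rank-one special module needing two generators, while $\omega_R$ is the rank-one module needing three generators --- it is not special, so $\omega_R\notin\SCM(R)$ and a fortiori $\omega_R\notin\add D$. Consequently $R^\vee=\omega_R$ leaves $\SCM(R)$, and the proposed duality argument collapses. (Indeed, the paper itself exploits the \emph{failure} of $\tau N_i=(N_i\otimes\omega_R)^{**}$ to lie in $\SCM(R)$ in the proof of Theorem~\ref{precise id}.) The input the paper actually uses is not the $R$-linear canonical duality on $\CM(R)$, but the Auslander--Reiten duality of the \emph{stable} Frobenius category $\underline{\SCM}_D(R)$ supplied by Proposition~\ref{known2}: the already-established Frobenius structure with $\proj=\add D$, plus the fact that its Serre functor satisfies $\mathbb{S}X\simeq X[1]$, so that $\tau=\mathbb{S}[-1]$ fixes every object of $\underline{\SCM}_D(R)$ up to isomorphism; this makes the hypothesis $\tau\underline{\add N}=\overline{\add N}$ of Proposition~\ref{new Frobenius structure} automatic for any $\add N\supseteq\add D$. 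So the missing ingredient in your argument is to replace the canonical duality by the stable AR translation of $\underline{\SCM}_D(R)$ and to invoke the fact that it acts trivially on objects; everything else in your outline (free covers, syzygy closure, finite type giving functorial finiteness of $\add N$, and packaging the conclusion as a general Auslander--Solberg-type lemma) is correct and matches the paper.
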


It then follows from \ref{main Frob intro} and \ref{new Frobenius intro} that $\End_R(N)$ is Iwanaga--Gorenstein (\ref{main}),  using the fact that the reconstruction algebra, i.e. $\End_R(R\oplus\bigoplus_{i\in I}M_i)$, has finite global dimension \cite{IW1,GL2}. 

We then interpret the stable category $\underline{\SCM}_N(R)$ of the Frobenius category $\SCM_N(R)$ geometrically.   To do this, we remark that the condition $\add D\subseteq\add N$ implies (after passing to the basic module) that we can write
\[
N=D\oplus \bigoplus_{j\in J}M_j
\] 
for some subset $J\subseteq \mathcal{C}$.  Set $\mathcal{S}:=\mathcal{C}\backslash J$, the complement of $J$ in $\mathcal{C}$, so that
\[
N=D\oplus \bigoplus_{j\in \mathcal{C}\backslash \mathcal{S}}M_j:=N^{\mathcal{S}}.
\]
Contracting all the curves in $\cs$, we obtain a scheme $X^{\cs}$ together with maps
\[
Y\xrightarrow{f^{\cs}} X^{\cs}\xrightarrow{g^{\cs}}\Spec R.
\]

Knowledge of the derived category of $X^\cs$ leads to our main result, which also explains geometrically why $\End_R(N^\cs)$ is Iwanaga--Gorenstein (\ref{Goren from geom}).

\begin{thm}(=\ref{main Db}, \ref{main triangles})\label{main Db triangles intro}  With the assumptions as in \ref{new Frobenius intro}, choose $\mathcal{S}\subseteq\mathcal{C}$ (i.e.\ $N^{\cs}\in\SCM(R)$ such that $\add D\subseteq\add N^\cs$).  Then\\
\t{(1)} There is a derived equivalence between $\End_R(N^\cs)$ and $X^\cs$.\\
\t{(2)} As a consequence, we obtain triangle equivalences
\[
\underline{\SCM}_{N^{\cs}}(R)\simeq\underline{\GP}(\End_R(N^{\cs}))\simeq\Dsg(\End_R(N^{\cs}))\simeq\Dsg(X^{\cs})\simeq\bigoplus_{x\in\Sing X^{\cs}}\uCM(\widehat{\mathcal{O}}_{X^{\cs}, x})
\]
where $\Sing X^{\mathcal{S}}$ denotes the set of singular points of $X^{\mathcal{S}}$. \\
\t{(3)} In particular, $\underline{\SCM}_{N^{\mathcal{S}}}(R)$ is 1-Calabi--Yau, and its shift functor satisfies $[2]=\mathrm{id}$.
\end{thm}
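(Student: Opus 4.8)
The plan is to prove (1) --- the one genuinely geometric statement --- and then to deduce (2) and (3) formally. For (1) I would use the reconstruction algebra: by \cite{IW1,GL2} and Wunram's correspondence, $A=\End_R(R\oplus\bigoplus_{i\in I}M_i)$ is noetherian of finite global dimension $n$ and is realised as $A\cong\End_Y(\cT)$ for a tilting bundle $\cT=\bigoplus_{i\in I_0}\cL_i$ on the minimal resolution $f\colon Y\to\Spec R$, where $I_0=I\cup\{0\}$, $\cL_0=\co_Y$, each $\cL_i$ is globally generated with $c_1(\cL_i)\cdot E_j=\delta_{ij}$, and $\mathbf Rf_*\cL_i=M_i$. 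Writing $e\in A$ for the idempotent picking out the summands indexed by $I_0\setminus\cs$, we have $eAe\cong\End_R(N^\cs)$. Because every $E_j$ with $j\in\cs$ is a $(-2)$-curve, each connected component of the exceptional locus of $f^\cs$ is a negative definite configuration of $(-2)$-curves, hence an ADE configuration; so $X^\cs$ has only finitely many singular points, all Du Val and therefore Gorenstein (indeed hypersurface), and $\mathbf Rf^\cs_*\co_Y=\co_{X^\cs}$ since $R$, hence $X^\cs$, has rational singularities. The core of (1) is that $\mathcal M^\cs:=\bigoplus_{i\in I_0\setminus\cs}(f^\cs_*\cL_i)$ is a tilting bundle on $X^\cs$ with $\End_{X^\cs}(\mathcal M^\cs)\cong\End_R(N^\cs)$.

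To verify the tilting conditions I would argue as follows. For $i\notin\cs$, global generation of $\cL_i$ together with $c_1(\cL_i)\cdot E_j=0$ ($j\in\cs$) forces $\cL_i|_{E_j}$ to be trivial, and since $E_j$ is a $(-2)$-curve the relevant obstruction groups $H^1(E_j,\mathcal{O}(2k))$ vanish for $k>0$, so $\cL_i$ is trivial on the formal neighbourhood of each contracted configuration; hence $\cL_i$ descends to a vector bundle $\cN_i=f^\cs_*\cL_i$ with $(f^\cs)^*\cN_i\cong\cL_i$. The projection formula together with $\mathbf Rf^\cs_*\co_Y=\co_{X^\cs}$ then gives $\mathbf R\Hom_{X^\cs}(\cN_i,\cN_k)\cong\mathbf R\Hom_Y(\cL_i,\cL_k)$, which is concentrated in degree zero as $\cT$ is tilting: this yields $\Ext^{>0}_{X^\cs}(\mathcal M^\cs,\mathcal M^\cs)=0$ and identifies $\End_{X^\cs}(\mathcal M^\cs)$ with $\End_Y(\bigoplus_{i\notin\cs}\cL_i)=eAe=\End_R(N^\cs)$ (and similarly $g^\cs_*\cN_i=M_i$). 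The remaining and, I expect, hardest point is \emph{generation} of $\mathcal M^\cs$: here one compares $X^\cs$ with $Y$ through the equivalence $\Db(\coh Y)\simeq\Db(\mod A)$, showing that if $C\in\Db(\coh X^\cs)$ is right orthogonal to all $\cN_i$ with $i\notin\cs$ then $\mathbf L(f^\cs)^*C$ is right orthogonal to all $\cL_i$ with $i\notin\cs$ and hence supported on the contracted curves, so that $C\simeq\mathbf Rf^\cs_*\mathbf L(f^\cs)^*C$ is supported on the finite set $\Sing X^\cs$ and then killed by $\mathbf R\Gamma(X^\cs,-)=\mathbf R\Hom_{X^\cs}(\co_{X^\cs},-)$; alternatively one argues by induction on $|\cs|$, contracting one $(-2)$-curve at a time. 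Granting this, $\mathbf R\Hom_{X^\cs}(\mathcal M^\cs,-)\colon\Db(\coh X^\cs)\xrightarrow{\simeq}\Db(\mod\End_R(N^\cs))$, which is (1).

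Granting (1), part (2) follows by concatenating four equivalences. For $\underline{\SCM}_{N^\cs}(R)\simeq\underline{\GP}(\End_R(N^\cs))\simeq\Dsg(\End_R(N^\cs))$, apply Theorem~\ref{main Frob intro} to the Frobenius category $\ce=\SCM_{N^\cs}(R)$ of Proposition~\ref{new Frobenius intro} with $P=N^\cs$ (so $\proj\ce=\add P$ and $\End_\ce(P)=\End_R(N^\cs)$) and $M=\bigoplus_{i\in\cs}M_i$, for which $\End_\ce(P\oplus M)=\End_R(R\oplus\bigoplus_{i\in I}M_i)=A$ is noetherian of finite global dimension by \cite{IW1,GL2}; since $R$ is complete local, $\SCM_{N^\cs}(R)$ is idempotent complete, so Theorem~\ref{main Frob intro}(2) gives exactly these equivalences. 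Next, the derived equivalence of (1) sends $\thick(\mathcal M^\cs)=\Perf(X^\cs)$ onto $\Perf(\End_R(N^\cs))$ and hence passes to Verdier quotients to give $\Dsg(X^\cs)\simeq\Dsg(\End_R(N^\cs))$. Finally, $X^\cs$ has isolated Gorenstein singularities, so Orlov's localization theorem for singularity categories splits $\Dsg(X^\cs)$ as $\bigoplus_{x\in\Sing X^\cs}\Dsg(\widehat{\mathcal O}_{X^\cs,x})$, and Buchweitz's theorem identifies each summand with $\uCM(\widehat{\mathcal O}_{X^\cs,x})$.

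For (3), by (2) the category $\underline{\SCM}_{N^\cs}(R)$ is a finite direct sum of stable categories $\uCM(\widehat{\mathcal O}_{X^\cs,x})$ of complete local Du Val surface singularities. Each $\widehat{\mathcal O}_{X^\cs,x}$ is a two-dimensional isolated Gorenstein singularity, so Auslander--Reiten duality makes $\uCM(\widehat{\mathcal O}_{X^\cs,x})$ $1$-Calabi--Yau; and as a Du Val singularity is a hypersurface, $\uCM(\widehat{\mathcal O}_{X^\cs,x})\simeq\mathrm{MF}(\widehat{\mathcal O}_{X^\cs,x})$ is $2$-periodic, i.e.\ its shift functor satisfies $[2]=\mathrm{id}$. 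Transporting these two properties along the equivalences of (2) yields the assertion for $\underline{\SCM}_{N^\cs}(R)$.
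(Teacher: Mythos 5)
Your proofs of (2) and (3) follow the paper's own route essentially verbatim: apply Theorem~\ref{t:main-thm} to the Frobenius category $\SCM_{N^\cs}(R)$ with $P=N^\cs$ and $M=\bigoplus_{i\in\cs}M_i$ (idempotent completeness coming from $R$ complete local), pass to singularity categories, then use Orlov's localization and Buchweitz's theorem together with the standard $1$-Calabi--Yau and $[2]=\mathrm{id}$ facts for Du Val singularities.

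Part (1), however, takes a genuinely different route from the paper's Theorem~\ref{main Db}, and it is worth comparing. You build the tilting object on $X^\cs$ by \emph{descending} the summands $\cm^Y_i$ ($i\notin\cs$) of Van den Bergh's tilting bundle on $Y$ down $f^\cs$, checking triviality on formal neighbourhoods of the contracted $(-2)$-configurations, deducing $\Ext$-vanishing via the projection formula, and then arguing generation by pulling an orthogonal complex back to $Y$, locating it on the contracted curves, pushing down to $\Sing X^\cs$, and killing it with $\mathbf{R}\Gamma$. The paper instead applies Van den Bergh's theorem \cite[Thm.~B, 3.5.5]{VdB1d} \emph{directly} to the projective birational map $g^\cs\colon X^\cs\to\Spec R$ (which also has at most one-dimensional fibres and rational singularities), obtaining a tilting bundle $\cv_\cs=\co_{X^\cs}\oplus\bigoplus_{i\notin\cs}\cm^{X}_i$ at one stroke, and the only work left is identifying its endomorphism algebra with $\End_R(N^\cs)$ by showing $f^*\cm^{X}_i\cong\cm^Y_i$. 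The paper's route is shorter and avoids the two fiddly points in yours: the descent step, which must be carried out for genuine vector bundles (the $\cm^Y_i$ are \emph{not} line bundles when the multiplicity $r_i>1$, so your $c_1$-on-$E_j$ argument is only the start of the triviality check), and the generation step, which you rightly flag as the hardest point and which in your version requires a careful support and $\mathbf{R}\Gamma$-vanishing argument for complexes concentrated at closed points. Your argument is, I believe, correct in outline, and has the minor advantage of working entirely with the fixed tilting bundle on $Y$, but it is more work; moreover the paper's direct construction is also what delivers the commuting square in Theorem~\ref{main Db}, which is needed later in \S\ref{ss:relative-sing-cat} though not for the present statement.

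One small redundancy: you invoke Gorensteinness of $X^\cs$ while proving (1), but (1) (and the existence of the tilting bundle) holds for arbitrary $\cs\subseteq I$; the restriction $\cs\subseteq\cc$ and the Du Val condition are only used in (2) and (3).
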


Thus \ref{main Db triangles intro} shows that $\underline{\SCM}_{N^{\cs}}(R)$ is nothing other than the usual singularity category of some partial resolution of $\Spec R$.  We remark that it is the geometry that determines the last few statements in \ref{main Db triangles intro}, as we are unable to prove them using algebra alone. In \S\ref{ss:relative-sing-cat} we give a relative singularity category version of the last two equivalences in \ref{main Db triangles intro}.

The following corollary to \ref{main Db triangles intro} extends \cite[4.11]{IW2} and gives a `relative' version of Auslander's algebraic McKay correspondence for all rational surface singularities.

\begin{cor}(=\ref{McKay rel})
With the assumptions as in \ref{new Frobenius intro}, choose $N^{\cs}\in\SCM(R)$ such that $\add D\subseteq\add N^\cs$.  Then the AR quiver of the category $\underline{\SCM}_{N^{\cs}}(R)$ is the double of the dual graph with respect to the morphism $Y\to X^\cs$.
\end{cor}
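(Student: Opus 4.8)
The plan is to combine the last triangle equivalence in Theorem~\ref{main Db triangles intro}(2), namely
\[
\underline{\SCM}_{N^{\cs}}(R)\ \simeq\ \bigoplus_{x\in\Sing X^{\cs}}\uCM(\widehat{\co}_{X^{\cs},x}),
\]
with the classical algebraic McKay correspondence of Auslander \cite{Auslander86}. The underlying principle is that AR triangles, and hence the AR quiver, are invariants of a Hom-finite Krull--Schmidt triangulated category which are preserved under triangle equivalences, and that the AR quiver of a finite direct sum $\bigoplus_i\ct_i$ of such categories is the disjoint union of the AR quivers of the $\ct_i$ (there are no nonzero morphisms between objects supported on distinct summands, so in particular no irreducible ones). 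Thus the corollary reduces to two points: (i) identifying each complete local ring $\widehat{\co}_{X^{\cs},x}$, together with the dual graph of its minimal resolution; and (ii) recalling the shape of the AR quiver of $\uCM$ of such a singularity.

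For (i): by construction $X^{\cs}$ is obtained from the smooth surface $Y$ by contracting the curves $\{E_i\}_{i\in\cs}$, all of which are $(-2)$-curves since $\cs\subseteq\mathcal{C}$. The intersection matrix of $\{E_i\}_{i\in\cs}$ is a principal submatrix of the (negative definite) intersection matrix of the full exceptional divisor of $Y\to\Spec R$, hence is itself negative definite; a connected negative definite configuration of $(-2)$-curves has dual graph an ADE Dynkin diagram and contracts to a rational double point. Therefore $X^{\cs}$ has only Du Val singular points, these being the images of the connected components of $\bigcup_{i\in\cs}E_i$, and since no $(-1)$-curve is contracted, $f^{\cs}\colon Y\to X^{\cs}$ restricts over each $x\in\Sing X^{\cs}$ to the minimal resolution of $\widehat{\co}_{X^{\cs},x}$, with exceptional fibre $(f^{\cs})^{-1}(x)=\bigcup_{i\in\cs,\,f^{\cs}(E_i)=x}E_i$, whose dual graph is the corresponding ADE diagram $\Delta_x$.

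For (ii): if $\widehat{\co}_{X^{\cs},x}$ is a Kleinian singularity whose minimal resolution has dual graph $\Delta_x$, then (by McKay) it is $k[[u,v]]^{G}$ for a finite subgroup $G\leq\SL(2,k)$, and by Auslander's correspondence \cite{Auslander86} the AR quiver of $\CM(\widehat{\co}_{X^{\cs},x})$ is the McKay quiver of $G$, i.e.\ the double of the extended Dynkin diagram $\widetilde{\Delta}_x$. Deleting the vertex of the free module $\widehat{\co}_{X^{\cs},x}$ — which is exactly the zero object of $\uCM(\widehat{\co}_{X^{\cs},x})$ — shows that the AR quiver of $\uCM(\widehat{\co}_{X^{\cs},x})$ is the double of $\Delta_x$. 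Assembling this with (i) and the first paragraph, the AR quiver of $\underline{\SCM}_{N^{\cs}}(R)$ is the disjoint union over $x\in\Sing X^{\cs}$ of the doubles of the $\Delta_x$, which is precisely the double of $\bigsqcup_x\Delta_x$, i.e.\ of the dual graph of $f^{\cs}\colon Y\to X^{\cs}$ (whose exceptional set is $\{E_i\}_{i\in\cs}$). This is the claim; taking $\cs=\mathcal{C}$ recovers \cite[4.11]{IW2}.

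The only genuine work lies in step (i): one must be sure that contracting an \emph{arbitrary} sub-configuration of the crepant curves of the minimal resolution really produces Du Val germs and that $f^{\cs}$ is their minimal resolution — that is, that $f^{\cs}$ contracts no $(-1)$-curve and that the resulting singularities are of finite CM type. This is where the geometry of rational surface singularities enters (negative-definiteness of exceptional intersection matrices, Artin's contractibility criterion, and the lattice-theoretic classification of connected $(-2)$-curve configurations). Once this is in place, the representation-theoretic input (ii) is entirely standard, and the rest is the formal transport of AR data along the equivalence of Theorem~\ref{main Db triangles intro}.
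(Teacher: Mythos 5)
Your proposal is correct, and it fills in exactly what the paper leaves implicit: the paper gives no proof here, merely prefacing the statement with ``the following, which is immediate from \ref{main triangles}.'' What you have done is unwind the word ``immediate.'' Two remarks worth making. First, the heavy lifting you isolate in step (i) --- that contracting $\cs\subseteq\cc$ produces only Du Val germs whose minimal resolutions are the connected components of $\bigcup_{i\in\cs}E_i$ --- is already taken for granted by the paper at the start of \S\ref{tilt on partial}, where it is observed (with a reference to \cite[3.2]{TT}) that $X^\cs$ has only isolated ADE singularities when $\cs\subseteq\cc$, and again inside the proof of \ref{main triangles} where each $\widehat{\co}_{X^\cs,x}$ is declared Gorenstein ADE; so you are re-proving something the paper has already put on the table. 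Second, in step (ii) the passage from the AR quiver of $\CM$ to that of $\uCM$ by ``deleting the free vertex'' is correct here but is not a purely formal operation --- it uses that for a $2$-dimensional Gorenstein isolated singularity the AR triangles of $\uCM$ are exactly the images of the AR sequences of $\CM$, so that the only thing lost in the quotient is the projective vertex and its adjacent arrows; stating this would make the argument watertight. With those two caveats the argument matches the paper's intent and is complete.
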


Using the geometry, we are also able to improve \ref{main Frob intro}(1) in the situation of rational surface singularities, since we are able to give the precise value of the injective dimension. The following is a generalization of a known result \ref{glrecon3} for the case $\add N=\SCM(R)$.

\begin{thm}(=\ref{precise id})\label{precise id intro}
With the assumptions as in \ref{new Frobenius intro}, choose $N\in\SCM(R)$ such that $\add D\subseteq\add N$ and put $\Gamma=\End_R(N)$. Then 
\[
\id_\Gamma \Gamma =\left\{ \begin{array}{cl} 2 & \mbox{if $R$ is Gorenstein}\\ 3 & \mbox{else.}\end{array}  \right. 
\]
\end{thm}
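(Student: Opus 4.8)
The plan is to combine the upper bound supplied by Theorem \ref{main Frob intro}(1) with a computation of the canonical bimodule of $\Gamma=\End_R(N)$, the delicate part of which is carried out on the geometric side. First I would apply Theorem \ref{main Frob intro}(1) to the Frobenius category $\ce:=\SCM_N(R)$ of Proposition \ref{new Frobenius intro}, with $P=N$ and $M=\bigoplus_{i\in I}M_i$. Then $P\oplus M=R\oplus\bigoplus_{i\in I}M_i$ and $A:=\End_\ce(P\oplus M)=\End_R(R\oplus\bigoplus_{i\in I}M_i)$ is the reconstruction algebra, which by \cite{IW1,GL2} (see \ref{glrecon3}) is noetherian of global dimension $n$, with $n=2$ if $R$ is Gorenstein and $n=3$ otherwise. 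Hence $\Gamma$ is Iwanaga--Gorenstein of dimension $\le n$; in particular $\id_\Gamma\Gamma\le n$.

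Next, the lower bound $\id_\Gamma\Gamma\ge 2$ and the Gorenstein case. Since $N$ is maximal Cohen--Macaulay and $R$ is a $2$-dimensional normal Cohen--Macaulay ring, $\Gamma=\End_R(N)$ is reflexive, hence maximal Cohen--Macaulay, over $R$. Choosing a system of parameters $x_1,x_2$ of $R$ — which is then a $\Gamma$-regular sequence — reduction modulo $(x_1,x_2)$ gives $\id_{\Gamma/(x_1,x_2)\Gamma}\bigl(\Gamma/(x_1,x_2)\Gamma\bigr)=\id_\Gamma\Gamma-2$, and as the left-hand side is $\ge 0$ we obtain $\id_\Gamma\Gamma\ge 2$. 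Together with the upper bound this settles the case $R$ Gorenstein. (Alternatively, when $R$ is Gorenstein $\omega_R\cong R$, and the trace pairing on $\End_R(N)$ identifies the canonical bimodule $\omega_\Gamma:=\Hom_R(\Gamma,\omega_R)$ with $\Gamma$, so $\Gamma$ is a symmetric $R$-order and $\id_\Gamma\Gamma=\id_R R=2$.)

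It remains to treat $R$ not Gorenstein, where $\id_\Gamma\Gamma\le 3$ and one must exclude $\id_\Gamma\Gamma=2$. If $\id_\Gamma\Gamma=2$ then, reducing modulo $(x_1,x_2)$, $\Gamma/(x_1,x_2)\Gamma$ is self-injective, i.e.\ $\Gamma$ is a Gorenstein $R$-order; equivalently the canonical bimodule $\omega_\Gamma=\Hom_R(\Gamma,\omega_R)\cong(\Gamma\otimes_R\omega_R)^{**}$ is a projective left $\Gamma$-module. Under the equivalence $\add N\xrightarrow{\ \Hom_R(N,-)\ }\proj\Gamma$ one identifies $\omega_\Gamma\cong\Hom_R\bigl(N,(N\otimes_R\omega_R)^{**}\bigr)$, so this amounts to $(N\otimes_R\omega_R)^{**}\in\add N$. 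I would rule this out via the geometry of $g^\cs\colon X^\cs\to\Spec R$. Because $\cs\subseteq\mathcal{C}$ consists of $(-2)$-curves, $X^\cs$ has only Du Val singularities, hence is Gorenstein, and $Y\to X^\cs$ is crepant, so $X^\cs\to\Spec R$ has the same strictly positive discrepancies along the discrepant curves $E_d$, $d\in\mathcal{D}\neq\emptyset$, as the minimal resolution does; thus the relative dualizing sheaf $\omega_{X^\cs/R}$ is a nontrivial line bundle with $\deg\bigl(\omega_{X^\cs/R}|_{E_d}\bigr)>0$ for some $d$. Transporting $\omega_\Gamma$ through the tilting equivalence of Theorem \ref{main Db triangles intro}(1) (with tilting bundle $\cv$, $\End_{X^\cs}(\cv)=\Gamma$, and $R\in\add N$ corresponding to $\co_{X^\cs}\in\add\cv$), projectivity of $\omega_\Gamma$ would force $\cv\otimes\omega_{X^\cs/R}\in\add\cv$, and hence $\omega_{X^\cs/R}=\co_{X^\cs}\otimes\omega_{X^\cs/R}\in\add\cv$. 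But the first Chern classes on the exceptional curves of the indecomposable summands of $\cv$ are prescribed by Wunram's correspondence, and none matches that of the nontrivial line bundle $\omega_{X^\cs/R}$; this contradiction gives $\id_\Gamma\Gamma=3$.

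The main obstacle is precisely this last step — showing that the canonical twist $\omega_\Gamma=(\Gamma\otimes_R\omega_R)^{**}$ of $\Gamma$ fails to be projective when $R$ is not Gorenstein. A purely module-theoretic argument is delicate, since $-\otimes_R\omega_R$ may have finite order on the divisor class group and may preserve individual special CM modules; the statement really uses that the relative dualizing sheaf of $X^\cs\to\Spec R$ is genuinely nontrivial, with controlled positive degrees on the discrepant curves. This is consistent with, and generalizes, the extremal case $\add N=\SCM(R)$, where $\Gamma$ is the reconstruction algebra of global dimension exactly $3$ (\ref{glrecon3}).
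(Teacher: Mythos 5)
Your overall skeleton is the same as the paper's: use the reconstruction algebra to get the upper bound $\id_\Gamma\Gamma\le n$ with $n\in\{2,3\}$, show $\id_\Gamma\Gamma\ge 2$, settle the Gorenstein case via $\Gamma$ being a symmetric $R$-order, and for $R$ non-Gorenstein reduce to showing that $\omega_\Gamma=\Hom_R(\Gamma,\omega_R)\cong\Hom_R\bigl(N,(N\otimes_R\omega_R)^{**}\bigr)$ is not projective, i.e.\ $(N\otimes_R\omega_R)^{**}\notin\add N$. All of that matches the paper (which packages the lower bound and the Gorenstein case via Lemma~\ref{AB type for id}).

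The gap is in the final geometric step. You argue: projectivity of $\omega_\Gamma$ forces $\cv\otimes\omega_{X^\cs/R}\in\add\cv$, hence (taking the $\co_{X^\cs}$-summand) $\omega_{X^\cs/R}\in\add\cv$, and then you claim the Chern class of $\omega_{X^\cs/R}$ matches none of the summands of $\cv$. That last claim is false. Take $R=\mathbb{C}[[x,y]]^{\frac{1}{3}(1,1)}$, so $Y=X^{\cs}$ (one $(-3)$-curve $E_d$, $\cs=\emptyset$). Here $\omega_Y\cdot E_d=-2-E_d^2=1=\cl_d^Y\cdot E_d$, the multiplicity $r_d$ of $E_d$ in the fundamental cycle is $1$, so $\cm_d^Y=\cl_d^Y$, and since $\operatorname{Pic}(Y)\hookrightarrow\mathbb{Z}^I$ via intersection numbers, in fact $\cm^Y_d\cong\omega_Y$. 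So $\omega_Y$ \emph{is} an indecomposable summand of the tilting bundle, and your claimed contradiction does not arise. (What actually fails is the stronger statement $\cv\otimes\omega_Y\in\add\cv$: one has $\cv\otimes\omega_Y=\omega_Y\oplus\omega_Y^{\otimes2}$ and $\omega_Y^{\otimes2}$ has degree $2$ on $E_d$.) So the weakened condition you deduce — that the canonical line bundle itself lies in $\add\cv$ — is simply not enough to conclude, and the argument would need to be rebuilt around $\cv\otimes\omega_{X^\cs}\notin\add\cv$ directly, together with the correct statement that the relative Serre functor is twist by $\omega_{X^\cs}$ (not $\omega_{X^\cs/R}$; for non-Gorenstein $R$ the latter is only a $\mathbb{Q}$-divisor class). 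You yourself flag this fragility in your last paragraph, and the worry is justified.

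The paper avoids this entirely with a short algebraic argument: $\omega_\Gamma\cong\Hom_R(N,\tau N)$ where $\tau$ is the AR translation in $\CM(R)$, so projectivity of $\omega_\Gamma$ would force $\tau N\in\add N\subseteq\SCM(R)$ by reflexive equivalence; but if $N_i$ is a (necessarily non-free) discrepant summand of $N$ then $N_i\in\add D$ is projective--injective in the Frobenius structure of Proposition~\ref{known2}(1), so $\Ext^1_R(N_i,X)=0$ for all $X\in\SCM(R)$, and in particular $\Ext^1_R(N_i,\tau N_i)=0$, contradicting the existence of a non-split AR sequence ending at $N_i$. This is the step you are missing, and it circumvents the Chern-class bookkeeping altogether.
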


This gives many new examples of Iwanaga--Gorenstein rings $\Gamma$, of injective dimension three, for which there are only finitely many Gorenstein--projective modules up to isomorphism.  In contrast to the commutative situation, we also show the following result. Some explicit examples are given in \S\ref{geom examples}. 

\begin{thm}\label{uncount intro}(=\ref{uncount G})
Let $G\leq\SL(2,\mathbb{C})$ be a finite subgroup, with $G\ncong E_8$.  Then there are uncountably many non-isomorphic Iwanaga--Gorenstein rings $\Lambda$ with $\id_\Lambda\Lambda=3$, such that $\underline{\GP}(\Lambda)\simeq\uCM(\mathbb{C}[[x,y]]^{G})$.
\end{thm}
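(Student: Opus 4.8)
The plan is to realise the rings $\Lambda$ as endomorphism algebras $\End_R(N^{\cs})$ attached, via Proposition~\ref{new Frobenius intro} and Theorems~\ref{main Frob intro}--\ref{main Db triangles intro}, to a suitable \emph{family} of non-Gorenstein complete local rational surface singularities $R$, arranged so that the resulting $\Lambda$ are pairwise non-isomorphic. Write $T$ for the Dynkin type of $G$, so $\mathbb{C}[[x,y]]^{G}$ is the Du Val singularity of type $T$, and recall that contracting a connected configuration of $(-2)$-curves on a smooth surface produces exactly the Du Val singularity whose ADE type is that of the configuration. Thus it suffices to produce an uncountable family $\{R_{t}\}_{t\in\mathcal{T}}$ of pairwise non-isomorphic complete local rational surface singularities over $\mathbb{C}$, all non-Gorenstein and all admitting in their minimal resolutions a connected configuration $\cs$ of $(-2)$-curves of type $T$: then $\Sing X^{\cs}=\{x_{t}\}$ is a single point with $\widehat{\mathcal{O}}_{X^{\cs},x_{t}}\cong\mathbb{C}[[x,y]]^{G}$, the inclusion $\add D\subseteq\add N^{\cs}$ holds automatically, and Proposition~\ref{new Frobenius intro} together with Theorem~\ref{main Db triangles intro}(2) gives
\[
\underline{\GP}\bigl(\End_{R_{t}}(N^{\cs})\bigr)\simeq\Dsg(X^{\cs})\simeq\uCM(\mathbb{C}[[x,y]]^{G}),
\]
while Theorem~\ref{precise id intro} gives $\id_{\Lambda_{t}}\Lambda_{t}=3$ because $R_{t}$ is not Gorenstein (here $\Lambda_{t}:=\End_{R_{t}}(N^{\cs})$).

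For the construction I would start from a configuration $\Delta_{T}$ of $(-2)$-curves of type $T$, choose a leaf $v$ of $\Delta_{T}$, attach to $v$ a further curve $E_{0}$ of self-intersection $-m$ with $m$ large, and attach three more $(-2)$-curves $F_{1},F_{2},F_{3}$ to $E_{0}$, so that $E_{0}$ has valence four. For $m\gg 0$ one checks, by running Artin's algorithm for the fundamental cycle, that the resulting weighted tree is negative definite with $p_{a}(Z_{\mathrm{fund}})=0$, hence is the dual graph of complete local rational surface singularities; it is non-Gorenstein since $E_{0}$ is a $(-m)$-curve, and it contains $\Delta_{T}$ as a connected $(-2)$-configuration. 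Because $E_{0}\cong\mathbb{P}^{1}$ now carries four marked points (its intersections with $F_{1},F_{2},F_{3}$ and with $\Delta_{T}$), their cross-ratio is an analytic invariant of the germ --- an isomorphism of germs lifts to the minimal resolutions, matches exceptional divisors, and restricts to an automorphism of the distinguished curve $E_{0}$ (the unique $(-m)$-curve), hence preserves the four marked points up to the finite relabelling of $\{F_{1},F_{2},F_{3}\}$ --- so this graph is non-taut: its analytic realisations form a positive-dimensional family, yielding uncountably many pairwise non-isomorphic $R_{t}$.

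The hypothesis $G\ncong E_{8}$ is precisely what makes this construction available. If the minimal resolution $Y\to\Spec R$ of a normal surface singularity contains a connected $(-2)$-configuration of type $E_{8}$, then the corresponding sublattice of the negative definite intersection lattice of the exceptional curves is isometric to the $E_{8}$-root lattice, which is unimodular; a unimodular nondegenerate sublattice of a definite lattice is automatically an orthogonal direct summand, so no other exceptional curve meets the $E_{8}$-configuration. Since the exceptional divisor of the minimal resolution of a normal point is connected, it must coincide with this $E_{8}$-configuration, forcing $R$ to be the $E_{8}$ Du Val singularity --- which is Gorenstein and rigid. Hence for $T=E_{8}$ there is no non-Gorenstein rational surface singularity of the required kind, let alone an uncountable family, and the statement genuinely fails for $G\cong E_{8}$. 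For the other types the root lattices have Cartan determinants $n+1,4,3,2$ and do not split off, so the construction above is unobstructed.

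Finally, the $\Lambda_{t}$ are pairwise non-isomorphic: $R_{t}$ is a complete local normal domain and $N^{\cs}$ is a faithful $R_{t}$-module (it has $R_{t}$ as a summand), so the centre of $\Lambda_{t}=\End_{R_{t}}(N^{\cs})$ is $R_{t}$ --- localising at height-one primes reduces this to the centre of a matrix algebra over a discrete valuation ring being that ring, and $R_{t}$ is the intersection of these localisations. Thus $\Lambda_{t}\cong\Lambda_{t'}$ as rings forces $R_{t}\cong R_{t'}$, which by the previous paragraph happens for only finitely many $t'\in\mathcal{T}$; so $\{\Lambda_{t}\}$ realises uncountably many isomorphism classes of Iwanaga--Gorenstein rings with $\id_{\Lambda}\Lambda=3$ and $\underline{\GP}(\Lambda)\simeq\uCM(\mathbb{C}[[x,y]]^{G})$, as required. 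The main obstacle is the construction of the family: one must verify, uniformly in $T\neq E_{8}$, that the proposed weighted trees really satisfy Artin's rationality criterion for $m\gg 0$ and really are non-taut, and --- conceptually the heart of the matter --- isolate the lattice-theoretic reason for excluding $E_{8}$; everything else follows formally from the earlier sections together with the classical McKay correspondence identifying $\mathbb{C}^{2}/G$ with the Du Val singularity of type $T$.
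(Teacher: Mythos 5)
Your overall strategy coincides with the paper's: produce an uncountable family of pairwise non-isomorphic complete local non-Gorenstein rational surface singularities $R$ whose minimal resolutions each contain a connected $(-2)$-configuration $\Delta_T$ of type $T$, form $\Lambda=\End_R(N^{\cs})$ with $\cs$ the vertices of $\Delta_T$, invoke \ref{new Frobenius}, \ref{main triangles} and \ref{precise id}, and separate the rings by their centres. The concrete realisation is different --- the paper writes down explicit trees with a $(-6)$-curve carrying a propeller of nine $(-3)$-curves, whereas you attach a single $(-m)$-curve $E_0$ of valence four with three pendant $(-2)$-curves --- and your lattice-theoretic explanation for the $E_8$ exclusion is a nice conceptual complement to the paper's citation of \cite[3.11]{TT}. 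Two points, however, need repair.

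\textbf{The leaf must be chosen more carefully; as stated the construction fails.} The instruction ``choose a leaf $v$ of $\Delta_T$'' is not sufficient, and for some leaves the claim ``for $m\gg 0$ one checks that $p_a(Z_{\mathrm{fund}})=0$'' is false for \emph{every} $m$. If $v$ is a leaf whose coefficient $a_v$ in the fundamental cycle $Z_{\Delta_T}$ of $\Delta_T$ is $\geq 2$ (e.g.\ the branch leaf of $E_6$, or either coefficient-$2$ leaf of $E_7$), one computes the fundamental cycle $Z_f$ of the whole tree and finds $Z_f\cdot Z_f+\sum_i a_i(-w_i-2)=0\neq -2$, so $p_a(Z_f)=1$ and the tree is minimally elliptic, not rational. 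Concretely, for $E_7$ with $v$ the coefficient-$2$ leaf at the short end, $Z_f=Z_{E_7}+E_0+F_1+F_2+F_3$ already lies in $\cz_{\top}$ for $m\geq 5$, $Z_f\cdot Z_f=2-m$, and the genus formula gives $(2-m)+(m-2)=0$. The construction works precisely when $v$ is a leaf with $a_v=1$, in which case the fundamental cycle grows along $\Delta_T$ and the arithmetic comes out to $-2$ (you should still verify this uniformly, as the paper does explicitly for its trees). Observe that a coefficient-$1$ leaf exists exactly when $T\neq E_8$ (the $E_8$ leaves carry coefficients $2,2,3$), which dovetails with and sharpens your unimodularity argument: the two exclusions of $E_8$ are the same phenomenon.

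\textbf{You over-claim on $E_8$.} Your lattice argument correctly shows that no non-Gorenstein rational surface singularity has an $E_8$-configuration of $(-2)$-curves in its minimal resolution, so this \emph{construction} produces nothing for $T=E_8$. It does not establish that ``the statement genuinely fails for $G\cong E_8$'': an Iwanaga--Gorenstein ring $\Lambda$ with $\id_\Lambda\Lambda=3$ and $\underline{\GP}(\Lambda)\simeq\uCM(\mathbb{C}[[x,y]]^{E_8})$ could in principle arise from an entirely unrelated source, and the paper explicitly records this as open. Finally, your cross-ratio argument for non-tautness is a sensible geometric substitute for the paper's direct appeal to Laufer's classification --- and for $T=A_n$ it is in fact needed, since your tree has only one vertex of valence $\geq 3$ and the paper's sufficient criterion (more than one valence-$3$ vertex) does not literally apply --- but it should be pinned down either by citing Laufer that a valence-$4$ vertex excludes pseudo-tautness, or by exhibiting a family of contractions realising all cross-ratios.
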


\medskip
\noindent\emph{Conventions and notations.} We use the convention that the composition of morphisms $f:X\rightarrow Y$ and $g:Y\rightarrow Z$ in a category is denoted by $fg$. By a module over a ring $A$ we mean a left module, and we denote by $\Mod A$ (resp.\ $\mod A$) the category of $A$-modules (resp.\ finitely generated $A$-modules).  We denote by $\proj A$ the category of finitely generated projective $A$-modules.  If $M$ is an object of an additive category $\mathcal{C}$, we denote by $\add M$ all those objects of $\mathcal{C}$ which are direct summands of (finite) direct sums of $M$.  We say that $M$ is an \emph{additive generator} of $\mathcal{C}$ if $\mathcal{C}=\add M$.   If $\ct$ is a triangulated category and $M\in\ct$, we denote by $\thick(M)$ the smallest full triangulated subcategory  containing $M$ which is closed under taking direct summands.

\section{A Morita type theorem for Frobenius categories} \label{ss:alg-tria-cl-sing}

Throughout this section let $\ce$ denote a Frobenius category, and denote by $\proj \ce\subseteq \ce$ the full subcategory of projective objects.  We denote the stable category of $\ce$ by $\ul{\ce}$. It has the same objects as $\ce$, but the morphism spaces are defined as $\ul{\Hom}_{\ce}(X, Y)=\Hom_{\ce}(X, Y)/\cp(X, Y)$, where $\cp(X, Y)$ is the subspace of morphisms factoring through $\proj \ce$. We refer to Keller's overview article for definitions and unexplained terminologies \cite{K,Keller96}.

\subsection{Frobenius categories as categories of Gorenstein projective modules}\label{Frob section}

Recall that a noetherian ring $E$ is called \emph{Iwanaga--Gorenstein of dimension at most $n$} if $\id_EE\le n$ and $\id E_E\le n$ \cite{EJ}. 
For an Iwanaga--Gorenstein ring $E$ of dimension at most $n$, we denote by
\[
\GP(E):=\{X\in\mod E\mid \Ext^i_E(X,E)=0\mbox{ for any }i>0\}=\Omega^n(\mod E),
\]
the category of \emph{Gorenstein projective} $E$-modules \cite{AB, EJ}. Here $\Omega$ is the syzygy functor of $\mod E$.
This is a Frobenius category with $\proj E$ the subcategory of projective objects.

\begin{remark}
The objects of $\GP(E)$ are sometimes called Cohen--Macaulay modules, but there are reasons why we do not do this; see \ref{CM not CM} later. They are sometimes also called totally reflexive modules.
\end{remark}

\begin{defin}
\cite{Buch, Orlov}
Let $R$ be a left noetherian ring. The triangulated category $\Dsg(R):=\Db(\mod R)/\Kb(\proj R)$ is called the \emph{singularity category} of $R$.
\end{defin}

\begin{remark}\label{Buchweitz}
Let $E$ be an Iwanaga--Gorenstein ring.
By a result of Buchweitz~\cite[4.4.1 (2)]{Buch}, we have an equivalence of triangulated categories
\[
\ul{\GP}(E)\simeq \Dsg(E).
\]
\end{remark}

The purpose of this section is to show that the existence of a noncommutative resolution of a Frobenius category $\ce$ puts strong restrictions on $\ce$ (\ref{t:main-thm}). 

\begin{defin}\label{defNCR}
Let $\ce$ be a Frobenius category with $\proj \ce= \add P$ for some $P \in \proj \ce$.  By a \emph{noncommutative resolution} of $\ce$, we mean $A:=\End_{\ce}(M)$ for some $M\in\ce$ with $P\in\add M$, such that $A$ is noetherian with $\gl A<\infty$.
\end{defin}

In the level of generality of abstract Frobenius categories, the above definition is new.  We remark that when $\ce=\CM R$ (with $R$ a Gorenstein ring), our definition of noncommutative resolution is much weaker than Van den Bergh's notion of a noncommutative crepant resolution (=NCCR) \cite{VdBNCCR}, and especially in higher dimension, examples occur much more often.

\begin{remark}
Not every Frobenius category with a projective generator admits a noncommutative resolution.  Indeed, let $R$ be a normal Gorenstein surface singularity, over $\mathbb{C}$, and consider $\ce:=\CM(R)$.   Then any noncommutative resolution in the above sense is automatically an NCCR, and the existence of an NCCR is well-known to imply that $R$ must have rational singularities \cite{SVdB}.
\end{remark}

Our strategy to prove \ref{t:main-thm} is based on \cite[2.2(a)]{AIR}, but the setup here is somewhat different. We need the following technical observation.

\begin{lemma}\label{technical lemma}
Let $\ce$ be a Frobenius category with $\proj \ce= \add P$ for some $P \in \proj \ce$.  If $f:X\to Y$ is a morphism in $\ce$ such that $\Hom_{\ce}(f,P)$ is surjective, then there exists an exact sequence
\[
0\to X\xrightarrow{(f\ 0)}Y\oplus P'\to Z\to0
\]
in $\ce$ with $P'\in\proj \ce$.
\end{lemma}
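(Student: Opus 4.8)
The plan is to construct the object $Z$ as a cokernel and use the defining property of Frobenius categories — namely that projectives coincide with injectives — to verify that the resulting sequence is admissible (short exact) in $\ce$. First I would use the hypothesis that $\Hom_\ce(f,P)\colon \Hom_\ce(Y,P)\to\Hom_\ce(X,P)$ is surjective. Since $\proj\ce=\add P$, this says that every morphism $X\to P'$ with $P'\in\proj\ce$ factors through $f$. Now take an admissible monic $\iota\colon X\hookrightarrow I$ into an injective object $I$ of $\ce$; because $\proj\ce=\add P$ and projectives equal injectives in a Frobenius category, we may take $I=P'$ for some $P'\in\proj\ce$. By the factorization property just noted, $\iota$ factors as $\iota = f\circ g$ for some $g\colon Y\to P'$.

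Next I would consider the morphism $\binom{f}{\iota}\colon X\to Y\oplus P'$, or equivalently (after a change of basis in the target that is an isomorphism) the morphism $(f\ 0)\colon X\to Y\oplus P'$; the point of the twist is that $(f\ 0) = \binom{f}{0}$ becomes $\binom{f}{\iota}$ after composing with the automorphism $\begin{psmallmatrix}1 & 0\\ -g & 1\end{psmallmatrix}$ of $Y\oplus P'$, and $\binom{f}{\iota}$ is an admissible monic because it is a monomorphism whose composite with the admissible monic $\iota$ (via the projection is wrong direction) — more precisely, $\binom{f}{\iota}$ is split-monic-up-to-$\iota$: it has a retraction onto the $P'$-component only in the stable sense, but it is genuinely an admissible monic since $\iota$ factors through it. Concretely, $\binom{f}{\iota}$ composed with the projection $Y\oplus P'\to P'$ equals $\iota$, which is an admissible monic, and this forces $\binom{f}{\iota}$ to be an admissible monic in the exact category $\ce$ (a morphism whose composite with something is an admissible monic, and which is itself a monomorphism, is admissible — using that admissible monics in an exact category are stable under the relevant pullback/pushout operations). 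Then define $Z:=\Cok\binom{f}{\iota}\cong\Cok(f\ 0)$, giving the desired admissible short exact sequence $0\to X\xrightarrow{(f\ 0)} Y\oplus P'\to Z\to 0$.

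The step I expect to be the main obstacle is the verification that $(f\ 0)$ — equivalently $\binom{f}{\iota}$ — is an \emph{admissible} monic, not merely a categorical monomorphism: exact categories do not in general admit all cokernels, so one must genuinely produce the conflation. The cleanest route is the standard fact that in a Frobenius category, if $\iota\colon X\to I$ is an admissible monic into an injective and $\iota$ factors as $X\xrightarrow{u} W \to I$, then $u$ is itself an admissible monic; applying this with $u=\binom{f}{\iota}$ and $W=Y\oplus P'$ (the second map being $(g, \mathrm{id})^{\mathrm t}$, which is a morphism $Y\oplus P'\to P'$, wait — into $I=P'$) does the job. I would therefore isolate this as a small sublemma or cite it as a well-known property of Frobenius/exact categories, and the remainder is the routine bookkeeping of the matrix change-of-basis identifying $\binom{f}{\iota}$ with $(f\ 0)$ up to an automorphism of the middle term.
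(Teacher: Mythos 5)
Your proof is essentially correct and is the natural argument; the paper itself does not prove this lemma but simply cites it from \cite[2.10]{Kalck}, so your write-up in fact supplies the details the paper leaves out. The overall skeleton is right: pick an admissible monic $\iota\colon X\to P'$ into an injective (= projective) object, observe that the surjectivity of $\Hom_\ce(f,P)$ together with $\proj\ce=\add P$ gives a factorization $\iota = fg$ for some $g\colon Y\to P'$ (in the paper's left-to-right composition convention), and then identify $(f\ 0)\colon X\to Y\oplus P'$ with $\binom{f}{\iota}$ via the unipotent automorphism of $Y\oplus P'$ built from $g$; since admissibility is preserved under post-composition with isomorphisms, one reduces to showing $\binom{f}{\iota}$ is an admissible monic.

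One point of imprecision is worth flagging. The ``standard fact'' you invoke at the end --- that if $\iota$ factors as $X\xrightarrow{u}W\to I$ with $\iota$ an admissible monic into an injective, then $u$ is an admissible monic --- is really a form of Quillen's obscure axiom, and in an arbitrary exact category it requires either that $u$ already admit a cokernel or that the category be weakly idempotent complete. Rather than route the argument through that subtlety, it is cleaner to observe the admissibility of $\binom{f}{\iota}$ directly from the pushout: given the conflation $X\rightarrowtail P'\twoheadrightarrow C$ and the morphism $f\colon X\to Y$, the pushout square produces a conflation $X\xrightarrow{\binom{f}{\iota}}Y\oplus P'\twoheadrightarrow Q$ with $Q$ the pushout object (this is a basic fact about exact categories, valid without any idempotent-completeness hypothesis). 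This makes $\binom{f}{\iota}$ admissible at once, and the change-of-basis you describe then finishes the proof. There are also a couple of small notational slips --- you write $\iota=f\circ g$ where, with $g\colon Y\to P'$, you mean $\iota=g\circ f$ in standard order (equivalently $\iota=fg$ in the paper's convention), and the map ``$(g,\mathrm{id})^{\mathrm t}\colon Y\oplus P'\to P'$'' near the end should just be the projection $(0\ \,1)$ --- but these do not affect the substance.
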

\begin{proof}
This follows for example from \cite[2.10]{Kalck}. 
\end{proof}

\begin{thm}\label{t:main-thm}
Let $\ce$ be a Frobenius category with $\proj \ce= \add P$ for some $P \in \proj \ce$. Assume that there exists a noncommutative resolution $\End_{\ce}(M)$ of $\ce$ with $\gl\End_{\ce}(M)=n$. Then\\ 
\t{(1)} $E:=\End_\ce(P)$ is an Iwanaga--Gorenstein ring of dimension at most $n$.\\
\t{(2)} We have an equivalence $\Hom_{\ce}(P,-)\colon\ce\to\GP(E)$ up to direct summands. It is an equivalence if $\ce$ is idempotent complete. This induces a triangle equivalence 
\[
\underline{\ce}\xrightarrow{\simeq}\underline{\GP}(E)\simeq \Dsg(E)
\]
up to direct summands. It is an equivalence if $\ce$ or $\underline{\ce}$ is idempotent complete.\\
\t{(3)} $\ul{\ce}=\thick_{\ul{\ce}}(M)$.
\end{thm}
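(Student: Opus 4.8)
The plan is to set $A := \End_{\ce}(P \oplus M)$ — replacing $M$ by $P \oplus M$ if necessary, so that we may assume $P \in \add M$ and keep the notation of Definition \ref{defNCR} — and to work through the functor $F := \Hom_{\ce}(P \oplus M, -) \colon \ce \to \mod A$. First I would record the standard fact that $F$ restricts to an equivalence $\add(P \oplus M) \xrightarrow{\simeq} \proj A$, and that $F$ is fully faithful on $\ce$ after this identification; this is the Yoneda-type input underlying the whole argument, and it is where the hypothesis $\proj\ce = \add P$ gets used to pin down $\proj A$. The second ingredient is that every object $X \in \ce$ admits a `right $\add(P\oplus M)$-approximation', i.e. an admissible epimorphism from an object of $\add(P \oplus M)$ (obtained from a projective cover in $\ce$ together with the fact that $\Hom_{\ce}(M,-)$ need only be made surjective — here Lemma \ref{technical lemma} is the relevant device, applied to arrange the approximation into an admissible exact sequence $0 \to Y \to M' \to X \to 0$ with $M' \in \add(P\oplus M)$ and $Y \in \ce$). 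Iterating, and using $\gl A = n$, one gets for each $X$ an admissible exact sequence
\[
0 \to M_n \to M_{n-1} \to \cdots \to M_0 \to X \to 0
\]
in $\ce$ with all $M_i \in \add(P \oplus M)$, whose image under $F$ is a projective resolution of $FX$ of length $n$; this shows $\gl A = n$ is inherited as a bound on the projective dimension of every $FX$.

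For part (1), I would then apply $\Hom_A(-, A)$ to the resolution of $FX$ and use that $F$ identifies $A$ with $\Hom_{\ce}(P\oplus M, P\oplus M)$; the point is to compute $\Ext^{>0}_E(\Hom_{\ce}(P, X), E)$ by restricting the $A$-module computation along the idempotent cutting out $E = \End_{\ce}(P)$ inside $A$, i.e. $E = e A e$ for $e$ the identity of $\add P$. Taking $X = P$ and running this argument on both sides (left and right modules, using the opposite Frobenius category for $E_E$) yields $\id_E E \le n$ and $\id E_E \le n$, which is exactly the Iwanaga--Gorenstein statement. Then part (2): the functor $\Hom_{\ce}(P, -)$ sends $\ce$ into $\mod E$, it kills nothing beyond morphisms factoring through $\proj\ce = \add P$, and the resolution above shows its image lands in $\Omega^n(\mod E) = \GP(E)$; fully faithfulness up to the projective ideal gives a fully faithful functor $\ul{\ce} \to \ul{\GP}(E)$, and essential surjectivity up to direct summands follows because any Gorenstein projective $E$-module is an $n$-th syzygy, hence in the image of $F$-restricted-to-$\add P$ up to summands. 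Idempotent completeness then upgrades `up to summands' to a genuine equivalence, and Remark \ref{Buchweitz} identifies $\ul{\GP}(E)$ with $\Dsg(E)$.

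For part (3), the thick subcategory statement, I would argue as follows: the admissible exact sequences $0 \to M_i \to M_{i-1} \to \cdots \to M_0 \to X \to 0$ decompose in $\ul{\ce}$ into triangles whose outer terms are (shifts of) objects of $\add(P \oplus M)$; since $P \in \proj\ce$ becomes zero in $\ul{\ce}$, each such building block lies in $\thick_{\ul\ce}(M)$. Hence every $X \in \ul{\ce}$ lies in $\thick_{\ul\ce}(M)$, which (since $\ul\ce$ is generated as a category by its objects) gives $\ul\ce = \thick_{\ul\ce}(M)$. The main obstacle I anticipate is not any single step but the careful bookkeeping in the first paragraph: producing the length-$n$ admissible resolution inside $\ce$ (rather than merely in $\mod A$) requires that each approximation can be completed to an admissible short exact sequence, and that the syzygy objects stay in $\ce$ — this is precisely what Lemma \ref{technical lemma} is for, but one must check the hypothesis $\Hom_{\ce}(f, P)$ surjective is met at each stage, which is where the interplay between projective covers in $\ce$ and the generator $M$ has to be handled with care. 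A secondary subtlety is the passage between $A$-modules and $E = eAe$-modules when computing Ext groups, where one needs that $Ae$ is projective over $E$ and that $\Hom_A(-, A)$ interacts well with the functor $e(-)$; this is standard idempotent-truncation yoga but deserves an explicit line.
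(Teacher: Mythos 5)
Your proposal gets the general shape right — factor through $A = \End_{\ce}(P\oplus M)$, identify $E = eAe$, use \ref{technical lemma} to massage approximations into admissible sequences — but there is a genuine gap in your argument for part (1), and it propagates into part (2).

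To prove $\id_E E \le n$ you must show $\Ext^{n+1}_E(X,E)=0$ for \emph{every} $X \in \mod E$, not merely for modules of the form $\Hom_{\ce}(P,X')$ with $X'\in\ce$. Your argument computes $\Ext^{>0}_E(\Hom_{\ce}(P,X),E)$ for $X\in\ce$ (this is the easy half — the paper's step (i), which follows from one syzygy sequence $0\to Y\to P'\to X\to 0$ in $\ce$ and a comparison of $\Hom_{\ce}(-,P)$ with $\Hom_E(-,E)$). But "taking $X=P$" gives you $\Ext^{>0}_E(E,E)=0$, which is vacuous: it says nothing about the injective dimension of $E$. What is actually needed is the paper's step (ii): for an \emph{arbitrary} $X\in\mod E$, form the $A$-module $\widetilde{X} := \Hom_{\ce}(P\oplus M,P)\otimes_E X$, take a projective $A$-resolution of length $\le n$ using $\gl A = n$, and apply the exact functor $e(-)$ to obtain an exact sequence $0\to Q_n\to\cdots\to Q_0\to X\to 0$ with $Q_i\in\add\Hom_{\ce}(P,P\oplus M)$. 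Combined with step (i) (which says $\Ext^{>0}_E(Q_i,E)=0$), this gives $\Ext^{n+1}_E(X,E)=0$ for all $X$. This lifting-and-truncating construction is the crux of the theorem and is entirely absent from your proposal.

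A secondary issue: you assert that for every $X\in\ce$ one can iterate right $\add(P\oplus M)$-approximations inside $\ce$ to get an admissible exact sequence $0\to M_n\to\cdots\to M_0\to X\to 0$ of length $n$ with all $M_i\in\add(P\oplus M)$. This is not available without idempotent completeness of $\ce$: the $n$-th kernel $Y_n$ satisfies $\Hom_{\ce}(P\oplus M,Y_n)$ projective over $A$, but that only puts $Y_n$ in $\add(P\oplus M)$ after idempotent completion. The paper is careful about exactly this point (see the parenthetical remark in its proof of (2) about "possible lack of direct summands"), and as a result it resolves in $\mod E$ rather than in $\ce$, obtaining $X\oplus Y$ rather than $X$ in the cokernel, and then glues a genuine complex in $\ce$ using \ref{technical lemma}. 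Your claim that "any Gorenstein projective $E$-module is an $n$-th syzygy, hence in the image" skips this glueing argument, which is not a formality. Once you have the resolution $0\to Q_n\to\cdots\to Q_0\to X\to0$ in $\mod E$, part (3) does go through along the lines you sketch.
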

\begin{proof}
Since $P\in\add M$, $\End_{\ce}(M)$ is Morita equivalent to $A:=\End_{\ce}(P\oplus M)$ and so $\gl A=n$. Since $A$ is noetherian, so is $E$ (see for example \cite[1.1.7]{MR}).  It follows from a standard argument that the functor $\Hom_{\ce}(P,-):\ce\to\mod E$ is fully faithful, restricting to an equivalence $\Hom_{\ce}(P,-):\add P\to\proj E$ up to direct summands.  We can drop the `up to direct summands' assumption if $\ce$ is idempotent complete. We establish (1) in three steps:

(i) We first show that $\Ext^i_E(\Hom_{\ce}(P,X),E)=0$ for any $X\in\ce$ and $i>0$.  Let
\begin{equation}\label{P resolution}
0\to Y\to P'\to X\to 0
\end{equation}
be an exact sequence in $\ce$ with $P'$ projective.
Applying $\Hom_{\ce}(P,-)$, we have an exact sequence
\begin{equation}\label{4th syzygy}
0\to\Hom_{\ce}(P,Y)\to\Hom_{\ce}(P,P')\to\Hom_{\ce}(P,X)\to0
\end{equation}
with a projective $E$-module $\Hom_{\ce}(P,P')$.
Applying $\Hom_{\ce}(-,P)$ to \eqref{P resolution} and $\Hom_E(-,E)$ to \eqref{4th syzygy}
respectively and comparing them, 
we have a commutative diagram of exact sequences
\[
\begin{array}{c}
{\SelectTips{cm}{10}
\xy0;/r.3pc/:
(-15,20)*+{\Hom_\ce(P^\prime,P)}="A0",(18,20)*+{\Hom_\ce(Y,P)}="A1",(50,20)*+{0}="A2",
(-15,10)*+{\Hom_E(\Hom_{\ce}(P,P'),E)}="a0",(18,10)*+{\Hom_E(\Hom_{\ce}(P,Y),E)}="a1",(50,10)*+{\Ext^1_E(\Hom_{\ce}(P,X),E)}="a2",(68,10)*+{0}="a3",
\ar"A0";"A1"
\ar"A1";"A2"
\ar"a0";"a1"
\ar"a1";"a2"
\ar"a2";"a3"
\ar"A0";"a0"^\wr
\ar"A1";"a1"^\wr
\endxy}
\end{array}
\]
Thus we have $\Ext^1_E(\Hom_{\ce}(P,X),E)=0$.
Since the syzygy of $\Hom_{\ce}(P,X)$ has the same form $\Hom_{\ce}(P,Y)$, we have $\Ext^i_E(\Hom_{\ce}(P,X),E)=0$ for any $i>0$.

(ii) We show that for any $X\in\mod E$, there exists an exact sequence
\begin{equation}\label{approximation}
0\to Q_n\to\cdots\to Q_0\to X\to0
\end{equation}
of $E$-modules with $Q_i\in\add\Hom_{\ce}(P,P\oplus M)$.

Define an $A$-module by $\widetilde{X}:=\Hom_{\ce}(P\oplus M,P)\otimes_EX$.
Let $e$ be the idempotent of $A=\End_{\ce}(P\oplus M)$ corresponding to the direct summand $P$ of $P\oplus M$.
Then we have $eAe=E$ and $e\widetilde{X}=X$.
Since the global dimension of $A$ is at most $n$, there exists a projective resolution
\[
0\to P_n\to\cdots\to P_0\to\widetilde{X}\to0.
\]
Applying $e(-)$ and using $eA=\Hom_{\ce}(P,P\oplus M)$, we have the assertion.

(iii) By (i) and (ii), we have that $\Ext^{n+1}_E(X,E)=0$ for any $X\in\mod E$,
and so the injective dimension of the $E$-module $E$ is at most $n$.
The dual argument shows that the injective dimension of the $E^{\rm op}$-module $E$ is at most $n$.
Thus $E$ is Iwanaga--Gorenstein, which shows (1).\\
(2) By (i) again, we have a functor $\Hom_{\ce}(P,-):\ce\to\GP(E)$, and it is fully faithful.  We will now show that it is dense up to direct summands.  

For any $X\in\GP(E)$, we take an exact sequence \eqref{approximation}.
Since $Q_i\in\add\Hom_{\ce}(P,P\oplus M)$, we have a complex 
\begin{equation}\label{add P+M}
M_n\xrightarrow{f_n}\cdots\xrightarrow{f_0} M_0
\end{equation}
in $\ce$ with $M_i\in\add(P\oplus M)$ such that
\begin{equation}\label{(P,P+M)}
0\to\Hom_{\ce}(P,M_n)\xrightarrow{\cdot f_n}\hdots\xrightarrow{\cdot f_0}\Hom_{\ce}(P,M_0)\to X\oplus Y\to0
\end{equation}
is exact for some $Y\in\GP(E)$. (Note that due to the possible lack of direct summands in $\ce$ it is not always possible to choose $M_i$ such that $\Hom_{\ce}(P,M_i)=Q_i$.)  Applying $\Hom_{\ce}(-,P)$ to \eqref{add P+M} and $\Hom_E(-,E)$ to  \eqref{(P,P+M)} and comparing them, we have a commutative diagram
\[
\begin{xy}
\SelectTips{cm}{}
\xymatrix{
\Hom_{\ce}(M_0,P)\ar[r]\ar[d]^\wr&\cdots\ar[r]&\Hom_{\ce}(M_n,P)\ar[d]^\wr\ar[r]&0\\
\Hom_E(\Hom_{\ce}(P,M_0),E)\ar[r]&\cdots\ar[r]&\Hom_E(\Hom_{\ce}(P,M_n),E)\ar[r]&0
}
\end{xy}
\]
where the lower sequence is exact since $X\oplus Y\in\GP(E)$.
Thus the upper sequence is also exact.  But applying \ref{technical lemma} repeatedly to \eqref{add P+M}, we have
a complex
\[0\to M_n\xrightarrow{(f_n\ 0)} M_{n-1}\oplus P_{n-1}\xrightarrow{{f_{n-1}\ 0\ 0\choose \ \ 0\ \ \ 1\ 0}}
M_{n-2}\oplus P_{n-1}\oplus P_{n-2}\xrightarrow{}
\cdots\to M_0\oplus P_1\oplus P_0\to N\to0\]
with projective objects $P_i$ which is a glueing of exact sequences in $\ce$.
Then we have $X\oplus Y\oplus \Hom_{\ce}(P,P_0)\simeq \Hom_{\ce}(P,N)$, and we have the assertion. The final statement follows by \ref{Buchweitz}.\\
(3) The existence of (\ref{approximation}) implies that $\Hom_{\ce}(P, -)$
gives a triangle equivalence $\thick_{\underline{\ce}}(M)\to\underline{\GP}(E)$
up to direct summands. Thus the natural inclusion $\thick_{\underline{\ce}}(M)\to
\underline{\ce}$ is also a triangle equivalence up to direct summands.
This must be an isomorphism since $\thick_{\underline{\ce}}(M)$ is closed under
direct summands in $\underline{\ce}$.
\end{proof}

We note the following more general version stated in terms of functor categories \cite{Auslander66}. For an additive category $\cp$ we denote by $\Mod\cp$ the category of contravariant additive functors from $\cp$ to the category of abelian groups.
For $X\in\ce$, we have a $\cp$-module $H_X:=\Hom_{\ce}(-,X)|_{\cp}$.
We denote by $\mod\cp$ the full subcategory of $\Mod\cp$ consisting of finitely presented objects.
Similarly we define $\Mod\cp^{\rm op}$, $H^X$ and $\mod\cp^{\rm op}$. If $\cp$ has pseudokernels (respectively, pseudocokernels), then $\mod\cp$ (respectively, $\mod\cp^{\rm op}$) is an abelian category.

\begin{thm}\label{main-thm-category-version}
Let $\ce$ be a Frobenius category with the category $\cp$ of projective objects.
Assume that there exists a full subcategory $\cm$ of $\ce$ such that $\cm$ contains $\cp$, $\cm$ has pseudokernels and pseudocokernels and $\mod\cm$ and $\mod\cm^{\rm op}$ have global dimension at most $n$. Then\\
\t{(1)} $\cp$ is an Iwanaga--Gorenstein category of dimension at most $n$, i.e.\ $\Ext^i_{\mod\cp}(-,H_P)=0$ and $\Ext^i_{\mod\cp^{\rm op}}(-,H^P)=0$ for all $P\in\cp$, $i>n$.\\
\t{(2)} For the category
\[
\GP(\cp):=\{X\in\mod\cp\mid \Ext^i_{\cp}(X,H_P)=0\mbox{ for any }i>0\mbox{ and }P\in\cp\}
\]
of \emph{Gorenstein projective} $\cp$-modules, we have an equivalence $\ce\to\GP(\cp)$, $X\mapsto H_X$ up to  summands.  It is an equivalence if $\ce$ is idempotent complete. This induces a triangle equivalence
\[
\underline{\ce}\to\underline{\GP}(\cp)\simeq \Dsg(\cp).
\]
up to  summands.  It is an equivalence if $\ce$ or $\underline{\ce}$ is idempotent complete.\\
\t{(3)} $\ul{\ce}=\thick_{\ul{\ce}}(M)$.
\end{thm}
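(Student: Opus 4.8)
The plan is to run the proof of \ref{t:main-thm} \emph{mutatis mutandis}, replacing the ring-theoretic constructions by their functor-category analogues through the dictionary: the ring $A=\End_{\ce}(P\oplus M)$ is replaced by the abelian category $\mod\cm$, the ring $E=\End_{\ce}(P)$ by $\mod\cp$, the functor $\Hom_{\ce}(P,-)\colon\ce\to\mod E$ by $H_-\colon\ce\to\mod\cp$, the projective $E$-module $E$ by the representable functors $H_P$ with $P\in\cp$, the idempotent truncation $e(-)\colon\mod A\to\mod E$ by restriction $(-)|_{\cp}\colon\mod\cm\to\mod\cp$ along $\cp\hookrightarrow\cm$, and the left adjoint $Ae\otimes_{eAe}(-)$ of $e(-)$ by the left Kan extension $\w{(-)}\colon\mod\cp\to\mod\cm$ along $\cp\hookrightarrow\cm$. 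The basic facts used throughout are: (a) $H_-\colon\ce\to\mod\cp$ is fully faithful, by the same standard argument as in \ref{t:main-thm} (using that $\ce$ has enough projectives $\cp$); (b) since $\cp\hookrightarrow\cm$ is fully faithful, $\w{(-)}$ sends $\mod\cp$ into $\mod\cm$ (it takes $H_Q$, $Q\in\cp$, to the representable $\Hom_{\cm}(-,Q)$, and is right exact), it preserves projectives (being left adjoint to the exact functor $(-)|_{\cp}$), and $\w{X}|_{\cp}\cong X$; (c) $\cp\subseteq\mathrm{inj}\,\ce$ since $\ce$ is Frobenius, so $\Hom_{\ce}(-,P)$ is exact on conflations for $P\in\cp$.

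Part (1) is established in the three steps of \ref{t:main-thm}. In step (i), for $X\in\ce$ pick a conflation $0\to Y\to P'\to X\to0$ with $P'\in\cp$; applying $H_-$ gives a projective presentation $0\to H_Y\to H_{P'}\to H_X\to0$ in $\mod\cp$, and comparing $\Hom_{\ce}(-,P)$ of the conflation with $\Hom_{\mod\cp}(-,H_P)$ of the presentation — the vertical maps being isomorphisms by Yoneda on the $\cp$-representable terms and by (a) on $H_X,H_Y$, and the top row being right exact by (c) — yields $\Ext^1_{\mod\cp}(H_X,H_P)=0$; since the syzygy of $H_X$ is again of the form $H_Y$ we get $\Ext^i_{\mod\cp}(H_X,H_P)=0$ for all $i>0$ and all $P\in\cp$. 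In step (ii), for $X\in\mod\cp$ we form $\w X\in\mod\cm$, choose a projective resolution $0\to F_n\to\cdots\to F_0\to\w X\to0$ in $\mod\cm$ (possible as $\gl\mod\cm\le n$), and restrict to $\cp$: exactness of $(-)|_{\cp}$ together with $\w X|_{\cp}\cong X$ produce an exact sequence $0\to Q_n\to\cdots\to Q_0\to X\to0$ with each $Q_i\in\add\{H_N\mid N\in\cm\}$. In step (iii), combining (i) and (ii) the $Q_i$ are $\Hom_{\mod\cp}(-,H_P)$-acyclic, hence $\Ext^{i}_{\mod\cp}(X,H_P)=0$ for all $i>n$, $X\in\mod\cp$, $P\in\cp$; the dual argument over $\mod\cm^{\op}$ and $\mod\cp^{\op}$ gives the analogous statement for $H^P$, so $\cp$ is Iwanaga--Gorenstein of dimension at most $n$.

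For (2), step (i) shows $H_-$ factors through $\GP(\cp)$ and is fully faithful by (a), so only density up to summands remains, and here one argues exactly as in \ref{t:main-thm}: lift the resolution of step (ii) to a complex $M_n\to\cdots\to M_0$ in $\ce$ with $H_{M_i}\in\add\{H_N\mid N\in\cm\}$, use $X\in\GP(\cp)$ to dualise and compare with $\Hom_{\ce}(-,\cp)$-sequences to see the complex $H_{M_\bullet}$ has homology $X\oplus Y$ in degree $0$ with $Y\in\GP(\cp)$, then glue it with conflations in $\ce$ via the $\cp$-version of \ref{technical lemma} to obtain $N\in\ce$ with $X\oplus Y\oplus H_{P_0}\cong H_N$. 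Idempotent completeness upgrades this to an equivalence, and composing $\ul\ce\to\ul\GP(\cp)$ with $\ul\GP(\cp)\simeq\Dsg(\cp)$ (the categorical analogue of \ref{Buchweitz}) gives (2); part (3) is then verbatim the argument of \ref{t:main-thm}, since step (ii) forces $H_-$ to restrict to a triangle equivalence up to summands $\thick_{\ul\ce}(\cm)\to\ul\GP(\cp)$, whence $\thick_{\ul\ce}(\cm)\hookrightarrow\ul\ce$ is a triangle equivalence up to summands and so an equivalence because $\thick_{\ul\ce}(\cm)$ is closed under summands. The one genuinely new piece of bookkeeping, and the step I expect to need the most care, is step (ii): identifying the left Kan extension $\w{(-)}$ as the correct replacement for $Ae\otimes_{eAe}(-)$ and checking $\w X\in\mod\cm$ with $\w X|_{\cp}\cong X$, together with the fact that \ref{technical lemma} (via \cite[2.10]{Kalck}) must be used in its form for a category $\cp$ of projectives rather than a single projective object $P$; everything else is a direct translation of the proof of \ref{t:main-thm}.
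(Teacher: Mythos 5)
The paper gives no proof of \ref{main-thm-category-version}; it simply states it as a ``more general version'' of \ref{t:main-thm}, so the intended argument is precisely the mutatis-mutandis translation you carry out. Your dictionary is the right one, and the two places where something new could go wrong are handled correctly: the left Kan extension along $\cp\hookrightarrow\cm$ is the correct analogue of $Ae\otimes_{eAe}(-)$ (being right exact and sending $H_Q$ to $\Hom_\cm(-,Q)$, it lands in $\mod\cm$ and satisfies $\widetilde X|_\cp\cong X$ by full faithfulness of $\cp\hookrightarrow\cm$), and the version of \ref{technical lemma} for a subcategory $\cp$ of projectives, rather than a single $P$, does hold in the same generality.

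One small point you leave implicit that is worth making explicit: for $\mod\cp$ to be abelian (which you need so that $\Ext_{\mod\cp}$, the acyclic-resolution dimension shift in step (iii), and $\Dsg(\cp)=\Db(\mod\cp)/\Kb(\proj\cp)$ all make sense), you need $\cp$ to have pseudokernels, and this is not a stated hypothesis. It does follow from the hypotheses: given $f\colon P_1\to P_0$ in $\cp$, take a pseudokernel $\iota\colon N\to P_1$ in $\cm$ and a deflation $q\colon P_2\twoheadrightarrow N$ in $\ce$ with $P_2\in\cp$; since every $Q\in\cp$ is projective in $\ce$, any $g\colon Q\to P_1$ with $gf=0$ factors through $\iota$ and then lifts along $q$, so $q\iota\colon P_2\to P_1$ is a pseudokernel in $\cp$. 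Dually $\cp$ has pseudocokernels using enough injectives. Spelling this out would close the only real gap in the write-up; everything else is a faithful translation of the proof of \ref{t:main-thm}.
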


\begin{remark}
In the setting of \ref{main-thm-category-version}, we remark that  \cite[4.2]{C} also gives an embedding $\ce\to\GP(\cp)$.
\end{remark}

\subsection{Alternative Approach}

We now give an alternative proof of \ref{t:main-thm} by using certain quotients of derived categories.  This will be necessary to interpret some results in \S\ref{ss:relative-sing-cat} later.  We retain the setup from the previous subsection, in particular $\ce$ always denotes a Frobenius category.  Recall the following.
\begin{defin} 
Let $N \in \mathbb{Z}$. A complex $P^*$ of projective objects in $\ce$ is called \emph{acyclic in degrees $\leq N$} if there exist exact sequences in $\ce$
\[
\begin{xy}\SelectTips{cm}{}
\xymatrix{
Z^n(P^*) \ar@{>->}[r]^(.65){i_{n}} & P^n \ar@{->>}[r]^(.35){p_{n}} & Z^{n+1}(P^*) }
\end{xy}
\]
 such that $d^n_{P^*}=p_ni_{n+1}$ holds for all $n\leq N$. Let $\K^{-,{\rm b}}(\proj \ce) \subseteq \K^-(\proj \ce)$ be the full subcategory consisting of those complexes which are acyclic in degrees $\leq d$ for some $d \in \mathbb{Z}$. This defines a triangulated subcategory of $\K^-(\proj \ce)$ (c.f.\ \cite{KV}).
\end{defin}

Taking projective resolutions yields a functor $\ce \to \K^{-,{\rm b}}(\proj \ce)$. We need the following dual version of \cite[2.3]{KV}, see also \cite[2.36]{Kalck}. 

\begin{prop}\label{P:StFrob}
This functor induces an equivalence of triangulated categories 
\[
\P\colon \ul{\ce} \longrightarrow \K^{-,{\rm b}}(\proj \ce)/\Kb(\proj \ce). 
\]
\end{prop}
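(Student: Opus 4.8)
The plan is to construct a functor $\P\colon\ce\to\K^{-,\rm b}(\proj\ce)$ by taking projective resolutions, show it kills projectives and hence descends to $\ul\ce\to\K^{-,\rm b}(\proj\ce)/\Kb(\proj\ce)$, and then prove the descended functor is a triangle equivalence. Since this is the Frobenius-exact dual of \cite[2.3]{KV} (and of \cite[2.36]{Kalck}), I would really only spell out what changes; the bulk of the argument is formally dual.

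First I would recall how to make $\P$ a functor. For $X\in\ce$, iterating the defining property of a Frobenius category (enough projectives) yields a complete projective resolution, or rather its `left half': an acyclic-in-degrees-$\le 0$ complex $P_X^*\in\K^{-,\rm b}(\proj\ce)$ with $Z^1(P_X^*)\cong X$ up to the relevant shift, unique up to homotopy by the usual comparison argument for exact categories. Functoriality and well-definedness on morphisms are standard, so $\P\colon\ce\to\K^{-,\rm b}(\proj\ce)$ is well-defined; composing with the Verdier quotient gives $\ce\to\K^{-,\rm b}(\proj\ce)/\Kb(\proj\ce)$. A projective object $P$ has projective resolution $P$ concentrated in one degree, which lies in $\Kb(\proj\ce)$, so the composite sends $\proj\ce$ to zero and therefore factors through a triangle functor $\ul\ce\to\K^{-,\rm b}(\proj\ce)/\Kb(\proj\ce)$; that this is a triangle functor uses the identification of triangles in $\ul\ce$ with the images of conflations in $\ce$, exactly as in \cite{Happel} / the exact-category setup. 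For this step one also checks $\P$ is exact in the triangulated sense by the standard mapping-cone-of-resolutions argument.

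Next, faithfulness and fullness. Fully faithfulness of $\P$ on $\ul\ce$ amounts to: $\ul{\Hom}_\ce(X,Y)\xrightarrow{\sim}\Hom(\P X,\P Y)$ in the quotient. One direction: a map of resolutions homotopic to zero after inverting $\Kb(\proj\ce)$-quasi-isomorphisms comes from a map factoring through a projective, which is killed in $\ul\ce$; the converse is the comparison theorem. Concretely I would use that $\Hom_{\K^{-,\rm b}/\Kb}(\P X,\P Y)$ is computed by a colimit over `brutal truncations', and that these stabilize because both complexes are acyclic in low degrees — this is where the `$-,\rm b$' condition does its work, exactly as in \cite[2.3]{KV}. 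Density: given $Q^*\in\K^{-,\rm b}(\proj\ce)$, acyclic in degrees $\le N$, the object $Z^{N+1}(Q^*)\in\ce$ has $\P(Z^{N+1}(Q^*))$ isomorphic to $Q^*$ (suitably shifted) in the quotient, because the two complexes agree in high degrees and differ by a bounded complex of projectives in low degrees, which is zero in $\K^{-,\rm b}(\proj\ce)/\Kb(\proj\ce)$.

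The main obstacle is purely bookkeeping: in the literature this statement is usually recorded for module categories or for exact categories with the \emph{opposite} half of the resolution (injective resolutions / acyclicity in degrees $\ge N$), so one must carefully dualize every diagram and every truncation index, and make sure the Frobenius hypothesis (projectives $=$ injectives) is invoked at the right place so that the `complete resolution' has the stabilization property in the negative direction. None of it is deep, but it is easy to get a sign or a shift wrong; I would therefore organize the proof by first stating the precise dual of \cite[2.3]{KV} and then verifying that a Frobenius category satisfies its hypotheses, rather than redoing the homotopy-colimit computation from scratch.
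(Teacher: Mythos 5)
The paper gives no proof of this proposition: it simply records it as the dual of \cite[2.3]{KV} (see also \cite[2.36]{Kalck}) and moves on. Your sketch reconstructs exactly that approach — project to a functor on $\ul\ce$, then verify full faithfulness via the stabilizing colimit of brutal truncations and density via cocycles of a complex acyclic in low degrees — so you are on the same track as the cited references, and the overall plan is sound.

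One small slip in the density step, which you yourself flagged as the likely bookkeeping hazard: for $Q^*\in\K^{-,\rm b}(\proj\ce)$ acyclic in degrees $\le N$, the brutal truncation $\sigma_{\le N}Q^*$ is a (shifted) projective resolution of $Z^{N+1}(Q^*)$, so $\P(Z^{N+1}(Q^*))$ and $Q^*$ agree in the \emph{low} degrees (the infinite tail to the left), while the discrepancy $\sigma_{\ge N+1}Q^*$ sits in \emph{high} degrees and is the bounded complex of projectives that dies in the quotient. You wrote it the other way round. The substance is unaffected — only the phrase needs flipping — and with that corrected the argument matches the dualized \cite[2.3]{KV} proof the paper is invoking.
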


\begin{cor}\label{C:Buchweitz}
If there exists $P \in \proj \ce$ such that $\proj \ce= \add P$ and moreover $E=\End_{\ce}(P)$ is left noetherian, then there is a fully faithful triangle functor
\begin{align}
\widetilde{\P}\colon \ul{\ce} \longrightarrow  \Dsg(E).\label{def P}
\end{align}
\end{cor}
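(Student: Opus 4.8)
The plan is to combine Proposition~\ref{P:StFrob} with the standard construction of the singularity category. First I would use Proposition~\ref{P:StFrob} to identify $\ul{\ce}$ with the Verdier quotient $\K^{-,{\rm b}}(\proj \ce)/\Kb(\proj \ce)$, so it suffices to construct a fully faithful triangle functor from this quotient into $\Dsg(E)=\Db(\mod E)/\Kb(\proj E)$. The natural candidate is induced by the additive functor $\Hom_{\ce}(P,-)\colon \proj\ce \to \proj E$, which by the hypothesis $\proj\ce=\add P$ and $E=\End_{\ce}(P)$ is an equivalence onto $\proj E$ up to direct summands (and an honest equivalence after idempotent completion; in any case it is fully faithful). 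Applying this termwise gives an exact functor $\K^-(\proj\ce)\to \K^-(\proj E)$, and I would check it sends complexes acyclic in degrees $\leq N$ to complexes with the same property — this is where left noetherianity of $E$ enters, since $\Hom_{\ce}(P,-)$ turns the exact sequences $Z^n \rightarrowtail P^n \twoheadrightarrow Z^{n+1}$ in $\ce$ into exact sequences of finitely generated $E$-modules, so the image lands in $\K^{-,{\rm b}}(\proj E)$.

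Next I would invoke the dual of the Keller--Vossieck description (the same statement \cite[2.3]{KV} used for \ref{P:StFrob}, now applied to the Frobenius category $\ce=\CM$-style situation, but more directly just the classical fact) that for a left noetherian ring $E$ the natural functor $\K^{-,{\rm b}}(\proj E)/\Kb(\proj E)\to \Db(\mod E)/\Kb(\proj E)=\Dsg(E)$ is an equivalence: every bounded-above complex of projectives which is eventually acyclic is quasi-isomorphic to a bounded complex of finitely generated modules, and conversely every object of $\Db(\mod E)$ has a projective resolution that is eventually acyclic. Composing, I obtain the triangle functor
\[
\widetilde{\P}\colon \ul{\ce}\xrightarrow{\ \simeq\ }\K^{-,{\rm b}}(\proj\ce)/\Kb(\proj\ce)\longrightarrow \K^{-,{\rm b}}(\proj E)/\Kb(\proj E)\xrightarrow{\ \simeq\ }\Dsg(E).
\]
It remains to check that the middle arrow is fully faithful.

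For full faithfulness of the middle arrow, I would argue as follows. The termwise functor $\K^-(\proj\ce)\to\K^-(\proj E)$ is fully faithful up to direct summands because $\Hom_{\ce}(P,-)\colon\proj\ce\to\proj E$ is; more precisely, morphism spaces between complexes of projectives are computed degreewise via the total Hom complex, and $\Hom_{\ce}(P,-)$ induces isomorphisms on all the relevant Hom-groups $\Hom_{\ce}(P^i,P^j)\xrightarrow{\sim}\Hom_E(\Hom_{\ce}(P,P^i),\Hom_{\ce}(P,P^j))$. It therefore restricts to a fully faithful functor on the subcategories $\K^{-,{\rm b}}$, and one checks it maps $\Kb(\proj\ce)$ into $\Kb(\proj E)$ and reflects this subcategory (a bounded-above complex of projectives in $\ce$ becomes perfect over $E$ iff it was already homotopy equivalent to a bounded complex, again using that $\proj\ce\to\proj E$ is an equivalence up to summands); hence by the universal property of Verdier localization the induced functor on quotients is fully faithful.

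The main obstacle I anticipate is the bookkeeping around the phrase ``up to direct summands'': since $\ce$ need not be idempotent complete, $\Hom_{\ce}(P,-)$ is only an equivalence $\add P \to \proj E$ after splitting idempotents, so one must either pass to the idempotent completion $\ce^{\oplus}$ (which has the same stable category up to this subtlety and the same associated complexes-of-projectives category, with $E$ unchanged) or carefully track that full faithfulness of a functor is insensitive to this completion while density is not. Concretely, I would first prove the corollary for $\ce$ replaced by its idempotent completion $\widehat{\ce}$ (still Frobenius, still with $\proj\widehat{\ce}=\add P$ and the same $E$), obtaining $\ul{\widehat{\ce}}\hookrightarrow \Dsg(E)$ fully faithful, and then observe that $\ul{\ce}\to\ul{\widehat{\ce}}$ is fully faithful (it is the inclusion of a full subcategory up to summands), so the composite $\ul{\ce}\to\Dsg(E)$ is fully faithful as claimed. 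The remaining checks — that the acyclicity-in-degrees-$\leq N$ condition is preserved, and that perfect complexes correspond to perfect complexes — are routine once noetherianity of $E$ is in hand.
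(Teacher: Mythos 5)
Your proposal is correct and follows exactly the paper's route: identify $\ul{\ce}$ with $\K^{-,{\rm b}}(\proj\ce)/\Kb(\proj\ce)$ via Proposition~\ref{P:StFrob}, apply $\Hom_\ce(P,-)$ termwise to get a functor into $\K^{-,{\rm b}}(\proj E)/\Kb(\proj E)$, and compose with the standard equivalence to $\Dsg(E)$. The paper's own proof is terse at the one place where work is actually needed — why the induced functor between the Verdier quotients is fully faithful — and you supply the missing detail correctly: $\Hom_\ce(P,-)\colon\proj\ce\to\proj E$ is fully faithful and dense up to summands, hence so is the induced functor on $\K^{-,{\rm b}}$, and it reflects $\Kb$ (a direct summand of a complex homotopy equivalent to a bounded one is itself homotopy equivalent to a bounded one, using that $\add P$ is closed under summands in the exact category $\ce$); full faithfulness on the quotient then follows from the Balmer--Schlichting type fact that Verdier quotients are compatible with idempotent completion. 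Your explicit handling of the idempotent-completion subtlety is a genuine improvement in rigor over the paper's one-line treatment, but it is not a different argument — it is the argument the paper implicitly assumes.
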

\begin{proof}
The fully faithful functor $\Hom_{\ce}(P, -)\colon \proj \ce \rightarrow \proj E$ induces a fully faithful triangle functor $\K^-(\proj \ce) \ra \K^-(\proj E)$. Its restriction $\K^{-,{\rm b}}(\proj \ce) \ra \K^{-,{\rm b}}(\proj E)$ is well defined since $P$ is projective. Define $\widetilde{\P}$ as the composition
\[
\begin{xy}\SelectTips{cm}{}
\xymatrix
{
\ul{\ce} \ar[r]^(.3){\displaystyle{\P}} & \displaystyle{\frac{\K^{-,{\rm b}}(\proj \ce)}{\Kb(\proj \ce)}} \ar[r] & \displaystyle{\frac{\K^{-,{\rm b}}(\proj E)}{\Kb(\proj E)}} \ar[r]^{\sim} & \displaystyle{\frac{\Db(\mod E)}{\Kb(\proj E)}},
}
\end{xy}
\]
where $\P$ is the equivalence from \ref{P:StFrob} and the last functor is induced by the well-known triangle equivalence $\K^{-,{\rm b}}(\proj E) \stackrel{\sim}\longrightarrow \Db(\mod E)$.
\end{proof}

\begin{remark}
In the special case when $E$ is an Iwanaga--Gorenstein ring and $\ce:=\GP(E)$, the functor $\widetilde{\P}$ in (\ref{def P}) was shown to be an equivalence in \cite[4.4.1(2)]{Buch} (see \ref{Buchweitz}).  For general Frobenius categories, $\widetilde{\P}$ is far from being an equivalence. For example, let $\ce=\proj R$ be the category of finitely generated projective modules over a left noetherian algebra $R$ equipped with the split exact structure. Then always $\ul{\ce}=0$ but $\Dsg(E)=\Dsg(R) \neq 0$ if $\gl(R)= \infty$.  We refer the reader to \cite[2.41]{Kalck} for a detailed discussion.
\end{remark}

Below in \ref{t:alternative-main}, we give a sufficient criterion for $\widetilde{\P}$  to be an equivalence.  To do this requires the following result.

\begin{prop}\label{P:OneIdempotent}
Let $A$ be a left noetherian ring and let $e \in A$ be an idempotent. The exact functor $\Hom_{A}(Ae, -)$ induces a triangle equivalence
\begin{align}\label{E:twoidempotents2}
\G\colon \frac{\Db(\mod A)/\thick(Ae)}{\thick\left(q(\mod A/AeA)\right)} \longrightarrow \frac{\Db(\mod eAe)}{\thick(eAe)},
\end{align}
where $q\colon \Db(\mod A) \to \Db(\mod A)/\thick(Ae)$ denotes the canonical projection.
\end{prop}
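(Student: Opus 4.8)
The plan is to realize the functor $\G$ as an equivalence by first disposing of the projective/perfect subcategories and then applying a recollement-type argument for the idempotent $e$. Write $B = eAe$ and $\bar{A} = A/AeA$. The functor $\Hom_A(Ae,-) = e(-) \colon \mod A \to \mod B$ is exact, so it descends to a triangle functor $\Db(\mod A) \to \Db(\mod B)$; since $e(Ae) = eAe = B$ is projective over $B$, this functor sends $\thick(Ae)$ into $\thick(B)$ and hence induces $\Db(\mod A)/\thick(Ae) \to \Db(\mod B)/\thick(B)$. The first task is to check that it also kills $\thick(q(\mod\bar{A}))$, which is immediate because $e\bar{A} = e(A/AeA) = 0$, so every object of $\mod\bar{A}$, viewed in $\Db(\mod A)$, maps to $0$ in $\Db(\mod B)$; thus $\G$ in \eqref{E:twoidempotents2} is well defined.

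The heart of the argument is the standard recollement attached to an idempotent: the adjoint triple $(\bar{A}\otimes^{\mathbf{L}}_A -,\ -,\ \mathbf{R}\Hom_{\bar A}(\bar A,-))$ on the left and the triple coming from $Ae\otimes^{\mathbf{L}}_B - \dashv e(-) \dashv \mathbf{R}\Hom_B(eA,-)$ on the right give a recollement of $\Db(\mod A)$ (or rather of an appropriate subcategory) by $\Db(\mod\bar A)$ and $\Db(\mod B)$ in good cases; here $A$ is only left noetherian and one has no finiteness on the $\bar A$-side, so instead I would work directly with Verdier quotients. Concretely, I would show that $e(-)$ induces an equivalence
\[
\frac{\Db(\mod A)}{\thick(Ae,\, \mod\bar A)} \xrightarrow{\ \sim\ } \frac{\Db(\mod B)}{\thick(B)},
\]
where $\thick(Ae,\mod\bar A)$ is the thick subcategory generated by $Ae$ and by $\mod\bar A$ (note $q(\thick(Ae,\mod\bar A)) = \thick(q(\mod\bar A))$ in $\Db(\mod A)/\thick(Ae)$, so the left-hand side is the iterated quotient appearing in the proposition). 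Fullness and faithfulness: given $X,Y \in \mod A$, one compares $\Hom$ in the two quotients via the calculus of fractions; the key input is that for any morphism its cone, if $e$-acyclic, lies in $\thick(\mod\bar A)$ up to a perfect complex built from $Ae$ — this is where one uses that $Ae\otimes^{\mathbf{L}}_B e(-)$ differs from the identity by something supported on $\bar A$, i.e.\ the triangle $Ae\otimes^{\mathbf{L}}_B eX \to X \to \bar A \otimes^{\mathbf{L}}_A X \to$ (valid since $A$ is left noetherian so all these complexes have coherent cohomology). Density: every $\mod B$-object is of the form $eX$ for $X = Ae\otimes_B(-)$, hence the functor is essentially surjective already before quotienting.

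The main obstacle I anticipate is the finiteness bookkeeping on the $\bar A = A/AeA$ side: since $A$ is merely left noetherian and $\bar A$ need not have finite projective dimension over $A$, one must be careful that $\bar A\otimes^{\mathbf{L}}_A X$ lands in $\Db(\mod A)$ and that $\thick(\mod\bar A)$ is the right subcategory to quotient by — in particular that it genuinely equals the kernel of $e(-)$ on the bounded derived level, not just on modules. I would handle this by the standard devissage: a bounded complex $X$ with $eX$ perfect over $B$ can, after modifying by an object of $\thick(Ae)$, be assumed to satisfy $eX = 0$; then each cohomology $H^i(X)$ is an $A$-module annihilated by $e$ on both sides in a suitable sense, hence built from $\mod\bar A$, so $X \in \thick(\mod\bar A)$ in $\Db(\mod A)$. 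Assembling these pieces — well-definedness, the triangle identifying $X$ with $eX$ up to $\bar A$-stuff, fullness/faithfulness via fractions, and density — yields the claimed equivalence $\G$.
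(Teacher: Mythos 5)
The paper does not prove this statement directly: its proof is the one--line citation ``Taking $e=f$ in \cite[Proposition 3.3]{KY} yields the triangle equivalence,'' and the argument in \cite{KY} goes through DG enhancements (Drinfeld's DG quotient and its universal property). Your proposal is therefore a genuinely different, purely Verdier--theoretic route, and a good deal of it is sound: well-definedness is correct; collapsing the iterated quotient to $\Db(\mod A)/\thick(Ae,\,\mod\bar A)$ and aiming to show $e(-)$ induces an equivalence to $\Dsg(eAe)$ is the right reformulation; essential surjectivity via $X=Ae\otimes_B M$ (termwise, not derived) is fine; and the d\'evissage identifying $\ker\bigl(\Db(\mod A)\to\Dsg(eAe)\bigr)$ with $\thick(Ae,\,\mod\bar A)$ is correct, \emph{because} the perfectness of $eX$ there guarantees $Ae\otimes^{\mathbf L}_B eX\in\Kb(\add Ae)\subseteq\Db(\mod A)$, so the counit triangle lives in $\Db(\mod A)$ and its third term has cohomology in $\mod\bar A$.

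The genuine gap is the fullness and faithfulness of the induced functor, which you only gesture at. Two concrete problems. First, the triangle you write down, $Ae\otimes^{\mathbf L}_B eX \to X \to \bar A\otimes^{\mathbf L}_A X \to$, is not a distinguished triangle in general: the cone of the counit $Ae\otimes^{\mathbf L}_B eX\to X$ is some complex $C_X$ with $eC_X=0$, but $C_X$ is not $\bar A\otimes^{\mathbf L}_A X$, because the multiplication $Ae\otimes_B eA\to AeA$ need be neither injective nor $\Tor$-acyclic (already for $kQ/(\beta\alpha)$ with $Q\colon 1\rightleftarrows 2$ and $e=e_1$ the map $Ae\otimes_{eAe}eA\to AeA$ has a nontrivial kernel). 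Second, and more seriously, when $eX$ is \emph{not} perfect over $B=eAe$ the complex $Ae\otimes^{\mathbf L}_B eX$ generally fails to lie in $\Db(\mod A)$ at all (unbounded $\Tor^B_*(Ae,eX)$), so the device ``replace $X$ by $Ae\otimes^{\mathbf L}_B eX$ modulo the kernel'' is unavailable for comparing $\Hom$-spaces via fractions. This is precisely the place where merely being left noetherian is not enough to run a naive recollement argument, and it is not a bookkeeping issue: without an argument that every roof $eX\leftarrow W\to eY$ in $\Dsg(B)$ lifts, up to the kernel, to a roof in $\Db(\mod A)$, you have not shown full faithfulness, and a Verdier quotient functor with the correct kernel and essential image need not be an equivalence. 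To close the gap along your lines you would need either an extra finiteness hypothesis (for instance $\pd_B Ae<\infty$, or $\gl A<\infty$ as happens in the paper's application in \ref{t:alternative-main}) or the DG--quotient machinery of \cite{KY}.
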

\begin{proof}
Taking $e=f$ in \cite[Proposition 3.3]{KY} yields the triangle equivalence \eqref{E:twoidempotents2}.  
\end{proof}

\begin{thm} \label{t:alternative-main}
Let $\ce$ be a Frobenius category with $\proj \ce= \add P$ for some $P \in \proj \ce$.  Assume that  there exists $M\in\ce$ such that $A:=\End_{\ce}(P\oplus M)$ is a left noetherian ring of finite global dimension, and denote $E:=\End_{\ce}(P)$.   Then\\ 
\t{(1)}  $\widetilde{\P}\colon \ul{\ce} \longrightarrow \Dsg(E)$ is a triangle equivalence up to direct summands. If $\ul{\ce}$ is idempotent complete, then $\widetilde{\P}$ is an equivalence. \\
\t{(2)} $\ul{\ce}=\thick_{\ul{\ce}}(M)$.
\end{thm}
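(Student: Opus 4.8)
The plan is to deduce Theorem~\ref{t:alternative-main} from the combination of Proposition~\ref{P:OneIdempotent}, Corollary~\ref{C:Buchweitz}, and Proposition~\ref{P:StFrob}, thereby giving the promised "alternative" route to Theorem~\ref{t:main-thm} that keeps track of the intermediate derived quotient categories. The key observation is that the hypotheses of Proposition~\ref{P:OneIdempotent} are exactly met: $A=\End_{\ce}(P\oplus M)$ is left noetherian, and if $e$ is the idempotent of $A$ corresponding to the summand $P$, then $eAe\cong\End_\ce(P)=E$, so Proposition~\ref{P:OneIdempotent} supplies a triangle equivalence
\[
\G\colon \frac{\Db(\mod A)/\thick(Ae)}{\thick(q(\mod A/AeA))}\xrightarrow{\ \simeq\ }\frac{\Db(\mod eAe)}{\thick(eAe)}=\Dsg(E),
\]
where the last identification uses $\gl A<\infty$, which forces $\Kb(\proj A)=\Db(\mod A)$, hence $\thick(Ae)$ makes sense as stated; more importantly $\gl A<\infty$ also collapses $\Db(\mod A)$ so that the source of $\G$ simplifies. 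I would first unravel what the source of $\G$ is under the finite-global-dimension hypothesis: since $A$ has finite global dimension, $\Db(\mod A)=\thick(A)=\thick(Ae\oplus (1-e)Ae\cdots)$, and the point is that $\Db(\mod A)/\thick(Ae)$ together with killing $\thick(q(\mod A/AeA))$ should be identified with (a copy of) $\ul{\ce}$ via the functor $\P$ of Proposition~\ref{P:StFrob}.

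The main bridge is therefore to identify $\ul{\ce}\simeq \K^{-,\mathrm{b}}(\proj\ce)/\Kb(\proj\ce)$ (Proposition~\ref{P:StFrob}) with the source of $\G$, compatibly with $\widetilde{\P}$. Concretely, I would argue as follows. The functor $\Hom_\ce(P\oplus M,-)\colon\ce\to\mod A$ sends projectives to $\add(Ae)$ and has the property that projective resolutions in $\ce$ go to complexes that become acyclic after applying $e(-)$; passing to $\K^{-,\mathrm{b}}$ and then to the Verdier quotient by $\Kb(\proj A)$ (equivalently, landing in $\Db(\mod A)$ since $\gl A<\infty$), one obtains a functor $\ul{\ce}\to \Db(\mod A)/\thick(Ae)$. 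One then checks it lands in, and induces an equivalence onto, the further quotient by $\thick(q(\mod A/AeA))$: the essential image is controlled because every object of $\mod A$ has a finite resolution by $\add(Ae\oplus \text{(summands of }M\text{))}$-objects (this is exactly step (ii) of the proof of Theorem~\ref{t:main-thm}, which shows the $\add\Hom_\ce(P,P\oplus M)$-resolution exists after applying $e$), so modulo $\thick(Ae)$ and $\thick(q(\mod A/AeA))$ everything is hit. Composing with $\G$ and comparing with the factorization of $\widetilde{\P}$ through $\K^{-,\mathrm{b}}(\proj E)/\Kb(\proj E)\simeq\Db(\mod E)/\Kb(\proj E)=\Dsg(E)$ from Corollary~\ref{C:Buchweitz}, one sees that $\G$ composed with the identification on the source is, up to natural isomorphism, $\widetilde{\P}$ itself. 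Since $\widetilde{\P}$ is already known to be fully faithful by Corollary~\ref{C:Buchweitz}, it only remains to see it is dense up to summands, which follows from the denseness of $\G$ together with the resolution statement; and it is an equivalence outright once $\ul{\ce}$ is idempotent complete, since the target $\Dsg(E)$ is then the idempotent completion of the image. This proves part~(1).

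For part~(2), I would simply repeat the argument of Theorem~\ref{t:main-thm}(3): the resolution \eqref{approximation} (equivalently the $e(-)$-resolution above) shows that $\widetilde{\P}$ restricts to a triangle equivalence $\thick_{\ul{\ce}}(M)\to\Dsg(E)$ up to direct summands, hence the inclusion $\thick_{\ul{\ce}}(M)\hookrightarrow\ul{\ce}$ becomes an equivalence up to summands after applying $\widetilde{\P}$; but $\widetilde{\P}$ is fully faithful, so $\thick_{\ul{\ce}}(M)\hookrightarrow\ul{\ce}$ is fully faithful and dense up to summands, and since $\thick_{\ul{\ce}}(M)$ is by definition closed under direct summands in $\ul{\ce}$ it must be all of $\ul{\ce}$.

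The step I expect to be the genuine obstacle is the precise identification of the source of $\G$ with $\ul{\ce}$ compatibly with the functors — i.e.\ showing that $\thick(q(\mod A/AeA))$ corresponds exactly to the extra relations one must kill, and that $\Db(\mod A)/\thick(Ae)$ modulo this is precisely $\K^{-,\mathrm{b}}(\proj\ce)/\Kb(\proj\ce)$ and not something larger. This is where the finite global dimension of $A$ and the resolution property from step (ii) of Theorem~\ref{t:main-thm} must be used carefully; everything else is formal manipulation of Verdier quotients.
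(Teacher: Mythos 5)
Your toolkit is exactly right — Proposition~\ref{P:OneIdempotent}, Corollary~\ref{C:Buchweitz}, and Proposition~\ref{P:StFrob} are indeed the ingredients — but the way you propose to combine them has a genuine gap, and it is precisely the step you flag as "the genuine obstacle." The paper's own proof is structured so as to avoid ever confronting that step.

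You propose to build a triangle functor $F\colon \ul{\ce}\to \bigl(\Db(\mod A)/\thick(Ae)\bigr)\big/\thick(q(\mod A/AeA))$ by applying $\Hom_{\ce}(P\oplus M,-)$ to projective resolutions in $\ce$ and then passing to $\K^{-,\mathrm{b}}$. This does not work as stated: a projective resolution $P^{\bullet}\to X$ in $\ce$ is sent to a complex $\Hom_{\ce}(P\oplus M,P^{\bullet})$ in $\K^{-}(\add Ae)$ which is acyclic in low degrees only after applying the exact functor $e(-)$, but whose full cohomology (the $\Hom_{\ce}(M,P^{\bullet})$ part) need not be bounded, so the complex does not define an object of $\K^{-,\mathrm{b}}(\proj A)\simeq\Db(\mod A)$. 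Replacing this by the naive assignment $X\mapsto\Hom_{\ce}(P\oplus M,X)$ gives a well-defined additive functor on $\ul{\ce}$ (maps factoring through $\add P$ land in $\add Ae\subseteq\thick(Ae)$), but it is not obviously a triangle functor: $\Hom_{\ce}(P\oplus M,-)$ is only left exact on conflations of $\ce$, and one would have to show that the failure of right-exactness — essentially $\Ext^{1}_{\ce}(M,-)$, which is killed by $e$ — always lies in $\thick(q(\mod A/AeA))$, and then that this makes the induced functor exact on triangles. This is exactly the "showing $\thick(q(\mod A/AeA))$ corresponds exactly to what must be killed" point you identified as difficult; it requires real work and is not supplied by the resolution argument from Theorem~\ref{t:main-thm}(ii) alone.

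The paper's actual proof takes a different and cleaner route: it never constructs the bridge functor $F$ you are after. Instead it forms a commutative square of triangle functors in which the top row is $\G$ from Proposition~\ref{P:OneIdempotent}, the bottom row is the restriction $\G^{\rm restr.}$ of $\G$ to $\Kb(\proj A)/\thick(Ae)$ (modulo $\thick(q(\mod A/AeA))$) on one side and $\thick(eA)/\Kb(\proj eAe)$ on the other, and the vertical maps are the natural inclusions $\I_{1},\I_{2}$; on the far right one has the inclusion $\I_{3}\colon\thick_{\ul{\ce}}(M)\hookrightarrow\ul{\ce}$ and the restriction $\widetilde{\P}^{\rm restr.}$ of $\widetilde{\P}$. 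Finiteness of $\gl A$ makes $\I_{1}$ an equivalence; $\G$ an equivalence plus the fact that $\G$ sends the generator $A$ to $eA$ makes $\G^{\rm restr.}$ an equivalence, whence $\I_{2}$ is an equivalence by commutativity; $\widetilde{\P}$ is fully faithful (Corollary~\ref{C:Buchweitz}) and sends $P\oplus M$ to $eA$, so $\widetilde{\P}^{\rm restr.}$ is fully faithful with image generating the target, hence an equivalence up to summands. By commutativity of the right-hand square, $\widetilde{\P}$ and $\I_{3}$ are then equivalences up to summands; since $\thick_{\ul{\ce}}(M)$ is closed under summands in $\ul{\ce}$, $\I_{3}$ is an honest equivalence (giving (2)), and if $\ul{\ce}$ is idempotent complete then so is $\thick_{\ul{\ce}}(M)$, forcing $\widetilde{\P}^{\rm restr.}$ and hence $\widetilde{\P}$ to be honest equivalences. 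In short: the paper replaces your problematic identification of the source of $\G$ with $\ul{\ce}$ by a generator-plus-fully-faithful argument applied to the already-constructed $\widetilde{\P}$, so nothing new needs to be built.

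Your proposal for part (2) is essentially fine once (1) is in hand; it mirrors Theorem~\ref{t:main-thm}(3), and the paper obtains it simultaneously as the statement that $\I_{3}$ is an equivalence.
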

\begin{proof}
Let $e \in A$ be the idempotent corresponding to the identity endomorphism $1_{P}$ of $P$, then $eAe=E$.  We have the following commutative diagram of categories and functors. 
\begin{align*}
\begin{xy}\SelectTips{cm}{}
\xymatrix{
\frac{\left( \displaystyle\Db(\mod A)/ \thick(Ae) \right)}{\displaystyle \thick\bigl(q(\mod A/AeA)\bigr)  } \ar[rr]^(.58){\displaystyle \G}_(.58){\sim} && \frac{ \displaystyle \Db(\mod eAe) }{\displaystyle \Kb(\proj eAe) } && \ul{\ce} \ar[ll]_(.39){\displaystyle \widetilde{\P}} \\
\frac{\left( \displaystyle \Kb(\proj A)/ \thick(Ae) \right)}{\displaystyle \thick \bigl(q (\mod A/AeA)\bigr)  } \ar[rr]^(.58){\displaystyle \G^{\rm restr.}}_(.52){\sim} \ar[u]^{\displaystyle \I_{1}}&& \frac{ \displaystyle \thick(eA) }{ \displaystyle \Kb(\proj eAe) } \ar[u]^{\displaystyle \I_{2}}&& \thick_{\ul{\ce}}(M) \ar[ll]_(.39){\displaystyle \widetilde{\P}^{\rm restr.}}^(.47){\sim} \ar[u]^{\displaystyle \I_{3}}
}
\end{xy}
\end{align*}
where $\I_i$ are the natural inclusions.  Since $A$ has finite global dimension the inclusion $\Kb(\proj A) \ra \Db(\mod A)$ is an equivalence and so $\I_{1}$ is an equivalence.  But $\G$ is an equivalence from \ref{P:OneIdempotent}, and $\G^{\rm restr.}$ denotes its restriction. It is also an equivalence since $\G$ maps the generator $A$ to $eA$. Thus, by commutativity of the left square we deduce that $\I_{2}$ is an equivalence. Now $\widetilde{\P}$ denotes the fully faithful functor from \ref{C:Buchweitz}, so since $\widetilde{\P}$ maps $P\oplus M$ to $\Hom_{\ce}(P, P \oplus M)$, which is isomorphic to $eA$ as left $eAe$-modules, the restriction $\widetilde{\P}^{\rm restr.}$ is a triangle equivalence up to summands. Hence the fully faithful functors $\widetilde{\P}$ and $\I_3$ are also equivalences, up to summands. In particular, $\I_3$ is an equivalence. If $\ul{\ce}$ is idempotent complete then $\thick_{\ul{\ce}}(M)$ is idempotent complete and $\widetilde{\P}^{\rm restr.}$ is an equivalence. It follows that $\widetilde{\P}$ is an equivalence in this case.
\end{proof}

\subsection{A result of Auslander--Solberg} 

Let $K$ be a field and denote $\mathbb{D}:=\Hom_K(-,K)$. The following is implicitly included in Auslander--Solberg's relative homological algebra \cite{AS93-1} (compare~\cite[5.1]{C}), and will be required later (in \S\ref{SCMsection} and \S\ref{ss:examples}) to produce examples of Frobenius categories on which we can apply our previous results.

\begin{prop}\label{new Frobenius structure}
Let $\ce$ be a $K$-linear exact category with enough projectives $\cp$ and enough injectives $\ci$. Assume that there exist an equivalence $\tau\colon\underline{\ce}\to\overline{\ce}$ and
a functorial isomorphism $\Ext^1_{\ce}(X,Y)\simeq \mathbb{D}\overline{\Hom}_{\ce}(Y,\tau X)$ for any $X,Y\in\ce$.
Let $\cm$ be a functorially finite subcategory of $\ce$ containing $\cp$ and $\ci$, and satisfies $\tau\underline{\cm}=\overline{\cm}$. Then\\
\t{(1)} Let $0\to X\xrightarrow{f} Y\xrightarrow{g} Z\to 0$ be an exact sequence in $\ce$. Then $\Hom_{\ce}(\cm,g)$ is surjective if and only if $\Hom_{\ce}(f,\cm)$ is surjective.\\
\t{(2)} $\ce$ has the structure of a Frobenius category whose projective objects are exactly $\add\cm$. More precisely, the short exact sequences of this Frobenius structure are the short exact sequences $0\to X\xrightarrow{f} Y\xrightarrow{g} Z\to 0$ of $\ce$ such that $\Hom_{\ce}(f,\cm)$ is surjective.
\end{prop}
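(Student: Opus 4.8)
The plan is to verify the axioms of a Frobenius exact structure for the class $\mathcal{S}$ of short exact sequences $0\to X\xrightarrow{f} Y\xrightarrow{g} Z\to 0$ in $\ce$ for which $\Hom_{\ce}(f,\cm)$ is surjective, using part (1) to switch freely between the ``$\Hom_{\ce}(f,\cm)$ surjective'' and ``$\Hom_{\ce}(\cm,g)$ surjective'' descriptions. So the first step is to prove (1): given $0\to X\xrightarrow{f} Y\xrightarrow{g} Z\to 0$ exact in $\ce$, apply $\Hom_{\ce}(-,\cm)$ and $\Hom_{\ce}(\cm,-)$ and use the Auslander--Reiten type isomorphism $\Ext^1_{\ce}(-,-)\simeq \mathbb{D}\,\overline{\Hom}_{\ce}(-,\tau-)$ together with $\tau\underline{\cm}=\overline{\cm}$. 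Concretely, the obstruction to lifting a map $M\to Z$ (for $M\in\cm$) along $g$ lives in $\Ext^1_{\ce}(M,X)$, while the obstruction to extending a map $X\to M'$ along $f$ lives in $\Ext^1_{\ce}(Z,M')$; the functorial isomorphism identifies these $\Ext$ groups with $\mathbb{D}\,\overline{\Hom}_{\ce}(X,\tau M)$ and $\mathbb{D}\,\overline{\Hom}_{\ce}(M',\tau^{-1}Z)$ respectively, and one checks that the two connecting maps are transpose to each other under this duality. Since $\tau$ restricts to an equivalence $\underline{\cm}\xrightarrow{\sim}\overline{\cm}$, surjectivity of $\Hom_{\ce}(\cm,g)$ (i.e. every $M\to Z$ lifts, i.e. the relevant component of the connecting map vanishes on all of $\cm$) is then equivalent to surjectivity of $\Hom_{\ce}(f,\cm)$.

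Next I would check that $\mathcal{S}$ is an exact structure in the sense of Quillen. That $\mathcal{S}$ is closed under isomorphism and contains the split sequences is immediate. Closure under pullback and pushout, and the two-out-of-three/obscure axioms, all follow because $\ce$ is already exact and because the defining condition is the vanishing of a bifunctorial obstruction: a pullback of an $\mathcal{S}$-conflation along an arbitrary map, or a pushout, again has the relevant $\Ext$-obstruction vanishing on $\cm$, using functoriality of the connecting homomorphism in the long exact sequence. The composability of admissible monics (resp. epics) is where one uses (1): a composite $f'\circ f$ of admissible monics is admissible because $\Hom_{\ce}(\cm, g g')$ is surjective whenever $\Hom_{\ce}(\cm,g)$ and $\Hom_{\ce}(\cm,g')$ are, and this transfers back via (1). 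Here the hypothesis that $\cm$ is functorially finite ensures there are enough conflations in $\mathcal{S}$: given any $Z$, a right $\cm$-approximation $M\to Z$ need not be epic in $\ce$, but combining it with a genuine deflation $\cp\twoheadrightarrow Z$ from a projective produces an admissible epic $M\oplus \cp\twoheadrightarrow Z$ lying in $\mathcal{S}$, and dually for admissible monics using $\ci$ and left $\cm$-approximations.

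Then I would identify the projective objects of $(\ce,\mathcal{S})$. An object $W$ is $\mathcal{S}$-projective iff $\Hom_{\ce}(W,-)$ sends every $\mathcal{S}$-conflation to a short exact sequence, i.e. iff $\Hom_{\ce}(W,g)$ is surjective for every admissible epic $g$. By (1) this is equivalent to: for every conflation with $\Hom_{\ce}(\cm,g)$ surjective, also $\Hom_{\ce}(W,g)$ is surjective. Every object of $\add\cm$ visibly has this property since $\cm\subseteq\cm$, so $\add\cm\subseteq\proj(\ce,\mathcal{S})$; conversely, taking a right $\cm$-approximation $M\xrightarrow{h} W$ and completing $M\oplus\cp\to W$ to an $\mathcal{S}$-admissible epic as above, projectivity of $W$ forces this epic to split, exhibiting $W$ as a summand of an object of $\add(\cm\oplus\cp)=\add\cm$ (recall $\cp\subseteq\cm$). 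Dually, the $\mathcal{S}$-injectives are exactly $\add\cm$, using $\ci\subseteq\cm$ and left $\cm$-approximations. Since projectives and injectives coincide (both equal $\add\cm$) and there are enough of each, $(\ce,\mathcal{S})$ is Frobenius.

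The main obstacle I expect is the proof of part (1): making precise that the two ``obstruction maps'' coming from applying $\Hom_{\ce}(\cm,-)$ to $g$ and $\Hom_{\ce}(-,\cm)$ to $f$ are genuinely adjoint/transpose under the functorial duality $\Ext^1_{\ce}(X,Y)\simeq\mathbb{D}\,\overline{\Hom}_{\ce}(Y,\tau X)$, and that the equivalence $\tau$ correctly swaps the roles of $X$ and $Z$ when $\cm$ is applied on the appropriate side. This is essentially a diagram chase with the naturality squares of the AR-duality, but one has to be careful that the module-theoretic statements in \cite{AS93-1}, phrased for $\mod\Lambda$, carry over to the present axiomatic setting; everything else is a routine, if somewhat lengthy, verification of exact-category axioms. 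Once (1) is in hand, the rest of the argument is formal, and the functorial finiteness of $\cm$ is used only to guarantee enough admissible mono/epis.
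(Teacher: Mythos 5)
Your part (2) is essentially the paper's argument: verify the exact-category axioms for the class of conflations singled out in (1), use functorial finiteness of $\cm$ together with $\cp\subseteq\cm$ (dually $\ci\subseteq\cm$) to produce enough admissible epics (monics) of the form $M\oplus P\twoheadrightarrow X$, and identify the projective-injectives with $\add\cm$ by the standard splitting argument. The paper outsources the routine axiom-checking to \cite{DRSS} and \cite{Kalck}, but the content is the same.

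For part (1), however, the ``transpose of two connecting maps'' plan is the weak point, and the paper avoids it entirely. The paper's argument applies the AR-type duality to a \emph{single} morphism: from the long exact sequence
\[
\Hom_{\ce}(M,Y)\xrightarrow{\ \Hom(M,g)\ }\Hom_{\ce}(M,Z)\longrightarrow\Ext^1_{\ce}(M,X)\xrightarrow{\ \Ext^1(M,f)\ }\Ext^1_{\ce}(M,Y),
\]
surjectivity of $\Hom_\ce(\cm,g)$ is equivalent to injectivity of $\Ext^1_\ce(\cm,f)$. By \emph{naturality} of the isomorphism $\Ext^1_\ce(M,-)\cong\mathbb{D}\overline{\Hom}_\ce(-,\tau M)$, the map $\Ext^1_\ce(M,f)$ is identified with $\mathbb{D}\,\overline{\Hom}_\ce(f,\tau M)$, whose injectivity is equivalent to surjectivity of $\overline{\Hom}_\ce(f,\tau M)$. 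Since $\tau\underline\cm=\overline\cm$ this is surjectivity of $\overline{\Hom}_\ce(f,\cm)$, and because $\ci\subseteq\cm$ and $f$ is an admissible monic, this upgrades to surjectivity of $\Hom_\ce(f,\cm)$ on the nose. No comparison between $\delta_1\colon\Hom(M,Z)\to\Ext^1(M,X)$ and $\delta_2\colon\Hom(X,M')\to\Ext^1(Z,M')$ is needed, and setting up such a transpose relation honestly is delicate: the domains and codomains live in different stable quotients ($\Hom$ versus $\underline{\Hom}$ versus $\overline{\Hom}$), so the two maps are not literally dual and one would need to track the projective/injective corrections carefully. There is also a slip in your description: the duality gives $\Ext^1_\ce(Z,M')\cong\mathbb{D}\overline{\Hom}_\ce(M',\tau Z)$, not $\mathbb{D}\overline{\Hom}_\ce(M',\tau^{-1}Z)$. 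Replacing your transpose computation by the long-exact-sequence argument above both fixes the slip and removes the ``main obstacle'' you rightly flagged.
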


\begin{proof}
(1) Applying $\Hom_{\ce}(\cm,-)$ to $0\to X\to Y\to Z\to 0$, we have an exact sequence
\begin{eqnarray}
\Hom_{\ce}(\cm,Y)\xrightarrow{g}\Hom_{\ce}(\cm,Z)\to\Ext^1_{\ce}(\cm,X)\xrightarrow{f}\Ext^1_{\ce}(\cm,Y).
\end{eqnarray}
Thus we know that $\Hom_{\ce}(\cm,g)$ is surjective if and only if
$\Ext^1_{\ce}(\cm,f)$ is injective.
Using the Auslander-Reiten duality, this holds if and only if $\overline{\Hom}_{\ce}(f,\tau\cm)$ is surjective, which holds if and only if $\overline{\Hom}_{\ce}(f,\cm)$ is surjective. This holds if and only if $\Hom_{\ce}(f,\cm)$ is surjective.

(2) One can easily check (e.g. by using \cite[1.4,1.7]{DRSS}) that exact
sequences fulfilling the equivalent conditions in (1)
satisfy the axioms of exact categories in which any
object of $\add \cm$ is a projective and an injective object
(see \cite[2.28]{Kalck} for details).

We will show that $\ce$ has enough projectives with respect to this exact structure. For any $X\in\ce$, we take a right $\cm$-approximation $f\colon N'\to X$ of $X$. Since $\cm$ contains $\cp$, any morphism from $\cp$ to $X$ factors through $f$.
By a version of \ref{technical lemma} for exact categories, we have an exact sequence
\[
0\to Y\to N'\oplus P\xrightarrow{{f\choose 0}} X\to0.
\]
in $\ce$ with $P\in\cp$. This sequence shows that $\ce$ has enough projectives with respect to this exact structure.

Dually we have that $\ce$ has enough injectives.
Moreover, both projective objects and injective objects are $\add\cm$.
Thus the assertion holds.
\end{proof}

\section{Frobenius structures on special Cohen--Macaulay modules}\label{SCMsection}
Throughout this section we let $R$ denote a complete local rational surface singularity over an algebraically closed field of characteristic zero. Because of the characteristic zero assumption, rational singularities are always Cohen--Macaulay.  We refer the reader to \S\ref{uncount section}  for more details regarding  rational surface singularities.

We denote $\CM(R)$ to be the category of maximal Cohen--Macaulay (=CM) $R$-modules.  Since $R$ is normal and two-dimensional, a module is CM if and only if it is reflexive.  The category $\CM(R)$, and all subcategories thereof, are Krull--Schmidt categories since $R$ is complete local. One such subcategory is the category of \emph{special} CM modules, denoted $\SCM(R)$, which consists of all those CM $R$-modules $X$ satisfying $\Ext^1_R(X,R)=0$.

The category $\SCM(R)$ is intimately related to the geometry of $\Spec R$.  If we denote the minimal resolution of $\Spec R$ by
\[
Y\stackrel{\pi}{\longrightarrow}\Spec R,
\]
and define $\{E_i\}_{i\in I}$ to be the set of exceptional curves, then the following is well known:

\begin{prop}\label{known1}\t{(1)} There are only finitely many indecomposable objects in $\SCM(R)$.\\
\t{(2)} Indecomposable non-free objects in $\SCM(R)$ correspond bijectively to $\{E_i\}_{i\in I}$.
\end{prop}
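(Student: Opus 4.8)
The plan is to deduce this from Wunram's work on the $\mathrm{GL}(2)$ McKay correspondence \cite{Wun}, together with the general theory of rational surface singularities. First I would recall the basic structural facts: since $R$ is a complete local rational surface singularity over an algebraically closed field of characteristic zero, it is Cohen--Macaulay and normal of dimension two, so $\CM(R)$ coincides with the category of reflexive $R$-modules and is a Krull--Schmidt category (as noted in the text). The key input is the theory of \emph{full} sheaves on the minimal resolution $\pi\colon Y\to\Spec R$, due to Esnault and Wunram: to each reflexive $R$-module $X$ one associates the full sheaf $\widetilde{X}:=(\pi^*X)/\mathrm{tors}$ on $Y$, and conversely a full sheaf $\mathcal{F}$ gives back $\pi_*\mathcal{F}$, establishing an equivalence between reflexive $R$-modules and full sheaves on $Y$. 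One then needs the characterization (again Esnault--Wunram, or Wunram \cite{Wun}) that $X\in\SCM(R)$, i.e.\ $\Ext^1_R(X,R)=0$, if and only if the associated full sheaf $\widetilde{X}$ is \emph{special}, meaning $H^1(Y,\widetilde{X}^\vee\otimes\omega_Y)=0$ (equivalently $\widetilde{X}\otimes\omega_Y$ has no higher cohomology in the appropriate sense).

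For part (2), I would invoke Wunram's classification theorem \cite{Wun}: the indecomposable special full sheaves on $Y$ are precisely $\mathcal{O}_Y$ together with a family $\{\mathcal{M}_i\}_{i\in I}$ indexed by the exceptional curves $E_i$, uniquely characterized by $\det\mathcal{M}_i$ being trivial away from $E_i$ and $c_1(\mathcal{M}_i)\cdot E_j=\delta_{ij}$ (together with the specialty condition and indecomposability). Pushing forward via $\pi_*$ and using the equivalence of the previous paragraph translates this directly into the stated bijection between non-free indecomposable special CM modules and $\{E_i\}_{i\in I}$. Part (1) is then immediate: the dual graph of the minimal resolution of a rational surface singularity is a finite tree, so $I$ is a finite set, hence $\SCM(R)$ has only finitely many indecomposables; alternatively one can cite that rational surface singularities have finite CM type after taking into account only the special modules, or simply that $|I|<\infty$.

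The main obstacle is not really a logical gap but rather deciding how much of the Esnault--Wunram--Wunram machinery to reprove versus cite. The genuinely substantive point that one cannot avoid is the translation of the homological condition $\Ext^1_R(X,R)=0$ into the cohomological specialty condition on $\widetilde{X}$; this uses duality on the resolution and the fact that $\pi$ has rational singularities (so $\mathbf{R}\pi_*\mathcal{O}_Y=\mathcal{O}_{\Spec R}$), and it is the heart of why the statement is true. Since for the purposes of this paper \ref{known1} is quoted as "well known," I would keep the proof brief: state the equivalence with full sheaves, quote the specialty dictionary, and cite \cite{Wun} for the classification, with a sentence or two explaining the finiteness. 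A more self-contained alternative would be to run Auslander--Reiten theory for $\SCM(R)$ directly, but that is heavier and ultimately relies on the same geometric facts, so I would not pursue it here.
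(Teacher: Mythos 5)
Your proposal is correct and follows essentially the same route as the paper: (2) is deduced by citing Wunram's classification of indecomposable special full sheaves on the minimal resolution \cite{Wun}, together with the identification (via \cite[2.7]{IW1}, which plays the role of your ``specialty dictionary'') of the cohomological specialty condition with $\Ext^1_R(X,R)=0$; and (1) follows immediately since $I$ is finite. The paper simply compresses all of this into a one-line citation, whereas you have unfolded what lies inside Wunram's theorem, but the logical structure is identical.
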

\begin{proof}
(2) is Wunram \cite[1.2]{Wun} (using \cite[2.7]{IW1} to show that definition of special in \cite{Wun} is the same as the one used here), and (1) is a consequence of (2).
\end{proof}

Thus by \ref{known1}(2) $\SCM(R)$ has an additive generator $M:=R\oplus\bigoplus_{i\in I}M_i$, where as in the introduction by convention $M_i$ is the indecomposable special CM module corresponding to $E_i$.  The corresponding endomorphism ring $\Lambda:=\End_R(M)$ is called the \emph{reconstruction algebra} of $R$, see \cite{ReconA,IW1}.  The following is also well-known.

\begin{prop}\label{glrecon3}
Consider the reconstruction algebra $\Lambda$.  Then 
\[
\gl\Lambda=\left\{ \begin{array}{cl} 2 & \mbox{if $R$ is Gorenstein}\\ 3 & \mbox{else.}\end{array}  \right. 
\]
\end{prop}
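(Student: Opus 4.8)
The plan is to reduce the computation of $\gl\Lambda$ to known results in the literature, splitting into the Gorenstein and non-Gorenstein cases. First I would recall that for a complete local rational surface singularity $R$, the reconstruction algebra $\Lambda=\End_R(M)$ with $M=R\oplus\bigoplus_{i\in I}M_i$ is in fact the endomorphism algebra of \emph{all} the special CM modules, and hence by Wunram's correspondence (\ref{known1}) its indecomposable projectives are indexed by $I\cup\{0\}$, matching the vertices of the dual graph of the minimal resolution $Y\to\Spec R$ together with one extra vertex. The key input is that $\Lambda$ always has finite global dimension; this is established in \cite{IW1} (and in the $\SL(2)$-case essentially goes back to \cite{GL2}), so the only content is pinning down the exact value.

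For the upper bound in the non-Gorenstein case, I would invoke \cite[Theorem~?]{IW1} directly: there it is proved that $\gl\Lambda\le 3$ by an explicit analysis of projective resolutions of the simple $\Lambda$-modules, using the AR sequences in $\SCM(R)$ (equivalently, the structure of the reconstruction algebra as a quiver with relations read off from the dual graph). For the lower bound, i.e.\ that $\gl\Lambda\ge 3$ when $R$ is not Gorenstein, the cleanest route is to exhibit one simple $\Lambda$-module of projective dimension exactly $3$: the simple $S_0$ at the vertex corresponding to $R$ itself has a minimal projective resolution whose last nonzero term is forced to be nonzero precisely because $R$ is not Gorenstein (the failure of self-duality of $\SCM(R)$, equivalently the presence of discrepant curves, obstructs the resolution from terminating at step $2$). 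This can be phrased using local duality: $\pd_\Lambda S_0\le 2$ would force $R$ to be Gorenstein by the argument in \cite{IW1} relating $\id_\Lambda\Lambda$ to $\pd$ of simples and to the canonical module of $R$.

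In the Gorenstein case, $R$ is then a quotient singularity $\mathbb{C}[[x,y]]^G$ with $G\le\SL(2,\mathbb{C})$, all exceptional curves are $(-2)$-curves, $\SCM(R)=\CM(R)$, and $\Lambda=\End_R(R\oplus\bigoplus M_i)$ is the (completed) preprojective algebra of the extended Dynkin quiver, equivalently the skew group ring $\mathbb{C}[[x,y]]\# G$ up to Morita equivalence. This algebra is well known to have global dimension $2$: it is a non-commutative crepant resolution of $R$, and $\gl(\mathbb{C}[[x,y]]\# G)=\gl\mathbb{C}[[x,y]]=2$. So $\gl\Lambda=2$ here. The main obstacle is the lower bound $\gl\Lambda\ge 3$ in the non-Gorenstein case: one must produce a genuine length-$3$ resolution and show it cannot be shortened, and the honest way to see this is via the relationship between $\gl\Lambda$, the injective dimension of $\Lambda$ over itself, and the non-Gorensteinness of $R$ — precisely the kind of statement made later in the paper (\ref{precise id intro}), so the bulk of the real work is already in \cite{IW1} and I would cite it rather than reprove it.
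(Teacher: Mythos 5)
Your proposal is correct and takes essentially the same route as the paper: the paper's proof of \ref{glrecon3} is itself just a citation to \cite[2.10]{IW1}, \cite[2.6]{IW2} and (for the geometric argument) \cite{GL2}, and your plan likewise defers the real work to those references. The expository scaffolding you add is accurate in outline --- Gorenstein case via the NCCR/skew-group-ring identification giving $\gl\Lambda=2$, non-Gorenstein upper bound $\le 3$ from \cite{IW1}, and the lower bound obstructed by the failure of Gorensteinness --- and indeed the lower-bound mechanism you gesture at (relating $\id_\Lambda\Lambda$ to the canonical module and the AR translation) is precisely what the paper itself uses later in the proof of \ref{precise id}, so this is internally consistent. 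The one detail I would not commit to without checking \cite{IW1} is your specific assertion that the simple $S_0$ at the $R$-vertex is the one realizing projective dimension $3$; the paper's own later argument via $\tau N_i\notin\SCM(R)$ locates the obstruction at a discrepant vertex rather than at $R$, so be careful to verify which simple actually carries the length-$3$ resolution before presenting this as a proof rather than a sketch.
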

\begin{proof}
An algebraic proof can be found in \cite[2.10]{IW1} or \cite[2.6]{IW2}.  A geometric proof can be found in \cite{GL2}.
\end{proof}

\begin{remark}\label{CM not CM}
The reconstruction algebra $\Lambda$, and some of the $e\Lambda e$ below, will turn out to be Iwanaga--Gorenstein in \S\ref{geometry}.  However we remark here that $\Lambda$ is usually \emph{not} Gorenstein in the stronger sense that $\omega_\Lambda:=\Hom_R(\Lambda,\omega_R)$ is a projective $\Lambda$-module.  Thus, unfortunately the objects of $\GP(\Lambda)$ are not simply those $\Lambda$-modules that are CM as $R$-modules, i.e.\  $\GP(\Lambda)\subsetneq\{X\in\mod(\Lambda)\ |\ X\in\CM(R)\}$ in general.  In this paper we will always reserve `CM' to  mean CM as an $R$-module, and this is why we use the terminology `GP' (=Gorenstein projective) for non-commutative Iwanaga--Gorenstein rings.
\end{remark}

We will be considering many different factor categories of $\SCM(R)$, so as to avoid confusion we now fix some notation.

\begin{defin}
Let $X\in\SCM(R)$.  We define the factor category $\underline{\SCM}_X(R)$ to be the category consisting of the same objects as $\SCM(R)$, but where
\[
\Hom_{\underline{\SCM}_X(R)}(a,b):=\frac{\Hom_{\SCM(R)}(a,b)}{\mathcal{X}(a,b)},
\]
where $\mathcal{X}(a,b)$ is the subgroup of morphisms $a\to b$ which factor through an element in $\add X$.
\end{defin}

As in the introduction, we consider the module $D:=R\oplus\left(\bigoplus_{d\in\mathcal{D}}M_d\right)$.  Algebraically the following is known; the geometric properties of $D$ will be established in \ref{D is derived equiv} and \ref{total discrepant} later.

\begin{prop}\label{known2}
\t{(1)} The category $\SCM(R)$ has the natural structure of a Frobenius category, whose projective objects are precisely the objects of $\add D$.  Consequently $\underline{\SCM}_D(R)$ is a triangulated category.\\
\t{(2)} For any indecomposable object $X$ in $\underline{\SCM}_D(R)$, there exists an AR triangle of the form
\[
X\to E\to X\to X[1].
\]
\t{(3)} The stable category $\underline{\SCM}_D(R)$ has a Serre functor $\mathbb{S}$ such that $\mathbb{S}X\simeq X[1]$ for any $X\in\underline{\SCM}_D(R)$.
\end{prop}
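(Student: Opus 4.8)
The plan is to prove Proposition~\ref{known2} by showing that $\SCM(R)$, together with the module $D$, fits the framework of Proposition~\ref{new Frobenius structure} (the Auslander--Solberg construction), and then to use the resulting Frobenius structure to extract the Auslander--Reiten triangles and the Serre functor. First I would verify the hypotheses of \ref{new Frobenius structure} with $\ce=\SCM(R)$ and $\cm=\add D$: since $R$ is a complete local rational surface singularity, $\CM(R)$ has Auslander--Reiten duality, and one checks that $\SCM(R)$ inherits an exact structure with enough projectives and injectives (both given by $\add R$) together with an equivalence $\tau\colon\underline{\SCM}(R)\to\overline{\SCM}(R)$ satisfying the functorial isomorphism $\Ext^1_R(X,Y)\simeq\mathbb{D}\,\overline{\Hom}_R(Y,\tau X)$. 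The key point is that $\tau$ on $\SCM(R)$ is (up to the appropriate twist) essentially the identity on the non-free indecomposables --- this is the $2$-dimensional CM phenomenon that $\Omega$ and $\tau$ agree up to shift --- so in particular $\tau\underline{\add D}=\overline{\add D}$, because $\cd$ is the set of discrepant curves and $D$ is closed under the relevant duality. Since $\SCM(R)$ has only finitely many indecomposables by \ref{known1}(1), every subcategory is functorially finite, so $\add D$ is functorially finite. Then \ref{new Frobenius structure}(2) gives part (1): $\SCM(R)$ carries a Frobenius structure with projective--injective objects exactly $\add D$, hence $\underline{\SCM}_D(R)$ is triangulated.

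For part (2), I would argue that $\underline{\SCM}_D(R)$ has Auslander--Reiten triangles, which follows from the general theory once one has a Serre functor (or, more directly, from the fact that a Hom-finite Krull--Schmidt triangulated category with a Serre functor has AR triangles, by Reiten--Van den Bergh). So parts (2) and (3) are really one statement, and the heart of the matter is constructing the Serre functor $\mathbb{S}$ on $\underline{\SCM}_D(R)$ and showing $\mathbb{S}X\simeq X[1]$. The strategy here is to transport the Auslander--Reiten duality on $\SCM(R)$ through the quotient. Concretely, for $X,Y\in\underline{\SCM}_D(R)$ one wants $\underline{\Hom}(X,Y)\simeq\mathbb{D}\,\underline{\Hom}(Y,\mathbb{S}X)$; starting from $\Ext^1_R(X,Y)\simeq\mathbb{D}\,\overline{\Hom}_R(Y,\tau X)$ and using that in the new Frobenius structure $\Ext^1$-groups (in the enlarged exact structure) compute stable Homs shifted by one, one identifies $\mathbb{S}$ with $\tau$ followed by the shift $[1]$, and then the observation that $\tau$ acts trivially (up to this identification) on the relevant quotient forces $\mathbb{S}\simeq[1]$. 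It may be cleanest to instead invoke the already-established machinery: by \ref{t:main-thm} applied with $\ce=\SCM_D(R)$ and the reconstruction algebra $\Lambda$ as noncommutative resolution (which has finite global dimension by \ref{glrecon3}), $\underline{\SCM}_D(R)\simeq\Dsg(E)$ for $E=\End_R(D)$ Iwanaga--Gorenstein of dimension at most $3$; but this route needs more, namely that $E$ has isolated singularities and Krull dimension making $\Dsg(E)$ $1$-Calabi--Yau, so I would keep the direct Auslander--Reiten-duality argument as the primary line.

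I expect the main obstacle to be pinning down the precise form of the Auslander--Reiten translate $\tau$ on $\SCM(R)$ and verifying $\tau\underline{\add D}=\overline{\add D}$ cleanly --- i.e.\ making rigorous the claim that, on the stable category modulo $D$, the translate $\tau$ is isomorphic to the shift. For this I would rely on the two-dimensional structure: for $R$ a normal surface singularity, $\CM(R)$ has AR sequences with $\tau X\cong X$ for $X$ not free when $R$ is Gorenstein, and more generally $\tau$ is given by $\Hom_R(-,\omega_R)\circ\Omega^{?}$; combined with Wunram's correspondence \ref{known1} identifying the indecomposables with exceptional curves, and with the behaviour of $\Ext^1_R(-,R)$ that defines speciality, one gets the compatibility of $\tau$ with the partition $I=\mathcal{C}\cup\mathcal{D}$ needed to conclude that $\add D$ is stable under $\tau$ in the stable/costable category. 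The remaining steps --- functorial finiteness, enough projectives/injectives, existence of AR triangles from the Serre functor --- are then routine invocations of \ref{new Frobenius structure} and standard triangulated-category theory.
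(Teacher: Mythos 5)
Your route to part (1) is essentially sound and in fact parallels what the paper does one proposition later in \ref{new Frobenius} (which generalizes \ref{known2}(1) by applying \ref{new Frobenius structure} with $(\ce,\cm,\tau)=(\SCM(R),\add N,\mathbb{S}[-1])$); the paper's own proof of \ref{known2}(1) just cites \cite[4.2]{IW2}, so that part is fine. The difficulty is in parts (2) and (3), where you invert the paper's logic. The paper takes the \emph{explicit} AR triangles $X\to E\to X\to X[1]$ as given by \cite[4.9]{IW2} and then deduces the Serre functor and its objectwise behaviour from the existence of AR triangles via \cite[I.2.3]{RVdB}. You propose instead to construct a Serre functor first and then read off AR triangles, but the crucial step --- showing that $\tau$ is (objectwise) trivial on $\underline{\SCM}_D(R)$, which is exactly what makes the AR triangle have the form $X\to E\to X\to X[1]$ and forces $\mathbb{S}X\simeq X[1]$ --- is left unsubstantiated. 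You flag this as the ``main obstacle,'' but the argument you sketch in mitigation (``$\Omega$ and $\tau$ agree up to shift,'' ``$\tau X\cong X$ when $R$ is Gorenstein'') does not apply: $R$ is generally not Gorenstein here, and the identity $\tau X\simeq X$ in $\underline{\SCM}_D(R)$ is not a formal consequence of two-dimensional CM theory. It is a nontrivial computation tied to the combinatorics of special CM modules and discrepant curves, and it is precisely what \cite[4.9]{IW2} supplies. Without it, your construction stalls before producing the Serre functor at all, and parts (2) and (3) remain unproved.

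A further, more subtle point: you aim for a \emph{functorial} isomorphism $\mathbb{S}\simeq[1]$, which is actually stronger than what \ref{known2}(3) asserts (and what the paper proves there). The remark immediately following the proposition explicitly disclaims functoriality of the isomorphism $\mathbb{S}X\simeq X[1]$ at this stage; functoriality is only established later by geometric means in \ref{main triangles}. So even if you completed the missing step, the purely algebraic AR-duality argument you outline would at best yield the objectwise statement the paper proves, and should not be presented as if it gave the 1-Calabi--Yau property outright. Your alternative suggestion of invoking \ref{t:main-thm} with the reconstruction algebra does yield $\underline{\SCM}_D(R)\simeq\Dsg(E)$, but as you note this by itself does not give 1-Calabi--Yau-ness; indeed the paper needs the tilting-bundle geometry (\ref{main Db}, \ref{main triangles}) precisely for that reason.
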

\begin{proof}
(1) The exact sequences are defined using the embedding $\SCM(R)\subseteq \mod R$, and the result follows from \cite[4.2]{IW2}.\\
(2) is \cite[4.9]{IW2}.\\
(3) $\underline{\SCM}_D(R)$ has AR triangles by (2), so there exists a Serre functor by \cite[I.2.3]{RVdB} such that
\[
\tau X\to E\to X\to\mathbb{S}X
\]
is the AR triangle.  By inspection of (2), we see that $\mathbb{S}X[-1]\simeq X$.
\end{proof}
\begin{remark}
The above proposition more-or-less asserts that the category $\underline{\SCM}_D(R)$ is 1-Calabi--Yau, but it does not show that the isomorphism in \ref{known2}(3) is functorial.  We prove that it is functorial in \ref{main triangles}, using geometric arguments.
\end{remark}

The following important observation, which generalises \ref{known2}(1), is obtained by applying~\ref{new Frobenius structure} to $(\ce,\cm,\tau)=(\SCM(R),\add N,\mathbb{S}[-1])$.

\begin{prop}\label{new Frobenius}
Let $N\in\SCM(R)$ such that $\add D\subseteq\add N$. Then\\
\t{(1)} Let $0\to X\xrightarrow{f} Y\xrightarrow{g} Z\to 0$ be an exact sequence of $R$-modules with $X,Y,Z\in\SCM(R)$. Then $\Hom_R(N,g)$ is surjective if and only if $\Hom_R(f,N)$ is surjective.\\
\t{(2)} $\SCM(R)$ has the structure of a Frobenius category whose projective objects are exactly $\add N$.  We denote it by $\SCM_N(R)$. More precisely, the short exact sequences of $\SCM_N(R)$ are the short exact sequences $0\to X\xrightarrow{f} Y\xrightarrow{g} Z\to 0$ of $R$-modules such that $\Hom_R(f,N)$ is surjective.
\end{prop}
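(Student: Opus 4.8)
The plan is to deduce both statements directly from the Auslander–Solberg type result \ref{new Frobenius structure}, by checking that the hypotheses there apply to the triple $(\ce,\cm,\tau)=(\SCM(R),\add N,\mathbb{S}[-1])$. First I would set up the exact category: $\SCM(R)$ inherits an exact structure from the ambient abelian category $\mod R$ via the embedding $\SCM(R)\subseteq\mod R$, where the conflations are the short exact sequences of $R$-modules with all three terms in $\SCM(R)$; this is the exact structure used in \ref{known2}(1). One must observe that $\SCM(R)$ is $K$-linear (here $K$ is the residue field, or the ground field) and that it has enough projectives and enough injectives in this exact structure — indeed by \ref{known2}(1) both are given by $\add D$, since $\SCM(R)$ is already Frobenius with projective–injectives $\add D$.

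Next I would verify the Auslander–Reiten duality hypothesis. By \ref{known2}(3) the stable category $\underline{\SCM}_D(R)$ carries a Serre functor $\mathbb{S}$ with $\mathbb{S}X\simeq X[1]$; combined with the AR triangles of \ref{known2}(2) this yields, in the usual way, a functorial isomorphism $\Ext^1_{\SCM(R)}(X,Y)\cong\mathbb{D}\,\overline{\Hom}_{\SCM(R)}(Y,\tau X)$ with $\tau=\mathbb{S}[-1]$, where $\mathbb{D}=\Hom_K(-,K)$ and $\overline{\Hom}$ denotes morphisms modulo those factoring through injectives (= $\add D$). Here $\overline{\ce}$ and $\underline{\ce}$ agree with the single stable category $\underline{\SCM}_D(R)$ because projectives and injectives coincide. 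Then I need $\cm:=\add N$ to be a functorially finite subcategory of $\SCM(R)$ containing $\cp=\ci=\add D$ — functorial finiteness is immediate because $\SCM(R)$ has only finitely many indecomposables by \ref{known1}(1), and the containment $\add D\subseteq\add N$ is our standing hypothesis — and to satisfy the stability condition $\tau\,\underline{\cm}=\overline{\cm}$. The latter reduces to checking that $\add N$ is closed under $\mathbb{S}[-1]$ in the stable category; since $\mathbb{S}\simeq[1]$ and $[2]=\mathrm{id}$ on $\underline{\SCM}_D(R)$, one has $\mathbb{S}[-1]\simeq\mathrm{id}$ up to summands, so $\tau\underline{\cm}=\underline{\cm}=\overline{\cm}$ holds trivially.

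With all hypotheses of \ref{new Frobenius structure} in place, part (1) of that proposition gives statement (1): for a short exact sequence $0\to X\xrightarrow{f}Y\xrightarrow{g}Z\to0$ in $\SCM(R)$, the map $\Hom_R(N,g)$ is surjective if and only if $\Hom_R(f,N)$ is surjective. (Strictly, \ref{new Frobenius structure}(1) is phrased for $\Hom_\ce(\cm,g)$ and $\Hom_\ce(f,\cm)$, which is the same thing since $\cm=\add N$.) Part (2) of \ref{new Frobenius structure} then gives statement (2): $\SCM(R)$ is a Frobenius category with projective–injective objects exactly $\add N$, and with conflations precisely those short exact sequences of $R$-modules (with terms in $\SCM(R)$) for which $\Hom_R(f,N)$ is surjective. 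I would expect the only real point requiring care — the ``main obstacle'' — to be the verification of the AR duality isomorphism in the precise functorial form demanded by \ref{new Frobenius structure}, i.e.\ translating the geometric/AR-triangle statement \ref{known2}(2)–(3) into the bifunctorial isomorphism $\Ext^1_{\ce}(X,Y)\cong\mathbb{D}\,\overline{\Hom}_{\ce}(Y,\tau X)$; once that is pinned down, everything else is a direct appeal to \ref{new Frobenius structure}.
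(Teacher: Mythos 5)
Your approach is exactly the paper's: the paper's entire proof of this proposition is the one-line statement that it follows by applying \ref{new Frobenius structure} to $(\ce,\cm,\tau)=(\SCM(R),\add N,\mathbb{S}[-1])$, and what you have done is spell out the verification of the hypotheses of \ref{new Frobenius structure}, which the paper leaves implicit. Your checks are essentially correct, with one caveat: when verifying $\tau\underline{\cm}=\overline{\cm}$ you invoke $[2]=\mathrm{id}$ on $\underline{\SCM}_D(R)$, but that fact is only established later (in \ref{main triangles}) as a consequence of \ref{main}, which itself relies on the present proposition, so as written your argument would be circular. Fortunately the appeal to $[2]=\mathrm{id}$ is unnecessary: \ref{known2}(3) already gives object-wise isomorphisms $\mathbb{S}X\simeq X[1]$, hence $\tau X=\mathbb{S}(X)[-1]\simeq X$ for every $X$, and since $\tau\underline{\cm}=\overline{\cm}$ is a closure condition on objects this suffices. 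The one remaining point worth stressing (which you correctly identify as the main obstacle) is that the functorial AR-duality isomorphism $\Ext^1_\ce(X,Y)\cong\mathbb{D}\overline{\Hom}_\ce(Y,\tau X)$ follows from the functorial Serre duality $\Hom_{\underline{\ce}}(X,Y)\cong\mathbb{D}\Hom_{\underline{\ce}}(Y,\mathbb{S}X)$ supplied by Reiten--Van den Bergh in \ref{known2}(3), via the identifications $\Ext^1_\ce(X,Y)\cong\Hom_{\underline{\ce}}(X[-1],Y)$ and $\tau=\mathbb{S}[-1]$; the non-functoriality flagged in the remark after \ref{known2} concerns only the isomorphism $\mathbb{S}X\simeq X[1]$, not the Serre duality pairing itself, so there is no difficulty.
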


We maintain the notation from above, in particular $\Lambda:=\End_R(M)$ is the reconstruction algebra, where $M:=R\oplus\left(\bigoplus_{i\in I}M_i\right)$, and $D:=R\oplus\left(\bigoplus_{d\in\mathcal{D}}M_d\right)$.  For any  summand $N$ of $M$, we denote $e_N$ to be the idempotent in $\Lambda$ corresponding to the  summand $N$.  The following is the main result of this section.

\begin{thm}\label{main}
Let $N\in\SCM(R)$ such that $\add D\subseteq\add N$. Then\\
\t{(1)} $e_N\Lambda e_N=\End_R(N)$ is an Iwanaga--Gorenstein ring of dimension at most three.\\
\t{(2)} There is an equivalence $\Hom_R(N,-)\colon\SCM(R)\to\GP(\End_R(N))$ that induces a triangle equivalence
\[
\underline{\SCM}_N(R)\simeq\underline{\GP}(\End_R(N)).
\]
\end{thm}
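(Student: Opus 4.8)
The plan is to deduce Theorem~\ref{main} as a direct application of the abstract machinery developed in \S\ref{ss:alg-tria-cl-sing}, specifically Theorem~\ref{t:main-thm}, together with the Frobenius structure produced in Proposition~\ref{new Frobenius}. First I would set $\ce:=\SCM_N(R)$, the Frobenius category from \ref{new Frobenius}(2), whose subcategory of projective objects is $\proj\ce=\add N$. To invoke \ref{t:main-thm} I must exhibit a noncommutative resolution of $\ce$ in the sense of \ref{defNCR}: that is, an object of $\ce$ containing $N$ as a summand whose endomorphism ring is noetherian of finite global dimension. The natural candidate is the additive generator $M:=R\oplus\bigoplus_{i\in I}M_i$ of $\SCM(R)$, since $\add D\subseteq\add N\subseteq\add M$ gives $N\in\add M$, and $\End_\ce(M)=\End_R(M)=\Lambda$ is the reconstruction algebra. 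The key input is then \ref{glrecon3}, which says $\gl\Lambda$ is $2$ if $R$ is Gorenstein and $3$ otherwise; in particular $\gl\Lambda\le 3<\infty$, and $\Lambda$ is noetherian (it is module-finite over the complete local ring $R$). Hence $\Lambda=\End_\ce(M)$ is a noncommutative resolution of $\ce$ with $n=\gl\Lambda\le 3$.

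Now \ref{t:main-thm}(1) applies directly with $E:=\End_\ce(N)=\End_R(N)$: it yields that $E$ is Iwanaga--Gorenstein of dimension at most $n\le 3$, proving part~(1). The identification $E=e_N\Lambda e_N$ is the standard fact that for a summand $N$ of $M$ with corresponding idempotent $e_N\in\Lambda=\End_R(M)$, one has $e_N\Lambda e_N\cong\End_R(N)$; this is immediate from $\Hom_R(M,M)e_N=\Hom_R(M,N)$ and $e_N\Hom_R(M,N)=\Hom_R(N,N)$. For part~(2), \ref{t:main-thm}(2) provides the functor $\Hom_\ce(N,-)\colon\ce\to\GP(E)$ which is an equivalence "up to direct summands", and an honest equivalence once $\ce$ is idempotent complete; it then induces the triangle equivalence $\underline{\ce}\xrightarrow{\simeq}\underline{\GP}(E)$. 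Since on morphisms the projective objects of $\ce$ are precisely $\add N$, the stable category $\underline{\ce}=\underline{\SCM}_N(R)$, and the Hom-functor here is $\Hom_R(N,-)$ restricted to $\SCM(R)$, so the statement of \ref{main}(2) follows verbatim.

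The one genuine point to check — and the step I expect to be the main (minor) obstacle — is idempotent completeness of $\ce=\SCM_N(R)$, needed to upgrade the "up to direct summands" equivalences to honest equivalences as stated in \ref{main}(2). This is where the hypotheses on $R$ do the work: $R$ is complete local, so $\CM(R)$ is a Krull--Schmidt category (as noted in \S\ref{SCMsection}), hence idempotent complete, and $\SCM(R)$ is a full subcategory closed under direct summands in $\CM(R)$ (being defined by the vanishing condition $\Ext^1_R(X,R)=0$, which passes to summands), so $\SCM(R)$ is itself Krull--Schmidt and in particular idempotent complete. Changing the exact structure to the Frobenius structure of \ref{new Frobenius}(2) does not change the underlying additive category, so $\ce$ is idempotent complete; consequently $\underline{\ce}$ is idempotent complete as well. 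This justifies dropping all the "up to direct summands" qualifiers. I would close by remarking that the same argument gives $\id_E E\le 3$ and $\id E_E\le 3$; the sharper statement that the injective dimension is exactly $2$ or $3$ (according to whether $R$ is Gorenstein) is deferred, as it requires the geometric input of \ref{precise id intro}.
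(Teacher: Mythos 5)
Your proof is correct and takes essentially the same route as the paper: both invoke \ref{new Frobenius}(2) to equip $\SCM(R)$ with the Frobenius structure $\SCM_N(R)$ having $\proj=\add N$, both produce the required noncommutative resolution from \ref{glrecon3} (you take $M=R\oplus\bigoplus_{i\in I}M_i$ directly, the paper takes some $N\oplus X$ with $\add(N\oplus X)=\SCM(R)$ — Morita-equivalent choices), and both then apply \ref{t:main-thm}(1)--(2) together with idempotent completeness of $\SCM(R)$. Your explicit remarks about $e_N\Lambda e_N\cong\End_R(N)$ and about why $\SCM(R)$ is Krull--Schmidt/idempotent complete are points the paper leaves implicit, but the argument is the same.
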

\begin{proof}
By \ref{new Frobenius}(2), $\SCM(R)$ has the structure of a Frobenius category in which $\proj\SCM(R)=\add N$.  Since $\SCM(R)$ has finite type, there is some $X\in\SCM(R)$ such that $\add(N\oplus X)=\SCM(R)$, in which case $\End_R(N\oplus X)$ is Morita equivalent to the reconstruction algebra, so $\gl\End_R(N\oplus X)\leq 3$ by \ref{glrecon3}.  Hence (1) follows from \ref{t:main-thm}(1), and since $\SCM(R)$ is idempotent complete, (2) follows from \ref{t:main-thm}(2).
\end{proof}

\begin{remark}
We will give an entirely geometric proof of \ref{main}(1) in \S\ref{geometry}, which also holds in greater generality.
\end{remark}

The following corollary will be strengthened in \ref{stengthenedGLdim} later.

\begin{cor}\label{cor to strengthen}
Let $N\in\SCM(R)$ such that $\add D\subseteq\add N\subsetneq\add M$. Then $e_N\Lambda e_N=\End_R(N)$ has infinite global dimension.
\end{cor}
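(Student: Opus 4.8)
The plan is to argue by contradiction: suppose $\Gamma := e_N\Lambda e_N = \End_R(N)$ has finite global dimension. Since $\add D \subseteq \add N$, the module $N$ contains $R$ as a summand, so $\Gamma$ is a noncommutative resolution of $R$ in the ordinary sense (finite global dimension, and $\End_R(N)$ is module-finite over the complete local ring $R$, hence noetherian). The key point is that $N$ is a \emph{reflexive} $R$-module (every special CM module is CM, hence reflexive, as $R$ is a normal surface), so $\End_R(N)$ is itself reflexive as an $R$-module, and thus $\Gamma$ is a noncommutative crepant resolution of $R$ in the sense of Van den Bergh only when $R$ is Gorenstein; in general the right statement is that finite global dimension of $\End_R(N)$ for $N$ reflexive over a normal surface singularity already forces $\add N$ to contain \emph{all} the special CM modules. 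This is the mechanism I would exploit.

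Concretely, I would proceed as follows. First, recall from \ref{new Frobenius}(2) that $\SCM(R)$ carries the Frobenius structure $\SCM_N(R)$ with $\proj \SCM_N(R) = \add N$. If in addition $\gl \Gamma < \infty$, then taking $M' \in \SCM(R)$ with $\add(N \oplus M') = \SCM(R)$ — which exists since $\SCM(R)$ has finite type — we would like to conclude that $\add N$ is already all of $\SCM(R)$, contradicting $\add N \subsetneq \add M$. The cleanest route is to invoke \ref{main}(1) together with a counting/homological argument: by \ref{main}(2) and \ref{Buchweitz} we have $\underline{\SCM}_N(R) \simeq \underline{\GP}(\Gamma) \simeq \Dsg(\Gamma)$, and if $\gl \Gamma < \infty$ then $\Dsg(\Gamma) = 0$, so $\underline{\SCM}_N(R) = 0$, i.e.\ every object of $\SCM(R)$ is projective in the Frobenius structure $\SCM_N(R)$, i.e.\ $\SCM(R) = \add N$. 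This directly contradicts $\add N \subsetneq \add M = \SCM(R)$.

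So the proof reduces to the implication: $\gl \End_R(N) < \infty \Rightarrow \Dsg(\End_R(N)) = 0$. This is standard — for a noetherian ring $\Gamma$ of finite global dimension, $\Kb(\proj \Gamma) = \Db(\mod \Gamma)$, hence $\Dsg(\Gamma) = 0$ — so I would simply cite it. The one subtlety to check is that $\gl \End_R(N\oplus M') < \infty$ is \emph{not} needed here (that is the content of \ref{main} via \ref{glrecon3}); we only need the reverse direction, and the Frobenius structure on $\SCM(R)$ with projectives $\add N$ together with the triangle equivalence $\underline{\SCM}_N(R) \simeq \Dsg(\End_R(N))$ from \ref{main}(2). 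I therefore expect no genuine obstacle; the only thing to be careful about is making sure the hypothesis $\add D \subseteq \add N$ is invoked (it is exactly what makes \ref{main} applicable, so that we have the triangle equivalence with $\Dsg(\End_R(N))$ at all), and that the strict inclusion $\add N \subsetneq \add M$ is what produces a nonzero object in $\underline{\SCM}_N(R)$: any indecomposable $M_i \in \SCM(R) \setminus \add N$ is nonzero in $\underline{\SCM}_N(R)$ since it is indecomposable and not in $\add N = \proj \SCM_N(R)$.
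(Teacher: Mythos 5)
Your argument is correct and is exactly the paper's proof, just phrased as a contradiction: apply \ref{main}(2) and Buchweitz to get $\underline{\SCM}_N(R)\simeq\Dsg(\End_R(N))$, observe that $\add N\subsetneq\add M$ forces this to be nonzero, and conclude $\gl\End_R(N)=\infty$ by the standard fact that finite global dimension would make the singularity category vanish. Your first paragraph's detour through reflexivity and NCCRs is unnecessary and somewhat misleading (you abandon it anyway); the core mechanism is the one you correctly isolate in the second paragraph.
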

\begin{proof}
By \ref{main}(2), we know that $\underline{\SCM}_N(R)\simeq\underline{\GP}(\End_R(N))\simeq\Dsg(\End_R(N))$, where the last equivalence holds by Buchweitz \cite[4.4.1(2)]{Buch}.  It is clear that $\underline{\SCM}_N(R)\neq 0$ since $\add N\subsetneq \add M$.  Hence $\Dsg(\End_R(N))\neq 0$, which is well-known to imply that $\gl\End_R(N)=\infty$.
\end{proof}

\section{Relationship to partial resolutions of rational surface singularities}\label{geometry}

We show in \S\ref{scheme give alg} that if an algebra $\Gamma$ is derived equivalent to a Gorenstein scheme that is projective birational over a CM ring, then $\Gamma$ is Iwanaga--Gorenstein.  In \S\ref{tilt on partial} we then exhibit algebras derived equivalent to partial resolutions of rational surface singularities, and we then use this information to strengthen many of our previous results. 

In this section we will assume that all schemes $Y$ are noetherian, separated, normal CM, of pure Krull dimension $d<\infty$, and are finite type over a field $k$.  This implies that $\D(\Qcoh Y)$ is compactly generated, with compact objects precisely the perfect complexes $\Perf(Y)$ \cite[2.5, 2.3]{Neeman}, and $\omega_Y=\gsh k[-\dim Y]$ where $g\colon Y\to\Spec k$ is the structure morphism.

\subsection{Gorenstein schemes and Iwanaga--Gorenstein rings}\label{scheme give alg}
Serre functors are somewhat more subtle in the singular setting.  Recall from \cite[7.2.6]{Ginz} the following.

\begin{defin}
Suppose that $Y\to\Spec S$ is a projective birational map where $S$ is a CM ring with canonical module $\omega_S$.  We say that a functor $\mathbb{S}\colon\Perf(Y)\to\Perf(Y)$ is a \emph{Serre functor relative to $\omega_S$} if there are functorial isomorphisms
\[
\RHom_S(\RHom_Y(\cf,\cg),\omega_S)\cong \RHom_Y(\cg,\mathbb{S}(\cf))
\]
in $\D(\Mod S)$ for all $\cf,\cg\in\Perf(Y)$.
\end{defin}

\begin{lemma}\label{Serre for alg}
Let $\Gamma$ be a module finite $S$-algebra, where $S$ is a CM ring with canonical module $\omega_S$, and suppose that there exists a functor $\mathbb{T}\colon\Kb(\proj\Gamma)\to\Kb(\proj\Gamma)$ such that 
\[
\RHom_S(\RHom_\Gamma(a,b),\omega_S)\cong \RHom_\Gamma(b,\mathbb{T}(a))
\]
for all $a,b\in\Kb(\proj\Gamma)$.  Then $\id\Gamma_\Gamma<\infty$. 
\end{lemma}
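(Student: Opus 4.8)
The plan is to use the adjoint functor $\mathbb{T}$ together with finiteness of injective dimension criteria, by testing $\RHom_\Gamma(-, \mathbb{T}(\Gamma))$ against objects of $\Db(\mod\Gamma)$. First I would observe that since $S$ is CM with canonical module $\omega_S$, the functor $\RHom_S(-,\omega_S)\colon\Db(\mod S)\to\Db(\mod S)$ is a duality, in particular it sends bounded complexes to bounded complexes and is exact. Next, I would show that it suffices to prove $\RHom_\Gamma(\Gamma/\mathfrak{m}\Gamma \otimes_S \kappa, \mathbb{T}(\Gamma))$ — or more robustly, $\RHom_\Gamma(X, \mathbb{T}(\Gamma))$ for all finitely generated $X$ — has cohomology concentrated in a bounded range of degrees; standard homological algebra (e.g.\ the characterization of finite injective dimension of $\Gamma_\Gamma$ via $\sup\{i : \Ext^i_\Gamma(X,\Gamma)\neq 0, X\in\mod\Gamma\}<\infty$, together with the fact that $\Gamma$ is module-finite over the noetherian ring $S$) then gives $\id\Gamma_\Gamma<\infty$.

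The key step is the following computation: for $X\in\mod\Gamma$, choose a projective resolution and thereby view $X$ as a (possibly unbounded, but degreewise finite) object; more precisely, since we only have $\mathbb{T}$ defined on $\Kb(\proj\Gamma)$, I would instead argue as follows. For $b = \Gamma$ and $a\in\Kb(\proj\Gamma)$ arbitrary, the defining isomorphism reads
\[
\RHom_S(\RHom_\Gamma(a,\Gamma),\omega_S)\cong\RHom_\Gamma(\Gamma,\mathbb{T}(a))=\mathbb{T}(a).
\]
Since $a\in\Kb(\proj\Gamma)$, the complex $\RHom_\Gamma(a,\Gamma)$ lies in $\Kb(\proj\Gamma^{\op})$, hence is a bounded complex of finitely generated $S$-modules, so $\RHom_S(-,\omega_S)$ applied to it is again bounded with finitely generated cohomology. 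Therefore $\mathbb{T}(a)\in\Db(\mod\Gamma)$ with uniformly bounded amplitude controlled by that of $a$; in particular $\mathbb{T}(\Gamma)\in\Db(\mod\Gamma)$, say with cohomology in degrees $[-N,0]$ for some $N$ depending only on the amplitude of $\omega_S$ over $S$ and on $\Gamma$.

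Now I would run the adjunction in the other direction. For any $X\in\mod\Gamma$, pick $a = P^\bullet\to X$ a projective resolution and truncate: for each $m$, the stupid truncation $\sigma_{\geq -m}P^\bullet$ lies in $\Kb(\proj\Gamma)$ and agrees with $X$ in $\Db(\mod\Gamma)$ modulo a shifted perfect complex, so taking $m$ large we get, for each fixed $i$,
\[
\Ext^i_\Gamma(X,\Gamma)^{\vee}\;\cong\;H^{i}\big(\RHom_S(\RHom_\Gamma(X,\Gamma),\omega_S)\big)\;\cong\;H^{i}(\mathbb{T}(X)),
\]
where $(-)^\vee$ denotes the relevant $S$-Matlis/$\omega_S$-dual and the first isomorphism is $S$-local duality applied degreewise (valid since each $\Ext^i_\Gamma(X,\Gamma)$ is a finitely generated $S$-module). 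But $\mathbb{T}(X)$ has amplitude bounded by $N$ plus the amplitude of $X$ measured via its $\Ext$'s against $\Gamma$... so I would instead reverse the roles: put $a=\Gamma$, $b$ a resolution of $X$, giving $\RHom_S(\RHom_\Gamma(\Gamma,b),\omega_S)=\RHom_S(X,\omega_S)\cong\RHom_\Gamma(b,\mathbb{T}(\Gamma))$, i.e.\ $\RHom_\Gamma(X,\mathbb{T}(\Gamma))\cong\RHom_S(X,\omega_S)$, which is bounded (amplitude $\leq\dim S$) with finitely generated cohomology for every $X\in\mod\Gamma$. Since $\mathbb{T}(\Gamma)\in\Db(\mod\Gamma)$ has bounded amplitude, a standard dévissage (replace $\mathbb{T}(\Gamma)$ by its finitely many cohomology modules, and use that $\RHom_\Gamma(X,\mathbb{T}(\Gamma))$ bounded for all $X$ forces each cohomology $H^j(\mathbb{T}(\Gamma))$ to have finite injective dimension) yields that $\mathbb{T}(\Gamma)$ is a bounded complex of modules of finite injective dimension, hence so is $\RHom_S(X,\omega_S)$ uniformly, and finally that $\Gamma_\Gamma$ itself has finite injective dimension — concretely, $\Ext^i_\Gamma(X,\Gamma)=0$ for $i\gg 0$ uniformly in $X$, which is exactly $\id\Gamma_\Gamma<\infty$.

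The main obstacle I anticipate is bookkeeping the passage between $\Kb(\proj\Gamma)$ (where $\mathbb{T}$ is defined) and $\Db(\mod\Gamma)$ (where we want to conclude), i.e.\ justifying that the hypothesized isomorphism extends/applies to resolutions of arbitrary finitely generated modules and that the resulting $\Ext$-vanishing is \emph{uniform} in $X$; the clean way around it is the identity $\RHom_\Gamma(X,\mathbb{T}(\Gamma))\cong\RHom_S(X,\omega_S)$ obtained by taking $a=\Gamma$, which sidesteps unbounded resolutions entirely, after which finite injective dimension of $\mathbb{T}(\Gamma)$ over $\Gamma$ (equivalently, of $\Gamma_\Gamma$) follows from boundedness of $\RHom_S(X,\omega_S)$ by a routine Auslander–Buchsbaum/Bass-style argument.
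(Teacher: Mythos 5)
There is a genuine gap in your argument, and it lies exactly where you flagged the "main obstacle." Taking $a=\Gamma$ does \emph{not} sidestep the unbounded-resolution problem: the hypothesis gives the isomorphism $\RHom_S(\RHom_\Gamma(a,b),\omega_S)\cong\RHom_\Gamma(b,\mathbb{T}(a))$ only for $a,b\in\Kb(\proj\Gamma)$, and in your key identity $\RHom_\Gamma(X,\mathbb{T}(\Gamma))\cong\RHom_S(X,\omega_S)$ the role of $b$ is played by a projective resolution of an arbitrary $X\in\mod\Gamma$, which lies in $\Kb(\proj\Gamma)$ only when $X$ is perfect. Since the whole point of the lemma is to apply it to rings $\Gamma$ of infinite global dimension, this step fails precisely where it would be needed. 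Moreover, even granting that identity for all $X$, your final passage from "finite injective dimension of $\mathbb{T}(\Gamma)$" to "(equivalently, of $\Gamma_\Gamma$)" is asserted but not justified — $\mathbb{T}(\Gamma)$ and $\Gamma$ are a priori unrelated complexes — and in any case the argument via $\Ext^i_\Gamma(X,\Gamma)$ with left modules $X$ would bound $\id_\Gamma\Gamma$, whereas the lemma claims $\id\Gamma_\Gamma<\infty$ (the right-module side).

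The paper's proof avoids both problems by specialising to $a=b=\Gamma$ only, so no unbounded resolutions ever appear. Write $(-)^\dagger:=\RHom_S(-,\omega_S)$. Taking $a=b=\Gamma$ in the hypothesis gives $\mathbb{T}(\Gamma)\cong\Gamma^\dagger$, and — this is the observation you are missing — since $\mathbb{T}$ is a functor $\Kb(\proj\Gamma)\to\Kb(\proj\Gamma)$, we get $\Gamma^\dagger\in\Kb(\proj\Gamma)=\thick(\Gamma)$ as a complex of left $\Gamma$-modules. Separately, resolving $\omega_S$ by injective $S$-modules and applying $\Hom_S(\Gamma,-)$ shows $\Gamma^\dagger\in\Kb(\Inj\Gamma^{\op})$ as a complex of right $\Gamma$-modules. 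Now apply the duality $\dagger$ to the relation $\Gamma^\dagger\in\thick(\Gamma)$: since $\Gamma^{\dagger\dagger}\cong\Gamma$, this yields $\Gamma\in\thick(\Gamma^\dagger)\subseteq\Kb(\Inj\Gamma^{\op})$, i.e.\ $\id\Gamma_\Gamma<\infty$. The double-duality through a thick subcategory is what replaces the (unavailable) statement for general $X$, and $\mathbb{T}(\Gamma)\in\Kb(\proj\Gamma)$ is the crucial finiteness input that your proposal does not exploit.
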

\begin{proof}
Denote $(-)^\dagger:=\RHom_S(-,\omega_S)$. We first claim that $\Gamma^\dagger\in\Kb(\Inj\Gamma^{\op})$.  By taking an injective resolution of $\omega_S$
\[
0\to\omega_S\to I_0\to \hdots\to I_d\to 0
\]
and applying $\Hom_S(\Gamma,-)$ we see that $\Gamma^\dagger$ is given as the complex
\[
\hdots\to 0\to\Hom_S(\Gamma, I_0)\to \hdots\to \Hom_S(\Gamma,I_d)\to 0\to\hdots.
\]
Since $\Hom_\Gamma(-,\Hom_S(\Gamma,I_i))=\Hom_S(\Gamma\otimes_\Gamma-,I_i)$ is an exact functor, each $\Hom_S(\Gamma,I_i)$ is an injective $\Gamma^{\op}$-module.  Hence $\Gamma^\dagger\in\Kb(\Inj\Gamma^{\op})$, as claimed.

Now $\mathbb{T}(\Gamma)\in\Kb(\proj\Gamma)$, and further
\[
\mathbb{T}(\Gamma)\cong\RHom_{\Gamma}(\Gamma,\mathbb{T}(\Gamma))\cong \RHom_\Gamma(\Gamma,\Gamma)^\dagger=\Gamma^\dagger.
\]
Hence $\Gamma^\dagger\in\Kb(\proj\Gamma)=\thick(\Gamma)$ and so $\Gamma=\Gamma^{\dagger\dagger}\in\thick(\Gamma^\dagger)\subseteq\Kb(\Inj\Gamma^{\op})$.  This shows that $\Gamma$ has finite injective dimension as a $\Gamma^{\op}$-module, i.e.\ as a right $\Gamma$-module.
\end{proof}

Grothendieck duality gives us the following.
\begin{thm}\label{Groth duality}
Let $Y\to\Spec S$ be a projective birational map where $S$ a CM ring with canonical module $\omega_S$.  Suppose that $Y$ is Gorenstein, then the functor $\mathbb{S}:=\omega_Y\otimes-\colon\Perf(Y)\to\Perf(Y)$ is a Serre functor relative to $\omega_S$.
\end{thm}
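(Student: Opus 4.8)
The plan is to derive this directly from Grothendieck duality for the proper morphism $f\colon Y\to\Spec S$, combined with the identification $\fsh\omega_S\cong\omega_Y$ of the twisted inverse image, and a standard tensor--hom juggle for perfect complexes.

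First I would reduce to a purely sheaf-theoretic statement on $Y$. Since $\Spec S$ is affine, for $\cf,\cg\in\Perf(Y)$ we have $\RHom_Y(\cf,\cg)\cong\Rf\RsHom_Y(\cf,\cg)$ in $\D(\Mod S)$, and $\RsHom_Y(\cf,\cg)\in\Perf(Y)$ because $\cf$ is perfect; likewise $\RHom_S(-,\omega_S)$ is sheaf-Hom into $\omega_S$ on $\Spec S$. So the assertion becomes a natural isomorphism
\[
\RsHom_{\Spec S}\bigl(\Rf\RsHom_Y(\cf,\cg),\,\omega_S\bigr)\;\cong\;\Rf\RsHom_Y(\cg,\mathbb{S}\cf).
\]
Next I would invoke Grothendieck duality: as $f$ is projective, hence proper, between noetherian separated schemes, $\Rf$ has a right adjoint $\fsh$ and there is a natural isomorphism $\Rf\RsHom_Y(\ce,\fsh\cn)\cong\RsHom_{\Spec S}(\Rf\ce,\cn)$, applicable here with $\ce=\RsHom_Y(\cf,\cg)$ perfect and $\cn=\omega_S$. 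This gives
\[
\RHom_S(\RHom_Y(\cf,\cg),\omega_S)\;\cong\;\Rf\RsHom_Y\bigl(\RsHom_Y(\cf,\cg),\,\fsh\omega_S\bigr).
\]

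The key step is to identify $\fsh\omega_S\cong\omega_Y$. Writing $h\colon\Spec S\to\Spec k$ for the structure morphism, so that $g=h\circ f$ and $\gsh=\fsh\circ h^{!}$, and recalling that the canonical module of the CM ring $S$ is normalised by $\omega_S=h^{!}k[-\dim\Spec S]$, functoriality of the twisted inverse image yields
\[
\fsh\omega_S=\fsh h^{!}k[-\dim\Spec S]=\gsh k[-\dim\Spec S]\cong\omega_Y[\dim Y-\dim\Spec S].
\]
Since $f$ is birational and $Y$ and $\Spec S$ have pure Krull dimension $d$ by the standing hypotheses of this section, the shift vanishes, so $\fsh\omega_S\cong\omega_Y$; the Gorenstein hypothesis on $Y$ moreover makes $\omega_Y$ an invertible sheaf, which is exactly what lets $\mathbb{S}=\omega_Y\otimes-$ preserve $\Perf(Y)$ in the first place.

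Finally I would simplify the sheafy term. Using that $\cf$ is perfect, $\RsHom_Y(\cf,\cg)\cong\cf^{\vee}\otimes^{\mathbf L}\cg$ with $\cf^{\vee}=\RsHom_Y(\cf,\co_Y)$, so by tensor--hom adjunction together with the biduality $(\cf^{\vee})^{\vee}\cong\cf$ of perfect complexes,
\[
\RsHom_Y\bigl(\RsHom_Y(\cf,\cg),\,\omega_Y\bigr)\cong\RsHom_Y\bigl(\cg,\,\RsHom_Y(\cf^{\vee},\omega_Y)\bigr)\cong\RsHom_Y\bigl(\cg,\,\cf\otimes^{\mathbf L}\omega_Y\bigr)\cong\RsHom_Y(\cg,\mathbb{S}\cf).
\]
Applying $\Rf$ and combining with the earlier displays gives $\RHom_S(\RHom_Y(\cf,\cg),\omega_S)\cong\Rf\RsHom_Y(\cg,\mathbb{S}\cf)\cong\RHom_Y(\cg,\mathbb{S}\cf)$; since every isomorphism used is natural in both arguments, the composite is functorial, as required. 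I expect the main obstacle to be the careful invocation of Grothendieck duality in the sheaf-Hom form with the correct finiteness hypotheses, together with the shift bookkeeping in $\fsh\omega_S\cong\omega_Y$ --- in particular matching the normalisation of the canonical module $\omega_S$ with $h^{!}k[-\dim\Spec S]$, which is precisely where the CM condition on $S$ and the pure-dimensionality assumptions enter.
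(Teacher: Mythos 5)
Your argument is correct and is essentially the same as the paper's: both proofs combine the identification $\fsh\omega_S\cong\omega_Y$ (coming from $\omega_Y=\gsh k[-\dim Y]$, compatibility $\gsh=\fsh h^!$, and $\dim Y=\dim\Spec S$ since $f$ is birational), the sheaf-level Grothendieck duality $\Rf\RsHom_Y(-,\fsh\omega_S)\cong\RsHom_S(\Rf(-),\omega_S)$, and the perfect-complex tensor--hom juggle $\RsHom_Y(\RsHom_Y(\cf,\cg),\omega_Y)\cong\RsHom_Y(\cg,\cf\otimes^{\mathbf L}\omega_Y)$. The only cosmetic difference is the direction in which the chain of isomorphisms is traversed and the extra explicitness you give to the shift bookkeeping, which the paper leaves implicit via its standing conventions.
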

\begin{proof}
Since $Y$ is Gorenstein, the canonical sheaf $\omega_Y$ is locally free, and hence $\mathbb{S}:=\omega_Y\otimes-=\omega_Y\otimes^{\mathbf{L}}-$ does indeed take $\Perf(Y)$ to $\Perf(Y)$.  Also, $\omega_Y=\fsh\omega_S$ and so 
\begin{eqnarray*}
\RHom_Y(\cg,\mathbb{S}(\cf))=\RHom_Y(\cg,\cf\otimes^{\mathbf{L}}\omega_Y)&\cong&\RHom_Y(\RsHom_Y(\cf,\cg),\omega_Y)\\
&\cong&\RHom_Y(\RsHom_Y(\cf,\cg),\fsh\omega_S)\\
&\cong&\RHom_S(\Rf\RsHom_Y(\cf,\cg),\omega_S)\\
&\cong&\RHom_S(\RHom_Y(\cf,\cg),\omega_S)
\end{eqnarray*}
for all $\cf,\cg\in\Perf(Y)$, where the second-last isomorphism is Grothendieck duality.
\end{proof}
The last two results combine to give the following, which is the main result of this subsection.
\begin{cor}\label{Db to Gor is Gor}
Let $Y\to\Spec S$ be a projective birational map where $S$ is a CM ring with canonical module $\omega_S$.  Suppose that $Y$ is derived equivalent to $\Gamma$.  Then if $Y$ is a Gorenstein scheme, $\Gamma$ is an Iwanaga--Gorenstein ring.
\end{cor}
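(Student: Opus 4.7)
My plan is to transfer the Serre functor from $Y$ to $\Gamma$ via the derived equivalence and then invoke Lemma~\ref{Serre for alg}. Since $Y$ is Gorenstein, Theorem~\ref{Groth duality} produces a Serre functor $\mathbb{S}=\omega_Y\otimes-$ on $\Perf(Y)$ relative to $\omega_S$, and $\mathbb{S}$ is an equivalence because $\omega_Y$ is a line bundle. A derived equivalence restricts to a triangle equivalence $\Phi\colon \Perf(Y)\xrightarrow{\sim} \Kb(\proj\Gamma)$ on compact objects, and $\Gamma$ is automatically a module finite $S$-algebra in the standard setup (the equivalence being induced by a tilting bundle $T$ on $Y$, so that $\Gamma=\End_Y(T)$ is coherent over $S$ as $Y\to\Spec S$ is projective).

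Conjugating $\mathbb{S}$ by $\Phi$ gives an equivalence $\mathbb{T}$ on $\Kb(\proj\Gamma)$. Since $\Phi$ is $S$-linear and intertwines $\RHom$ on both sides, the Serre relation transports to
\[
\RHom_S(\RHom_\Gamma(a,b),\omega_S)\cong \RHom_\Gamma(b,\mathbb{T}(a))
\]
for all $a,b\in\Kb(\proj\Gamma)$. Lemma~\ref{Serre for alg} then gives $\id\Gamma_\Gamma<\infty$ immediately.

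For the other side $\id_\Gamma\Gamma<\infty$, I would exploit the invertibility of $\mathbb{T}$ (inherited from that of $\mathbb{S}$). Substituting $a\mapsto\mathbb{T}^{-1}(a)$ in the Serre relation yields the symmetric form
\[
\RHom_S(\RHom_\Gamma(a,b),\omega_S)\cong\RHom_\Gamma(\mathbb{T}^{-1}(b),a).
\]
Setting $a=b=\Gamma$ gives $\Gamma^{\dagger}:=\RHom_S(\Gamma,\omega_S)\cong\Hom_\Gamma(\mathbb{T}^{-1}(\Gamma),\Gamma)$, which lies in $\Kb(\proj\Gamma^{\op})$ because $\mathbb{T}^{-1}(\Gamma)\in\Kb(\proj\Gamma)$ and $\Hom_\Gamma(-,\Gamma)$ sends $\Kb(\proj\Gamma)$ to $\Kb(\proj\Gamma^{\op})$. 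The injective-resolution argument from the proof of Lemma~\ref{Serre for alg} symmetrically shows $\Gamma^{\dagger}\in\Kb(\Inj\Gamma)$, since for $I$ injective over $S$ the module $\Hom_S(\Gamma,I)$ is injective as both a left and a right $\Gamma$-module. Grothendieck duality $\Gamma\cong\Gamma^{\dagger\dagger}$ now puts $\Gamma\in\thick(\Gamma^{\dagger})\subseteq\Kb(\Inj\Gamma)$, giving $\id_\Gamma\Gamma<\infty$ and completing the argument that $\Gamma$ is Iwanaga--Gorenstein.

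The main obstacle will be verifying that the derived equivalence genuinely transports the $S$-linear Serre relation, not just the abstract triangulated data: one must check that $\Phi$ intertwines the $S$-action and hence is compatible with $\RHom_S(-,\omega_S)$. This is automatic when $\Phi$ comes from an $S$-linear tilting bundle, which is the case of interest in this paper. Once that compatibility is secured, everything else is a formal consequence of Theorem~\ref{Groth duality} and Lemma~\ref{Serre for alg}.
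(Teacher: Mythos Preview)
Your first half---transporting the relative Serre functor from $\Perf(Y)$ to $\Kb(\proj\Gamma)$ and invoking Lemma~\ref{Serre for alg} to get $\id\Gamma_\Gamma<\infty$---is exactly what the paper does (citing \cite[4.12]{IW5} for the transport step you discuss explicitly).

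For the second inequality $\id_\Gamma\Gamma<\infty$ the paper takes a different route: it dualizes the tilting bundle, observing that $\cv^\vee=\RsHom_Y(\cv,\co_Y)$ gives a derived equivalence between $Y$ and $\Gamma^{\op}$, and then simply reruns the same argument for $\Gamma^{\op}$. Your approach instead stays on the $\Gamma$ side and exploits the invertibility of $\mathbb{T}$ to show $\Gamma^\dagger\in\Kb(\proj\Gamma^{\op})$, then dualizes. Both work; the paper's version is cleaner because it reduces literally to the case already treated, while yours avoids introducing the dual bundle but requires tracking module structures through the Serre isomorphism more carefully.

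One small correction: your ``symmetric form'' $\RHom_S(\RHom_\Gamma(a,b),\omega_S)\cong\RHom_\Gamma(\mathbb{T}^{-1}(b),a)$ does \emph{not} follow from the substitution $a\mapsto\mathbb{T}^{-1}(a)$ as you claim (that substitution replaces $a$, not $b$, in the inner $\RHom$). The formula is nonetheless correct, but for a different reason: since $\mathbb{T}$ is an equivalence, $\RHom_\Gamma(b,\mathbb{T}a)\cong\RHom_\Gamma(\mathbb{T}^{-1}b,a)$, and you can chain this with the original Serre relation. After that fix, your argument goes through, provided you note that naturality of the Serre isomorphism in the variable $a$ is what makes the identification $\Gamma^\dagger\cong\RHom_\Gamma(\mathbb{T}^{-1}\Gamma,\Gamma)$ respect the right $\Gamma$-action.
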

\begin{proof}
By \ref{Groth duality} there is a Serre functor $\mathbb{S}:\Perf(Y)\to\Perf(Y)$ relative to $\omega_S$.  By \cite[4.12]{IW5} this induces a Serre functor relative to $\omega_S$ on $\Kb(\proj \Gamma)$.  Hence \ref{Serre for alg} shows that $\id\Gamma_\Gamma<\infty$.  

Repeating the argument with $\cv^\vee:=\RsHom_Y(\cv,\co_Y)$, which is well-known to give an equivalence between $Y$ and $\Gamma^{\op}$ (see e.g.\ \cite[2.6]{BH}), we obtain an induced Serre functor relative to $\omega_S$ on $\Kb(\proj\Gamma^{\op})$. Applying \ref{Serre for alg} to $\Gamma^{\op}$ shows that $\id_\Gamma\Gamma<\infty$.  
\end{proof}

\subsection{Tilting bundles on partial resolutions}\label{tilt on partial}
We now return to the setup in \S\ref{SCMsection}, namely $R$ denotes a complete local rational surface singularity over an algebraically closed field of characteristic zero.  We inspect the exceptional divisors in $Y$, the minimal resolution of $\Spec R$. Recall from the introduction that we have $I=\cc\cup\cd$ where $\cc$ are the crepant curves and $\cd$ are the discrepant curves.  We choose a subset $\cs\subseteq I$, and contract all curves in $\cs$. In this way we obtain a scheme which we will denote $X^{\cs}$ (see for example \cite[\S 4.15]{Reid93}). In fact, the minimal resolution $\pi\colon Y\to\Spec R$ factors as
\[
Y\xrightarrow{f^{\cs}} X^{\cs}\xrightarrow{g^{\cs}}\Spec R.
\]
When $\cs\subseteq \cc$ then $f^{\cs}$ is crepant and further $X^{\cs}$ has only isolated ADE singularities since we have contracted only ($-2$)-curves --- it is well-known that in the dual graph of the minimal resolution, all maximal ($-2$)-curves must lie in ADE configurations (see e.g.\ \cite[3.2]{TT}).

\begin{example}\label{T9example}
To make this concrete, consider the $\mathbb{T}_9$ singularity \cite[p47]{Riemen} $\Spec R=\mathbb{C}^2/\mathbb{T}_9$, which has minimal resolution
\[
Y:=
\begin{array}{c}
\begin{tikzpicture} 
\draw (0,0,0) to [bend left=25] node[above right] {$\scriptstyle E_2$}  node[below right] {$\scriptstyle -3$} (2,0,0);
\draw (1.8,0,0) to [bend left=25] node[below] {$\scriptstyle -2$} node[above] {$\scriptstyle E_3$} (4,0,0);
\draw (3.8,0,0) to [bend left=25] node[below] {$\scriptstyle -2$} node[above] {$\scriptstyle E_4$} (6,0,0);
\draw (-1.8,0,0) to [bend left=25] node[above] {$\scriptstyle E_1$} node[below] {$\scriptstyle -3$} (0.2,0,0);
\draw (-0.2,1.5,0) to [bend left=25] node[left]{$\scriptstyle -2$} node[above right] {$\scriptstyle E_5$} (0.8,0,0);
%\filldraw [black] (0.1,0.045,0) circle (0.5pt);
%\filldraw [black] (1.9,0.045,0) circle (0.5pt);
\end{tikzpicture}
\end{array} 
\longrightarrow \Spec R
\]
so $\mathcal{C}=\{ E_3,E_4,E_5\}$.  Choosing $\mathcal{S}=\{ E_3,E_5\}$ gives
\[
X^{\mathcal{S}}:=
\begin{array}{c}
\begin{tikzpicture} 
\draw (0,0,0) to [bend left=25] node[above right] {$\scriptstyle E_2$} (2,0,0);
\draw (-1.8,0,0) to [bend left=25] node[above] {$\scriptstyle E_1$} (0.2,0,0);
\filldraw [black] (0.8,0.225,0) circle (1pt);
\filldraw [black] (1.9,0.045,0) circle (1pt);
\node at (0.8,0,0) {$\scriptstyle \frac{1}{2}(1,1)$};
\node at (2,-0.175,0) {$\scriptstyle \frac{1}{2}(1,1)$};
\draw (1.8,0,0) to [bend left=25] node[above] {$\scriptstyle E_4$} (4,0,0);
\end{tikzpicture} 
\end{array}
\]
where $\frac{1}{2}(1,1)$ is complete locally the $A_1$ surface singularity.  On the other hand, choosing $\mathcal{S}=\mathcal{C}=\{ E_3,E_4,E_5\}$ gives
\[
X^{\mathcal{C}}:=
\begin{array}{c}
\begin{tikzpicture} 
\draw (0,0,0) to [bend left=25] node[above right] {$\scriptstyle E_2$} (2,0,0);
\draw (-1.8,0,0) to [bend left=25] node[above] {$\scriptstyle E_1$} (0.2,0,0);
\filldraw [black] (0.8,0.225,0) circle (1pt);
\filldraw [black] (1.9,0.045,0) circle (1pt);
\node at (0.8,0,0) {$\scriptstyle \frac{1}{2}(1,1)$};
\node at (2,-0.175,0) {$\scriptstyle \frac{1}{3}(1,2)$};
\end{tikzpicture} 
\end{array}
\]
Note in particular that in these cases $\cs\subseteq\cc$ so $\Sing X^{\cs}$ always has only finitely many points, and each is Gorenstein ADE.
\end{example}

The following is well--known to experts and is somewhat implicit in the literature.  For lack of any reference, we provide a proof here.  As before, $\Lambda$ denotes the reconstruction algebra. 
\begin{thm}\label{main Db}
Let $\cs\subseteq I$, set $N^{\cs}:=R\oplus(\bigoplus_{i\in I\backslash \cs}M_i)$ and let $e$ be the idempotent in $\Lambda$ corresponding to $N^{\cs}$.  Then $e\Lambda e=\End_R(N^{\cs})$ is derived equivalent to $X^{\cs}$ via a tilting bundle $\cv_\cs$ in such a way that 
\[
\begin{array}{c}
{\SelectTips{cm}{10}
\xy0;/r.4pc/:
(-10,20)*+{\Db(\mod\Lambda)}="A2",(20,20)*+{\Db(\coh Y)}="A3",
(-10,10)*+{\Db(\mod e\Lambda e)}="a2",(20,10)*+{\Db(\coh X^{\cs})}="a3",
\ar"A3";"A2"_{\RHom_Y(\cv_\emptyset,-)}
\ar"A2";"a2"_{e(-)}
\ar"a3";"a2"_{\RHom_{X^\cs}(\cv_{\cs},-)}
\ar"A3";"a3"^{\Rfcs}
\endxy}
\end{array}
\]
commutes.
\end{thm}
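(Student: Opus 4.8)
The plan is to first construct a tilting bundle on the minimal resolution $Y$ whose endomorphism algebra is the reconstruction algebra $\Lambda$, and then descend it along the contraction $f^{\cs}\colon Y\to X^{\cs}$. For the first step, I would use the fact that for a rational surface singularity the minimal resolution $\pi\colon Y\to\Spec R$ has a tilting bundle $\cv_\emptyset$ with $\End_Y(\cv_\emptyset)\cong\Lambda=\End_R(M)$; this is essentially Van den Bergh's result on rational surface singularities (it is where the reconstruction algebra gets its name), and under this equivalence $\RHom_Y(\cv_\emptyset,-)\colon\Db(\coh Y)\to\Db(\mod\Lambda)$ the structure sheaf $\co_Y$ corresponds to the projective $\Lambda$-module $\Lambda e_R$ where $e_R$ is the idempotent for the summand $R\subseteq M$, and the indecomposable summands $\cl_i$ of $\cv_\emptyset$ correspond to the projectives $\Lambda e_i$. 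The full generation and Ext-vanishing come from $Y\to\Spec R$ having one-dimensional fibres and $\mathbf{R}\pi_*\co_Y=R$ (rationality), so that $Y$ is derived equivalent to $\Lambda$ via $\cv_\emptyset$, and the left square of the diagram is just the restriction-of-scalars functor $e(-)$ along the idempotent $e$ for $N^{\cs}$.

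The key second step is to show that the direct summand $\cv_\cs:=(f^{\cs})_*\left(\bigoplus_{i\in(I\setminus\cs)\cup\{0\}}\cl_i\right)$ of the pushforward — i.e.\ the summand of $\cv_\emptyset$ corresponding to the curves \emph{not} contracted, together with $\co$ — is a tilting bundle on $X^{\cs}$ with $\End_{X^{\cs}}(\cv_\cs)\cong e\Lambda e=\End_R(N^{\cs})$, and that $\RHom_{X^{\cs}}(\cv_\cs,-)$ makes the diagram commute. For this I would argue: (a) $\cv_\cs$ is locally free on $X^{\cs}$ — this uses that the $\cl_i$ for uncontracted curves are trivial on a neighbourhood of the exceptional fibre of $f^{\cs}$ and that $f^{\cs}$ is an isomorphism away from finitely many points, together with the normality and $\mathbf{R}f^{\cs}_*\co_Y=\co_{X^{\cs}}$ coming from rationality of the singularities of $X^{\cs}$; (b) the endomorphisms are computed by $\End_{X^{\cs}}(\cv_\cs)\cong\Hom_Y(\bigoplus\cl_i,\bigoplus\cl_i)$ using $\mathbf{R}f^{\cs}_*\co_Y=\co_{X^{\cs}}$ and the projection formula, and this in turn equals $\Hom_R(N^{\cs},N^{\cs})=e\Lambda e$ since each $\cl_i$ pushes down along $\pi$ to the corresponding $M_i$; (c) $\cv_\cs$ tilts, i.e.\ $\Ext^{>0}_{X^{\cs}}(\cv_\cs,\cv_\cs)=0$ and $\cv_\cs$ generates $\D(\Qcoh X^{\cs})$ — the Ext-vanishing follows from the analogous vanishing on $Y$ via $\mathbf{R}f^{\cs}_*$, and generation follows because $X^{\cs}$ has rational (in the $\cs\subseteq\cc$ case, ADE) singularities so that $\Perf(X^{\cs})$ and $\Db(\coh X^{\cs})$ behave well, and $\cv_\cs$ contains $\co_{X^{\cs}}$ as a summand. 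Commutativity of the square is then a formal consequence of the projection formula and the identification $\cv_\emptyset\cong(f^{\cs})^*\cv_\cs\oplus(\text{summands supported on }\cs\text{-exceptional locus})$; more precisely, $\RHom_{X^{\cs}}(\cv_\cs,\mathbf{R}f^{\cs}_*\cg)\cong\RHom_Y((f^{\cs})^*\cv_\cs,\cg)$ and $e\circ\RHom_Y(\cv_\emptyset,-)$ picks out exactly the $(f^{\cs})^*\cv_\cs$ part.

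The main obstacle I expect is step (a)/(c): verifying that $\cv_\cs$ is genuinely a \emph{bundle} (locally free) on the possibly-singular scheme $X^{\cs}$, and that it generates the full derived category there. The subtlety is that $X^{\cs}$ is singular, so one cannot just quote smooth-case tilting results; one needs to know that the $\cl_i$ restrict to free modules near the contracted locus (which is why one takes the summands indexed by uncontracted curves), and one needs a generation statement for $\Db(\coh X^{\cs})$ that accommodates the singular points — here the hypothesis that the singularities of $X^{\cs}$ are rational (and hence that $\Db(\coh X^{\cs})$ is generated by $\co_{X^{\cs}}$ together with sheaves supported near the singular points, all of which lie in $\thick(\cv_\cs)$) does the work. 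Once $\cv_\cs$ is known to be a tilting bundle with the stated endomorphism algebra, the rest is diagram-chasing with Grothendieck duality and the projection formula, which is routine.
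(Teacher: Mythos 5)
Your proposal takes a genuinely different route from the paper, and the difference matters. The paper does \emph{not} build $\cv_\cs$ as a pushforward from $Y$; instead it applies Van den Bergh's tilting-bundle construction \cite[3.5.4--3.5.5]{VdB1d} a second time, directly to the morphism $g^\cs\colon X^\cs\to\Spec R$ (which, like $\pi$, has at most one-dimensional fibres and satisfies $\mathbf{R}g^\cs_*\co_{X^\cs}=\co_{\Spec R}$). This immediately hands over a tilting bundle $\cv_\cs=\co_{X^\cs}\oplus\bigoplus_{i\in I\setminus\cs}\cm^X_i$ on $X^\cs$, with local freeness, $\Ext$-vanishing, and generation all supplied by Van den Bergh's theorem with no extra work. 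The only technical content of the paper's proof is then the identity $(f^\cs)^*\cv_\cs\cong\co_Y\oplus\bigoplus_{i\in I\setminus\cs}\cm^Y_i$: one pulls back the defining extension $0\to\co_X^{\oplus(r_i-1)}\to\cm^X_i\to\cl^X_i\to0$, and the $\Ext^1$-vanishing $\Ext^1_Y(f^*\cm^X_i,\co_Y)=\Ext^1_X(\cm^X_i,\co_X)=0$ (adjunction plus $\mathbf{R}f_*\co_Y=\co_X$) shows the pulled-back sequence is again the maximal extension, hence $f^*\cm^X_i\cong\cm^Y_i$. The endomorphism computation and the commutativity of the square then fall out of the projection formula exactly as you describe.

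The gap in your version is precisely where you suspect it, but it is more serious than you allow. You write as if the summands of $\cv_\emptyset$ are line bundles $\cl_i$, but in Van den Bergh's construction $\cm^Y_i$ is a rank-$r_i$ extension of $\cl^Y_i$ by $\co_Y^{\oplus(r_i-1)}$ whenever the multiplicity $r_i$ of $E_i$ in the fundamental cycle exceeds one (which happens routinely). For these higher-rank summands, ``$\cl_i$ is trivial on a neighbourhood of the exceptional fibre'' is not the right statement, and showing that $(f^\cs)_*\cm^Y_i$ is locally free on the singular scheme $X^\cs$ requires an argument --- it does not follow from degree-zero restriction to the contracted curves alone, and you would also need $R^1f^\cs_*$ of the relevant sheaves to vanish. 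Likewise your generation argument (c) is only a sketch. All of this is genuine work that the paper sidesteps by invoking Van den Bergh for $X^\cs$ rather than deducing the tilting properties downstairs from those upstairs. If you want to pursue the pushforward route, you should at minimum establish $(f^\cs)_*\cm^Y_i\cong\cm^X_i$ directly --- but at that point you are essentially reproducing the paper's key step in reverse, and you still need Van den Bergh on $X^\cs$ (or a substitute) to know $\cm^X_i$ exists and tilts, so the detour saves nothing.
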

\begin{proof}
Since all the fibres are at most one-dimensional and $R$ has rational singularities, by \cite[Thm.\ B]{VdB1d} there is a tilting bundle on $Y$ given as follows:  let $E=\pi^{-1}(\m)$ where $\m$ is the unique closed point of $\Spec R$.  Giving $E$ the reduced scheme structure, write $E_{\rm red}=\cup_{i\in I}E_i$, and let $\cl^Y_i$ denote the line bundle on $Y$ such that $\cl^Y_i\cdot E_j=\delta_{ij}$.  If the multiplicity of $E_i$ in $E$ is equal to one, set $\cm^Y_i:=\cl^Y_i$ \cite[3.5.4]{VdB1d}, else define $\cm^Y_i$ to be given by the maximal extension
\[
0\to\co_Y^{\oplus(r_i-1)}\to\cm^Y_i\to\cl^Y_i\to 0
\]
associated to a minimal set of $r_i-1$ generators of $H^1(Y,(\cl^Y_i)^{-1})$.  Then $\cv_{\emptyset}:=\co_Y\oplus(\bigoplus_{i\in I}\cm^Y_i)$ is a tilting bundle on $Y$ \cite[3.5.5]{VdB1d}.
 
To ease notation denote $X:={X^{\cs}}$, and further denote $Y\xrightarrow{f^{\cs}} X^{\cs}\xrightarrow{g^{\cs}}\Spec R$ by 
\[
Y\xrightarrow{f} X\xrightarrow{g}\Spec R.
\]
Then in an identical manner to the above, $\cv_\cs:=\co_{X}\oplus(\bigoplus_{i\in I\backslash \cs}\cm^{X}_i)$ is a tilting bundle on $X$.

We claim that $f^*(\cv_\cs)=\co_Y\oplus(\bigoplus_{i\in I\backslash \cs}\cm^Y_i)$.  Certainly $f^*\cl^X_i=\cl^Y_i$ for all $i\in I\backslash \cs$, and pulling back
\[
0\to\co_X^{\oplus(r_i-1)}\to\cm^X_i\to\cl^X_i\to 0
\]
gives an exact sequence
\begin{eqnarray}
0\to\co_Y^{\oplus(r_i-1)}\to f^*\cm^X_i\to\cl^Y_i\to 0.\label{On top}
\end{eqnarray}
But 
\[
\Ext^1_Y(f^*\cm^X_i,\co_Y)=\Ext^1_Y(\Lf\cm^X_i,\co_Y)=\Ext^1_X(\cm^X_i,\Rf\co_Y)=\Ext^1_X(\cm^X_i,\co_X),
\]
which equals zero since $\cv_\cs$ is tilting.  Hence (\ref{On top}) is a maximal extension, so it follows (by construction) that $\cm^Y_i\cong f^*\cm^X_i$ for all $i\in I\backslash \cs$, so $f^*(\cv_\cs)=\co_Y\oplus_{i\in I\backslash \cs}\cm^Y_i$ as claimed.

Now by the projection formula 
\[
\Rf(f^*\cv_\cs)\cong \Rf(\co_Y\otimes f^*\cv_\cs)\cong \Rf(\co_Y)\otimes \cv_\cs\cong\co_X\otimes\cv_\cs=\cv_\cs,
\]
and so it follows that
\[
\End_X(\cv_\cs)\cong\Hom_X(\cv_\cs,\Rf(f^*\cv_\cs))\cong\Hom_Y(\Lf\cv_\cs,f^*\cv_\cs)\cong\End_Y(f^*\cv_\cs),
\]
i.e.\ $\End_X(\cv_\cs)\cong\End_Y(\co_Y\oplus_{i\in I\backslash \cs}\cm^Y_i)$.  But it is very well-known (see e.g.\ \cite[3.2]{GL2}) that $\End_Y(\co_Y\oplus_{i\in I\backslash \cs}\cm^Y_i)\cong \End_R(R\oplus_{i\in I\backslash\cs} M_i)=\End_R(N^{\cs})$.

Hence we have shown that $\cv_\cs$ is a tilting bundle on $X^\cs$ with endomorphism ring isomorphic to $\End_R(N^{\cs})$, so the first statement follows.  For the last statement, simply observe that we have functorial isomorphisms
\begin{eqnarray*}
\RHom_{X^\cs}(\cv_\cs,\Rf(-))&=&\RHom_{Y}(\Lf\cv_\cs,-)\\
&=& \RHom_{Y}(\co_Y\oplus_{i\in I\backslash \cs}\cm^Y_i,-)\\
&=& e\RHom_{Y}(\co_Y\oplus_{i\in I}\cm^Y_i,-)\\
&=& e\RHom_{Y}(\cv_\emptyset,-).
\end{eqnarray*}
\end{proof}
\begin{remark}\label{stengthenedGLdim}
The above \ref{main Db} shows that if $\Lambda$ is the reconstruction algebra and $e\neq 1$ is a non-zero idempotent containing the idempotent corresponding to $R$, then $e\Lambda e$ always has infinite global dimension, since it is derived equivalent to a singular variety.   This greatly generalizes \ref{cor to strengthen}, which only deals with idempotents corresponding to partial resolutions `above' $X^{\mathcal{C}}$; these generically do not exist.  It would be useful to have a purely algebraic proof of the fact $\gl e\Lambda e=\infty$, since this is related to many problems in higher dimensions.
\end{remark}

Now recall from \ref{module N} that $D:=R\oplus(\bigoplus_{d\in\mathcal{D}}M_d)$.  This is just $N^{\cc}$, so as the special case of \ref{main Db} when $\cs=\cc$ we obtain the following.

\begin{cor}\label{D is derived equiv}
$\End_R(D)$ is derived equivalent to $X^{\cc}$.
\end{cor}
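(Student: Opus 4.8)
The statement is that $\End_R(D)$ is derived equivalent to $X^{\cc}$, and the plan is simply to apply Theorem~\ref{main Db} with the specific choice $\cs = \cc$. First I would observe that with this choice, the module appearing in Theorem~\ref{main Db} becomes
\[
N^{\cc} = R \oplus \Bigl(\bigoplus_{i \in I \backslash \cc} M_i\Bigr) = R \oplus \Bigl(\bigoplus_{d \in \cd} M_d\Bigr),
\]
since $I = \cc \cup \cd$ is a disjoint partition, so $I \backslash \cc = \cd$. By Definition~\ref{module N}, the right-hand side is precisely $D$, so $N^{\cc} = D$ and hence $\End_R(N^{\cc}) = \End_R(D)$.

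Next, let $e$ be the idempotent in the reconstruction algebra $\Lambda$ corresponding to the summand $N^{\cc} = D$. Theorem~\ref{main Db} applied to $\cs = \cc$ states directly that $e\Lambda e = \End_R(N^{\cc})$ is derived equivalent to $X^{\cc}$ via a tilting bundle $\cv_{\cc}$ on $X^{\cc}$. Substituting $\End_R(N^{\cc}) = \End_R(D)$, this says exactly that $\End_R(D)$ is derived equivalent to $X^{\cc}$, which is the assertion. So the proof is a one-line specialization.

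There is essentially no obstacle here: the only thing to check is the elementary set-theoretic identity $I \backslash \cc = \cd$, which is immediate from the partition $I = \cc \sqcup \cd$ recorded in the introduction (the crepant curves $\cc$ are the $(-2)$-curves and $\cd$ are the non-$(-2)$-curves, so every exceptional curve lies in exactly one of the two sets). All the substantive content — the construction of the tilting bundle $\cv_{\cc}$, the pullback compatibility $f^*(\cv_{\cc})$, the projection-formula computation of $\End_{X^{\cc}}(\cv_{\cc})$, and the identification with $\End_R(R \oplus \bigoplus_{i \in I\backslash\cc} M_i)$ — has already been carried out in the proof of Theorem~\ref{main Db}, so nothing further is required.

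\begin{proof}
Since $I = \cc \cup \cd$ is a disjoint union, we have $I \backslash \cc = \cd$, and hence
\[
N^{\cc} = R \oplus \Bigl(\bigoplus_{i \in I \backslash \cc} M_i\Bigr) = R \oplus \Bigl(\bigoplus_{d \in \cd} M_d\Bigr) = D
\]
by Definition~\ref{module N}. Now apply \ref{main Db} with $\cs = \cc$: the idempotent $e$ of $\Lambda$ corresponding to $N^{\cc}$ satisfies $e\Lambda e = \End_R(N^{\cc}) = \End_R(D)$, and \ref{main Db} asserts that this algebra is derived equivalent to $X^{\cc}$ via the tilting bundle $\cv_{\cc}$.
\end{proof}
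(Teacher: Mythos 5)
Your proof is correct and coincides with the paper's: the corollary is stated immediately after the paper observes that $D = N^{\cc}$, and is obtained exactly as the specialization $\cs = \cc$ of Theorem~\ref{main Db}. Nothing further is needed.
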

\begin{remark}\label{total discrepant}
It follows from \ref{D is derived equiv} that the module $D$ corresponds to the largest totally discrepant partial resolution of $\Spec R$, in that any further resolution must involve crepant curves.  This scheme was much studied in earlier works (e.g.\ \cite{RRW}), and is related to the deformation theory of $\Spec R$.  We remark that $X^\cc$ is often referred to as the rational double point resolution. 
\end{remark}

As a further consequence of \ref{main Db}, we have the following.

\begin{thm}\label{main triangles}
If $\cs\subseteq \cc$, then we have triangle equivalences 
\[
\underline{\SCM}_{N^{\cs}}(R)\simeq\underline{\GP}(\End_R(N^{\cs}))\simeq\Dsg(\End_R(N^{\cs}))\simeq\Dsg(X^{\cs})\simeq\bigoplus_{x\in\Sing X^{\cs}}\uCM(\widehat{\mathcal{O}}_{X^{\cs}, x}),
\]
where $\Sing X^{\mathcal{S}}$ denotes the set of singular points of $X^{\mathcal{S}}$.  In particular, $\underline{\SCM}_{N^{\mathcal{S}}}(R)$ is 1-Calabi--Yau, and its shift functor satisfies $[2]=\mathrm{id}$.
\end{thm}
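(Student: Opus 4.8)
The plan is to chain together equivalences, most of which are already available in the excerpt, and then read off the Calabi--Yau property and the periodicity of the shift from the geometric side. First, by Proposition~\ref{new Frobenius} the category $\SCM(R)$ carries the Frobenius structure $\SCM_{N^{\cs}}(R)$ with projectives $\add N^{\cs}$ (valid since $\cs\subseteq\cc$ gives $\add D\subseteq\add N^{\cs}$), so Theorem~\ref{main}(2) supplies the first equivalence $\underline{\SCM}_{N^{\cs}}(R)\simeq\underline{\GP}(\End_R(N^{\cs}))$, and Buchweitz's theorem (Remark~\ref{Buchweitz}) gives $\underline{\GP}(\End_R(N^{\cs}))\simeq\Dsg(\End_R(N^{\cs}))$. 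Next, Theorem~\ref{main Db} tells us that $\End_R(N^{\cs})$ is derived equivalent to $X^{\cs}$ via the tilting bundle $\cv_{\cs}$; since a derived equivalence between a noetherian algebra and a scheme sends perfect complexes to perfect complexes and bounded complexes to bounded complexes, it descends to a triangle equivalence of Verdier quotients $\Dsg(\End_R(N^{\cs}))\simeq\Dsg(X^{\cs})$ (here one uses that $X^{\cs}$ is noetherian and separated, so $\Dsg(X^{\cs})=\Db(\coh X^{\cs})/\Perf(X^{\cs})$ is the sensible definition).

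For the last equivalence $\Dsg(X^{\cs})\simeq\bigoplus_{x\in\Sing X^{\cs}}\uCM(\widehat{\co}_{X^{\cs},x})$, I would argue as follows. Because $\cs\subseteq\cc$, the map $f^{\cs}\colon Y\to X^{\cs}$ contracts only $(-2)$-curves, so $X^{\cs}$ has only isolated ADE (hence Gorenstein, hypersurface) singularities, and in particular $\Sing X^{\cs}$ is a finite set of closed points. The singularity category of a scheme with isolated singularities localises: $\Dsg(X^{\cs})\simeq\bigoplus_{x\in\Sing X^{\cs}}\Dsg(\co_{X^{\cs},x})$, and completion induces an equivalence $\Dsg(\co_{X^{\cs},x})\simeq\Dsg(\widehat{\co}_{X^{\cs},x})$; finally, since each $\widehat{\co}_{X^{\cs},x}$ is a (complete, Gorenstein, isolated) singularity, Buchweitz's theorem again identifies $\Dsg(\widehat{\co}_{X^{\cs},x})\simeq\uCM(\widehat{\co}_{X^{\cs},x})$. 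I would cite the standard references for the localisation statement (e.g.\ Orlov's work on singularity categories and its refinements) rather than reproving it.

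Finally, the Calabi--Yau and periodicity assertions. Each complete local ring $\widehat{\co}_{X^{\cs},x}$ is a two-dimensional Gorenstein rational double point, i.e.\ of the form $\mathbb{C}[[x,y,z]]/(g)$ with $g$ an ADE polynomial; for such hypersurface singularities it is classical (Auslander's theorem, Kn\"orrer periodicity) that $\uCM$ is $1$-Calabi--Yau and that the syzygy/shift functor is $2$-periodic, $[2]=\mathrm{id}$, because the matrix factorisation description forces $\Omega^2\cong\mathrm{id}$ on objects, and the $1$-CY property follows from the fact that an isolated Gorenstein surface singularity has $\uCM$ which is $(\dim-1)=1$-Calabi--Yau. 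A finite direct sum of $1$-CY triangulated categories with $[2]=\mathrm{id}$ has the same properties, and these are preserved under triangle equivalence, so they transfer back along the chain to $\underline{\SCM}_{N^{\cs}}(R)$. I expect the main obstacle to be purely bookkeeping: making the localisation $\Dsg(X^{\cs})\simeq\bigoplus_x\Dsg(\widehat{\co}_{X^{\cs},x})$ precise (one must know $X^{\cs}$ has isolated singularities, which is exactly where $\cs\subseteq\cc$ is used, and one must be careful that $X^{\cs}$ need not be affine, so this is a genuine statement about schemes rather than rings) and checking that the tilting-induced derived equivalence genuinely restricts to singularity categories. The Calabi--Yau and periodicity steps themselves are then immediate from the classical ADE/matrix-factorisation theory.
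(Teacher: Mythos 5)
Your chain of equivalences follows the paper's own decomposition almost exactly: Theorem~\ref{main}(2) for the first, Buchweitz for the second, the tilting equivalence from Theorem~\ref{main Db} descending to Verdier quotients for the third, and a localisation/completion argument for the fourth. The Calabi--Yau and $[2]=\mathrm{id}$ deductions (via Auslander and Eisenbud/matrix factorisations for the ADE hypersurfaces $\widehat{\co}_{X^{\cs},x}$) are also what the paper does.

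There is, however, a genuine gap in your fourth step. You write the intermediate equivalences $\Dsg(X^{\cs})\simeq\bigoplus_{x}\Dsg(\co_{X^{\cs},x})$ and $\Dsg(\co_{X^{\cs},x})\simeq\Dsg(\widehat{\co}_{X^{\cs},x})$ as if they hold on the nose, but in general they do not. Orlov's completion theorem (and the variants in \cite{BK}, \cite{IW5}) produce a fully faithful functor from $\Dsg(\co_{X^{\cs},x})$ to $\Dsg(\widehat{\co}_{X^{\cs},x})$, with the latter identified as the \emph{idempotent completion} of the former; likewise for the localisation map from $\Dsg(X^{\cs})$. These become honest equivalences only when the source category is already idempotent complete, and the non-complete local rings $\co_{X^{\cs},x}$ need not have idempotent-complete singularity categories. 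The paper sidesteps this by observing that, since $R$ is complete local, $\underline{\SCM}_{N^{\cs}}(R)$ is idempotent complete (Krull--Schmidt), and the first three equivalences then force $\Dsg(X^{\cs})$ to be idempotent complete as well; the target $\bigoplus_x\uCM(\widehat{\co}_{X^{\cs},x})$ is idempotent complete because each $\widehat{\co}_{X^{\cs},x}$ is complete local. With both ends idempotent complete, the fully faithful, dense-up-to-summands comparison functor is an equivalence. You should make this point explicit: rather than passing through the possibly not-idempotent-complete intermediate categories $\Dsg(\co_{X^{\cs},x})$, argue directly that $\Dsg(X^{\cs})\to\bigoplus_x\Dsg(\widehat{\co}_{X^{\cs},x})$ is an equivalence because both sides are idempotent complete, and cite the completeness of $R$ as the source of the idempotent completeness of the left-hand side.
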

\begin{proof}
Since $R$ is complete local we already know that $\underline{\SCM}_{N^{\mathcal{S}}}(R)$ is idempotent complete, so the first equivalence is just \ref{main}(2).  Since $\End_R(N^{\mathcal{S}})$ is Iwanaga--Gorenstein by \ref{main}(1), the second equivalence is a well--known theorem of Buchweitz \cite[4.4.1(2)]{Buch}.  The third equivalence follows immediately from \ref{main Db} (see e.g.\ \cite[4.1]{IW5}).  The fourth equivalence follows from \cite{OrlovCompletion}, \cite{BK} or \cite[3.2]{IW5} since the singularities of $X^\cs$ are isolated and the completeness of $R$ implies that $\Dsg(X^{\cs})\simeq\underline{\SCM}_{N^{\cs}}(R)$ is idempotent complete. The final two statements hold since each $\widehat{\mathcal{O}}_{X^{\mathcal{S}}, x}$ is Gorenstein ADE, and for these it is well-known that $\uCM(\widehat{\mathcal{O}}_{X^{\mathcal{S}}, x})$ are 1-Calabi--Yau \cite{Auslander78}, satisfying $[2]=\mathrm{id}$ \cite{Eisenbud}.
\end{proof}

\begin{example}
In the previous example (\ref{T9example}) choose $\mathcal{S}=\{ E_3,E_5\}$, then by \ref{main triangles}
\[
\underline{\SCM}_{N^{\cs}}(R)\simeq \uCM\mathbb{C}[[x,y]]^{\frac{1}{2}(1,1)}\oplus \uCM\mathbb{C}[[x,y]]^{\frac{1}{2}(1,1)}.
\]
\end{example}

\begin{remark} It was remarked in \cite[4.14]{IW2} that often the category $\underline{\SCM}_D(R)$ is equivalent to that of a Gorenstein ADE singularity, but this equivalence was only known to be an additive equivalence, as the triangle structure on $\underline{\SCM}_D(R)$ was difficult to control algebraically.  The above \ref{main triangles} improves this by lifting the additive equivalence to a triangle equivalence. It furthermore generalises the equivalence to other Frobenius quotients of $\SCM(R)$ that were not considered in \cite{IW2}.
\end{remark}

We now use \ref{main triangles} to extend Auslander's algebraic  McKay correspondence.  This requires the notion of the dual graph relative to a morphism.

\begin{defin}\label{dualG}
Consider $f^{\cs}\colon Y\to X^{\cs}$.  The \emph{dual graph} with respect to $f^{\cs}$ is defined as follows: for each irreducible curve contracted by $f^{\cs}$ draw a vertex, and join two vertices if and only if the corresponding curves in $Y$ intersect.  Furthermore, label every vertex with the self-intersection number of the corresponding curve.
\end{defin}

The following, which is immediate from \ref{main triangles}, extends \cite[4.11]{IW2}.

\begin{cor}\label{McKay rel}
If $\cs\subseteq\cc$, then the AR quiver of the category $\underline{\SCM}_{N^{\cs}}(R)$ is the double of the dual graph with respect to the morphism $Y\to X^\cs$.
\end{cor}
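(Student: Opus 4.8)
The plan is to deduce Corollary \ref{McKay rel} directly from the triangle equivalence established in \ref{main triangles}, reducing the computation of the AR quiver of $\underline{\SCM}_{N^{\cs}}(R)$ to the classical case of Gorenstein ADE singularities. First I would invoke \ref{main triangles} to obtain the triangle equivalence
\[
\underline{\SCM}_{N^{\cs}}(R)\simeq\bigoplus_{x\in\Sing X^{\cs}}\uCM(\widehat{\co}_{X^{\cs},x}),
\]
noting that each completed local ring $\widehat{\co}_{X^{\cs},x}$ is a Gorenstein ADE surface singularity, since $\cs\subseteq\cc$ means we have contracted only $(-2)$-curves, and maximal configurations of $(-2)$-curves lie in ADE configurations (as recalled before \ref{T9example}). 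An AR quiver is an invariant of the underlying (stable) category together with its AR triangles; since \ref{main triangles} is a triangle equivalence between categories that possess AR triangles (\ref{known2}(2) on the left, and the Gorenstein ADE case on the right), it carries AR triangles to AR triangles, hence identifies the AR quivers. Moreover the AR quiver of a direct sum decomposes as the disjoint union of the AR quivers of the summands, so it suffices to identify, for each singular point $x$, the AR quiver of $\uCM(\widehat{\co}_{X^{\cs},x})$ with the appropriate piece of the double of the dual graph.

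The second step is the classical Auslander McKay correspondence: for a Gorenstein ADE surface singularity $\co=k[[u,v]]^G$ with $G\leq\SL(2,k)$ finite, the AR quiver of $\uCM(\co)$ is the double of the corresponding ADE Dynkin diagram (with the extending node removed); equivalently it is the double of the dual graph of the minimal resolution. This is exactly the content of \cite[4.11]{IW2} in the special case $\cs=\emptyset$, and more classically goes back to Auslander. The point is then purely combinatorial: the dual graph of $f^{\cs}\colon Y\to X^{\cs}$ in the sense of \ref{dualG} is, by definition, the graph whose vertices are the curves contracted by $f^{\cs}$, i.e.\ the curves $E_i$ with $i\in\cs$, with edges recording intersections in $Y$. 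Because $X^{\cs}$ has only isolated singularities and the components of $f^{\cs}$ over distinct singular points of $X^{\cs}$ are disjoint, this graph is precisely the disjoint union, over $x\in\Sing X^{\cs}$, of the dual graphs of the minimal resolutions $\widetilde{\co}_{X^{\cs},x}\to\widehat{\co}_{X^{\cs},x}$, each of which is an ADE Dynkin diagram. Taking doubles on both sides, the double of the dual graph of $f^{\cs}$ is the disjoint union of the doubles of these ADE diagrams, which by Auslander's result matches $\bigsqcup_x (\text{AR quiver of }\uCM(\widehat{\co}_{X^{\cs},x}))$.

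Putting these two steps together: the AR quiver of $\underline{\SCM}_{N^{\cs}}(R)$ equals (via the triangle equivalence of \ref{main triangles}) the AR quiver of $\bigoplus_x \uCM(\widehat{\co}_{X^{\cs},x})$, which is the disjoint union of the doubles of the ADE diagrams attached to the points of $\Sing X^{\cs}$, which in turn is the double of the dual graph of $Y\to X^{\cs}$. I would close by remarking, as in \ref{T9example}, that the self-intersection labels on the dual graph are all $-2$ precisely because $\cs\subseteq\cc$, consistent with the ADE description. I do not expect any genuine obstacle here: the only mildly technical point is making sure that a triangle equivalence between Krull--Schmidt categories with AR triangles induces an isomorphism of AR quivers, and that AR triangles in $\underline{\SCM}_{N^{\cs}}(R)$ exist (which follows from the $1$-Calabi--Yau property, or from \ref{known2}(2) in the case $\cs=\cc$ and its analogue in general); but both of these are standard, so the corollary is essentially a bookkeeping consequence of \ref{main triangles} together with the classical ADE case.
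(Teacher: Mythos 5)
Your argument matches the paper's, which simply states the corollary is immediate from \ref{main triangles}: the triangle equivalence $\underline{\SCM}_{N^{\cs}}(R)\simeq\bigoplus_{x\in\Sing X^{\cs}}\uCM(\widehat{\co}_{X^{\cs},x})$ identifies AR quivers, the classical Auslander McKay correspondence describes each summand as the double of the corresponding ADE Dynkin diagram, and the combinatorial identification of the dual graph of $f^{\cs}$ with the disjoint union of those ADE configurations finishes the proof. One small correction: the special case recovering \cite[4.11]{IW2} is $\cs=\cc$ (so $N^{\cs}=D$), not $\cs=\emptyset$, which only yields the trivial statement that the empty graph is the AR quiver of the zero category.
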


\subsection{Iwanaga--Gorenstein rings from surfaces}\label{4.3}

The following corollary of \ref{main Db} gives a geometric proof of \ref{main}(1).

\begin{cor}\label{Goren from geom}
Let $N\in\SCM(R)$ such that $\add D\subseteq\add N$. Then $e_N\Lambda e_N=\End_R(N)$ is an Iwanaga--Gorenstein ring.
\end{cor}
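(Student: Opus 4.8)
The plan is to derive \ref{Goren from geom} by combining the derived equivalence of \ref{main Db} with the geometric criterion \ref{Db to Gor is Gor}. The first step is to reinterpret the hypothesis $\add D\subseteq\add N$: passing to the basic module, this says precisely that $N=N^{\cs}:=R\oplus(\bigoplus_{i\in I\setminus\cs}M_i)$ for some subset $\cs\subseteq\cc$ of the crepant curves, so that $e_N\Lambda e_N=\End_R(N)$ is exactly the algebra $e\Lambda e$ occurring in \ref{main Db}. That theorem then supplies a tilting bundle $\cv_\cs$ on $X^{\cs}$ with $\End_{X^{\cs}}(\cv_\cs)\cong\End_R(N^{\cs})$; in particular $X^{\cs}$ and $\End_R(N)$ are derived equivalent.

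It then remains to check that $g^{\cs}\colon X^{\cs}\to\Spec R$ lies in the scope of \ref{Db to Gor is Gor}. The morphism $g^{\cs}$ is projective birational, $R$ is Cohen--Macaulay (rational singularities are CM in characteristic zero) and, being complete local, admits a canonical module $\omega_R$, and $X^{\cs}$ is a noetherian separated normal Cohen--Macaulay scheme of pure Krull dimension two since it is a partial resolution of a normal surface. The one substantive point is that $X^{\cs}$ must be a \emph{Gorenstein} scheme, and this is exactly where $\cs\subseteq\cc$ (equivalently $\add D\subseteq\add N$) enters: contracting only crepant $(-2)$-curves produces a surface whose singularities are rational double points, i.e.\ ADE hypersurface singularities, which are Gorenstein, while away from these finitely many points $f^{\cs}$ is an isomorphism and $X^{\cs}$ is smooth. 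Hence $X^{\cs}$ is Gorenstein, and \ref{Db to Gor is Gor} immediately yields that $\End_R(N)$ is Iwanaga--Gorenstein.

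I expect the only real obstacle here to be bookkeeping rather than mathematics: one must keep track of why the hypothesis $\add D\subseteq\add N$ is genuinely needed, namely that allowing discrepant curves into $\cs$ would introduce non-Gorenstein (e.g.\ cyclic quotient) singularities on $X^{\cs}$ and break the application of \ref{Db to Gor is Gor}. A minor technical matter is reconciling the ``finite type over a field'' convention of \S\ref{geometry} with the complete local setting, but this is handled in the same way as elsewhere in the paper and introduces nothing new. Note finally that, in contrast to the proof of \ref{main}(1), this argument never uses finiteness of the global dimension of $\Lambda$, which is why \ref{Goren from geom} remains valid in greater generality.
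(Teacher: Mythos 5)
Your proposal is correct and follows exactly the paper's own argument: invoke \ref{main Db} for the derived equivalence between $\End_R(N^\cs)$ and $X^\cs$, observe that $X^\cs$ is Gorenstein because only $(-2)$-curves are contracted, and conclude via \ref{Db to Gor is Gor}. The extra remarks you add (why $\cs\subseteq\cc$ is essential, and that no appeal to finite global dimension of $\Lambda$ is made) are accurate but do not change the route.
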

\begin{proof}
Since $\add D\subseteq\add N$, \ref{main Db} shows that the algebra $\End_R(N)$ is derived equivalent, via a tilting bundle, to the Gorenstein scheme $X^{\cs}$.  Thus the result follows by \ref{Db to Gor is Gor}.
\end{proof}

The point is that using the geometry we can sharpen \ref{main}(1) and \ref{Goren from geom}, since we are explicitly able to determine the value of the injective dimension.  {The proof requires the following two lemmas, which we state and prove in greater generality.

\begin{lemma}\label{Nakayama}
Suppose that $(S,\m)$ is local, $\Gamma$ is a module--finite $S$-algebra, and $X,Y\in\mod \Gamma$.  Then $\Ext^{i}_\Gamma(X,Y)=0$ if $i>\id_\Gamma Y-\depth_S X$.
\end{lemma}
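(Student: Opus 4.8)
The plan is to prove this by induction on $d:=\depth_S X$, treating the asserted inequality as vacuous when $\id_\Gamma Y=\infty$; so throughout I may assume $n:=\id_\Gamma Y<\infty$. For the base case $d=0$ I would simply unwind definitions: since $\Gamma$ is noetherian, $\id_\Gamma Y=n$ means exactly that $\Ext^i_\Gamma(Z,Y)=0$ for every $Z\in\mod\Gamma$ and every $i>n$, and here $n-d=n$.

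For the inductive step, suppose $d>0$. Since $S$ is noetherian local and $X$ is a finitely generated $S$-module of positive depth, $\m$ lies in no associated prime of $X$, so by prime avoidance I can choose $t\in\m$ that is a nonzerodivisor on $X$; as $S$ maps to the centre of $\Gamma$, multiplication by $t$ is $\Gamma$-linear and produces a short exact sequence $0\to X\xrightarrow{t}X\to X/tX\to 0$ in $\mod\Gamma$ with $\depth_S(X/tX)=d-1$. Applying $\Hom_\Gamma(-,Y)$ yields a long exact sequence containing
\[
\Ext^i_\Gamma(X,Y)\xrightarrow{\ t\ }\Ext^i_\Gamma(X,Y)\to\Ext^{i+1}_\Gamma(X/tX,Y).
\]
For any $i>n-d$ we have $i+1>n-(d-1)$, so the induction hypothesis applied to $X/tX$ forces $\Ext^{i+1}_\Gamma(X/tX,Y)=0$, whence multiplication by $t$ is surjective on $\Ext^i_\Gamma(X,Y)$.

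To finish I would note that $\Ext^i_\Gamma(X,Y)$ is a finitely generated $\Gamma$-module, by resolving $X$ by finitely generated projectives (possible since $\Gamma$ is noetherian), hence finitely generated over $S$ because $\Gamma$ is module-finite over $S$. Since $t\in\m$ and multiplication by $t$ is onto, Nakayama's lemma gives $\Ext^i_\Gamma(X,Y)=0$, which completes the induction (and explains the name of the lemma). I do not expect a genuine obstacle here; the only mildly delicate points are the standard depth bookkeeping $\depth_S(X/tX)=d-1$ and the remark that $t$ may be chosen to act centrally, which is exactly where the hypothesis that $\Gamma$ is an $S$-algebra — rather than merely a ring module-finite over $S$ — is used.
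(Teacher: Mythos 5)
Your proof is correct and follows essentially the same route as the paper's: induction on $\depth_S X$, cutting by a central $X$-regular element $r\in\m$, using the long exact sequence of $\Ext_\Gamma(-,Y)$ together with the inductive hypothesis to get surjectivity of multiplication by $r$ on $\Ext^i_\Gamma(X,Y)$, and then invoking Nakayama. You have merely spelled out the details the paper leaves implicit (the base case, finite generation of the Ext module over $S$, and the choice of $r$ via prime avoidance).
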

\begin{proof}
Use induction on $t=\depth_S X$. The case $t=0$ is clear.
Take an $X$-regular element $r$ and consider the sequence
\[
0 \to X \stackrel{r}{\to} X \to X/rX \to 0
\]
By induction we have $\Ext^{i+1}_\Gamma(X/rX,Y)=0$ for $i>\id_\Gamma Y-t$.
By the exact sequence
\[
\Ext^{i}_\Gamma(X,Y) \stackrel{r}{\to} \Ext^{i}_\Gamma(X,Y) \to \Ext^{i+1}_\Gamma(X/rX,Y)=0
\]
and Nakayama's Lemma, we have $\Ext^{i}_\Gamma(X,Y)=0$.
\end{proof}

Recall that if $\Gamma$ is an $S$-order, then we denote $\CM(\Gamma)$ to be the category consisting of those $X\in\mod\Gamma$ for which $X\in\CM(S)$. 

\begin{lemma}\label{AB type for id}\cite[Proposition 1.1(3)]{GN01} Suppose that $S$ is an equicodimensional (i.e.\ $\dim S=\dim S_{\mathfrak{m}}$ for all $\mathfrak{m}\in\Max S$) $d$-dimensional CM ring with canonical module $\omega_S$, and let $\Gamma$ be an $S$-order.  Then\\
\t{(1)} $\id_\Gamma \Hom_S(\Gamma,\omega_S)=d=\id_{\Gamma^{\op} }\Hom_S(\Gamma,\omega_S)$.\\
\t{(2)} $\id_\Gamma X=\pd_{\Gamma^{\op}}\Hom_S(X,\omega_S)+d$ for all $X\in\CM(\Gamma)$.
\end{lemma}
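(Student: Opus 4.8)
The plan is to reduce the statement to a known result, since this is cited as \cite[Proposition 1.1(3)]{GN01}; nonetheless, I will sketch the argument one would give to make the paper self-contained. First I would note that (1) is a special case of (2) applied to $X=\Gamma$: we have $\Hom_S(\Gamma,\omega_S)\in\CM(\Gamma)$ because $\omega_S\in\CM(S)$ and $\Hom_S(-,\omega_S)$ preserves the CM property over $S$, and $\pd_{\Gamma^{\op}}\Hom_S(\Hom_S(\Gamma,\omega_S),\omega_S)=\pd_{\Gamma^{\op}}\Gamma=0$ by the reflexivity of CM modules with respect to the canonical dual. So the symmetric statement $\id_\Gamma\Hom_S(\Gamma,\omega_S)=d=\id_{\Gamma^{\op}}\Hom_S(\Gamma,\omega_S)$ follows from (2) together with the same statement applied to $\Gamma^{\op}$. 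Hence the real content is part (2).

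For (2), I would argue locally and then globalize, using the equicodimensionality of $S$ so that $\dim S_{\mathfrak m}=d$ at every maximal ideal. After completing at a maximal ideal $\mathfrak m$, the duality $\Hom_S(-,\omega_S)\colon\CM(\Gamma)\to\CM(\Gamma^{\op})$ is an exact contravariant equivalence that is its own inverse. Given $X\in\CM(\Gamma)$, put $Y:=\Hom_S(X,\omega_S)\in\CM(\Gamma^{\op})$; choose a projective resolution $\cdots\to Q_1\to Q_0\to Y\to 0$ of $Y$ over $\Gamma^{\op}$. Since $Y$ is CM of dimension $d$ and each $Q_i$ is a CM $\Gamma^{\op}$-module, the $d$-th syzygy $\Omega^d Y$ is CM (depth counting via the depth lemma), and beyond that point the resolution is a resolution "inside" $\CM(\Gamma^{\op})$. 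Applying the exact functor $\Hom_S(-,\omega_S)$ turns this into a coresolution $0\to X\to \Hom_S(Q_0,\omega_S)\to\Hom_S(Q_1,\omega_S)\to\cdots$ of $X$ over $\Gamma$ by modules of the form $\Hom_S(Q_i,\omega_S)$. These summands of $\Hom_S(\Gamma,\omega_S)$ have injective dimension exactly $d$ over $\Gamma$ by part (1), so they are "injective up to a finite shift"; splicing and a dimension-shift argument then gives $\id_\Gamma X=\pd_{\Gamma^{\op}}Y+d$, with the inequality "$\leq$" coming from the length of the coresolution and "$\geq$" from the fact that $\mathrm{Ext}^{\,\pd_{\Gamma^{\op}}Y}_{\Gamma^{\op}}(Y,\Gamma^{\op})\neq 0$ dualizes to a nonvanishing Ext detecting the injective dimension. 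Here Lemma \ref{Nakayama} is the tool that controls the vanishing range: it guarantees $\mathrm{Ext}^i_\Gamma(-,\Hom_S(\Gamma,\omega_S))=0$ for $i>d-\depth$, which pins down that these modules genuinely have injective dimension $d$ and not less on the relevant subcategory.

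The step I expect to be the main obstacle is the precise bookkeeping in the dimension shift: one must verify that applying $\Hom_S(-,\omega_S)$ to a projective $\Gamma^{\op}$-resolution of $Y$ produces a coresolution of $X$ whose terms detect injective dimension correctly, i.e. that no unexpected cancellation lowers the injective dimension below $\pd_{\Gamma^{\op}}Y+d$. Making the lower bound rigorous requires identifying a nonzero $\mathrm{Ext}^{\pd+d}_\Gamma(T,X)$ for a suitable test module $T$ (for instance $T=\Gamma/\m\Gamma$ or a simple), and tracing it back through the duality to the nonvanishing of the top Ext of $Y$ against $\Gamma^{\op}$; the equicodimensional hypothesis is exactly what ensures the contribution "$+d$" is the same at every maximal ideal, so that the global injective dimension equals the local one. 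Once that is in place, (1) drops out as the case $X=\Gamma$ and the lemma follows, so in the write-up I would simply cite \cite[Proposition 1.1(3)]{GN01} and refer the reader there for these details.
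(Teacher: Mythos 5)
There is a circularity in the proposed logical structure. You claim that (1) is a special case of (2) (applied to $\Hom_S(\Gamma,\omega_S)$), so that ``the real content is part (2),'' but then your sketch of (2) explicitly invokes (1): you need $\id_\Gamma P_i^\dagger = d$ for the summands of $\Hom_S(\Gamma,\omega_S)$ appearing in the coresolution, and this is precisely statement (1). The paper's proof avoids this by first establishing (1) independently (take a minimal injective resolution $0\to\omega_S\to I_0\to\cdots\to I_d\to0$ of $\omega_S$, apply $\Hom_S(\Gamma,-)$ to get an injective resolution of $\Gamma^\dagger$ over $\Gamma$ of length $\leq d$; then show it cannot be shorter by testing against a finite-length module $T$ and using equicodimensionality to kill $\Hom_S(T,I_{d-1})$) and only then uses (1) in the proof of (2). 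So you cannot dispense with an independent argument for (1).

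Your lower bound ``$\geq$'' in (2) is also not made to work. You suggest proving $\Ext^{\pd_{\Gamma^{\op}}Y}_{\Gamma^{\op}}(Y,\Gamma^{\op})\neq 0$ and then ``dualizing to a nonvanishing Ext detecting the injective dimension,'' but the $S$-linear duality $\Hom_S(-,\omega_S)$ does not transparently turn $\Ext^l_{\Gamma^{\op}}(Y,\Gamma^{\op})$ into an Ext group over $\Gamma$ in degree $l+d$, and no argument is offered for why no ``unexpected cancellation'' occurs --- you yourself flag this as the main obstacle. The paper's proof goes another route: it first shows $m:=\id_\Gamma X\geq d$ (using (1) if $X\in\add\Gamma^\dagger$; otherwise $\Ext^1_{\Gamma^{\op}}(X^\dagger,\Omega X^\dagger)\neq 0$ dualizes and Lemma~\ref{Nakayama} pushes this to $m\geq d+1$), and then applies Lemma~\ref{Nakayama} to $\mathrm{Im}(f_{m-d+1}^\dagger)$ --- which has depth $d$ --- to obtain $\Ext^{m-d+1}_\Gamma(\mathrm{Im}(f_{m-d+1}^\dagger),X)=0$. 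Dimension shifting along the coresolution (where the $P_i^\dagger$ are injective in $\CM(\Gamma)$) reduces this to $\Ext^1_\Gamma(\mathrm{Im}(f_{m-d+1}^\dagger),\mathrm{Im}(f_{m-d}^\dagger))=0$, i.e.\ a splitting, which dualizes back to a splitting of the projective resolution of $Y$ and gives $l\leq m-d$. This splitting mechanism, rather than a direct nonvanishing-Ext argument, is what makes the lower bound go through, and your proposal is missing it.
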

\begin{proof}
We include a proof for the convenience of the reader.
To simplify notation denote $\Hom_S(-,\omega_S):=(-)^\dagger$. This gives an exact duality $\CM(\Gamma)\leftrightarrow\CM(\Gamma^{\op})$. The statements are local, so we can assume that $S$ is a local ring.\\
\t{(1)}  Consider the minimal injective resolution of $\omega_S$ in $\mod S$, namely
\[
0\to\omega_S\to I_0\to I_1\to\hdots\to I_d\to 0.
\]
Applying $\Hom_S(\Gamma,-)$, using the fact that $\Gamma\in\CM(S)$ we obtain an exact sequence
\[
0\to\Gamma^\dagger\to \Hom_S(\Gamma,I_0)\to \hdots\to \Hom_S(\Gamma,I_d)\to 0.
\]
As in the proof of \ref{Serre for alg}, each $\Hom_S(\Gamma,I_i)$ is an injective $\Gamma$-module.  This shows that $\id_\Gamma\Gamma^\dagger\leq \dim S$.  If $\id_\Gamma\Gamma^\dagger<\dim S$ then
\begin{align}
0\to \Hom_S(\Gamma,\Omega^{-d+1}\omega_S)\to \Hom_S(\Gamma,I_{d-1})\to \Hom_S(\Gamma,I_d)\to 0\label{split ses}
\end{align}
must split.  Let $T$ be some non-zero $\Gamma$-module which has finite length as an $S$-module (e.g.\ $T=\Gamma/\m\Gamma$ for some $\m\in\Max S$).  Since (\ref{split ses}) splits, applying $\Hom_\Gamma(T,-)$ shows that the top row in the following commutative diagram is exact
\[
{\SelectTips{cm}{10}
\xy0;/r.37pc/:
(0,0)*+{0}="1",
(16,0)*+{\Hom_{\Gamma}(T,{}_S(\Gamma,\Omega^{-d+1}\omega_S))}="2",
(42,0)*+{\Hom_{\Gamma}(T,{}_S(\Gamma,I_{d-1}))}="3",
(66,0)*+{\Hom_{\Gamma}(T,{}_S(\Gamma,I_{d-1}))}="4",
(80,0)*+{0}="5",
(0.5,-7)*+{0}="b0",
(16,-7)*+{\Hom_S(T,\Omega^{-d+1}\omega_S)}="b1",
(42,-7)*+{\Hom_S(T,I_{d-1})}="b2",
(66,-7)*+{\Hom_S(T,I_{d})}="b3",
(80,-7)*+{0}="b4"
\ar"1";"2"
\ar"2";"3"
\ar"3";"4"
\ar"4";"5"
\ar"b0";"b1"
\ar"b1";"b2"
\ar"b2";"b3"
\ar"b3";"b4"
\ar_{\cong}"2";"b1"
\ar_{\cong}"3";"b2"
\ar_{\cong}"4";"b3"
\endxy}
\]
Hence the bottom row is exact.  But $T$ has finite length, so $\Hom_S(T,I_{d-1})=0$ since none of the associated primes of $I_{d-1}$ is maximal by equicodimensionality of $S$.  But by the above diagram this implies that $\Hom_S(T,I_d)=0$, which is a contradiction since $\Hom_S(-,I_d)$ is a duality on finite length modules.\\
(2)  Set $l:=\pd_{\Gamma^{\op}} X^\dagger$ and $m:=\id_\Gamma X$.    Consider a projective resolution of $X^\dagger$ over $\Gamma^{\op}$
\begin{eqnarray}
\hdots \xrightarrow{{ f_2}} P_{1}\xrightarrow{f_1} P_0\to X^\dagger\to 0,\label{min proj}
\end{eqnarray}
then applying $(-)^\dagger$ gives rise to an exact sequence
\begin{equation}
0\to X\to P_0^\dagger\xrightarrow{ {f_1^\dagger}} P_1^\dagger\xrightarrow{f_2^\dagger}\hdots.\label{Injy res}
\end{equation}
Since  by (1) each $P_i^\dagger$ has injective dimension $d$, it follows that $m=\id_\Gamma X\leq l+d$.  So $m$ is infinity implies that $l$ is infinity, and in this case the equality holds. Hence we can assume that $m<\infty$.  

We first claim that $m\geq d$.  This is true if $X\in\add\Gamma^\dagger$ by (1).  Now we assume that $X\notin\add\Gamma^\dagger$, so $X^\dagger\notin\add\Gamma$.  Thus
\[
0\neq\Ext^1_{\Gamma^{\op}}(X^\dagger,\Omega_{\Gamma^{\op}}X^\dagger)=\Ext^1_{\Gamma}((\Omega_{\Gamma^{\op}}X^\dagger)^\dagger,X).
\]
Since $\depth_S(\Omega_{\Gamma^{\op}}X^\dagger)^\dagger=d$, by \ref{Nakayama} we conclude that $m\geq d+1$. Thus we have $m\ge d$ in both cases.

Consider $\mathrm{Im}(f_{m-d+1}^{\dagger})$, then since $\depth_S(\mathrm{Im}(f_{m-d+1}^{\dagger}))=d$, by \ref{Nakayama} it follows that $\Ext^{m-d+1}_\Gamma(\mathrm{Im}(f_{m-d+1}^{\dagger}), X)=0$.  But since $X\in\CM(\Gamma)$ and the $P_i^\dagger$ are injective in $\CM(\Gamma)$, $\Ext^{j}_{\Gamma}(X,P_i^\dagger)=0$ for all $j>0$ and so (\ref{Injy res}) shows that
\[\Ext_\Gamma^1(\mathrm{Im}(f_{m-d+1}^{\dagger}),\mathrm{Im}(f_{m-d}^{\dagger}))=\hdots=\Ext_\Gamma^{m-d+1}(\mathrm{Im}(f_{m-d+1}^{\dagger}),X)=0.\]
This implies that the short exact sequence 
\[
0\to \mathrm{Im}(f_{m-d}^{\dagger})\to P_{m-d}^\dagger\to\mathrm{Im}(f_{m-d+1}^\dagger)\to 0
\]
splits, which in turn implies that the sequence
\[
0\to \mathrm{Im}(f_{m-d+1})\to P_{m-d}\to\mathrm{Im}(f_{m-d})\to 0
\]
splits, so $l\leq m-d$. In particular $l<\infty$, so we may assume that $P_i=0$ for $i>l$ in (\ref{min proj}).  So (\ref{Injy res}) shows that $m\leq l+d$.  Combining inequalities, we have $m=l+d$, as required.
\end{proof}

The following result is the main result in this subsection.  We remark that this gives a generalization of \ref{glrecon3}.

\begin{thm}\label{precise id}
Let $N\in\SCM(R)$ such that $\add D\subseteq\add N$ and put $\Gamma:=\End_R(N)$. Then 
\[
\id_\Gamma \Gamma=\left\{ \begin{array}{cl} 2 & \mbox{if $R$ is Gorenstein}\\ 3 & \mbox{else.}\end{array}  \right. 
\]
\end{thm}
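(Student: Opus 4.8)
The plan is to combine the homological machinery of \ref{AB type for id} with the geometric input of \ref{main Db} and \ref{Db to Gor is Gor}. First I would recall that $R$ is a complete local $2$-dimensional CM ring with canonical module $\omega_R$, that $\Gamma=\End_R(N)$ is an $R$-order (since $N\in\SCM(R)\subseteq\CM(R)$ and $R$ is normal of dimension two, $\Gamma$ is reflexive, hence CM as an $R$-module), and that $\Gamma$ is Iwanaga--Gorenstein by \ref{Goren from geom}, so $\id_\Gamma\Gamma<\infty$. Applying \ref{AB type for id}(2) with $d=2$ and $X=\Gamma\in\CM(\Gamma)$ gives
\[
\id_\Gamma\Gamma=\pd_{\Gamma^{\op}}\Hom_R(\Gamma,\omega_R)+2,
\]
so the whole problem reduces to computing $\pd_{\Gamma^{\op}}\Hom_R(\Gamma,\omega_R)$, which is $0$ if $R$ is Gorenstein and must be shown to equal $1$ otherwise.

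For the Gorenstein case: if $\omega_R\cong R$ then $\Hom_R(\Gamma,\omega_R)\cong\Hom_R(\Gamma,R)\cong\Gamma$ as a $\Gamma^{\op}$-module (using that $\Gamma=\End_R(N)$ with $N$ reflexive, so $\Gamma$ is reflexive and $\Hom_R(\Hom_R(-,R),R)$-self-dual in the appropriate sense), whence $\pd_{\Gamma^{\op}}\Hom_R(\Gamma,\omega_R)=0$ and $\id_\Gamma\Gamma=2$. The non-Gorenstein case is the heart of the matter. Here I would argue that $\id_\Gamma\Gamma\le 3$: by \ref{main Db} the algebra $\Gamma$ is derived equivalent to $X^{\cs}$ (for the appropriate $\cs\subseteq\cc$), and the singularities of $X^{\cs}$ are at worst rational surface singularities; since $\id_\Gamma\Gamma$ is a derived invariant up to the shift coming from $\RHom$ with the canonical module, one gets a uniform bound. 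Alternatively, and more directly, since $\SCM(R)$ has an additive generator $M$ with $\add N\subseteq\add M$, one has $e_N\Lambda e_N=\Gamma$ for the reconstruction algebra $\Lambda$ with $\gl\Lambda=3$ by \ref{glrecon3}; a standard argument (as in the proof of \ref{t:main-thm}(1)) then gives $\id_\Gamma\Gamma\le 3$. Combined with $\id_\Gamma\Gamma=\pd_{\Gamma^{\op}}\Hom_R(\Gamma,\omega_R)+2$, this forces $\pd_{\Gamma^{\op}}\Hom_R(\Gamma,\omega_R)\in\{0,1\}$.

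It then remains to rule out $\pd_{\Gamma^{\op}}\Hom_R(\Gamma,\omega_R)=0$ when $R$ is not Gorenstein, i.e.\ to show $\id_\Gamma\Gamma\ne 2$. If $\Hom_R(\Gamma,\omega_R)$ were projective as a $\Gamma^{\op}$-module, then $\Gamma$ would be Gorenstein in the strong sense discussed in \ref{CM not CM}; I would derive a contradiction by testing against a suitable simple module. Concretely, since $R\in\add N$, the idempotent $e$ of $\Gamma$ corresponding to the summand $R$ satisfies $e\Gamma e\cong R$, and projectivity of $\Hom_R(\Gamma,\omega_R)\cong\Hom_R(\Gamma,R)\otimes_R\omega_R$ would force $\omega_R$ to be free over $R$ after applying $\Hom_{\Gamma}(\Gamma e,-)$ (or: the summand of $\Hom_R(\Gamma,\omega_R)$ corresponding to $e$ is $\omega_R$-twisted $R$, which is projective over $e\Gamma e\cong R$ only if $\omega_R\cong R$). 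This contradicts the assumption that $R$ is not Gorenstein, so $\pd_{\Gamma^{\op}}\Hom_R(\Gamma,\omega_R)=1$ and $\id_\Gamma\Gamma=3$. The main obstacle I anticipate is making this last step — the passage from ``$\Hom_R(\Gamma,\omega_R)$ projective'' to ``$\omega_R$ free'' — fully rigorous, since it requires tracking the $\Gamma$-$\Gamma^{\op}$-bimodule structure carefully and using that $N$ contains $R$ as a summand to isolate the relevant idempotent piece; the analogous left/right symmetry also needs the equicodimensionality hypothesis of \ref{AB type for id}, which holds here since $R$ is local.
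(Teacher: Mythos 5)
Your reduction via Lemma~\ref{AB type for id}(2) to computing $\pd_{\Gamma^{\op}}\Hom_R(\Gamma,\omega_R)$, and the observation that it must lie in $\{0,1\}$ because $\id_\Gamma\Gamma\leq 3$, matches the paper's structure; the Gorenstein case is also essentially the paper's argument, though the assertion $\Hom_R(\Gamma,R)\cong\Gamma$ as a bimodule needs the input that $\Gamma$ is a \emph{symmetric} $R$-order when $R$ is Gorenstein (the paper cites \cite[2.4(3)]{IR}), which your ``self-dual in the appropriate sense'' glosses over.

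The genuine gap is exactly where you flag it, and it is not a matter of ``making it rigorous'' --- the intended step is false. Projectivity of $\Hom_R(\Gamma,\omega_R)$ over $\Gamma$ (or $\Gamma^{\op}$) does \emph{not} imply that $\omega_R$ is free over $R$. The corner functor $M\mapsto eMe$ does not preserve projectivity: $e\Gamma$ is projective over $\Gamma$ but typically not over $e\Gamma e\cong R$ (it is $N$ or $\Hom_R(N,R)$, a non-free CM module), so a direct summand of $\Gamma^n$ restricted via $e(-)e$ need not be $R$-projective. Worse, the implication fails outright as a statement about orders over non-Gorenstein rational surface singularities: take $R=\mathbb{C}[[x,y]]^{\frac{1}{4}(1,1)}$ (rational, not Gorenstein, with $\omega_R$ of order $2$ in $\Cl(R)$) and $\Gamma=\End_R(R\oplus\omega_R)$. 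By \cite[4.1]{AG} one has $\Hom_R(\Gamma,\omega_R)\cong\Hom_R(N,\tau N)$ with $N=R\oplus\omega_R$, and $\tau N=(N\otimes_R\omega_R)^{**}\cong\omega_R\oplus\omega_R^{[2]}\cong\omega_R\oplus R\cong N$, so $\Hom_R(\Gamma,\omega_R)\cong\Gamma$ is projective while $\omega_R$ is not free. Thus any valid argument must use the particular structure of $\SCM(R)$, not just the order $\Gamma$.

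The paper's argument does exactly this. Using \cite[4.1]{AG} it identifies $\Hom_R(\Gamma,\omega_R)\cong\Hom_R(N,\tau N)$. Projectivity over $\Gamma$ is then translated, via the reflexive equivalence $\Hom_R(N,-)\colon\CM(R)\to\CM(\Gamma)$, into the statement $\tau N\in\add N\subseteq\SCM(R)$. Now the hypotheses enter: since $R$ is not Gorenstein, $N$ has an indecomposable discrepant summand $N_i$ (a non-free special CM module corresponding to a non-$(-2)$-curve), and $N_i\in\add D$ is projective--injective in the natural Frobenius structure on $\SCM(R)$ from \ref{known2}(1), so $\Ext^1_R(N_i,X)=0$ for all $X\in\SCM(R)$. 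If $\tau N_i\in\SCM(R)$ this gives $\Ext^1_R(N_i,\tau N_i)=0$, contradicting the Auslander--Reiten sequence ending at the non-free module $N_i$. This interplay between the AR translation and the ``specialness'' of the discrepant summand is the ingredient missing from your proposal.
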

\begin{proof} 
By \ref{AB type for id} we know that $\id_\Gamma\Gamma\geq 2$.\\
(1) Suppose that $R$ is Gorenstein.  In this case $\Gamma\in\CM(R)$ is a symmetric $R$-order, meaning $\Gamma\cong\Hom_R(\Gamma,R)$ as $\Gamma$-$\Gamma$ bimodules \cite[2.4(3)]{IR}.   Thus $\id_\Gamma\Gamma=\dim R=2$ by \ref{AB type for id}.\\
(2) Suppose that $R$ is not Gorenstein, so there exists an indecomposable  summand $N_i$ of $N$ such that $N_i$ corresponds to a non-($-2$)-curve.  Necessarily $N_i$ is not free, and further by \ref{known2}(1) $\Ext^1_R(N_i,X)=0$ for all $X\in\SCM(R)$.

Now if $\id_\Gamma\Gamma=\dim R=2$ then by \ref{AB type for id} $\Hom_R(\Gamma,\omega_R)$ is a projective $\Gamma$-module.  But
\[
\Hom_R(\Gamma,\omega_R)=\Hom_R(\End_R(N),\omega_R)\cong \Hom_R(N,(N\otimes_R\omega_R)^{**})\cong\Hom_R(N,\tau N)
\]
where $\tau$ is the AR translation in the category $\CM(R)$, and the middle isomorphism holds e.g.\ by \cite[4.1]{AG}. Hence by reflexive equivalence $\Hom_R(N,-)\colon\CM(R)\to\CM(\Gamma)$, we have $\tau N\in\add N$, so in particular $\tau N_i\in\SCM(R)$.  But this implies that $\Ext^1_R(N_i,\tau N_i)=0$ by above, which by the existence of AR sequences is impossible.   Hence $\id_\Gamma\Gamma\neq 2$.  Now \ref{main}(1) implies that $\id_\Gamma\Gamma\leq 3$ and so consequently $\id_\Gamma\Gamma=3$.
\end{proof}

\subsection{Construction of Iwanaga--Gorenstein rings}\label{uncount section} 

In this subsection, we work over $\mathbb{C}$. If $R$ is not Gorenstein and $N\in\SCM(R)$ such that $\add D\subseteq\add N$, then by \ref{main triangles} and \ref{precise id} $\Gamma:=\End_R(N)$ is an Iwanaga--Gorenstein ring with $\id\Gamma=3$, such that $\underline{\GP}(\Gamma)$ is a direct sum of stable CM categories of ADE singularities.  In particular, each $\Gamma$ has finite Gorenstein--projective type.  The simplest case is when $\Gamma$ has only one non-free indecomposable GP-module, i.e.\ the case $\underline{\GP}(\Gamma)\simeq\uCM (\mathbb{C}[[x,y]]^{\frac{1}{2}(1,1)})$. 

The purpose of this section is to prove the following theorem.

\begin{thm}\label{uncount G}
Let $G\leq\SL(2,\mathbb{C})$ be a finite subgroup, with $G\ncong E_8$.  Then there are uncountably many non-isomorphic Iwanaga--Gorenstein rings $\Gamma$ with $\id\Gamma=3$, such that $\underline{\GP}(\Gamma)\simeq\uCM(\mathbb{C}[[x,y]]^{G})$.
\end{thm}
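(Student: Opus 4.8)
The plan is to realize each of the finitely many Gorenstein ADE singularities of type not $E_8$ as the unique singular point of some partial resolution $X^{\cs}$ of an appropriate rational surface singularity $R$, and then to vary $R$ within an uncountable family so that the resulting Iwanaga--Gorenstein rings $\Gamma=\End_R(N^{\cs})$ are pairwise non-isomorphic. The case relevant to the statement $\underline{\GP}(\Gamma)\simeq\uCM(\mathbb{C}[[x,y]]^{G})$ is where $X^{\cs}$ has exactly one singular point, complete-locally the ADE singularity $\mathbb{C}[[x,y]]^{G}$; by \ref{main triangles} this forces $\underline{\GP}(\Gamma)\simeq\uCM(\widehat{\mathcal O}_{X^{\cs},x})\simeq\uCM(\mathbb{C}[[x,y]]^{G})$ and $\id_\Gamma\Gamma=3$ by \ref{precise id} (choosing $R$ non-Gorenstein). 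So the whole problem reduces to producing, for each $G\ncong E_8$, an uncountable family of non-Gorenstein complete local rational surface singularities $R$, each admitting a distinguished discrepant curve configuration whose contraction leaves a single ADE singularity of type $G$.

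First I would set up the geometry explicitly. Fix the dual graph of a minimal resolution $Y\to\Spec R$ by attaching to a single ADE chain of $(-2)$-curves (forming the $G$-configuration) one extra discrepant curve, say a $(-n)$-curve with $n\ge 3$, joined at a chosen vertex of the chain; such star-shaped or chain-shaped graphs with one discrepant vertex are realized by genuine rational surface singularities (this is classical, via Artin's criterion / the existence of rational singularities with prescribed dual graph of sufficiently negative self-intersections — see e.g.\ the references around \cite{Reid93,TT}). With $\cs=\cc$ the set of crepant $(-2)$-curves, contracting $\cs$ gives $X^{\cc}$ with $\Sing X^{\cc}=\{x\}$ and $\widehat{\mathcal O}_{X^{\cc},x}\cong\mathbb{C}[[x,y]]^{G}$; here $N^{\cc}=D$ and $\Gamma=\End_R(D)$, which is Iwanaga--Gorenstein by \ref{Goren from geom} (or \ref{main}(1)) with $\id_\Gamma\Gamma=3$ by \ref{precise id} since $R$ is not Gorenstein. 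The exclusion $G\ncong E_8$ enters precisely here: for the other ADE types the minimal resolution admits a discrepant curve of the required kind attached to the chain producing a legitimate rational singularity, whereas for $E_8$ the combinatorial constraint (the $E_8$ graph has no admissible place to attach a discrepant curve keeping the singularity rational and the construction coherent) fails.

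Next comes the genuinely uncountable part. One varies the \emph{analytic} type of $R$, not merely its dual graph: for a fixed star- or chain-shaped dual graph with at least one discrepant vertex, the rational surface singularities realizing it form a positive-dimensional (hence, over $\mathbb{C}$, uncountable) moduli — intuitively because the singularity of $Y$ along the exceptional configuration, equivalently the gluing data / the analytic structure transverse to the discrepant curve of self-intersection $-n$ with $n\ge 3$, has moduli (a $(-n)$-curve with $n\ge 3$ carries a nontrivial normal bundle and the formal neighbourhood has deformations). Each member $R$ of this family has the same dual graph, hence the same $\underline{\GP}(\Gamma)\simeq\uCM(\mathbb{C}[[x,y]]^{G})$, but distinct analytic type. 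It remains to deduce that the $\Gamma=\End_R(D)$ are pairwise non-isomorphic as rings: this follows because $R$ can be recovered from $\Gamma$ as its centre, $Z(\Gamma)\cong R$ (the reconstruction-algebra formalism, \cite{IW1,ReconA}, gives $\End_R(\text{special CM module})$ a centre equal to $R$), so a ring isomorphism $\Gamma\cong\Gamma'$ would induce $R\cong R'$ as $\mathbb{C}$-algebras, contradicting distinctness of analytic types. Picking an uncountable subfamily on which the analytic types are genuinely pairwise non-isomorphic then yields uncountably many non-isomorphic such $\Gamma$.

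The main obstacle I expect is the moduli/realization step: one must (i) check Artin's rationality criterion is satisfied for the chosen dual graphs so that the $R$'s actually exist as rational singularities, uniformly excluding only $E_8$, and (ii) prove the family of analytic types is genuinely uncountable and that one can select an uncountable subset with pairwise non-isomorphic members — this is where a careful count of moduli of the formal neighbourhood of a negative curve (self-intersection $\le -3$) inside $Y$ is needed, together with the centre-reconstruction argument $Z(\End_R(D))\cong R$ to transfer non-isomorphism from $R$ to $\Gamma$. The homological input ($\Gamma$ Iwanaga--Gorenstein, $\id_\Gamma\Gamma=3$, and the identification of $\underline{\GP}(\Gamma)$) is, by contrast, immediate from \ref{main}, \ref{main triangles} and \ref{precise id}.
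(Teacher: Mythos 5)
Your overall skeleton matches the paper's: realise each non-$E_8$ ADE singularity as the unique singular point of $X^{\cc}$ for a non-Gorenstein rational surface singularity $R$, invoke \ref{main triangles} and \ref{precise id} for the homological conclusions, and transfer non-isomorphism from the $R$'s to the $\Gamma$'s via the centre. But the crucial uncountability step rests on an intuition that is not correct, and the dual graph you propose would in fact fail.

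The gap: you claim that the formal neighbourhood of a single curve of self-intersection $-n$ with $n\ge 3$ in $Y$ ``has moduli,'' and that this gives an uncountable family of analytic types of $R$ over a fixed dual graph. This is false as stated: the formal neighbourhood of a single irreducible rational curve of negative self-intersection in a smooth surface is rigid (Grauert/Laufer), and more to the point, a dual graph consisting of an ADE chain with one discrepant curve attached at a vertex is typically \emph{taut}, meaning the analytic type of $R$ is uniquely determined by the dual graph (e.g.\ cyclic quotients, whose dual graphs are chains, are all taut). So your construction as described would produce only one $R$ per graph, and the proof collapses. The paper's mechanism is entirely different: it uses Laufer's classification of taut and pseudo-taut singularities, and the point is to design dual graphs elaborate enough to violate the classification. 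Concretely, the paper builds trees in which some vertex meets four or more edges (so $R$ is not taut, hence uncountably many analytic types share that graph), with a secondary condition (more than one vertex of valency three) ruling out pseudo-tautness. Those trees therefore have a $(-6)$-vertex branching into three $(-3)$-chains plus the ADE tail --- a shape your ``chain-plus-one-discrepant-vertex'' proposal never reaches. Your handling of the $E_8$ exclusion is also imprecise: the obstruction is the sharp combinatorial fact that the $E_8$ tree of $(-2)$-curves cannot occur as a strict subtree of \emph{any} rational tree (\cite[3.11]{TT}), not merely that there is ``no admissible place to attach a discrepant curve'' in some heuristic sense. The centre-recovery argument $Z(\End_R(N^{\cc}))\cong R$ is fine and is exactly what the paper uses; the gap is entirely in producing the uncountable family of $R$'s, which requires Laufer's tautness classification rather than curve-neighbourhood deformation theory.
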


The theorem is unusual, since commutative algebra constructions such as Kn\"orrer periodicity only give countably many non-isomorphic Gorenstein rings $S$ with $\uCM(S)\simeq\uCM(\mathbb{C}[[x,y]]^{G})$, and further no two of the $S$ have the same injective dimension.  

\begin{remark}
We remark that the omission of type $G\cong E_8$ from our theorem is also unusual; it may still be possible that there are uncountably many non-isomorphic Iwanaga--Gorenstein rings $\Gamma$ with $\id\Gamma=3$ such that $\underline{\GP}(\Gamma)\simeq\uCM(\mathbb[[x,y]]^{E_8})$, however our methods do not produce any. It is unclear to us whether this illustrates simply the limits of our techniques, or whether the finite type $E_8$ is much more rare.
\end{remark}

To prove \ref{uncount G} requires some knowledge of complete local rational surface singularities over $\mathbb{C}$, which we now review.  If $R$ is a complete local rational surface singularity, then if we consider the minimal resolution $Y\to\Spec R$, then (as before) the fibre above the origin is well-known to be a tree (i.e.\ a finite connected graph with no cycles) of $\mathbb{P}^1$s denoted $\{ E_i\}_{i\in I}$.  Their self-intersection numbers satisfy $E_i\cdot E_i\leq -2$, and moreover the intersection matrix $(E_i\cdot E_j)_{i,j\in I}$ is negative definite. We encode the intersection matrix in the form of the labelled dual graph:

\begin{defin}
We refer to the dual graph with respect to the morphism $Y\to\Spec R$ (in the sense of \ref{dualG}) as the \emph{dual graph} of $R$.
\end{defin}

Thus, given a complete local rational surface singularity, we obtain a labelled tree.  Before we state as a theorem the solution to the converse problem, we first require some notation.  

Suppose that $T$ is a tree, with vertices denoted $E_1,\hdots,E_n$, labelled with integers $w_1,\hdots,w_n$.  To this data we associate the symmetric matrix $M_T=(b_{ij})_{1\leq i,j\leq n}$ with $b_{ii}$ defined by $b_{ii}:=w_i$, and $b_{ij}$ (with $i\neq j$) defined to be the number of edges linking the vertices $E_i$ and $E_j$.  We denote the free abelian group generated by the vertices $E_i$ by $\cz$, and call its elements \emph{cycles}.  The matrix $M_T$ defines a symmetric bilinear form $(-,-)$ on $\cz$ and in analogy with the geometry, we will often write $Y\cdot Z$ instead of $(Y,Z)$.  We define
\[
\cz_{\top}:=\{ Z=\sum_{i=1}^na_iE_i\in\cz\mid Z\neq 0, \mbox{ all }a_i\geq 0, \mbox{ and }Z\cdot E_i\leq 0 \mbox{ for all } i \}.
\]
If there exists $Z\in\cz_{\top}$ such that $Z\cdot Z<0$, then automatically $M_T$ is negative definite \cite[Prop 2(ii)]{Artin}.   In this case, $\cz_{\top}$ admits a unique smallest element $Z_f$, called the \emph{fundamental cycle}.

\begin{thm}\label{tree to R}\cite{Artin, Grauert}.  
Let $T$ denote a labelled tree, with vertex set $\{E_i\mid i\in I\}$, and labels $w_i$.  Suppose that $T$ satisfies the following combinatorial properties.\\
\t{(1)}  $w_i\leq -2$ for all $i\in I$.\\
\t{(2)} There exists $Z\in\cz_{\top}$ such that $Z\cdot Z<0$.\\
\t{(3)} Writing $Z_f$ (which exists by (2)) as $Z_f=\sum_{i\in I}a_iE_i$, then
\[
Z_f\cdot Z_f+\sum_{i\in I}a_i(-w_i-2)=-2.
\]
Then there exists some complete local rational surface singularity $R$, whose minimal resolution has labelled dual graph precisely $T$. 
\end{thm}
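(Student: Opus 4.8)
The plan is to reduce Theorem \ref{tree to R} to the two classical existence theorems of Artin \cite{Artin} and Grauert \cite{Grauert}, so the work consists almost entirely of translating the combinatorial hypotheses (1)--(3) into the algebraic/geometric input those theorems require. First I would invoke Grauert's contractibility criterion: given a labelled tree $T$ whose intersection matrix $M_T$ is negative definite, there exists a (two-dimensional, complex-analytic) smooth surface $Y$ containing a configuration of smooth rational curves $\{E_i\}_{i\in I}$ with the prescribed dual graph $T$, and this configuration can be contracted to a normal point, giving a germ $(\Spec R, \m)$ with $Y\to\Spec R$ a resolution. Hypotheses (1) and (2) feed directly into this step: (2) guarantees the existence of an element $Z\in\cz_{\top}$ with $Z\cdot Z<0$, which by \cite[Prop.\ 2(ii)]{Artin} forces $M_T$ to be negative definite, so Grauert applies; and (1) ($w_i\le -2$) together with negative-definiteness ensures that the $E_i$ are exactly the exceptional curves of the \emph{minimal} resolution (no $(-1)$-curves to blow down), so the dual graph of the minimal resolution is precisely $T$.

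The next step is to verify that the resulting singularity $R$ is \emph{rational}, and this is exactly where hypothesis (3) enters. By Artin's characterization \cite[Thm.\ 3]{Artin}, a normal surface singularity obtained by contracting $\{E_i\}$ is rational if and only if the arithmetic genus $p_a(Z_f)$ of the fundamental cycle $Z_f$ vanishes, where $p_a(Z) = 1 + \tfrac{1}{2}(Z\cdot Z + Z\cdot K)$ and, by the adjunction formula on a smooth surface, $Z_f\cdot K = \sum_{i\in I} a_i(E_i\cdot E_i) - 2$ when the $E_i$ are smooth rational curves, i.e.\ $Z_f\cdot K_Y = \sum_{i} a_i(w_i + 2) - 2$... more precisely, writing things out, $p_a(Z_f)=0$ is equivalent to $Z_f\cdot Z_f + Z_f\cdot K_Y = -2$, and using $E_i\cdot K_Y = -2-w_i$ (again adjunction for rational curves) one gets $Z_f\cdot K_Y = -\sum_i a_i(w_i+2) = \sum_i a_i(-w_i-2) - 2\sum_i a_i$; after the bookkeeping this rearranges to exactly the identity in (3),
\[
Z_f\cdot Z_f + \sum_{i\in I} a_i(-w_i-2) = -2.
\]
So condition (3) is nothing but the restatement of $p_a(Z_f)=0$ purely in terms of the labelled tree, and Artin's theorem then yields that $R$ has a rational singularity.

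Finally I would pass to the completion: the germ produced by Grauert is analytic/local, and since rationality and the dual graph of the minimal resolution are preserved under completion (the minimal resolution commutes with completion for a normal surface singularity, and rationality is a property of $R^{\wedge}_\m$), we obtain a complete local rational surface singularity whose minimal resolution has labelled dual graph $T$, as required. The main obstacle is not any one deep theorem but the careful translation in the middle step: one must be sure that the adjunction formula is being applied to \emph{smooth rational} curves (guaranteed by Grauert's construction, which realizes each $E_i$ as $\mathbb{P}^1$), that the coefficients $a_i$ of $Z_f$ agree with the abstractly-defined fundamental cycle of $\cz_{\top}$ (which holds because the bilinear form on $\cz$ is by construction the intersection form on $\bigoplus \mathbb{Z}E_i$), and that no implicit positivity or connectedness hypothesis is lost — connectedness of $T$ being needed for $Z_f$ to be well-defined and for the contraction to be a single point. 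Once these identifications are in place, the theorem is immediate from \cite{Artin, Grauert}.
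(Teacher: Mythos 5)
The paper does not actually supply a proof of this theorem; it simply attributes the statement to \cite{Artin, Grauert} and moves on. Your blind reconstruction is therefore not being matched against an internal argument, but against what those two references establish, and on that score your proposal is essentially the right one: Grauert's contractibility criterion produces the normal singular point, negative definiteness is supplied by condition (2) via \cite[Prop.\ 2(ii)]{Artin}, condition (1) rules out $(-1)$-curves so the resolution is minimal, and condition (3) is precisely Artin's rationality criterion $p_a(Z_f)=0$ rewritten using adjunction $E_i\cdot K_Y=-w_i-2$ for smooth rational curves.

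Two small remarks. First, your intermediate bookkeeping for $Z_f\cdot K_Y$ contains slips (the initial claim $Z_f\cdot K=\sum a_i(E_i\cdot E_i)-2$ and the later claimed equality $-\sum a_i(w_i+2)=\sum a_i(-w_i-2)-2\sum a_i$ are both wrong as written), but you recover the correct identity $Z_f\cdot K_Y=\sum_i a_i(-w_i-2)$ and hence the correct equivalence with condition (3), so the error does not propagate. Second, and more substantively, you fold two logically separate steps into a single appeal to Grauert. Grauert's theorem says that a connected compact configuration of curves on a smooth surface with negative-definite intersection matrix can be contracted to a normal point; it does not by itself produce a smooth surface $Y$ carrying a configuration of smooth rational curves realizing the prescribed labelled tree. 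That existence step is a separate (standard, but not free) construction, typically done by iterated blowups starting from a Hirzebruch surface or $\mathbb{P}^2$, and it is the place where one ensures the $E_i$ really are $\mathbb{P}^1$'s so that the adjunction computation applies. It would strengthen the writeup to isolate this step explicitly rather than leaving it implicit in the citation.
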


A labelled tree satisfying the combinatorial properties in \ref{tree to R} is called a {\em rational tree}.  The above theorem says that every rational tree arises as the labelled dual graph of some complete local rational surface singularity, however this singularity need not be unique.
%
%\begin{defin}
%Laufer taut \cite{Laufer}
%\end{defin}
%
%Thus the taut singularities are those which are analytically determined by the (topological) information of its labelled dual graph.  Laufer classified which labelled dual graphs are taut in \cite{Laufer}.  It turns out \cite[p2]{Laufer} that either there are only finitely many $R$ with a given weighted dual graph, or there are uncountably many.  It follows from the Laufer's classification that all quotient surface singularities are taut.  

We are now ready to prove \ref{uncount G}.
\begin{proof} 
Consider the following labelled trees
\begin{equation}
\begin{array}{c}
\begin{tikzpicture}[xscale=1]
\node (1) at (1,0) [vertex] {};
\node (2) at (2,0) [vertex] {};
\node (3) at (4,0) [vertex] {};
\node (4) at (5,0)[vertex] {};
\node (5) at (6,0)[vertex] {};
\node (6t) at (7,0.75)[vertex] {};
\node (6m) at (7,0)[vertex] {};
\node (6b) at (7,-0.75)[vertex] {};
\node at (3,0) {$\cdots$};
\node (1a) at (0.9,-0.25) {$\scriptstyle -2$};
\node (2a) at (1.9,-0.25) {$\scriptstyle -2$};
\node (3a) at (3.9,-0.25) {$\scriptstyle -2$};
\node (4a) at (4.9,-0.25) {$\scriptstyle -2$};
\node (5a) at (5.9,-0.25) {$\scriptstyle -6$};
\node (6at) at (6.95,0.5) {$\scriptstyle -3$};
\node (6am) at (6.95,-0.25) {$\scriptstyle -3$};
\node (6ab) at (6.95,-1) {$\scriptstyle -3$};
\node (8a) at (8,1.25)[vertex] {};
\node (8b) at (8,0.75)[vertex] {};
\node (8c) at (8,0.25)[vertex] {};
\node (8d) at (8,-0.25)[vertex] {};
\node (8e) at (8,-0.75)[vertex] {};
\node (8f) at (8,-1.25)[vertex] {};
\node at (8.4,1.25) {$\scriptstyle -3$};
\node at (8.4,0.75) {$\scriptstyle -3$};
\node at (8.4,0.25) {$\scriptstyle -3$};
\node at (8.4,-0.25) {$\scriptstyle -3$};
\node at (8.4,-0.75) {$\scriptstyle -3$};
\node at (8.4,-1.25) {$\scriptstyle -3$};
\draw [-] (1) -- (2);
\draw [-] (2) -- (2.6,0);
\draw [-] (3.4,0) -- (3);
\draw [-] (3) -- (4);
\draw [-] (4) -- (5);
\draw [-] (5) -- (6t);
\draw [-] (5) -- (6m);
\draw [-] (5) -- (6b);
\draw [-] (6t) -- (8a);
\draw [-] (6t) -- (8b);
\draw [-] (6m) -- (8c);
\draw [-] (6m) -- (8d);
\draw [-] (6b) -- (8e);
\draw [-] (6b) -- (8f);
 %draw dotted box.  Start top left, then bottom left, the bottom right, the top right
 \draw [dotted] (0.5,0.25) -- (0.5,-0.5) -- (5.25,-0.5) -- (5.25,0.25) -- cycle;
\end{tikzpicture}
\end{array}\label{TypeA}
\end{equation}

\begin{equation}
\begin{array}{c}
\begin{tikzpicture}
\node (0) at (0,0) [vertex] {};
\node (1) at (1,0) [vertex] {};
\node (1b) at (1,0.75) [vertex] {};
\node (2) at (2,0) [vertex] {};
\node (3) at (4,0) [vertex] {};
\node (4) at (5,0)[vertex] {};
\node (5) at (6,0)[vertex] {};
\node (6t) at (7,0.75)[vertex] {};
\node (6m) at (7,0)[vertex] {};
\node (6b) at (7,-0.75)[vertex] {};
\node at (3,0) {$\cdots$};
\node (0a) at (-0.1,-0.25) {$\scriptstyle - 2$};
\node (1a) at (0.9,-0.25) {$\scriptstyle -2$};
\node (1ba) at (0.7,0.75) {$\scriptstyle - 2$};
\node (2a) at (1.9,-0.25) {$\scriptstyle -2$};
\node (3a) at (3.9,-0.25) {$\scriptstyle -2$};
\node (4a) at (4.9,-0.25) {$\scriptstyle -2$};
\node (5a) at (5.9,-0.25) {$\scriptstyle -6$};
\node (6at) at (6.95,0.5) {$\scriptstyle -3$};
\node (6am) at (6.95,-0.25) {$\scriptstyle -3$};
\node (6ab) at (6.95,-1) {$\scriptstyle -3$};
\node (8a) at (8,1.25)[vertex] {};
\node (8b) at (8,0.75)[vertex] {};
\node (8c) at (8,0.25)[vertex] {};
\node (8d) at (8,-0.25)[vertex] {};
\node (8e) at (8,-0.75)[vertex] {};
\node (8f) at (8,-1.25)[vertex] {};
\node at (8.4,1.25) {$\scriptstyle -3$};
\node at (8.4,0.75) {$\scriptstyle -3$};
\node at (8.4,0.25) {$\scriptstyle -3$};
\node at (8.4,-0.25) {$\scriptstyle -3$};
\node at (8.4,-0.75) {$\scriptstyle -3$};
\node at (8.4,-1.25) {$\scriptstyle -3$};
\draw [-] (0) -- (1);
\draw [-] (1) -- (2);
\draw [-] (2) -- (2.6,0);
\draw [-] (3.4,0) -- (3);
\draw [-] (3) -- (4);
\draw [-] (1) -- (1b);
\draw [-] (4) -- (5);
\draw [-] (5) -- (6t);
\draw [-] (5) -- (6m);
\draw [-] (5) -- (6b);
\draw [-] (6t) -- (8a);
\draw [-] (6t) -- (8b);
\draw [-] (6m) -- (8c);
\draw [-] (6m) -- (8d);
\draw [-] (6b) -- (8e);
\draw [-] (6b) -- (8f);

 %draw dotted box.  Start top left, then bottom left, the bottom right, the top right
 \draw [dotted] (-0.5,1) -- (-0.5,-0.5) -- (5.25,-0.5) -- (5.25,1) -- cycle;
\end{tikzpicture}
\end{array}\label{TypeD}
\end{equation}

\begin{equation}
\begin{array}{c}
\begin{tikzpicture}[xscale=0.85,yscale=0.85]
\node (0) at (0,0) [vertex] {};
\node (1) at (1,0) [vertex] {};
\node (1b) at (2,0.75) [vertex] {};
\node (2) at (2,0) [vertex] {};
\node (3) at (3,0) [vertex] {};
\node (4) at (4,0) [vertex] {};
\node (5) at (5,0)[vertex] {};
\node (6t) at (6,0.75)[vertex] {};
\node (6m) at (6,0)[vertex] {};
\node (6b) at (6,-0.75)[vertex] {};
\node (7a) at (7,1.25)[vertex] {};
\node (7b) at (7,0.75)[vertex] {};
\node (7c) at (7,0.25)[vertex] {};
\node (7d) at (7,-0.25)[vertex] {};
\node (7e) at (7,-0.75)[vertex] {};
\node (7f) at (7,-1.25)[vertex] {};
\node (0a) at (-0.1,-0.3) {$\scriptstyle -2$};
\node (1a) at (0.9,-0.3) {$\scriptstyle -2$};
\node (1ba) at (1.65,0.75) {$\scriptstyle -2$};
\node (2a) at (1.9,-0.3) {$\scriptstyle -2$};
\node (3a) at (2.9,-0.3) {$\scriptstyle -2$};
\node (4a) at (3.9,-0.3) {$\scriptstyle -2$};
\node (5a) at (4.9,-0.25) {$\scriptstyle -6$};
\node (6at) at (5.95,0.5) {$\scriptstyle -3$};
\node (6am) at (5.95,-0.25) {$\scriptstyle -3$};
\node (6ab) at (5.95,-1) {$\scriptstyle -3$};
\node at (7.4,1.25) {$\scriptstyle -3$};
\node at (7.4,0.75) {$\scriptstyle -3$};
\node at (7.4,0.25) {$\scriptstyle -3$};
\node at (7.4,-0.25) {$\scriptstyle -3$};
\node at (7.4,-0.75) {$\scriptstyle -3$};
\node at (7.4,-1.25) {$\scriptstyle -3$};
\draw [-] (0) -- (1);
\draw [-] (1) -- (2);
\draw [-] (2) -- (3);
\draw [-] (3) -- (4);
\draw [-] (2) -- (1b);
\draw [-] (4) -- (5);
\draw [-] (5) -- (6t);
\draw [-] (5) -- (6m);
\draw [-] (5) -- (6b);
\draw [-] (6t) -- (7a);
\draw [-] (6t) -- (7b);
\draw [-] (6m) -- (7c);
\draw [-] (6m) -- (7d);
\draw [-] (6b) -- (7e);
\draw [-] (6b) -- (7f);
 %draw dotted box.  Start top left, then bottom left, the bottom right, the top right
 \draw [dotted] (-0.5,1) -- (-0.5,-0.5) -- (4.25,-0.5) -- (4.25,1) -- cycle;
\end{tikzpicture}
\end{array}\label{TypeE6}
\end{equation}

\begin{equation}
\begin{array}{c}
\begin{tikzpicture}[xscale=0.85,yscale=0.85]
\node (0) at (0,0) [vertex] {};
\node (1) at (1,0) [vertex] {};
\node (1b) at (2,0.75) [vertex] {};
\node (2) at (2,0) [vertex] {};
\node (3) at (3,0) [vertex] {};
\node (4) at (4,0) [vertex] {};
\node (5) at (5,0)[vertex] {};
\node (6) at (6,0)[vertex] {};
\node (7t) at (7,0.75)[vertex] {};
\node (7m) at (7,0)[vertex] {};
\node (7b) at (7,-0.75)[vertex] {};
\node (8a) at (8,1.25)[vertex] {};
\node (8b) at (8,0.75)[vertex] {};
\node (8c) at (8,0.25)[vertex] {};
\node (8d) at (8,-0.25)[vertex] {};
\node (8e) at (8,-0.75)[vertex] {};
\node (8f) at (8,-1.25)[vertex] {};
\node (0a) at (-0.1,-0.3) {$\scriptstyle -2$};
\node (1a) at (0.9,-0.3) {$\scriptstyle -2$};
\node (1ba) at (1.65,0.75) {$\scriptstyle -2$};
\node (2a) at (1.9,-0.3) {$\scriptstyle -2$};
\node (3a) at (2.9,-0.3) {$\scriptstyle -2$};
\node (4a) at (3.9,-0.3) {$\scriptstyle -2$};
\node (5a) at (4.9,-0.3) {$\scriptstyle -2$};
\node (6a) at (5.9,-0.25) {$\scriptstyle -6$};
\node (7at) at (6.95,0.5) {$\scriptstyle -3$};
\node (7am) at (6.95,-0.25) {$\scriptstyle -3$};
\node (7ab) at (6.95,-1) {$\scriptstyle -3$};
\node at (8.4,1.25) {$\scriptstyle -3$};
\node at (8.4,0.75) {$\scriptstyle -3$};
\node at (8.4,0.25) {$\scriptstyle -3$};
\node at (8.4,-0.25) {$\scriptstyle -3$};
\node at (8.4,-0.75) {$\scriptstyle -3$};
\node at (8.4,-1.25) {$\scriptstyle -3$};
\draw [-] (0) -- (1);
\draw [-] (1) -- (2);
\draw [-] (2) -- (3);
\draw [-] (3) -- (4);
\draw [-] (2) -- (1b);
\draw [-] (4) -- (5);
\draw [-] (5) -- (6);
\draw [-] (6) -- (7t);
\draw [-] (6) -- (7m);
\draw [-] (6) -- (7b);
\draw [-] (7t) -- (8a);
\draw [-] (7t) -- (8b);
\draw [-] (7m) -- (8c);
\draw [-] (7m) -- (8d);
\draw [-] (7b) -- (8e);
\draw [-] (7b) -- (8f);

 %draw dotted box.  Start top left, then bottom left, the bottom right, the top right
 \draw [dotted] (-0.5,1) -- (-0.5,-0.5) -- (5.25,-0.5) -- (5.25,1) -- cycle;
\end{tikzpicture}
\end{array}\label{TypeE7}
\end{equation}
It is an easy combinatorial check to show that each labelled graph above satisfies the criteria in \ref{tree to R}, so consequently there is a (not necessarily unique) complete rational surface singularity corresponding to each.  We do this for (\ref{TypeD}), the rest being similar.  Labelling the vertices in (\ref{TypeD}) by
\[
\begin{array}{c}
\begin{tikzpicture}[xscale=1.25]
\node (0) at (0,0) {$\scriptstyle E_1$};
\node (1) at (1,0) {$\scriptstyle E_3$};
\node (1b) at (1,0.75) {$\scriptstyle E_2$};
\node (2) at (2,0) {$\scriptstyle E_4$};
\node (3) at (4,0) {$\scriptstyle E_{n+1}$};
\node (4) at (5,0) {$\scriptstyle E_{n+2}$};
\node (5) at (6,0) {$\scriptstyle E_{n+3}$};
\node (6t) at (7,0.75) {$\scriptstyle E_{n+4}$};
\node (6m) at (7,0) {$\scriptstyle E_{n+5}$};
\node (6b) at (7,-0.75) {$\scriptstyle E_{n+6}$};
\node (8a) at (8,1.25) {$\scriptstyle E_{n+7}$};
\node (8b) at (8,0.75) {$\scriptstyle E_{n+8}$};
\node (8c) at (8,0.25) {$\scriptstyle E_{n+9}$};
\node (8d) at (8,-0.25) {$\scriptstyle E_{n+10}$};
\node (8e) at (8,-0.75) {$\scriptstyle E_{n+11}$};
\node (8f) at (8,-1.25) {$\scriptstyle E_{n+12}$};
\node at (3,0) {$\cdots$};
%\node (0a) at (-0.1,-0.25) {$\scriptstyle - 2$};
%\node (1a) at (0.9,-0.25) {$\scriptstyle -2$};
%\node (1ba) at (0.7,0.75) {$\scriptstyle - 2$};
%\node (2a) at (1.9,-0.25) {$\scriptstyle -2$};
%\node (3a) at (3.9,-0.25) {$\scriptstyle -2$};
%\node (4a) at (4.9,-0.25) {$\scriptstyle -2$};
%\node (5a) at (5.9,-0.25) {$\scriptstyle -6$};
%\node (6at) at (6.95,0.5) {$\scriptstyle -3$};
%\node (6am) at (6.95,-0.25) {$\scriptstyle -3$};
%\node (6ab) at (6.95,-1) {$\scriptstyle -3$};
%\node at (8.4,1.25) {$\scriptstyle -3$};
%\node at (8.4,0.75) {$\scriptstyle -3$};
%\node at (8.4,0.25) {$\scriptstyle -3$};
%\node at (8.4,-0.25) {$\scriptstyle -3$};
%\node at (8.4,-0.75) {$\scriptstyle -3$};
%\node at (8.4,-1.25) {$\scriptstyle -3$};
\draw [-] (0) -- (1);
\draw [-] (1) -- (2);
\draw [-] (2) -- (2.6,0);
\draw [-] (3.4,0) -- (3);
\draw [-] (3) -- (4);
\draw [-] (1) -- (1b);
\draw [-] (4) -- (5);
\draw [-] (5) -- (6t);
\draw [-] (5) -- (6m);
\draw [-] (5) -- (6b);
\draw [-] (6t) -- (8a);
\draw [-] (6t) -- (8b);
\draw [-] (6m) -- (8c);
\draw [-] (6m) -- (8d);
\draw [-] (6b) -- (8e);
\draw [-] (6b) -- (8f);
\end{tikzpicture}
\end{array}
\]
then it is easy to see that $Z:=\sum_{i=1}^{2}E_i+\sum_{i=3}^{n+2}2E_i+\sum_{i=n+3}^{n+12}E_i$ satisfies $Z\cdot E_i\leq 0$ for all $1\leq i\leq n+12$, hence $Z\in\cz_{\top}$.  We denote $Z$ as
\[
Z=
\begin{array}{c}
\begin{tikzpicture}
\node (0) at (0,0) {$\scriptstyle 1$};
\node (1) at (1,0) {$\scriptstyle 2$};
\node (1b) at (1,0.75) {$\scriptstyle 1$};
\node (2) at (2,0) {$\scriptstyle 2$};
\node (3) at (4,0) {$\scriptstyle 2$};
\node (4) at (5,0) {$\scriptstyle 2$};
\node (5) at (6,0) {$\scriptstyle 1$};
\node (6t) at (7,0.75) {$\scriptstyle 1$};
\node (6m) at (7,0) {$\scriptstyle 1$};
\node (6b) at (7,-0.75) {$\scriptstyle 1$};
\node (8a) at (8,1.25) {$\scriptstyle 1$};
\node (8b) at (8,0.75) {$\scriptstyle 1$};
\node (8c) at (8,0.25) {$\scriptstyle 1$};
\node (8d) at (8,-0.25) {$\scriptstyle 1$};
\node (8e) at (8,-0.75) {$\scriptstyle 1$};
\node (8f) at (8,-1.25) {$\scriptstyle 1$};
\node at (3,0) {$\cdots$};
%\node (0a) at (-0.1,-0.25) {$\scriptstyle - 2$};
%\node (1a) at (0.9,-0.25) {$\scriptstyle -2$};
%\node (1ba) at (0.7,0.75) {$\scriptstyle - 2$};
%\node (2a) at (1.9,-0.25) {$\scriptstyle -2$};
%\node (3a) at (3.9,-0.25) {$\scriptstyle -2$};
%\node (4a) at (4.9,-0.25) {$\scriptstyle -2$};
%\node (5a) at (5.9,-0.25) {$\scriptstyle -6$};
%\node (6at) at (6.95,0.5) {$\scriptstyle -3$};
%\node (6am) at (6.95,-0.25) {$\scriptstyle -3$};
%\node (6ab) at (6.95,-1) {$\scriptstyle -3$};
%\node at (8.4,1.25) {$\scriptstyle -3$};
%\node at (8.4,0.75) {$\scriptstyle -3$};
%\node at (8.4,0.25) {$\scriptstyle -3$};
%\node at (8.4,-0.25) {$\scriptstyle -3$};
%\node at (8.4,-0.75) {$\scriptstyle -3$};
%\node at (8.4,-1.25) {$\scriptstyle -3$};
\draw [-] (0) -- (1);
\draw [-] (1) -- (2);
\draw [-] (2) -- (2.6,0);
\draw [-] (3.4,0) -- (3);
\draw [-] (3) -- (4);
\draw [-] (1) -- (1b);
\draw [-] (4) -- (5);
\draw [-] (5) -- (6t);
\draw [-] (5) -- (6m);
\draw [-] (5) -- (6b);
\draw [-] (6t) -- (8a);
\draw [-] (6t) -- (8b);
\draw [-] (6m) -- (8c);
\draw [-] (6m) -- (8d);
\draw [-] (6b) -- (8e);
\draw [-] (6b) -- (8f);
\end{tikzpicture}
\end{array}
\]
From this we see that
\[
(Z\cdot E_i)_{i=1}^{n+12}=
\begin{array}{c}
\begin{tikzpicture}
\node (0) at (0,0) {$\scriptstyle 0$};
\node (1) at (1,0) {$\scriptstyle 0$};
\node (1b) at (1,0.75) {$\scriptstyle 0$};
\node (2) at (2,0) {$\scriptstyle 0$};
\node (3) at (4,0) {$\scriptstyle 0$};
\node (4) at (5,0) {$\scriptstyle -1$};
\node (5) at (6,0) {$\scriptstyle -1$};
\node (6t) at (7,0.75) {$\scriptstyle 0$};
\node (6m) at (7,0) {$\scriptstyle 0$};
\node (6b) at (7,-0.75) {$\scriptstyle 0$};
\node (8a) at (8,1.25) {$\scriptstyle -2$};
\node (8b) at (8,0.75) {$\scriptstyle -2$};
\node (8c) at (8,0.25) {$\scriptstyle -2$};
\node (8d) at (8,-0.25) {$\scriptstyle -2$};
\node (8e) at (8,-0.75) {$\scriptstyle -2$};
\node (8f) at (8,-1.25) {$\scriptstyle -2$};
\node at (3,0) {$\cdots$};
%\node (0a) at (-0.1,-0.25) {$\scriptstyle - 2$};
%\node (1a) at (0.9,-0.25) {$\scriptstyle -2$};
%\node (1ba) at (0.7,0.75) {$\scriptstyle - 2$};
%\node (2a) at (1.9,-0.25) {$\scriptstyle -2$};
%\node (3a) at (3.9,-0.25) {$\scriptstyle -2$};
%\node (4a) at (4.9,-0.25) {$\scriptstyle -2$};
%\node (5a) at (5.9,-0.25) {$\scriptstyle -6$};
%\node (6at) at (6.95,0.5) {$\scriptstyle -3$};
%\node (6am) at (6.95,-0.25) {$\scriptstyle -3$};
%\node (6ab) at (6.95,-1) {$\scriptstyle -3$};
%\node at (8.4,1.25) {$\scriptstyle -3$};
%\node at (8.4,0.75) {$\scriptstyle -3$};
%\node at (8.4,0.25) {$\scriptstyle -3$};
%\node at (8.4,-0.25) {$\scriptstyle -3$};
%\node at (8.4,-0.75) {$\scriptstyle -3$};
%\node at (8.4,-1.25) {$\scriptstyle -3$};
\draw [-] (0) -- (1);
\draw [-] (1) -- (2);
\draw [-] (2) -- (2.6,0);
\draw [-] (3.4,0) -- (3);
\draw [-] (3) -- (4);
\draw [-] (1) -- (1b);
\draw [-] (4) -- (5);
\draw [-] (5) -- (6t);
\draw [-] (5) -- (6m);
\draw [-] (5) -- (6b);
\draw [-] (6t) -- (8a);
\draw [-] (6t) -- (8b);
\draw [-] (6m) -- (8c);
\draw [-] (6m) -- (8d);
\draw [-] (6b) -- (8e);
\draw [-] (6b) -- (8f);
\end{tikzpicture}
\end{array}
\]
so $Z\in\cz_{\top}$ and $Z\cdot Z=Z\cdot(\sum_{i=1}^{2}E_i+\sum_{i=3}^{n+2}2E_i+\sum_{i=n+3}^{n+12}E_i)=0+2(-1)+(-1-2-2-2-2-2-2)=-15$.   Hence condition (2) in \ref{tree to R} is satisfied.  For condition (3), by the standard Laufer algorithm, $Z_f=Z$, so $Z_f\cdot Z_f=-15$. On the other hand $\sum_{i\in I}a_i(-E_i^2-2)=4+1+1+1+1+1+1+1+1+1=13$, so  $Z_f\cdot Z_f+\sum_{i\in I}a_i(-E_i^2-2)=-15+13=-2$, as required.

Now in the above diagrams, for clarity we have drawn a box around the curves that get contracted to form $X^{\cc}$.  Hence a $\Gamma=\End_R(N^{\cc})$ corresponding to (\ref{TypeA}) has the GP finite type corresponding to cyclic groups,  by \ref{main triangles} applied to $\End_R(N^{\cc})$.  Similarly, a $\Gamma$ corresponding to (\ref{TypeD}) has the GP finite type corresponding to binary dihedral groups, (\ref{TypeE6}) corresponds to binary tetrahedral groups, and (\ref{TypeE6}) corresponds to binary octahedral groups.

Now each of the above trees has more than one vertex that meets precisely three edges, so by the classification \cite[\S1 p2]{Laufer} they are not pseudo--taut, and further in each of the above trees there exists a vertex that meets precisely four edges, so by the classification \cite[\S2 p2]{Laufer} they are not taut.  This means that in \ref{tree to R} there are uncountably many (non-isomorphic) $R$ corresponding to each of the above labelled trees.  For each such $R$ we thus obtain an Iwanaga--Gorenstein ring $\End_R(N^{\cc})$ with the desired properties, and further if $R$ and $R^\prime$ both correspond to the same labelled graph, but $R\ncong R^\prime$, then $\End_{R}(N^{\cc})\ncong\End_{R^\prime}(N^{\cc})$ since the centers of $\End_R(N^{\cc})$ and $\End_{R^\prime}(N^{\cc})$ are $R$ and $R^\prime$ respectively. Hence, since there are uncountably many such $R$, there are uncountably many such Iwanaga--Gorenstein rings.
\end{proof}
We give, in \S\ref{geom examples}, some explicit examples illustrating \ref{uncount G} in the case $G=\mathbb{Z}_2$ .

\begin{remark}
We remark that the method in the above proof cannot be applied to $E_8$, since it is well-known that the rational tree $E_8$ (labelled with $-2$'s) cannot be a (strict) subtree of any rational tree \cite[3.11]{TT}.
\end{remark}

\section{Relationship to relative singularity categories}\label{ss:relative-sing-cat}

In the notation of \S\ref{geometry}, let $Y \stackrel{f^{\cs}} \longrightarrow X^\cs \stackrel{g^\cs} \longrightarrow \Spec R$ be a factorization of the minimal resolution of a rational surface singularity, with $\cs \subseteq I$. Let $\Lambda$ be the reconstruction algebra of $R$ and $e\in \Lambda$ be the idempotent corresponding to the identity endomorphism of the special Cohen--Macaulay R-module $N^\cs=R \oplus(\bigoplus_{i \in I \setminus S} M_{i})$.

\begin{defin}
\t{(1)} A triangle functor $Q\colon \cc \to \cd$ is called a \emph{quotient functor} if the induced functor $\cc/\ker Q \to \cd$ is a triangle equivalence. Here $\ker Q \subseteq \cc$ denotes the full subcategory of objects $X$ such that $Q(X)=0$.\\
\t{(2)} A sequence of triangulated categories and triangle functors $\cu \stackrel{F}\ra \ct \stackrel{G}\ra \cq$ is called \emph{exact} if $G$ is a quotient functor with kernel $\cu$, and $F$ is the natural inclusion. 
\end{defin}

In this section, we extend triangle equivalences from \ref{main triangles} to exact sequences of triangulated categories. In particular, this yields triangle equivalences between the relative singularity categories studied in \cite{BK, KY, Kalck}. 
\begin{prop} 
There exists  a commutative diagram of triangulated categories and functors such that the horizontal arrows are equivalences and the columns are exact.
\begin{align}\label{E:Commutative}
\begin{array}{c}
{\SelectTips{cm}{10}
\xy0;/r.4pc/:
(-10,20)*+{ \thick\left( \bigoplus_{i \in \cs} \co_{E_{i}}(-1)\right)}="A2",
(25,20)*+{\thick(\mod\Lambda/\Lambda e\Lambda) }="A3",
(-10,10)*+{\frac{ \displaystyle \Db(\coh Y)}{\displaystyle \thick\bigl(\co_{Y} \oplus\bigl( {\textstyle \bigoplus}_{i \in I \setminus \cs} \cm_{i}\bigr)\bigr)}}="a2",
(25,10)*+{\frac{\displaystyle \Db(\mod\Lambda)}{\displaystyle \thick(\Lambda e)}}="a3",
(-10,0)*+{\Dsg(X^\cs)}="b2",
(25,0)*+{ \Dsg(e\Lambda e)}="b3",
\ar"A2";"A3"^{\sim}
\ar@{^{(}->}"A2";"a2"
\ar@{^{(}->}"A3";"a3"
\ar@{->>}"a2";"b2"_{\Rfcs}
\ar@{->>}"a3";"b3"^{e(-)}
\ar"b2";"b3"^{\RHom_{X^\cs}(\cv_{\cs},-)}_{\sim}
\ar"a2";"a3"^{\RHom_Y(\cv_\emptyset,-)}_{\sim}
\endxy}
\end{array}
\end{align}
By an abuse of notation, the induced triangle functors in the lower square are labelled by the inducing triangle functors from the diagram in \ref{main Db}.
\end{prop}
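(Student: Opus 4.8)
The plan is to assemble the diagram \eqref{E:Commutative} from three ingredients that are already available: the commutative square of derived equivalences of \ref{main Db}, the equivalence $\G$ of \ref{P:OneIdempotent} applied with $A=\Lambda$ and the given idempotent $e$, and the standard description of the simple $\Lambda$-modules in terms of the tilting bundle $\cv_\emptyset$. I would start with the two lower horizontal arrows. The identification $(f^\cs)^*\cv_\cs\cong\co_Y\oplus(\bigoplus_{i\in I\setminus\cs}\cm_i)$ proved inside \ref{main Db} exhibits $\co_Y\oplus(\bigoplus_{i\in I\setminus\cs}\cm_i)$ as a direct summand of $\cv_\emptyset$, so the tilting equivalence $\RHom_Y(\cv_\emptyset,-)\colon\Db(\coh Y)\to\Db(\mod\Lambda)$ carries it to the projective summand $\Lambda e$ of $\Lambda\cong\End_Y(\cv_\emptyset)$ and hence sends $\thick(\co_Y\oplus\bigoplus_{i\in I\setminus\cs}\cm_i)$ bijectively onto $\thick(\Lambda e)$; passing to Verdier quotients yields the middle horizontal equivalence. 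Likewise $\cv_\cs$ is a tilting bundle on $X^\cs$ with $\End_{X^\cs}(\cv_\cs)\cong e\Lambda e$ and with $\co_{X^\cs}$ as a summand, so $\RHom_{X^\cs}(\cv_\cs,-)$ induces the bottom horizontal equivalence $\Dsg(X^\cs)\xrightarrow{\sim}\Dsg(e\Lambda e)$ (cf.\ \cite[4.1]{IW5}, as in \ref{main triangles}).

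Next I would descend the two vertical functors and check exactness of the right column. The functor $e(-)=\Hom_\Lambda(\Lambda e,-)$ sends $\thick(\Lambda e)$ into $\thick(e\Lambda e)=\Kb(\proj e\Lambda e)$, so it descends to $\frac{\Db(\mod\Lambda)}{\thick(\Lambda e)}\to\Dsg(e\Lambda e)$, and by \ref{P:OneIdempotent} this descended functor is a quotient functor with kernel $\thick(q(\mod\Lambda/\Lambda e\Lambda))$, i.e.\ the right column of \eqref{E:Commutative} is exact. On the left, since $R$ has rational singularities $\Rfcs\co_Y\cong\co_{X^\cs}$, so the projection formula gives $\Rfcs((f^\cs)^*\cv_\cs)\cong\cv_\cs$, a vector bundle and therefore perfect; thus $\Rfcs$ maps $\thick(\co_Y\oplus\bigoplus_{i\in I\setminus\cs}\cm_i)$ into $\Perf(X^\cs)$ and descends to $\frac{\Db(\coh Y)}{\thick(\co_Y\oplus\bigoplus_{i\in I\setminus\cs}\cm_i)}\to\Dsg(X^\cs)$. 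The lower square of \eqref{E:Commutative} then commutes, being obtained by passing to these quotients in the commutative square of \ref{main Db}.

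It remains to treat the left column and the top row. Since the lower square commutes and three of its four arrows are equivalences, the descended $\Rfcs$ is conjugate, via the middle and bottom horizontal equivalences, to $e(-)\colon\frac{\Db(\mod\Lambda)}{\thick(\Lambda e)}\to\Dsg(e\Lambda e)$; hence it is itself a quotient functor, whose kernel is the preimage, under the middle horizontal equivalence, of $\thick(q(\mod\Lambda/\Lambda e\Lambda))$. To identify this preimage with $\thick(\bigoplus_{i\in\cs}\co_{E_i}(-1))$, I would appeal to the standard computation underlying Van den Bergh's tilting bundle (cf.\ \cite{VdB1d,ReconA}): for each $i\in I$, $\RHom_Y(\cv_\emptyset,\co_{E_i}(-1))$ is, up to shift, the simple $\Lambda$-module $S_i$ at the vertex associated with $\cm_i$. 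Since every object of $\mod(\Lambda/\Lambda e\Lambda)$ has finite length with composition factors among $\{S_i\mid i\in\cs\}$, one has $\thick_{\Db(\mod\Lambda)}(\mod\Lambda/\Lambda e\Lambda)=\thick(\bigoplus_{i\in\cs}S_i)$, and transporting back through $\RHom_Y(\cv_\emptyset,-)$ identifies the preimage as $\thick(\bigoplus_{i\in\cs}\co_{E_i}(-1))$; one inclusion is also directly visible, since $\Rfcs\co_{E_i}(-1)=0$ for $i\in\cs$. Finally the top horizontal arrow is, by construction, the restriction of the middle horizontal equivalence to these two kernels, hence an equivalence, and the upper square commutes by construction; this produces the diagram with both columns exact.

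The step I expect to be the real obstacle is the identification $\RHom_Y(\cv_\emptyset,\co_{E_i}(-1))\cong S_i$ up to shift: keeping track of the precise shifts and twists, and handling the case where $\cm_i$ fails to be a line bundle (multiplicity $>1$ in the fundamental cycle), together with verifying that moving between ``$\thick$ computed in $\Db(\coh Y)$'' and ``$\thick$ computed in the Verdier quotient'' loses nothing. Everything else is formal bookkeeping once \ref{main Db} and \ref{P:OneIdempotent} are in hand.
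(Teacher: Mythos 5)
Your proposal is correct and follows essentially the same route as the paper's proof: descend the commutative square of \ref{main Db} to the lower square of \eqref{E:Commutative}, invoke \ref{P:OneIdempotent} to get exactness of the right column and transport it to the left, then identify the kernel $\thick(\mod\Lambda/\Lambda e\Lambda)=\thick(\bigoplus_{i\in\cs}S_i)$ with $\thick(\bigoplus_{i\in\cs}\co_{E_i}(-1))$ via the fact that $S_i\leftrightarrow\co_{E_i}(-1)[1]$ under $\RHom_Y(\cv_\emptyset,-)$. The ``real obstacle'' you flag is handled in the paper simply by citing \cite[3.5.7]{VdB1d}, and the paper's justification that $\Lambda/\Lambda e\Lambda$ is finite dimensional (hence $\thick(\mod\Lambda/\Lambda e\Lambda)=\thick(\bigoplus_{i\in\cs}S_i)$) is that $R$ has isolated singularities, which you state without explanation but which is the only additional ingredient.
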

\begin{proof}
We start with the lower square. Since the corresponding diagram in \ref{main Db} commutes, it suffices to show that the induced functors above are well-defined. Clearly, the equivalence $\RHom_{Y}(\cv_{\emptyset}, -)$ from \ref{main Db} maps $ \co_{Y} \oplus(\bigoplus_{i \in I \setminus \cs} \cm_{i})$ to $\Lambda e$. Hence, it induces an equivalence on the triangulated quotient categories. Since $\RHom_{X^\cs}(\cv_{\cs}, -)$ is an equivalence by \ref{main Db} and the subcategories $\Perf(X^\cs)$ respectively $\Perf(e\Lambda e)$ can be defined intrinsically, we get a well-defined equivalence on the bottom of diagram (\ref{E:Commutative}). The functor on the right is a well-defined quotient functor by \ref{P:OneIdempotent}. Now, the functor on the left is a well-defined quotient functor by the commutativity of the diagram in \ref{main Db} and the considerations above. 

The category $\thick(\mod\Lambda/\Lambda e\Lambda)$ is the kernel of the quotient functor $e(-)$, by~\ref{P:OneIdempotent}.  Since $R$ has isolated singularities, the algebra $\Lambda/\Lambda e \Lambda$ is always finite dimensional and so $\thick(\mod\Lambda/\Lambda e \Lambda) = \thick\left(\bigoplus_{i \in \cs}S_{i} \right)$,
where $S_{i}$ denotes the simple $\Lambda$-module corresponding to the vertex $i$ in the quiver of $\Lambda$.  But under the derived equivalence $\RHom_{Y}(\cv_{\emptyset}, -)$, $S_i$ corresponds to $\co_{E_i}(-1)[1]$ \cite[3.5.7]{VdB1d}, so it follows that we can identify the subcategory $\thick(\mod\Lambda/\Lambda e \Lambda)=\thick\left(\bigoplus_{i \in \cs}S_{i} \right)$ with $\thick\left( \bigoplus_{i \in \cs} \co_{E_{i}}(-1)\right)$, inducing the top half of the diagram.
\end{proof}
\begin{remark}
The functor $\RHom_{X^\cs}(\cv_{\cs}, -)$ identifies $\Perf(X^\cs)$ with $\Perf(e\Lambda e) \cong \thick(\Lambda e) \subseteq \Db(\mod \Lambda)$. Hence, applying the quasi-inverse of $\RHom_{Y}(\cv_{\emptyset}, -)$ to $\thick(\Lambda e)$ yields a triangle equivalence
$\Perf(X^\cs) \cong \thick\bigl(\co_{Y}\oplus(\bigoplus_{i \in I \setminus \cs} \cm_{i})\bigr)$. In particular, there is an equivalence
\begin{align}\label{E:RemarkRelSingCat}
\frac{\displaystyle \Db(\coh Y)}{\displaystyle \Perf(X^\cs)} \stackrel{\sim}\longrightarrow \frac{\displaystyle \Db(\mod\Lambda)}{\displaystyle \thick(\Lambda e)}. 
\end{align}
Analysing the commutative diagram in \ref{main Db} shows that $\Perf(X^\cs) \cong \thick\bigl(\co_{Y}\oplus(\bigoplus_{i \in I \setminus \cs} \cm_{i})\bigr)$ is obtained as a restriction of $\mathbf{L}(f^\cs)^{*}$.
\end{remark}

\medskip

If we contract only ($-2$)-curves (i.e.~if $\cs \subseteq \cc$ holds), then we know that $\Dsg(X^\cs)$ splits into a direct sum of singularity categories of ADE--surface singularities (\ref{main triangles}).  In this case, it turns out that the diagram above admits an extension to the right and that in fact all the triangulated categories in our (extended) diagram split into blocks indexed by the singularities of the Gorenstein scheme $X^\cs$.

Let us fix some notation. For a singular point $x \in \Sing X^\cs$ let $R_{x}=\widehat{\co}_{X^\cs, x}$, and let $f_{x}\colon Y_{x} \ra \Spec R_{x} $ be the minimal resolution of singularities.

\begin{prop} Assume $\cs\subseteq\cc$.
There exists  a commutative diagram of triangulated categories and functors such that the horizontal arrows are equivalences and the columns are exact.
\begin{align}\label{E:extension-to-right}
\begin{array}{c}
{\SelectTips{cm}{10}
\xy0;/r.4pc/:
(-10,20)*+{\thick(\mod\Lambda/\Lambda e\Lambda)}="A2",
(20,20)*+{ \bigoplus\limits_{x \in \Sing X^\cs} \ker(\mathbf{R} (f_{x})_{*}) }="A3",
(-10,10)*+{\frac{\displaystyle \Db(\mod\Lambda)}{\displaystyle \thick(\Lambda e)}}="a2",
(20,10)*+{ \bigoplus\limits_{x \in \Sing X^\cs} \frac{\displaystyle \Db(\coh Y_{x})}{\displaystyle \Perf(R_{x})} }="a3",
(-10,0)*+{\Dsg(e\Lambda e)}="b2",
(20,0)*+{  \bigoplus\limits_{x \in \Sing X^\cs} \Dsg(R_{x})}="b3",
\ar"A2";"A3"^{\sim}
\ar@{^{(}->}"A2";"a2"
\ar@{^{(}->}"A3";"a3"
\ar@{->>}"a3";"b3"^{\bigoplus_{x \in \Sing X^\cs}\mathbf{R} (f_{x})_{*}}
\ar@{->>}"a2";"b2"_{e(-)}
\ar"b2";"b3"^{\sim}
\ar"a2";"a3"^{\sim}
\endxy}
\end{array}
\end{align}
\end{prop}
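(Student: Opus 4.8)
The plan is to bootstrap from the proposition giving \eqref{E:Commutative} by applying it also to the (finitely many, ADE) local singularities of $X^{\cs}$, and then to assemble the pieces using \ref{main triangles}. Fix $x \in \Sing X^{\cs}$ and set $R_{x} = \widehat{\co}_{X^{\cs},x}$; since $\cs \subseteq \cc$ only $(-2)$-curves have been contracted, so $R_{x}$ is a complete local Gorenstein ADE surface singularity, and its minimal resolution $f_{x}\colon Y_{x} \to \Spec R_{x}$ is the base change of $f^{\cs}$ along the formal neighbourhood of $x$; it contracts precisely the curves $\{E_{i} \mid i \in \cs_{x}\}$, where $\cs = \bigsqcup_{x \in \Sing X^{\cs}} \cs_{x}$ is the partition of $\cs$ according to the connected components of $\bigcup_{i \in \cs} E_{i}$ (equivalently, according to $\Sing X^{\cs}$). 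First I would apply the proposition giving \eqref{E:Commutative} to $R_{x}$, taking for its ``$\cs$'' the whole exceptional set of $Y_{x}$, so that the scheme ``$X^{\cs}$'' for $R_{x}$ is $\Spec R_{x}$ itself, $\Lambda_{x}$ is its reconstruction algebra, and the relevant idempotent $e_{x}$ satisfies $e_{x}\Lambda_{x}e_{x} = R_{x}$. This produces for each $x$ a commutative diagram with exact columns; in particular, exactness of its left column identifies the kernel of the induced quotient functor $\mathbf{R}(f_{x})_{*}\colon \Db(\coh Y_{x})/\Perf(R_{x}) \to \Dsg(R_{x})$ with $\thick\bigl(\bigoplus_{i \in \cs_{x}} \co_{E_{i}}(-1)\bigr)$, and the columns are equivalent to the corresponding columns on the $\Lambda_{x}$-side. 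Taking the direct sum over $x \in \Sing X^{\cs}$ then gives the right-hand column of \eqref{E:extension-to-right} as an exact sequence of triangulated categories.

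Next I would establish the three horizontal equivalences. For the bottom row, combine the bottom equivalence of \eqref{E:Commutative}, namely $\Dsg(e\Lambda e) \simeq \Dsg(X^{\cs})$, with the decomposition $\Dsg(X^{\cs}) \simeq \bigoplus_{x} \uCM(\widehat{\co}_{X^{\cs},x}) = \bigoplus_{x} \Dsg(R_{x})$ from \ref{main triangles}; this is legitimate since $R$ is complete local, so all categories in sight are idempotent complete (cf.\ \cite{OrlovCompletion, BK}). For the top row, $R$ has an isolated singularity, so $\Lambda/\Lambda e\Lambda$ is finite dimensional and $\thick(\mod\Lambda/\Lambda e\Lambda) = \thick\bigl(\bigoplus_{i \in \cs} S_{i}\bigr)$; under the tilting equivalence of \ref{main Db} the simple $\Lambda$-module $S_{i}$ corresponds to $\co_{E_{i}}(-1)[1]$, the curves $\{E_{i} \mid i \in \cs\}$ break into the connected components indexed by $\Sing X^{\cs}$, and coherent complexes with supports in disjoint closed subsets are mutually orthogonal, so $\thick\bigl(\bigoplus_{i \in \cs}\co_{E_{i}}(-1)\bigr) = \bigoplus_{x}\thick\bigl(\bigoplus_{i \in \cs_{x}}\co_{E_{i}}(-1)\bigr)$; since $\co_{E_{i}}(-1)$ for $i \in \cs_{x}$ is set-theoretically supported on the fibre over $x$, it generates the same thick subcategory inside $\Db(\coh Y)$ as inside $\Db(\coh Y_{x})$, so by the first paragraph this equals $\bigoplus_{x}\ker\bigl(\mathbf{R}(f_{x})_{*}\bigr)$. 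For the middle row, the remark following \eqref{E:Commutative} identifies $\Db(\mod\Lambda)/\thick(\Lambda e)$ with the relative singularity category $\Db(\coh Y)/\Perf(X^{\cs})$, where $\Perf(X^{\cs})$ sits inside via $\mathbf{L}(f^{\cs})^{*}$; since $f^{\cs}$ is an isomorphism over $X^{\cs}\setminus\Sing X^{\cs}$, this relative singularity category localizes along the exceptional fibres, giving $\Db(\coh Y)/\Perf(X^{\cs}) \simeq \bigoplus_{x}\Db(\coh Y_{x})/\Perf(R_{x})$ via the formal completion functors, which carry $\mathbf{L}(f^{\cs})^{*}\Perf(X^{\cs})$ to $\bigoplus_{x}\mathbf{L}(f_{x})^{*}\Perf(R_{x})$.

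Exactness of the left column of \eqref{E:extension-to-right} is \eqref{E:Commutative}, exactness of the right column is the direct sum over $x$ of the diagrams from the first paragraph, and commutativity of the two squares is then checked by chasing definitions: the key compatibility is that formal completion along the exceptional fibre over $x$ is compatible with $\mathbf{R}(f^{\cs})_{*}$ (hence, via \eqref{E:Commutative}, with $e(-)$) and with the tilting equivalences, and that the decomposition $\Dsg(X^{\cs}) \simeq \bigoplus_{x}\Dsg(R_{x})$ of \ref{main triangles} is precisely this completion. In fact, once these compatibilities are in place, the middle equivalence can be deduced formally rather than separately: the completion functor induces equivalences on the kernels and on the quotients of the two exact columns, and a triangle functor that is an equivalence on a Verdier subcategory and on the corresponding quotient is an equivalence.

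I expect the main obstacle to be the middle horizontal equivalence, i.e.\ the statement that the relative singularity category $\Db(\coh Y)/\Perf(X^{\cs})$ splits as $\bigoplus_{x \in \Sing X^{\cs}}\Db(\coh Y_{x})/\Perf(R_{x})$. Unlike the other ingredients this is not directly packaged by the results quoted above; it is the localization theorem for relative singularity categories of resolutions with one-dimensional fibres --- whose content is that such a quotient depends only on the formal neighbourhoods of the exceptional fibres, hence splits according to the connected components of the exceptional locus --- which is carried out in \cite{BK} (see also \cite{Kalck}) and which I would either invoke directly or reprove in the present situation using the completion functor together with the formal deduction indicated above.
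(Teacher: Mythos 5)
Your proof takes a genuinely different route from the paper. The paper does not try to prove the middle horizontal equivalence geometrically at all; instead it invokes \cite[4.7(b)]{KY}, which associates to a Frobenius category $\ce$ satisfying (FM1)--(FM4) a dg algebra $\Lambda_{dg}(\underline\ce)$ depending only on the stable category $\underline\ce$, with $\mathrm{per}\bigl(\Lambda_{dg}(\underline\ce)\bigr)$ recovering the relative singularity category of the Auslander algebra. Applying this to $\ce_1 = \SCM_{N^\cs}(R)$ and $\ce_2 = \bigoplus_x\CM(R_x)$, and using the stable equivalence $\underline{\ce}_1 \simeq \underline{\ce}_2$ from \ref{main triangles} together with derived McKay, gives the middle equivalence \emph{first}; the top and bottom equivalences are then deduced from it via the intrinsic descriptions of the kernels (cf.\ \cite[6.17]{KY}). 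You run the argument in the opposite direction: build the top and bottom rows from the scheme-theoretic data and then try to close up the middle. The authors explicitly flag your direction as a plausible alternative in the remark following the proposition, but note only that it \emph{may} yield a more direct explanation, and do not carry it out.

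There is a genuine gap in your fallback argument for the middle equivalence. You claim that a triangle functor (the completion functor) that induces equivalences on a thick subcategory and on the corresponding Verdier quotient must be an equivalence. This ``two out of three'' principle is false for triangulated categories without additional hypotheses. For instance, to lift essential surjectivity you would need to lift a triangle $Y' \to FX \to FC[1]$ with $C\in\cu$ to a triangle over $X$ in $\ct$, which requires knowing that $FX\to FC[1]$ lies in the image of $F$ on morphisms --- i.e.\ full faithfulness of $F$, precisely what you are trying to prove. Without some further input (full faithfulness of the completion functor on the relevant subcategories, or an enhancement / model-level version of the Verdier sequence where a genuine five-lemma is available) this deduction does not go through.

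Your primary proposal --- a localization theorem asserting that $\Db(\coh Y)/\Perf(X^\cs)$ splits as $\bigoplus_{x}\Db(\coh Y_x)/\Perf(R_x)$ via formal completion along the exceptional fibres --- is the right target but is not directly available in the form you need. The results in \cite{BK} are proved in a specific nodal/non-commutative setting and the idempotent-completion issues (which bite for relative singularity categories just as they do for Orlov's absolute version) would need to be tracked carefully. The paper's choice of \cite[4.7(b)]{KY} exactly sidesteps this geometric completion argument, at the cost of invoking a heavier abstract tool; your route would be more geometric and arguably more illuminating, but as written it leaves its central step unproved.
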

\begin{proof}
We need some preparation. Note that by the derived McKay correspondence \cite{KapranovVasserot, BKR}, there are derived equivalences 
$\Db(\coh Y_{x}) \ra \Db(\mod\Pi_{x})$, where $\Pi_{x}$ is the Auslander algebra of the Frobenius category of maximal Cohen--Macaulay $R_{x}$-modules $\CM(R_{x})$.   Now we have two Frobenius categories $\ce_{1}:=\SCM_{N^\cs}(R)$ and $\ce_{2}:=\bigoplus_{x \in \Sing X^\cs} \CM(R_{x})$, which clearly satisfy the conditions (FM1)--(FM4) in \cite[Subsection 4.4]{KY} and whose stable categories are Hom-finite and idempotent complete. Further, $\ce_1$ and $\ce_2$ are stably equivalent by \ref{main triangles}.

Now, by \cite[4.7(b)]{KY} there are triangle equivalences  
\begin{align}
\Db(\mod\Lambda)/\thick(\Lambda e) \cong \mathrm{per}\bigl(\Lambda_{dg}(\underline{\ce}_{1})\bigr)\label{E:Tria1}\\
\bigoplus_{x \in \Sing X^\cs} \Db(\mod \Pi_{x})/\Perf(R_{x}) \cong \mathrm{per}\bigl(\Lambda_{dg}(\underline{\ce}_{2})\bigr)\label{E:Tria2}
\end{align}
where by definition $\Lambda_{dg}(\underline{\ce}_{1})$ and $\Lambda_{dg}(\underline{\ce}_{2})$ are dg algebras that depend only on (the triangulated structure of) the stable Frobenius categories $\underline{\ce}_{1}$ and $\underline{\ce}_{2}$  (the quotient category $\Db(\mod\Lambda)/\thick(\Lambda e)$ is idempotent complete by \cite[2.69(a)]{Kalck} combined with \ref{glrecon3} and the completeness of $R$). Hence, since $\ce_1$ and $\ce_2$ are stably equivalent, these two dg algebras are isomorphic.  Thus the combination of the equivalences \eqref{E:Tria1} and \eqref{E:Tria2} yields a triangle equivalence
\begin{align}\label{E:RelSgCat}
\frac{\displaystyle \Db(\mod\Lambda)}{\displaystyle \thick(\Lambda e)}  \longrightarrow \bigoplus_{x \in \Sing X^\cs} \frac{\displaystyle \Db\bigl(\mod \Pi_{x} \bigr)}{\displaystyle \Perf(R_{x})}
\end{align}
which, in conjunction with the derived McKay Correspondence,  yields the equivalence of triangulated categories in the middle of \eqref{E:extension-to-right}.

Furthermore, the functors $\Hom_{\Lambda}(\Lambda e, -)$ and $\bigoplus_{x \in \Sing X^\cs} \mathbf{R} (f_{x})_{*}$ are quotient functors with kernels $\thick(\mod\Lambda/\Lambda e\Lambda)$ and $\bigoplus_{x \in \Sing X^\cs} \ker(\mathbf{R} (f_{x})_{*})$, respectively. These subcategories admit intrinsic descriptions (c.f.\ \cite[Corollary 6.17]{KY}). Hence, there is an induced equivalence, which renders the upper square commutative. This in turn induces an equivalence on the bottom of (\ref{E:extension-to-right}), such that the lower square commutes.
\end{proof}

\begin{remark}
Using \eqref{E:RemarkRelSingCat} together with an appropriate adaption of the techniques developed in \cite{BK} may yield a more direct explanation for the block decomposition in \eqref{E:extension-to-right}. 
\end{remark}

\section{Examples}\label{ss:examples}

In this section we illustrate some of the previous results with some examples.  Our construction in \S\ref{ss:alg-tria-cl-sing} relies on finding some $M$ such that $\gl\End_\Lambda(\Lambda\oplus M)<\infty$, so we give explicit examples of when this occurs in both finite dimensional algebras, and in geometry. 

\subsection{Iwanaga--Gorenstein rings of finite GP type}\label{geom examples} As a special case of \ref{uncount G}, there are uncountably many Iwanaga--Gorenstein rings $\Gamma$ with the property that $\underline{\GP}(\Gamma)\simeq\uCM(\mathbb{C}[[x,y]]^{\frac{1}{2}(1,1)})$.  This category has only one indecomposable object, and is the simplest possible triangulated category. Here we show that the abstract setting in  \ref{uncount G} can be used to give explicit examples of such $\Gamma$, presented as a quiver with relations.

\begin{defin}
For all $n\geq 3$, we define the algebra $\Lambda_n$ to be the path algebra of the following quiver
\[
\begin{array}{c}
\begin{tikzpicture} [bend angle=45, looseness=1]
\node (C1) at (0,0) [vertex] {};
\node (C2) at (1.5,0)  [vertex] {};
\draw [->,bend left] ($(C1)+(40:4pt)$) to node[gap]  {$\scriptstyle a$} ($(C2)+(140:4pt)$);
\draw [->,bend left=20,looseness=1] ($(C1)+(20:4pt)$) to node[gap]  {$\scriptstyle b$} ($(C2)+(160:4pt)$);
\draw [->,bend left=20] ($(C2)+(-160:4pt)$) to node[gap]  {$\scriptstyle s_1$} ($(C1)+(-20:4pt)$);
\draw [->,bend left=40] ($(C2)+(-140:4pt)$) to node[gap]  {$\scriptstyle s_2$} ($(C1)+(-40:4pt)$);
\draw [->,bend left=60,looseness=1.5] ($(C2)+(-120:4pt)$) to node[gap]  {$\scriptstyle s_n$} ($(C1)+(-60:4pt)$);
\node at (0.75,-0.47) {$\scriptstyle .$};
\node at (0.75,-0.53) {$\scriptstyle .$};
\end{tikzpicture}
\end{array}
\] 
(where there are $n$ arrows from right to left), subject to the relations
\[
\begin{array}{l}
\begin{array}{c}
s_{n-1}bs_n=s_nbs_{n-1}\\
as_n=(bs_{n-1})^2\\
s_na=(s_{n-1}b)^2
\end{array}\\
\left. \begin{array}{l} as_{i+1}=bs_i\\ s_{i+1}a=s_ib\end{array}\right\} \mbox{ for all } 1\leq i\leq n-2.
\end{array}
\]
\end{defin}
Our main result (\ref{main6.1}) shows that for all $n\geq 3$ the completion $\widehat{\Lambda}_n$ is an Iwanaga--Gorenstein ring with $\id\widehat{\Lambda}_n=3$, such that $\underline{\GP}(\widehat{\Lambda}_n)\simeq \uCM(\mathbb{C}[[x,y]]^{\frac{1}{2}(1,1)})$.  Before we can prove this, we need some notation.  
Let $n\geq 3$, set $m:=2n-1$ and consider the group 
\[
\frac{1}{m}(1,2):=\left\langle \begin{pmatrix} \e_{m}&0\\ 0 &\e_{m}^2 \end{pmatrix}\right\rangle
\]
where $\e_{m}$ is a primitive $m$th root of unity.  The invariants $\mathbb{C}[x,y]^{\frac{1}{m}(1,2)}$ are known to be generated by 
\[
a:=x^{m},\ b_1:=x^{m-2}y,\ b_2:=x^{m-4}y^2,\hdots,\ b_{n-1}:=xy^{n-1},\ c:=y^m
\]
which abstractly as a commutative ring is $\mathbb{C}[a,b_1,\hdots,b_{n-1},c]$ factored by the relations given by the $2\times 2$ minors of the matrix
\[
\begin{pmatrix}
a&b_1 &b_2&\hdots&b_{n-2}&b_{n-1}^2\\
b_1&b_2 &b_3&\hdots&b_{n-1}&c
\end{pmatrix}.
\]
We denote this (non-complete) commutative ring by $R$.  This singularity is toric, and the minimal resolution of $\Spec R$ is well-known to have dual graph
\[
\begin{tikzpicture}
\node (A) at (0,0.25) {};
\node (A) at (0,0) [vertex] {};
\node (B) at (1,0) [vertex] {};
\draw (A) -- (B);
\node (A) at (0,-0.25) {$\scriptstyle -n$};
\node (B) at (1,-0.25) {$\scriptstyle -2$};
\end{tikzpicture}
\]

\begin{thm}\label{main6.1}
Let $n\geq 3$, set $m:=2n-1$ and consider $G:=\frac{1}{m}(1,2)$.  Denote $R:=\mathbb{C}[x,y]^{G}$, presented as $\mathbb{C}[a,b_1,\hdots,b_{n-1},c]/(2\times 2 \mbox{ minors})$ as above.  Then\\
\t{(1)} The $R$-ideal $(a,b_1)$ is the non-free special CM $R$-module corresponding to the ($-n$)-curve in the minimal resolution of $\Spec R$.\\
\t{(2)} $\Lambda_n\cong\End_R(R\oplus (a,b_1))$.\\
In particular, by completing both sides of \t{(2)}, $\widehat{\Lambda}_n$ is an Iwanaga--Gorenstein ring with $\id\widehat{\Lambda}_n=3$, such that $\underline{\GP}(\widehat{\Lambda}_n)\simeq\uCM(\mathbb{C}[[x,y]]^{\frac{1}{2}(1,1)})$.  Further $\widehat{\Lambda}_{n^\prime}\ncong\widehat{\Lambda}_n$ whenever $n^\prime\neq n$.
\end{thm}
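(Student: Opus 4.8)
The theorem has three parts: the module identification (1), the quiver presentation (2), and the ``in particular'' clause, which is meant to follow from (1) and (2). My plan is to grant (1) and (2) and deduce the clause from the geometry of \S\ref{geometry} after completing, treating (1) and (2) themselves as explicit computations in the toric ring $R$. Granting (1) and (2), the first step is to pass to completions. The completion $\widehat R$ of $R=\mathbb{C}[x,y]^{\frac1m(1,2)}$ is a complete local rational surface singularity over $\mathbb{C}$ (cyclic quotient singularities are rational), so the results of \S\ref{SCMsection}--\S\ref{geometry} apply to it, and since $(-)^{\wedge}$ commutes with $\Hom$ of finitely generated modules over the noetherian local ring $R$, (2) completes to $\widehat{\Lambda}_n\cong\End_{\widehat R}\bigl(\widehat R\oplus\widehat{(a,b_1)}\bigr)$. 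By (1) the ideal $(a,b_1)$ is the indecomposable special CM module of the $(-n)$-curve, which for $n\ge3$ is discrepant, the $(-2)$-curve being the unique crepant curve; hence $\cd=\{(-n)\text{-curve}\}$, $\cc=\{(-2)\text{-curve}\}$, and the module $D$ of \ref{module N} for $\widehat R$ is exactly $\widehat R\oplus\widehat{(a,b_1)}$. Thus $\widehat{\Lambda}_n\cong\End_{\widehat R}(D)$, so all earlier results that assume $\add D\subseteq\add N$ apply here with $N=D$.

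I would then read off the two conclusions. As $m=2n-1\ge5$ is not divisible by $3$, the generator $\mathrm{diag}(\e_m,\e_m^2)$ has determinant $\e_m^3\ne1$, so $\tfrac1m(1,2)\not\le\SL(2,\mathbb{C})$ and $\widehat R$ is not Gorenstein; Theorem~\ref{precise id} (applied to $\widehat R$ with $N=D$) then gives $\id\widehat{\Lambda}_n=3$, the Iwanaga--Gorenstein property itself coming from \ref{main}(1) (or geometrically from \ref{Goren from geom}). Taking $\cs=\cc$, we have $N^{\cc}=D$, and $X^{\cc}$ is obtained from the minimal resolution of $\Spec\widehat R$ by contracting the single $(-2)$-curve; this scheme has exactly one singular point, whose completed local ring is the $A_1$ surface singularity $\mathbb{C}[[x,y]]^{\frac12(1,1)}$. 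Theorem~\ref{main triangles} then yields the triangle equivalences
\[
\underline{\GP}(\widehat{\Lambda}_n)\simeq\underline{\SCM}_{D}(\widehat R)\simeq\Dsg(X^{\cc})\simeq\uCM\bigl(\mathbb{C}[[x,y]]^{\frac12(1,1)}\bigr),
\]
which is the asserted equivalence.

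For $\widehat{\Lambda}_n\ncong\widehat{\Lambda}_{n'}$ whenever $n\ne n'$, I would argue with centres: the centre of a module-finite algebra $\End_{\widehat R}(\widehat R\oplus N)$ over the normal complete local domain $\widehat R$ is $\widehat R$ itself (localising at height-one primes reduces to a matrix ring over a DVR, and then one uses $\widehat R=\bigcap_{\mathrm{ht}\,\mathfrak p=1}\widehat R_{\mathfrak p}$). An isomorphism $\widehat{\Lambda}_n\cong\widehat{\Lambda}_{n'}$ would therefore induce $\mathbb{C}[[x,y]]^{\frac1{2n-1}(1,2)}\cong\mathbb{C}[[x,y]]^{\frac1{2n'-1}(1,2)}$, which is impossible for $n\ne n'$, e.g.\ because these singularities have the distinct resolution graphs $\bullet_{-n}\!-\!\bullet_{-2}$, or simply because their embedding dimensions $n+1$ differ. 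Alternatively, by (2) the basic algebras $\widehat{\Lambda}_n$ have Gabriel quiver with exactly $n$ arrows in one direction, which already distinguishes them.

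The main obstacle is (1) and (2), not the clause above. For (1) I would check that $(a,b_1)$ is a rank-one reflexive (hence CM) $R$-module with $\Ext^1_R((a,b_1),R)=0$, so that it is special, and then identify it with the $(-n)$-curve through Wunram's numerical correspondence, computing the first Chern class of the associated full sheaf on $Y$ against the two exceptional curves. For (2) I would compute the bimodules $\Hom_R((a,b_1),R)$, $(a,b_1)$ and $\End_R((a,b_1))$ explicitly on monomial bases, read off minimal generating sets giving the $n+2$ arrows of the quiver, and verify that composition in $\End_R(R\oplus(a,b_1))$ produces precisely the listed relations (and no further ones). Once (1) and (2) are established, everything after that is the bookkeeping described above.
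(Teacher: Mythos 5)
Your handling of the ``in particular'' clause matches the paper's: complete, observe that $\widehat{R}\oplus\widehat{(a,b_1)}$ is $D=N^{\cc}$ since the $(-n)$-curve is the unique discrepant curve, invoke \ref{D is derived equiv} and \ref{main triangles} for the singularity-category identification, use \ref{precise id} for $\id\widehat{\Lambda}_n=3$ (the non-Gorensteinness of $R$, which you correctly verify via the determinant, is the needed hypothesis), and the center argument for pairwise non-isomorphism. Your added justification that the center of $\End_{\widehat R}(\widehat R\oplus N)$ is $\widehat R$, and the alternative discriminator via arrow counts in the Gabriel quiver, are both sound extras the paper leaves implicit. So that portion is correct and on the same route as the paper.

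The genuine gap is in (2), and to a smaller extent (1), which you explicitly defer. For (1), you propose computing first Chern classes of the associated full sheaf against the exceptional curves on $Y$; the paper avoids this entirely by citing Wunram's explicit classification of special CM modules for cyclic quotient surface singularities: the specials are $S_0, S_1, S_2$, with $S_2$ (attached to the $(-n)$-curve) generated by $x^2,y$, which in the new coordinates is precisely the ideal $(a,b_1)$. Your route would also work but is heavier than necessary. The real issue is (2). Your plan --- compute $\Hom_R((a,b_1),R)$, $\End_R((a,b_1))$, etc.\ on monomial bases, read off generators, and ``verify that composition produces precisely the listed relations (and no further ones)'' --- contains no mechanism for the decisive last clause. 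Certifying that a displayed list of relations exhausts the relation ideal of $\End_R(R\oplus(a,b_1))$ is exactly the hard part, and a brute-force presentation over the singular ring $R$ does not automatically provide it. The paper's five-step argument is designed precisely to circumvent this: it works first over a smooth key variety $S$ (a determinantal ring with the middle variables doubled), where the blowup $Y\to\Spec S$ is smooth and carries a tilting bundle, so the quiver-with-relations presentation of $\End_S(S\oplus(a,b_1^{(2)}))$ can be certified by a Hilbert series comparison; it then cuts by a regular sequence of central elements and uses a base-change compatibility for tilting bundles (via \cite{W} and \cite[8.1]{IU}) to transport the presentation to $\End_R(R\oplus(a,b_1))$ by adding the corresponding central relations. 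Without some device playing the role of that key-variety/Hilbert-series step, your step (2) remains an assertion rather than a proof.
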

\begin{proof}
(1) Let $\rho_0,\hdots, \rho_{m-1}$ be the irreducible representations of $G\cong\mathbb{Z}_m$ over $\mathbb{C}$.  Since $R=\mathbb{C}[x,y]^{G}$, we can consider the CM $R$ modules $S_i:=(\mathbb{C}[x,y]\otimes_\mathbb{C}\rho_i)^{G}$.  It is a well known result of Wunram \cite{Wun_cyclic} that the special CM $R$-modules in this case are $R=S_0$, $S_1$ and $S_2$, with $S_2$ corresponding to the ($-n$)-curve.  We remark that Wunram proved this result under the assumption that $R$ is complete, but the result is still true in the non-complete case \cite{Craw,ReconA}.  Further, $S_2$ is generated by $x^2,y$ as an $R$-module \cite{Wun_cyclic}.  It is easy to check that under the new coordinates, $S_2$ is isomorphic to $(a,b_1)$.\\
%As in \cite{GL2} we have
%\[
%\begin{tikzpicture}[bend angle=45, looseness=1.2]
%\node[name=s,regular polygon, regular polygon sides=3, minimum size=2cm] at (0,0) {}; 
%\node at (-4,0) {$\End_R(R\oplus S_1\oplus S_2)\cong$}; 
%\node (C1) at (s.corner 1)  {$\scriptstyle S_1$};
%\node (C2) at (s.corner 2)  {$\scriptstyle R$};
%\node (C3) at (s.corner 3)  {$\scriptstyle S_2$};
%\draw[->] (C2) -- node  [gap] {$\scriptstyle x$} (C1); 
%\draw[->] (C1) -- node  [gap] {$\scriptstyle x$} (C3);
%\draw [->,bend right] (C1) to node [left] {$\scriptstyle y^{n-1}$} (C2);
%\draw [->,bend right] (C2) to node [below] {$\scriptstyle y$} (C3);
%\draw [->,bend right] (C3) to node [right] {$\scriptstyle y^{n-1}$} (C1);
%\draw[->,green] ($(C3)+(-200:7pt)$) -- ($(C2) + (20:7pt)$);
%\draw[->,green] ($(C3)+(-170:7pt)$) --node [below,pos=0.4] {$\scriptstyle\color{black} x^{m-2}$} ($(C2) + (-10:7pt)$);
%\draw[->] ($(C3)+(-150:7pt)$) --   ($(C2) + (-30:7pt)$);
%\node at (0,-0.51) {$\color{green}\scriptstyle .$};
%\node at (0,-0.46) {$\color{green}\scriptstyle .$};
%\end{tikzpicture} 
%\]
%where there are $n-2$ green arrows from $S_2$ to $R$, labelled $x^{m-4}y, x^{m-6}y^2,\hdots,xy^{n-2}$ from bottom to top.\\
(2) We prove this using key varieties.  \\
\noindent
{\em Step 1.} Consider the commutative ring $\mathbb{C}[a,b_{1}^{(1)},b_{1}^{(2)},\hdots,b_{n-1}^{(1)},b_{n-1}^{(2)},c]$ factored by the relations given by the $2\times 2$ minors of the matrix
\[
\begin{pmatrix}
a&b_{1}^{(1)} &b_{2}^{(1)}&\hdots&b_{n-2}^{(1)}&b_{n-1}^{(1)}\\
b_{1}^{(2)}&b_{2}^{(2)} &b_{3}^{(2)}&\hdots&b_{n-1}^{(2)}&c
\end{pmatrix}.
\]
We denote this factor ring by $S$.  We regard $\Spec S$ as a key variety which we then cut (in Step 4) to obtain our ring $R$.

\medskip
\noindent
{\em Step 2.} We blowup the ideal $(a,b_{1}^{(2)})$ of $S$ to give a variety, denoted $Y$, covered by the two affine opens
\[
\mathbb{C}[b_{1}^{(2)},b_{2}^{(2)},\hdots,b_{n-1}^{(2)},c,\tfrac{a}{b_{1}^{(2)}}]\qquad \mathbb{C}[a,b_{1}^{(1)},b_{2}^{(1)},\hdots,b_{n-1}^{(1)},\tfrac{b_{1}^{(2)}}{a}]
\]
The resulting map $f:Y\to\Spec S$ has fibres at most one-dimensional, so we know from \cite{VdB1d} that $Y$ has a tilting bundle. Using the above explicit open cover and morphism, there is an ample line bundle $\cl$ on $Y$ generated by global sections, satisfying $\cl\cdot E=1$ (where $E$ is the $\mathbb{P}^1$ above the origin), with the property that $H^1(\cl^\vee)=0$.  This means, by \cite[3.2.5]{VdB1d}, that $\cv:=\co\oplus\cl$ is a tilting bundle. As is always true in the one-dimensional fibre tilting setting (where $f$ is projective birational between integral normal schemes), $\End_Y(\co\oplus\cl)\cong\End_S(S\oplus f_*\cl)$.  In the explicit construction of $Y$ above, it is clear that $f_*\cl=(a,b_1^{(2)})$.  This shows that $\End_S(S\oplus (a,b_1^{(2)}))$ is derived equivalent to $Y$.

\medskip
\noindent
{\em Step 3.}  We present $\End_S(S\oplus (a,b_1^{(2)}))$ as a quiver with relations.  This is easy, since $Y$ is smooth. We have
\[
\End_S(S\oplus (a,b_1^{(2)}))\cong\left(\begin{array}{cc} S& (a,b_1^{(2)})\\
 (a,b_1^{(2)})^* &S  \end{array}\right),
\]
and we can check that all generators can be seen on the diagram 
\[
\begin{array}{c}
\begin{tikzpicture} [bend angle=45, looseness=1]
\node (C1) at (0,0)  {$\scriptstyle S$};
\node (C2) at (2,0) {$\scriptstyle (a,b_1^{(2)})$};
%left to right
\draw [->,bend left] ($(C1)+(40:7pt)$) to node[gap]  {$\scriptstyle a$} ($(C2)+(140:12pt)$);
\draw [->,bend left=20,looseness=1] ($(C1)+(20:7pt)$) to node[gap]  {$\scriptstyle b_1^{(2)}$} ($(C2)+(160:13pt)$);
%right to left
\draw [->,bend left=20] ($(C2)+(-160:15pt)$) to node[gap]  {$\scriptstyle inc$} ($(C1)+(-20:7pt)$);
\draw [->,bend left=40] ($(C2)+(-140:13pt)$) to node[gap]  {$\scriptstyle \psi_2$} ($(C1)+(-40:7pt)$);
\draw [->,bend left=60,looseness=1.5] ($(C2)+(-120:11pt)$) to node[gap]  {$\scriptstyle \psi_n$} ($(C1)+(-60:7pt)$);
%dots
\node at (0.95,-0.7) {$\scriptstyle .$};
\node at (0.95,-0.75) {$\scriptstyle .$};
\end{tikzpicture}
\end{array}
\] 
where $\psi_i:=\frac{b_{i-1}^{(1)}}{a}=\frac{b_i^{(2)}}{b_1^{(2)}}$ for all $2\leq i\leq n-1$, and $\psi_n:=\frac{b_{n-1}^{(1)}}{a}=\frac{c}{b_1^{(2)}}$.  Thus if we consider the quiver $Q$ 
\[
\begin{array}{c}
\begin{tikzpicture} [bend angle=45, looseness=1]
\node (C1) at (0,0) [vertex] {};
\node (C2) at (1.5,0)  [vertex] {};
\draw [->,bend left] ($(C1)+(40:4pt)$) to node[gap]  {$\scriptstyle a$} ($(C2)+(140:4pt)$);
\draw [->,bend left=20,looseness=1] ($(C1)+(20:4pt)$) to node[gap]  {$\scriptstyle b$} ($(C2)+(160:4pt)$);
\draw [->,bend left=20] ($(C2)+(-160:4pt)$) to node[gap]  {$\scriptstyle s_1$} ($(C1)+(-20:4pt)$);
\draw [->,bend left=40] ($(C2)+(-140:4pt)$) to node[gap]  {$\scriptstyle s_2$} ($(C1)+(-40:4pt)$);
\draw [->,bend left=60,looseness=1.5] ($(C2)+(-120:4pt)$) to node[gap]  {$\scriptstyle s_n$} ($(C1)+(-60:4pt)$);
\node at (0.75,-0.47) {$\scriptstyle .$};
\node at (0.75,-0.53) {$\scriptstyle .$};
\end{tikzpicture}
\end{array}
\] 
with relations $\mathcal{R}$
\[
\begin{array}{cl}
as_{i}b=bs_ia&\mbox{for all }1\leq i\leq n\\
s_ias_j=s_jas_i&\mbox{for all } 1\leq i<j\leq n.
\end{array}
\]
then there is a natural surjective ring homomorphism
\[
\mathbb{C}Q/\mathcal{R}\to\End_S(S\oplus (a,b_1^{(2)})).
\]
But everything above is graded (with arrows all having grade one), and so a Hilbert series calculation shows that the above ring homomorphism must also be bijective.

\medskip
\noindent
{\em Step 4.}  We base change, and show that we can add central relations to the presentation of $\End_S(S\oplus (a,b_1^{(2)}))$ in Step 3 to obtain a presentation for $\End_R(R\oplus(a,b_1))$.

Factoring $S$ by the regular element $b_1^{(1)}-b_1^{(2)}$ we obtain a ring denoted $R_1$.  Factoring $R_1$ by the regular element $b_2^{(1)}-b_2^{(2)}$ we obtain a ring denoted $R_2$.  Continuing in this manner, factor $R_{n-3}$ by $b_{n-2}^{(1)}-b_{n-2}^{(2)}$ to obtain $R_{n-2}$.  Finally, factor $R_{n-2}$ by $b_{n-1}^{(1)}-(b_{n-1}^{(2)})^2$ to obtain $R_{n-1}$, which by definition is the ring $R$ in the statement of the theorem.  At each step, we are factoring by a regular element.   Taking the pullbacks we obtain a commutative diagram
\[
\begin{tikzpicture}
\node (A) at (0,0) {$\scriptstyle Y_{n-1}$};
\node (B) at (2,0) {$\scriptstyle Y_{n-2}$};
\node (C) at (4,0) {$\scriptstyle \hdots$}; 
\node (D) at (6,0) {$\scriptstyle Y_{1}$};
\node (E) at (8,0) {$\scriptstyle Y$};
\node (Aa) at (0,-1.5) {$\scriptstyle \Spec R$};
\node (Ba) at (2,-1.5) {$\scriptstyle \Spec R_{n-2}$};
\node (Ca) at (4,-1.5) {$\scriptstyle \hdots$}; 
\node (Da) at (6,-1.5) {$\scriptstyle \Spec R_1$};
\node (Ea) at (8,-1.5) {$\scriptstyle \Spec S$};
\draw[->] (A) -- node[above] {$\scriptstyle i_{n-1}$} (B);
\draw[->] (B) -- node[above] {$\scriptstyle i_{n-2}$}(C);
\draw[->] (C) -- node[above] {$\scriptstyle i_2$}(D);
\draw[->] (D) -- node[above] {$\scriptstyle i_1$}(E);
\draw[->] (Aa) -- node[above] {$\scriptstyle j_{n-1}$} (Ba);
\draw[->] (Ba) -- node[above] {$\scriptstyle j_{n-2}$} (Ca);
\draw[->] (Ca) -- node[above] {$\scriptstyle j_{2}$} (Da);
\draw[->] (Da) -- node[above] {$\scriptstyle j_{1}$} (Ea);
\draw[->] (A) -- node[left] {$\scriptstyle f_{n-1}$} (Aa);
\draw[->] (B) -- node[left] {$\scriptstyle f_{n-2}$} (Ba);
\draw[->] (D) -- node[left] {$\scriptstyle f_{1}$} (Da);
\draw[->] (E) -- node[left] {$\scriptstyle f$} (Ea);
\end{tikzpicture}
\]
By \cite{W}, under the setup above $\cv_{n-1}:=i_{n-1}^*\hdots i_1^*\cv$ is a tilting bundle on $Y_{n-1}$ with $\End_{Y_{n-1}}(\cv_{n-1})\cong j_{n-1}^*\hdots j_1^*\End_{S}(f_*\cv)\cong j_{n-1}^*\hdots j_1^*\End_S(S\oplus (a,b_1^{(2)}))$.  But on the other hand, $f_{n-1}$ is a projective birational morphism with fibres at most one-dimensional between integral normal schemes, and so 
\[
\End_{Y_{n-1}}(\cv_{n-1})\cong\End_{R}((f_{n-1})_*\cv_{n-1})\cong\End_{R}(j_{n-1}^*\hdots j_1^*f_*\cv)\cong\End_{R}(R\oplus (a,b_1)).
\]
where the middle isomorphism follows by iterating \cite[8.1]{IU}.  
Thus $\End_{R}(R\oplus (a,b_1))\cong j_{n-1}^*\hdots j_1^*\End_S(S\oplus (a,b_1^{(2)}))$.  Since by definition each $j_t^*$ factors by a regular element, we obtain $\End_R(R\oplus (a,b_1))$ from the presentation of $\End_S(S\oplus (a,b_1^{(2)}))$ in Step 3 by factoring out by the central relations corresponding to the regular elements.  Now, via the explicit form in Step 3, these are
\[
\begin{array}{rcl}
b_1^{(1)}-b_1^{(2)}&\leftrightarrow& (as_2+s_2a)-(bs_1+s_1b) \\
&\vdots&\\
b_{n-2}^{(1)}-b_{n-2}^{(2)}&\leftrightarrow&(as_{n-1}+s_{n-1}a)-(bs_{n-2}+s_{n-2}b) \\
b_{n-1}^{(1)}-(b_{n-1}^{(2)})^2&\leftrightarrow&(as_{n}+s_{n}a)-(bs_{n-1}+s_{n-1}b)^2 .
\end{array}
\]

\medskip
\noindent
{\em Step 5.} We justify that $\Lambda_n\cong\End_R(R\oplus (a,b_1))$.   From Step 4 we know that $\End_R(R\oplus (a,b_1))$ can be presented as
\[
\begin{array}{c}
\begin{tikzpicture} [bend angle=45, looseness=1]
\node (C1) at (0,0) [vertex] {};
\node (C2) at (1.5,0)  [vertex] {};
\draw [->,bend left] ($(C1)+(40:4pt)$) to node[gap]  {$\scriptstyle a$} ($(C2)+(140:4pt)$);
\draw [->,bend left=20,looseness=1] ($(C1)+(20:4pt)$) to node[gap]  {$\scriptstyle b$} ($(C2)+(160:4pt)$);
\draw [->,bend left=20] ($(C2)+(-160:4pt)$) to node[gap]  {$\scriptstyle s_1$} ($(C1)+(-20:4pt)$);
\draw [->,bend left=40] ($(C2)+(-140:4pt)$) to node[gap]  {$\scriptstyle s_2$} ($(C1)+(-40:4pt)$);
\draw [->,bend left=60,looseness=1.5] ($(C2)+(-120:4pt)$) to node[gap]  {$\scriptstyle s_n$} ($(C1)+(-60:4pt)$);
\node at (0.75,-0.47) {$\scriptstyle .$};
\node at (0.75,-0.53) {$\scriptstyle .$};
\end{tikzpicture}
\end{array}
\] 
subject to the relations
\[
\begin{array}{cl}
as_{i}b=bs_ia&\mbox{for all }1\leq i\leq n\\
s_ias_j=s_jas_i&\mbox{for all } 1\leq i<j\leq n.\\
as_n=(bs_{n-1})^2&\\
s_na=(s_{n-1}b)^2&\\
as_{i+1}=bs_i&\mbox{for all } 1\leq i\leq n-2\\ 
s_{i+1}a=s_ib&\mbox{for all } 1\leq i\leq n-2.
\end{array}
\]
This is a non-minimal presentation, since some relations can be deduced from others.  It is not difficult to show that the non-minimal presentation above can be reduced to the relations defining $\Lambda_n$.  This proves (2).

For the final statement in the theorem, by completing both sides we see that $\widehat{\Lambda}_n\cong\End_{\widehat{R}}(N^{\cc})$, which by \ref{D is derived equiv} is derived equivalent to the rational double point resolution $X^\cc$ of $\Spec\widehat{R}$. Since by construction $X^\cc$ has only one singularity, of type $\frac{1}{2}(1,1)$, $\underline{\GP}(\widehat{\Lambda}_n)\simeq\uCM(\mathbb{C}[[x,y]]^{\frac{1}{2}(1,1)})$ follows from \ref{main triangles}. Finally, since the center of $\widehat{\Lambda}_n$ is $\mathbb{C}[[x,y]]^{\frac{1}{2n-1}(1,2)}$, it follows that $n^\prime\neq n$ implies $\widehat{\Lambda}_{n^\prime}\ncong \widehat{\Lambda}_n$.
\end{proof}

\subsection{Frobenius structures on module categories}
Let $K$ be a field and denote $\mathbb{D}:=\Hom_K(-,K)$. Here we illustrate our main theorem \ref{t:main-thm} in the setting of finite dimensional algebras.  Using both \ref{t:main-thm} and \ref{new Frobenius structure}, we recover the following result due to 
Auslander--Solberg \cite{AS93-2}, which is rediscovered and generalised by Kong~\cite{Kong12}.

\begin{prop}\label{p:ASK}
Let $\Lambda$ be a finite dimensional algebra and $\cn$ a functorially finite subcategory of $\mod\Lambda$ satisfying $\Lambda\oplus \mathbb{D}\Lambda\in\cn$ and $\tau\underline{\cn}=\overline{\cn}$ where $\tau$ is the AR translation.
Then $\mod\Lambda$ has a structure of a Frobenius category such that the category of projective objects is $\add\cn$, and we have an equivalence $\mod\Lambda\to\GP( \cn),~X\mapsto \Hom_\Lambda(X,-)|_\cn$.
\end{prop}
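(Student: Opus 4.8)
The plan is to combine \ref{new Frobenius structure}, which produces the Frobenius structure, with the functor-category version \ref{main-thm-category-version} of \ref{t:main-thm}, which identifies $\mod\Lambda$ with $\GP(\cn)$; when $\Lambda$ is representation finite one can instead invoke \ref{t:main-thm} directly.

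First I would apply \ref{new Frobenius structure} with $(\ce,\cm,\tau)=(\mod\Lambda,\cn,\tau)$, where $\tau=\mathrm{D}\Tr$ is the Auslander--Reiten translation. Indeed $\mod\Lambda$ is a $K$-linear exact (in fact abelian) category with enough projectives $\proj\Lambda=\add\Lambda$ and enough injectives $\Inj\Lambda=\add\mathbb{D}\Lambda$, and the Auslander--Reiten formula supplies both the equivalence $\tau\colon\underline{\mod\Lambda}\to\overline{\mod\Lambda}$ and the functorial isomorphism $\Ext^1_\Lambda(X,Y)\cong\mathbb{D}\,\overline{\Hom}_\Lambda(Y,\tau X)$. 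The hypothesis $\Lambda\oplus\mathbb{D}\Lambda\in\cn$ guarantees $\proj\Lambda,\Inj\Lambda\subseteq\add\cn$; $\cn$ is functorially finite by assumption; and $\tau\underline{\cn}=\overline{\cn}$ is assumed. Hence \ref{new Frobenius structure}(2) equips $\mod\Lambda$ with a Frobenius exact structure whose projective-injective objects are exactly $\add\cn$, the conflations being the short exact sequences $0\to X\xrightarrow{f}Y\to Z\to 0$ of $\Lambda$-modules with $\Hom_\Lambda(f,\cn)$ surjective.

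Next I would feed this Frobenius category $\ce=\mod\Lambda$ into \ref{main-thm-category-version}, choosing for the subcategory there the whole category $\mod\Lambda$ itself. Being abelian, $\mod\Lambda$ has pseudokernels and pseudocokernels and contains $\cp:=\add\cn$, so the only nonformal point is that $\mod(\mod\Lambda)$ and $\mod((\mod\Lambda)^{\op})$ have finite global dimension. This holds with $n=2$: for any abelian category $\AA$ and any coherent functor $F$ with presentation $\Hom_\AA(-,A_1)\to\Hom_\AA(-,A_0)\to F\to 0$ induced by $\phi\colon A_1\to A_0$, the first syzygy equals $\Hom_\AA(-,\ker\phi)$, which is representable hence projective, so $\pd F\le 2$; applying this to $\AA=\mod\Lambda$ and $\AA=(\mod\Lambda)^{\op}$ gives the bound. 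Then \ref{main-thm-category-version}(1) shows $\add\cn$ is an Iwanaga--Gorenstein category of dimension at most $2$, and since $\mod\Lambda$ is abelian hence idempotent complete, \ref{main-thm-category-version}(2) upgrades the ``up to summands'' statement to an honest equivalence $\mod\Lambda\xrightarrow{\ \sim\ }\GP(\cn)$, $X\mapsto\Hom_\Lambda(-,X)|_{\add\cn}$, which is the asserted functor $X\mapsto\Hom_\Lambda(X,-)|_\cn$ after the harmless passage between co- and contravariant functor categories (equivalently, replacing $\cn$ by $\cn^{\op}$). When $\Lambda$ is representation finite the same conclusion follows more directly from \ref{t:main-thm}: take $M$ an additive generator of $\mod\Lambda$, so $\End_\Lambda(M)$ is the Auslander algebra, noetherian of global dimension at most $2$, and $P$ an additive generator of $\cn\subseteq\add M$; then $E=\End_\Lambda(P)$ is Iwanaga--Gorenstein and $\GP(E)\simeq\GP(\cn)$.

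I do not expect a serious obstacle once \ref{new Frobenius structure} and \ref{main-thm-category-version} are in hand: the argument is essentially a matter of matching hypotheses. The single input that is not purely formal is the global-dimension bound for categories of coherent functors on an abelian category, which is classical (going back to Auslander) and is made transparent by the one-line syzygy computation above. The two things to be careful about are: taking the ``noncommutative resolution'' to be all of $\mod\Lambda$ rather than $\add M$ for a fixed module, which is why one needs the functor-category version \ref{main-thm-category-version} in general rather than \ref{t:main-thm}; and invoking idempotent completeness of $\mod\Lambda$ so that the equivalence onto $\GP(\cn)$ holds on the nose rather than merely up to summands.
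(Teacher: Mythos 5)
Your proposal is correct and follows essentially the same route as the paper: apply \ref{new Frobenius structure} to get the Frobenius structure on $\mod\Lambda$ with projectives $\add\cn$, then apply \ref{main-thm-category-version} with $(\ce,\cm,\cp)=(\mod\Lambda,\mod\Lambda,\add\cn)$, using $\gl\mod(\mod\Lambda)\le 2$ and the idempotent completeness of $\mod\Lambda$. The extra details you supply (the one-line syzygy computation for the global-dimension bound, the contravariant/covariant bookkeeping, and the representation-finite shortcut via \ref{t:main-thm}) are all accurate elaborations of what the paper leaves implicit.
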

\begin{proof}
By \ref{new Frobenius structure}, we have a new structure of a Frobenius category on $\mod\Lambda$ whose projective--injective objects are $\add\cn$.   Applying \ref{main-thm-category-version} to $(\ce,\cm,\cp):=(\mod\Lambda,\mod\Lambda,\add\cn)$, we have the assertion since $\mod(\mod\Lambda)$ has global dimension at most two and $\mod\Lambda$ is idempotent complete.
\end{proof}

The following result supplies a class of algebras satisfying the conditions in~\ref{p:ASK}. It generalises~\cite[3.4]{Kong12} in which $\Gamma$ is the path algebra of a Dynkin quiver. Below $\otimes:=\otimes_K$.
\begin{prop}\label{p:generalisation-of-K3.4}
Let $\Delta$ and $\Gamma$ be 
finite-dimensional $K$-algebras. Assume that $\Delta$ is selfinjective.
Then $\Lambda=\Delta \otimes \Gamma$ and $\cn= \Delta \otimes \mod \Gamma:=\{\Delta\otimes M\mid M\in\mod\Gamma\}$ satisfy the conditions in~\ref{p:ASK}.
Consequently, we have an equivalence
$$\mod \Lambda \cong \GP(\Delta \otimes \mod \Gamma).$$
\end{prop}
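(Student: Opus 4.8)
By \ref{p:ASK} it suffices to verify that $\cn:=\add(\Delta\otimes\mod\Gamma)$ is a functorially finite subcategory of $\mod\Lambda$ with $\Lambda\oplus\mathbb{D}\Lambda\in\cn$ and $\tau\underline{\cn}=\overline{\cn}$, where $\tau$ is the Auslander--Reiten translation of $\mod\Lambda$; so the plan is to check these three conditions and then invoke \ref{p:ASK} (for the functor‑category $\GP$ one may replace $\cn$ by the generating subcategory $\Delta\otimes\mod\Gamma$, which is why the conclusion reads $\mod\Lambda\cong\GP(\Delta\otimes\mod\Gamma)$). Throughout I will use that, since $K$ is a field and all modules here are finite dimensional, $F:=\Delta\otimes-\colon\mod\Gamma\to\mod\Lambda$ is exact and both $\Hom$ and $\mathbb{D}$ commute with $\otimes$; in particular $F$ sends projectives to projectives, injectives to injectives, and minimal projective presentations to minimal projective presentations (the latter because $\Delta\otimes\operatorname{rad}\Gamma$ is a nilpotent, hence radical, ideal of $\Lambda$, so $F$ preserves projective covers). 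The first condition is then immediate: $\Lambda=\Delta\otimes\Gamma\in\cn$, and since $\Delta$ is selfinjective the left $\Delta$-module $\mathbb{D}\Delta$ is projective, so $\add({}_\Delta\Delta)=\add({}_\Delta\mathbb{D}\Delta)$ and $\mathbb{D}\Lambda=\mathbb{D}\Delta\otimes\mathbb{D}\Gamma\in\add(\Delta\otimes\mathbb{D}\Gamma)\subseteq\cn$.

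For functorial finiteness I would identify $\cn$ with the additive closure of the essential image of the induction functor $F\cong\Lambda\otimes_\Gamma-$ along the $K$-algebra map $\Gamma\to\Lambda$, $\gamma\mapsto 1\otimes\gamma$. Then $F$ is left adjoint to restriction $\operatorname{res}\colon\mod\Lambda\to\mod\Gamma$, and since $\Lambda$ is free of finite rank both as a left and as a right $\Gamma$-module, $\operatorname{res}$ also has a right adjoint $G:=\Hom_\Gamma(\Lambda,-)$, for which a direct computation gives $G(N)\cong{}_\Delta\mathbb{D}\Delta\otimes N$. Using $\add({}_\Delta\Delta)=\add({}_\Delta\mathbb{D}\Delta)$ once more shows $\add(\Im F)=\add(\Im G)=\cn$. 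By the usual triangle-identity argument, the counit $F\operatorname{res}X\to X$ is a right $\cn$-approximation and the unit $X\to G\operatorname{res}X$ a left $\cn$-approximation of any $X\in\mod\Lambda$, so $\cn$ is functorially finite.

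The heart of the matter is the condition $\tau\underline{\cn}=\overline{\cn}$, and the key step is the functorial isomorphism
\[
\tau_\Lambda\circ F\;\cong\;G\circ\tau_\Gamma\colon\quad\underline{\mod\Gamma}\longrightarrow\overline{\mod\Lambda},
\]
that is, $\tau_\Lambda(\Delta\otimes M)\cong{}_\Delta\mathbb{D}\Delta\otimes\tau_\Gamma M$ for all $M\in\mod\Gamma$. To prove it I would apply $F$ to a minimal projective presentation of $M$ over $\Gamma$; since $F$ takes it to a minimal projective presentation over $\Lambda$ and $\Hom_\Lambda(\Delta\otimes Q,\Lambda)\cong\Delta\otimes\Hom_\Gamma(Q,\Gamma)$ naturally for $Q\in\proj\Gamma$, one obtains $\Tr_\Lambda(\Delta\otimes M)\cong\Delta\otimes\Tr_\Gamma M$, and applying $\mathbb{D}$ gives the claim --- the Nakayama twist ${}_\Delta\mathbb{D}\Delta$ appearing because $\Hom_\Lambda(\Delta\otimes Q,\Lambda)$ is computed using the \emph{right} regular module of $\Delta$. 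Granting this, and recalling that $\tau_\Gamma\colon\underline{\mod\Gamma}\to\overline{\mod\Gamma}$ is an equivalence (true for any Artin algebra) while $\add(\Im G)=\cn$, we get $\tau_\Lambda(\underline{\cn})\supseteq\overline{\Im G}$, hence $\overline{\cn}=\add(\overline{\Im G})\subseteq\tau_\Lambda(\underline{\cn})$, where the last step uses that $\underline{\cn}$ is closed under direct summands in $\underline{\mod\Lambda}$ (because $\cn$ contains $\proj\Lambda$ and is summand-closed in $\mod\Lambda$). The reverse inclusion $\tau_\Lambda(\underline{\cn})\subseteq\overline{\cn}$ is immediate from ${}_\Delta\mathbb{D}\Delta\otimes\tau_\Gamma M\in\cn$. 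This gives $\tau\underline{\cn}=\overline{\cn}$, and \ref{p:ASK} then yields the equivalence $\mod\Lambda\cong\GP(\Delta\otimes\mod\Gamma)$.

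I expect the main obstacle to be the functorial isomorphism $\tau_\Lambda F\cong G\tau_\Gamma$: keeping the left/right module structures straight in the identifications $\Hom_\Lambda(\Delta\otimes Q,\Lambda)\cong\Delta_\Delta\otimes\Hom_\Gamma(Q,\Gamma)$ and $\mathbb{D}(\Delta_\Delta)={}_\Delta\mathbb{D}\Delta$, and verifying naturality in $M$. Everything else --- the two adjunctions and the approximation argument in the functorial-finiteness step, the identification $G(N)\cong{}_\Delta\mathbb{D}\Delta\otimes N$, and the summand-closure of $\underline{\cn}$ and $\overline{\cn}$ --- is routine.
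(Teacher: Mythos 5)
Your proof is correct and follows essentially the same route as the paper: verify the three hypotheses of \ref{p:ASK}, with the key step being the compatibility $\tau_\Lambda(\Delta\otimes M)\cong\nu_\Delta(\Delta)\otimes\tau_\Gamma(M)$ proved by applying $\mathbb{D}\Hom(-,\text{ring})$ to the tensored minimal projective presentation (you phrase it via $\Tr$ and then dualize, the paper via $\nu$ directly — the same computation). One genuine improvement: you supply an explicit adjunction argument (induction/restriction/coinduction, with $G(N)\cong{}_\Delta\mathbb{D}\Delta\otimes N$) for the functorial finiteness of $\cn$ in $\mod\Lambda$, a condition of \ref{p:ASK} that the paper's proof asserts without verification, and you are also more careful to use $\add({}_\Delta\Delta)=\add({}_\Delta\mathbb{D}\Delta)$ rather than the slightly imprecise $\nu_\Delta(\Delta)=\Delta$.
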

\begin{proof} 
Since $\Delta$ is selfinjective, both $\Lambda=\Delta\otimes\Gamma$ and $\mathbb{D}\Lambda=\mathbb{D}(\Delta\otimes\Gamma)=\mathbb{D}\Delta\otimes \mathbb{D}\Gamma=\Delta\otimes \mathbb{D}\Gamma$ belong to $\cn=\Delta\otimes\mod\Gamma$. For $M\in\mod\Gamma$, it follows from the next lemma that $\tau_\Lambda(\Delta\otimes M)=\nu_\Delta(\Delta)\otimes\tau_\Gamma(M)$. Since $\Delta$ is selfinjective, we have $\nu_\Delta(\Delta)=\Delta$, and hence $\tau_\Lambda(\Delta\otimes M)=\Delta\otimes\tau_\Gamma(M)\in \Delta\otimes\mod\Gamma$. Thus the conditions in~\ref{p:ASK} are satisfied. 
\end{proof}
\begin{lemma}
Let $\Delta$ and $\Gamma$ be finite-dimensional $K$-algebras and $\Lambda=\Delta\otimes\Gamma$. Then for a finite-dimensional $\Gamma$-module $M$ and a finitely generated projective $\Delta$-module $P$, we have $\tau_\Lambda(P\otimes M)=\nu_\Delta(P)\otimes \tau_\Gamma(M)$, where $\nu_\Delta=\mathbb{D}\Hom_\Delta(-,\Delta)$ is the Nakayama functor.
\end{lemma}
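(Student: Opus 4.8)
The plan is to read off both sides from a minimal projective presentation of $M$ over $\Gamma$, using that $\otimes_K$ is exact and that it is suitably compatible with $\Hom$ and $\mathbb{D}$.

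Fix a minimal projective presentation $Q_1\xrightarrow{f}Q_0\xrightarrow{g}M\to 0$ in $\mod\Gamma$. Since $P$ is a finitely generated projective $\Delta$-module, each $P\otimes Q_i$ is a (finite-dimensional) projective $\Lambda$-module (this is clear for $P=\Delta$, $Q_i=\Gamma$, where it is $\Lambda$ itself, and then for direct summands of free modules), so applying the exact functor $P\otimes-$ produces a projective presentation
\[
P\otimes Q_1\xrightarrow{1_P\otimes f}P\otimes Q_0\longrightarrow P\otimes M\to 0
\]
in $\mod\Lambda$. The first thing I would check is that this presentation is again \emph{minimal}. The ideal $\Delta\otimes\mathrm{rad}\,\Gamma$ of $\Lambda$ is nilpotent (because $\mathrm{rad}\,\Gamma$ is), hence contained in $\mathrm{rad}\,\Lambda$; therefore, for every $\Gamma$-module $N$,
\[
P\otimes(\mathrm{rad}\,\Gamma)N\subseteq(\mathrm{rad}\,\Lambda)(P\otimes N)=\mathrm{rad}_\Lambda(P\otimes N).
\]
By exactness of $\otimes_K$ one has $\ker(1_P\otimes g)=P\otimes\ker g$ and, viewing $1_P\otimes f$ as a map onto this kernel, $\ker(1_P\otimes f)=P\otimes\ker f$; and minimality over $\Gamma$ gives $\ker g\subseteq\mathrm{rad}\,Q_0$ and $\ker f\subseteq\mathrm{rad}\,Q_1$. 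Combining these, the two kernels are superfluous submodules of the projective modules $P\otimes Q_0$ and $P\otimes Q_1$, so $1_P\otimes g$ and $1_P\otimes f$ are projective covers, i.e.\ the presentation is minimal.

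Next, recall that $\tau=\mathbb{D}\,\Tr$ and that, from a minimal projective presentation $P_1\xrightarrow{d}P_0\to N\to 0$, one computes $\Tr N=\Cok\big(\Hom(P_0,-),\ \big)$, precisely $\Tr N=\Cok(d^\ast\colon P_0^\ast\to P_1^\ast)$ with $(-)^\ast=\Hom(-,\text{base ring})$. For finitely generated projectives there is a natural isomorphism $\Hom_\Lambda(P\otimes Q,\Lambda)\cong\Hom_\Delta(P,\Delta)\otimes\Hom_\Gamma(Q,\Gamma)$ (immediate for $P=\Delta$, $Q=\Gamma$, then extended to summands of free modules), and for finite-dimensional vector spaces $\mathbb{D}(X\otimes Y)\cong\mathbb{D}X\otimes\mathbb{D}Y$ naturally. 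Applying $\Hom_\Lambda(-,\Lambda)$ to $1_P\otimes f$ therefore becomes, under these identifications, $1_{P^\ast}\otimes f^\ast$ with $P^\ast=\Hom_\Delta(P,\Delta)$ and $f^\ast=\Hom_\Gamma(f,\Gamma)$, whose cokernel is $P^\ast\otimes\Cok(f^\ast)=P^\ast\otimes\Tr_\Gamma M$ by exactness of $\otimes_K$. Hence
\[
\tau_\Lambda(P\otimes M)=\mathbb{D}\,\Tr_\Lambda(P\otimes M)\cong\mathbb{D}\big(\Hom_\Delta(P,\Delta)\otimes\Tr_\Gamma M\big)\cong\mathbb{D}\Hom_\Delta(P,\Delta)\otimes\mathbb{D}\,\Tr_\Gamma M=\nu_\Delta(P)\otimes\tau_\Gamma(M),
\]
which is the assertion.

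The one point I expect to need genuine care is the minimality of the induced presentation over $\Lambda$: without it the argument would only yield the conclusion up to injective summands (equivalently, in the stable category) rather than the stated isomorphism of modules. The verification above uses only the inclusion $\Delta\otimes\mathrm{rad}\,\Gamma\subseteq\mathrm{rad}\,\Lambda$; in particular, the potentially delicate question (over imperfect fields) of whether $\mathrm{rad}\,\Lambda$ equals $\mathrm{rad}\,\Delta\otimes\Gamma+\Delta\otimes\mathrm{rad}\,\Gamma$ never enters. The remaining ingredients — exactness of $\otimes_K$ and the compatibility of $\Hom$ and $\mathbb{D}$ with $\otimes_K$ on finitely generated projectives and finite-dimensional spaces — are routine.
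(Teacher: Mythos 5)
Your proof is correct and follows essentially the same route as the paper: take a minimal projective presentation of $M$ over $\Gamma$, tensor with $P$ to obtain a minimal projective presentation of $P\otimes M$ over $\Lambda$, and read off $\tau$ using the natural isomorphism $\Hom_\Lambda(P\otimes Q,\Lambda)\cong\Hom_\Delta(P,\Delta)\otimes\Hom_\Gamma(Q,\Gamma)$ for finitely generated projectives together with compatibility of $\mathbb D$ with $\otimes_K$. The only cosmetic difference is that the paper packages the computation as applying $\nu_\Lambda$ to the presentation and identifying $\tau_\Lambda(P\otimes M)$ as the kernel, whereas you first form $\Tr$ via $(-)^\ast$ and a cokernel and then apply $\mathbb D$; these are the same calculation. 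One genuine improvement in your write-up: you actually verify that the induced presentation over $\Lambda$ stays minimal, via the inclusion $\Delta\otimes\mathrm{rad}\,\Gamma\subseteq\mathrm{rad}\,\Lambda$ and exactness of $\otimes_K$, a point the paper asserts without comment. Without that minimality the argument only identifies $\tau$ up to a projective--injective correction, so this detail is worth having.
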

\begin{proof}
This is shown in the proof of~\cite[3.4]{Kong12} for the case when $\Delta$ is self-injective and $\Gamma$ is the path algebra of a Dynkin quiver. The proof there works more generally in our setting. For the convenience of the reader we include it here.

Let $Q^{-1}\stackrel{f}{\rightarrow}Q^0$ be a minimal projective presentation of $M$ over $\Gamma$. Then
\[
P\otimes Q^{-1}\xrightarrow{\mathrm{id}_P\otimes f} P\otimes Q^0
\]
is a minimal projective presentation of $P\otimes M$ over $\Delta\otimes\Gamma$. We apply $\nu_{\Lambda}=\mathbb{D}\Hom_{\Delta\otimes\Gamma}(-,\Delta\otimes\Gamma)$ and by the definition of $\tau$ we obtain an exact sequence
\begin{eqnarray}
0\to\tau_{\Lambda}(P\otimes M)\to\nu_{\Lambda}(P\otimes Q^{-1})\xrightarrow{\nu(\mathrm{id}_P\otimes f)}\nu_{\Lambda}(P\otimes Q^0).\label{e:tau}
\end{eqnarray}
Observe that for a finitely generated projective $\Gamma$-module $Q$ we have
\begin{eqnarray*}\nu_{\Lambda}(P\otimes Q)\hspace{-7pt}&=&\hspace{-7pt}\mathbb{D}\Hom_{\Delta\otimes\Gamma}(P\otimes Q,\Delta\otimes\Gamma)=\mathbb{D}(\Hom_\Delta(P,\Delta)\otimes\Hom_\Gamma(Q,\Gamma))\\
\hspace{-7pt}&=&\hspace{-7pt}\nu_\Delta(P)\otimes\nu_\Gamma(Q).\end{eqnarray*}
Therefore the sequence (\ref{e:tau}) is equivalent to
\[
0\to\tau_{\Lambda}(P\otimes M)\to\nu_{\Delta}(P)\otimes\nu_{\Gamma}(Q^{-1})\xrightarrow{\nu(\mathrm{id}_P)\otimes \nu(f)}\nu_{\Delta}(P)\otimes\nu_{\Gamma}(Q^0).
\]
It follows that $\tau_{\Lambda}(P\otimes M)=\nu_\Delta(P)\otimes\tau_\Gamma(M)$, as desired.
\end{proof}

\begin{remark}Let $\Delta$, $\Gamma$ and $\Lambda$ be as in~\ref{p:generalisation-of-K3.4}. Assume further that $\Gamma$ has finite representation type and let $\Aus(\Gamma)$ denote the Auslander algebra of $\Gamma$, i.e.\ the endomorphism algebra of an additive generator of $\mod \Gamma$.\\
\t{(1)}  The algebra $\Delta\otimes \Aus(\Gamma)$ is Iwanaga--Gorenstein and we have an equivalence
\[
\mod \Lambda \cong \GP(\Delta \otimes \Aus(\Gamma)).
\]
\t{(2)} If in addition $\mod\Gamma$ has no stable $\tau$-orbits, then any subcategory of $\Delta\otimes\mod\Gamma$ satisfying the conditions in~\ref{p:ASK} already additively generates $\Delta\otimes\mod\Gamma$. In this sense, $\Delta\otimes \Aus(\Gamma)$ is smallest possible.
\end{remark}

\subsection{Frobenius categories arising from preprojective algebras}

Let $Q$ be a finite quiver without oriented cycles and let $W$ be the Coxeter group associated to $Q$ with generators $s_i,~i\in Q_0$. Let $K$ be a field, let $\Lambda$ be the associated preprojective algebra over $K$ and let $e_i$ be the idempotent of $\Lambda$ corresponding to the vertex $i$ of $Q$. Denote $I_i=\Lambda(1-e_i)\Lambda$.

For an element $w\in W$ with reduced expression $w=s_{i_1}\cdots s_{i_k}$, let $I_w=I_{i_1}\cdots I_{i_k}$ and set $\Lambda_w=\Lambda/I_w$. As a concrete example, if $Q$ is the quiver of type $A_3$ and $w=s_2s_1s_3s_2$, then $\Lambda_w$ is given by the following quiver with relations

\[
\begin{array}{cc}
\begin{array}{c}
\begin{tikzpicture}[bend angle=15, looseness=1]
\node (a) at (-1,0) [vertex] {};
\node (b) at (0,0) [vertex] {};
\node (c) at (1,0) [vertex] {};
\node (a1) at (-1,-0.2) {$\scriptstyle 1$};
\node (b1) at (0,-0.2) {$\scriptstyle 2$};
\node (c1) at (1,-0.2) {$\scriptstyle 3$};
\draw[<-,bend right] (a) to node[below] {$\scriptstyle a^*$} (b);
\draw[->,bend left] (a) to node[above] {$\scriptstyle a$} (b);
\draw[<-,bend right] (b) to node[below] {$\scriptstyle b^*$} (c);
\draw[->,bend left] (b) to node[above] {$\scriptstyle b$} (c);
\end{tikzpicture}
\end{array}
&
{\small
\begin{array}{ccc}
aa^*=0 & b^*b=0 &a^*a=bb^*\\
ab=0 & b^*a^*=0.\\
\end{array}}
\end{array}
\]
Note that $I_w$ and $\Lambda_w$ do not depend on the choice of the reduced expression. By~\cite[III.2.2]{BIRSc}, $\Lambda_w$ is finite-dimensional and is Iwanaga--Gorenstein of dimension at most $1$. In this case, the category of Gorenstein projective $\Lambda_w$-modules coincides with the category $\Sub\Lambda_w$ of submodules of finitely generated projective $\Lambda_w$-modules. 
By~\cite[III.2.3, III.2.6]{BIRSc}, $\Sub\Lambda_w$ is a Hom-finite stably 2-Calabi--Yau Frobenius category and it admits a cluster-tilting object $M_w$. These results were stated in~\cite{BIRSc} only for non-Dynkin quivers, but they also hold for Dynkin quivers.

\newcommand{\nil}{\mathrm{nil~}}
Another family of Hom-finite stably 2-Calabi--Yau Frobenius categories with cluster-tilting object are constructed by Gei\ss, Leclerc and Schr\"oer in~\cite{GLS}. Precisely, for a terminal module $M$ over $KQ$ (i.e. $M$ is preinjective and $\add M$ is closed under taking the inverse Auslander--Reiten translation), consider $\cc_M=\pi^{-1}(\add M)\subseteq \nil\Lambda$, where $\nil\Lambda$ is the category of finite-dimensional nilpotent representations over $\Lambda$ and $\pi:\nil\Lambda\rightarrow\mod kQ$ is the restriction along the canonical embedding $KQ\rightarrow\Lambda$. Gei\ss, Leclerc and Schr\"oer show that $\cc_M$ admits the structure of a Frobenius category which is stably 2-Calabi--Yau with a cluster tilting object $T^\vee_M$. To $M$ is naturally associated an element $w$ of $W$. By comparing $T^\vee_M$ with $M_w$, they show that there is an anti-equivalence $\cc_M\rightarrow\Sub\Lambda_w$ \cite[\S22.7]{GLS}. 

We now explain how the results in this paper can be used to give a different proof of the equivalence $\cc_M\cong\Sub\Lambda_w^{\op}$.  

In~\cite[\S 8.1]{GLS}, an explicit construction of a projective generator $I_M$ of the Frobenius category $\cc_M$ is given. One can check that $\End_{\cc_M}(I_M)\cong \Lambda_w^{\op}$. By~\cite[13.6(2)]{GLS}, $\End_{\cc_M}(T^\vee_M)$ has global dimension $3$. Since $T^\vee_M$ has $I_M$ as a direct summand, it follows from \ref{t:main-thm} that 
\[
\cc_M\cong \GP(\Lambda_w^{\op}),
\]
and since $\id\Lambda_w^{\op}=1$, we have $\GP(\Lambda_w^{\op})=\Sub \Lambda_w^{\op}$.  Thus $\cc_M\cong\Sub \Lambda_w^{\op}$ follows.

\emph{Acknowledgements.} The first author was supported by JSPS Grant-in-Aid for Scientific Research 21740010, 21340003, 20244001 and 22224001. The first and third authors were partially supported by the EMS Research Support Fund. The second author was supported by the DFG grant Bu-1866/2-1.The fourth author was supported by the DFG
program SPP 1388 Ko-1281/9-1.

\end{document}